\newtheorem{thm}{Theorem}
\newtheorem{lem}[thm]{Lemma}
\newtheorem{prop}[thm]{Proposition}
\newtheorem{cor}[thm]{Corollary}
\newtheorem{claim}[thm]{Claim}
\newtheorem{fact}[thm]{Fact}
\newtheorem{conj}[thm]{Conjecture}
\theoremstyle{definition}
\newtheorem{defn}[thm]{Definition}
\theoremstyle{remark}
\newtheorem{ex}[thm]{Example}
\newtheorem{rem}[thm]{Remark}
\newcommand{\xxx}[1] {} %{{\large\bf #1}}
\newcommand{\Fp} {{\ensuremath{\mathbb{F}_p}}\xspace}
\newcommand{\Fq} {{\ensuremath{\mathbb{F}_q}}\xspace}
\newcommand{\Fclosed} {{\ensuremath{\overline{\,\mathbb{F}\,}}}\xspace}
\newcommand{\Fpclosed} {{\ensuremath{\Fclosed\kern-3pt_p}}\xspace}
\newcommand{\Fqclosed} {{\ensuremath{\Fclosed\kern-3pt_q}}\xspace}
\newcommand{\e} {{\ensuremath{\varepsilon}}\xspace}
\newcommand{\Prod} {{\textstyle\prod}}
\newcommand{\Frac}[2] {{\ensuremath{\textstyle\frac{#1}{#2}}}\xspace}
\newcommand{\gen} {{\rm gen}}
\newcommand{\nongen} {{\rm nongen}}
\newcommand{\Span}[1] {{\ensuremath{\langle{#1}\rangle}}\xspace}
\newcommand{\cl}[1] {{\ensuremath{\overline{#1}}}}
\newcommand{\Frob} {{\it Frob}\xspace}
\DeclareMathOperator{\pr}{pr}
\DeclareMathOperator{\diam}{diam}
\DeclareMathOperator{\inv}{inv}
\DeclareMathOperator{\mult}{mult}
\def\BC {\mathbb{C}}
\def\BF {\mathbb{F}}
\def\BP {\mathbb{P}}
\def\BZ {\mathbb{Z}}
\def\CA {\mathcal{A}}
\def\CC {\mathcal{C}}
\def\CH {\mathcal{H}}
\def\CK {\mathcal{K}}
\def\CL {\mathcal{L}}
\def\CM {\mathcal{M}}
\def\CN {\mathcal{N}}
\def\CO {\mathcal{O}}
\def\CR {\mathcal{R}}
\def\CS {\mathcal{S}}
\def\CZ {\mathcal{Z}}
\def\ua {{\underline{a}}}
\def\ug {{\underline{g}}}
\def\ux {{\underline{x}}}
\def\uz {{\underline{z}}}
\begin{document}
\title{Growth in finite simple groups of Lie type}
\author{L\'aszl\'o Pyber and Endre Szab\'o }
\date{\today\\
  \indent \emph{Keywords:} growth, finite simple groups, algebraic groups\\
  \indent \emph{Subject classifications:}
  20F69,  %(asymptotic properties of groups)
  20G15,  %(linear algebraic groups)
  20D06  %(simple groups of Lie type and alternating groups)
}
\thanks{L.P. is supported in part by OTKA 78439 and
  72523} \thanks{E.Sz. is supported in part by OTKA 61116 and 72523}

\begin{abstract}
  We prove that if $L$ is a finite simple group of Lie type and $A$ a
  set of generators of $L$, then $A$ grows i.e $|A^3| > |A|^{1+\e}$
  where $\e$ depends only on the Lie rank of $L$, or $A^3=L$.  This
  implies that for a family of simple groups $L$ of Lie type
  the diameter of any Cayley graph is polylogarithmic in $|L|$.
  We also obtain some new families of expanders.
  \par
  We also prove the following partial extension.  Let $G$ be a
  subgroup of $GL(n,p)$, $p$ a prime, and $S$ a symmetric set of
  generators of $G$ satisfying $|S^3|\le K|S|$ for some $K$.  Then $G$
  has two normal subgroups $H\ge P$ such that $H/P$ is soluble, $P$ is
  contained in $S^6$ and $S$ is covered by $K^c$ cosets of $H$ where
  $c$ depends on $n$.
  We obtain results of similar flavour for sets generating infinite subgroups
  of $GL(n,\mathbb F)$, $\mathbb F$ an arbitrary field.
\end{abstract}

\maketitle

\section{Introduction}

The diameter, $\diam(X)$, of an undirected graph $X=(V,E)$ is the
largest distance between two of its vertices.

Given a subset $A$ of the vertex set $V$ the expansion of $A$, $c(A)$,
is defined to be the ratio $|\sigma(A)|/|A|$ where $\sigma(A)$ is the
set of vertices at distance $1$ from $A$. A graph is a $C$-expander
for some $C>0$ if for all sets $A$ with $|A|< |V|/2$ we have $c(A)\ge
C$. A family of graphs is an expander family if all of its members are
$C$-expanders for some fixed positive constant $C$.

Let $G$ be a finite group and $S$ a symmetric (i.e. inverse-closed)
set of generators of $G$.  The Cayley graph $\Gamma(G,S)$ is the graph
whose vertices are the elements of $G$ and which has an edge from $x$
to $y$ if and only if $x=sy$ for some $s \in S$. Then the diameter of
$\Gamma$ is the smallest number $d$ such that $S^d=G$.

The following classical conjecture is due to Babai \cite{BS}

\begin{conj}[Babai]
  For every non-abelian finite simple group $L$ and every symmetric
  generating set $S$ of $L$ we have $\diam\big( \Gamma(L,S)\big) \le
  C\big(\log|L|\big)^c$ where $c$ and $C$ are absolute constants.
\end{conj}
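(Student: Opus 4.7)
The plan is to reduce the diameter bound to a \emph{growth dichotomy} and then prove the dichotomy using the algebraic or combinatorial structure of $L$. Concretely, the target is a product theorem: for each family under consideration there should exist $\e>0$ (absolute, or depending only on the Lie rank of $L$) such that for every symmetric generating set $A$ of $L$, either $|A^3|\ge |A|^{1+\e}$ or $A^3=L$.  Given such a statement, the diameter bound is almost immediate by iteration: as long as $A^{3^k}$ is not all of $L$, its size grows by at least a factor $2^\e$ at each step, so after $O(\log|L|)$ iterations one has $A^{d}=L$ with $d$ polylogarithmic in $|L|$.

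For $L$ a finite simple group of Lie type of bounded rank, the substantive step is the product theorem itself, which I would prove following the template introduced by Helfgott for $SL_2(\Fp)$ and $SL_3(\Fp)$.  The key inputs are: (i) an \emph{escape from subvarieties} lemma, showing that a bounded power $A^k$ is not contained in any proper closed subvariety of bounded complexity; (ii) a Larsen--Pink type inequality bounding $|A\cap V|$ by roughly $|A|^{\dim V/\dim L}$ for $V$ a closed subvariety of $L$ viewed as the $\Fq$-points of an algebraic group; (iii) a pivoting argument which takes a regular semisimple $g\in A^k$, its centralising maximal torus $T$, and uses many conjugates $aTa^{-1}$ with $a\in A^k$ to sweep out a large portion of $L$; and (iv) a growth statement inside a single torus, ultimately powered by sum--product phenomena in the underlying field $\Fq$.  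Combining these yields $|A^3|\ge|A|^{1+\e}$ unless $A$ has already engulfed $L$.  For the alternating groups one would abandon the algebraic-geometric line and instead imitate a combinatorial strategy based on reducing the support of permutations in $A^k$.

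The principal obstacle is the rank dependence.  Both the Larsen--Pink bound in (ii) and the subvariety classifications underpinning (i) and (iii) have complexity depending on $\dim L$, and the $\e$ one obtains degrades -- perhaps vanishes -- as the rank grows.  A second delicate point is the structural dichotomy behind (iv): one must rule out the possibility that $A$ is ``trapped'' inside a proper algebraic subgroup, which amounts to an approximate-subgroup classification theorem.  Because of these two difficulties, the strategy as sketched resolves Babai's conjecture only for families of Lie type groups of bounded Lie rank; the classical groups of unbounded rank, the alternating groups $A_n$, and the sporadic families remain the true frontier.
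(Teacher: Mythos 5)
The statement you are trying to prove is Babai's conjecture itself, which asks for \emph{absolute} constants $c,C$ valid for every non-abelian finite simple group. Your proposal, as you concede in your last sentence, delivers only the case of groups of Lie type of bounded Lie rank: each ingredient (escape from subvarieties, the Larsen--Pink-type dimensional bounds, the pivoting over conjugate maximal tori) yields an $\e$ depending on $\dim L$, i.e.\ on the rank, and the alternating groups are deferred to an unspecified combinatorial argument. This is a genuine gap, and it is not merely a technicality of your write-up: the paper constructs generating sets $A$ of $SL(n,3)$ of size $2^{n-1}+4$ with $|A^3|<100|A|$, so the three-fold product theorem with a rank-independent $\e$ is false --- one necessarily has $\e(r)=O(1/r)$. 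Consequently the route ``uniform product theorem $+$ iteration'' can never give the conjecture in full; for classical groups of unbounded rank (and for $A_n$) one needs a mechanism that does not pass through uniform three-fold growth of arbitrary generating sets. The paper itself proves only the bounded-rank case (Theorem~\ref{intro-thm}); the conjecture as stated remains open there, so no complete proof exists to compare against.

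For the bounded-rank part that you do address, your outline is close in spirit to the paper's but differs in two respects worth noting. First, the paper's argument deliberately avoids additive combinatorics: there is no sum--product input inside tori as in your step (iv); growth is extracted instead from a concentration/``spreading'' machinery (the Spreading Theorem and Dichotomy Lemmas) applied to CCC-subgroups, which replace maximal tori so that the argument runs in arbitrary connected algebraic groups, not just simple ones. Second, the endgame when $A$ is already very large is settled by quasirandomness (Gowers, Nikolov--Pyber): once $|A|>|L|^{1-\delta(r)}$ one gets $A^3=L$ directly from lower bounds on the minimal degree of complex representations, rather than by continuing the torus-sweeping argument; and the worry that $A$ is ``trapped'' in a proper algebraic subgroup is dispatched by the fact that a generating set of $L$ normalises no proper positive-dimensional closed subgroup (Proposition~\ref{from-Liebeck}), not by an approximate-subgroup classification theorem.
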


In a spectacular breakthrough Helfgott \cite{He1} proved that the
conjecture holds for the family of groups $L=PSL(2,p)$, $p$ a prime.
In recent major work \cite{He2} he proved the conjecture for the
groups $L=PSL(3,p)$, $p$ a prime.  Dinai \cite{Di} and Varj\'u
\cite{Va} have extended Helfgott's original result to the groups
$PSL(2,q)$, $q$ a prime power.

We prove the following.
\begin{thm} \label{intro-thm} Let $L$ be a finite simple group of Lie
  type of rank $r$. For every symmetric set $S$ of generators of
  $L$ we have
  $$
  \diam\big(\Gamma(L,S)\big) < \big(\log|L|\big)^{c(r)}
  $$
  where the constant $c(r)$ depends only on $r$.
\end{thm}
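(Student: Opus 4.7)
The plan is to reduce Theorem~\ref{intro-thm} to the product theorem announced in the abstract: for every symmetric generating set $A$ of a finite simple group $L$ of Lie type of rank $r$, either $A^3=L$ or $|A^3|\geq |A|^{1+\varepsilon}$, with $\varepsilon=\varepsilon(r)>0$ depending only on the rank. Theorem~\ref{intro-thm} is then a short formal iteration, exactly as in Helfgott's derivation of the diameter bound for $PSL(2,p)$ from his original tripling theorem.

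Concretely, set $A_0=S$. Since $S$ is symmetric and nonempty, $1\in S^2$ and hence $S^k\subseteq S^{k+2}$ for all $k\geq 1$, so the sets $A_i:=S^{3^i}$ form an increasing chain of symmetric generating sets of $L$. Applying the product theorem at each step, either $A_i^3=L$ for some $i$, or
\[
|A_{i+1}|=|A_i^3|\geq |A_i|^{1+\varepsilon}
\]
for every $i$. In the latter case induction yields $|A_i|\geq 2^{(1+\varepsilon)^i}$, which is bounded above by $|L|$. Hence the iteration terminates, say at step $i_0+1$, with
\[
i_0\leq \frac{\log\log|L|}{\log(1+\varepsilon)}+O(1),
\]
and consequently
\[
\diam\bigl(\Gamma(L,S)\bigr)\leq 3^{i_0+1}\leq (\log|L|)^{c(r)}
\]
with $c(r)=\log 3/\log(1+\varepsilon(r))$.

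The substantive difficulty lies entirely in the product theorem, not in this iteration. The main obstacle is establishing the growth inequality uniformly across all Lie types of rank $r$, including the non-split twisted forms and the small-characteristic cases. This requires realising $L$ as essentially the fixed points of a Steinberg endomorphism on a simple algebraic group over $\overline{\mathbb F}_p$, locating regular semisimple elements in bounded powers of $A$ by escape-from-subvariety arguments, and exploiting the interaction of maximal tori with their unipotent complements to force growth. The non-simple situations that arise when one passes to centralisers or intersects with proper subgroups motivate the complementary result on subsets of $GL(n,p)$ announced in the second half of the abstract.
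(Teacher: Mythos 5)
Your reduction of the diameter bound to the product theorem (either $A^3=L$ or $|A^3|\ge|A|^{1+\e(r)}$), followed by iterating on $S, S^3, S^9,\dots$ to get $\diam\le 3^{O(\log\log|L|/\log(1+\e))}=(\log|L|)^{c(r)}$, is exactly how the paper obtains Theorem~\ref{intro-thm} from Theorem~\ref{simple-L} (the same iteration appears verbatim in the proof of Theorem~\ref{pre-diameter}). The argument is correct and takes essentially the same route, with the real work deferred, as in the paper, to the growth theorem itself.
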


This settles Babai's conjecture for any family of simple groups of Lie
type of bounded rank.

A key result of Helfgott \cite{He1} shows that generating sets of
$SL(2,p)$ grow rapidly under multiplication. His bound on diameters is
an immediate consequence.

\begin{thm}[Helfgott]
  Let $L=SL(2,p)$ and $A$ a generating set of $L$. Let $\delta$ be a
  constant, $0<\delta<1$.
  \begin{enumerate}[a)]
  \item \label{item:1} Assume that $|A|< |L|^{1-\delta}$. Then
    $$
    |A^3|\gg |A|^{1+\e}
    $$
    where $\e$ and the implied constant depend only on $\delta$
  \item \label{item:2} Assume that $|A| > |L|^{1-\delta}$. Then
    $A^k=L$ where $k$ depends only on $\delta$.
  \end{enumerate}
\end{thm}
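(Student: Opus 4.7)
The plan is to follow the strategy introduced in \cite{He1}: prove the growth assertion (a) first, and deduce (b) from it by iteration plus a covering argument.

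I would begin with the reduction of (b) to (a). If $|A|>|L|^{1-\delta}$ and $A^3\ne L$, then repeatedly applying (a) to the chain $A\subseteq A^3\subseteq A^9\subseteq\cdots$ multiplies the size by a factor $|A|^{\e}$ at each step as long as the current set is smaller than $|L|^{1-\delta/2}$. After a bounded number of triplings the size exceeds $|L|^{1-\delta/2}$, and a standard covering lemma (for instance the argument $|XY|\ge |L|/2\Rightarrow XY\cdot XY=L$ via a pigeonhole on right cosets) shows that a further bounded power fills $L$.

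For the growth statement (a), the first step is \emph{escape from subvarieties}: because $A$ generates $L=SL(2,p)$, a short power $A^k$ must contain a regular semisimple element $g$, i.e.\ one whose characteristic polynomial has distinct roots in $\Fpclosed$. Its centraliser $T$ is then a maximal torus, and replacing $A$ by a bounded power I may assume $g\in A$. Next, analyse the conjugation action of $A$ on $T$: either most of $A$ normalises $T$, so $A$ has a big intersection with the soluble subgroup $N_L(T)$, or the conjugates $aTa^{-1}$, $a\in A$, produce many distinct tori. A Pl\"unnecke--Ruzsa calculation exploiting $|A^3|\le |A|^{1+\e}$ extracts, in either case, a large subset $X$ of a fixed torus $T\cong \Fp^\times$ (or the norm-one subgroup of $\mathbb{F}_{p^2}^\times$) and a large subset $Y$ of a unipotent root subgroup $U\cong \Fp$, both sitting inside some bounded power of $A\cup A^{-1}$.

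The finale applies the sum-product theorem of Bourgain--Katz--Tao: any $Z\subset\Fp$ with $p^{\delta}<|Z|<p^{1-\delta}$ satisfies $\max(|Z+Z|,|Z\cdot Z|)\gg |Z|^{1+\eta}$. Exploiting the Borel decomposition $B=TU$, one reads multiplication in $\Fp$ off the conjugation action of $T$ on $U$, and addition off the group law of $U$; this converts sum-product growth in the extracted sets $X,Y$ into growth of a bounded power of $A$ itself, contradicting $|A^3|\le |A|^{1+\e}$ for $\e$ small enough. The principal obstacle, I expect, is the combinatorial bookkeeping that transfers growth in the torus/unipotent intersections back to growth of $A$: one must chain Ruzsa-type inequalities so that the exponent loss at each step does not swamp the $\eta$ gain from sum-product, and the boundary regimes $|A|\approx p$ and $|A|\approx |L|^{1-\delta}$ (where sum-product degrades or the escape step is delicate) require separate handling.
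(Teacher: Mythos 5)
This theorem is quoted in the paper from \cite{He1}; the paper itself gives no proof of it, and its own generalisation (Theorem~\ref{simple-L}) is proved by a route that deliberately avoids additive combinatorics, so your sketch is a reconstruction of Helfgott's original sum--product argument rather than of anything proved here. As such a reconstruction it contains two genuine gaps.

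In part a), the step ``extract a large subset $Y$ of a unipotent root subgroup $U\cong\Fp$ inside a bounded power of $A$'' is unjustified, and it is not how Helfgott's proof runs. Escape from subvarieties produces elements \emph{off} proper subvarieties (hence regular semisimple elements), and the lower bounds for intersections of $A^kA^{-k}$ with subgroups come from centralisers of elements already found in $A$: cosets of $\CC_L(a)$ are fibres of $g\mapsto g^{-1}ag$, which is why maximal tori (centralisers of regular semisimple elements) can be forced to meet $AA^{-1}$ in roughly $|A|^{1/(n+1)}$ elements. A unipotent subgroup is not of this form; unipotent elements form a proper subvariety of $SL_2$, and nothing forces a bounded power of a non-growing set to contain more than trivially many of them, let alone $|A|^{1/3}$ of them in a single $U$. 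Helfgott instead applies Bourgain--Katz--Tao to the set of traces $\mathrm{tr}(A^k)$ (trace amplification), or in later treatments one uses a pivot argument inside the torus; without such a device your sum--product step has no set of field elements to act on, and this is the heart of the proof, not ``bookkeeping''.

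In part b), the reduction to a) by iterating and then invoking a covering lemma at size $|L|/2$ fails in the dense range. Statement a) applies only to sets of size $<|L|^{1-\delta'}$ for a \emph{fixed} $\delta'$, and $\varepsilon(\delta')\to0$ as $\delta'\to0$; a bounded number of triplings therefore takes you from $|L|^{1-\delta}$ to $|L|^{1-c}$ for some fixed $c>0$, but never to $|L|/2$, since for large $p$ the whole interval $\bigl(|L|^{1-\delta'},|L|/2\bigr)$ lies outside the scope of a) for every fixed $\delta'$. You need a separate argument for dense subsets: either Helfgott's own treatment of the large-set case, or the quasirandomness argument used in this paper (Propositions~\ref{govers-trick} and~\ref{minimal-complex-representation-simple}): the minimal degree of a nontrivial complex representation of $L$ is at least $(p-1)/2$ while $|L|\asymp p^3$, so any subset of size exceeding roughly $|L|^{8/9}$ already satisfies $A^3=L$. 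With that ingredient inserted after the bounded iteration, b) follows with $k=k(\delta)$; without it, your pigeonhole step is never reached.
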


It was observed in \cite{NP} that a result of Gowers \cite{Gow}
implies that \ref{item:2}) holds for an arbitrary simple group of Lie
type $L$ with $k=3$ for some $\delta(r)$ which depends only on the Lie
rank $r$ of $L$ (see \cite{BNP} for a more detailed discussion).
Hence to complete the proof of our theorem on diameters it remains to
prove an analogue of the (rather more difficult) part \ref{item:1}) as
was done by Helfgott for the groups $SL(3,p)$ in \cite{He2}.

We prove the following.

\begin{thm} \label{simple-L} Let $L$ be a finite simple group of Lie
  type of rank $r$ and $A$ a generating set of $L$.  Then either
  $A^3=L$ or
  $$
  |A^3|\gg |A|^{1+\e}
  $$
  where $\e$ and the implied constant depend only on $r$.
\end{thm}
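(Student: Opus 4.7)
The plan is to argue by contradiction. Assume $A$ generates $L$, $A^3 \ne L$, and $|A^3| \le C |A|^{1+\e}$ for some small constant $\e = \e(r)$ to be determined later. The Gowers-type result cited right before the statement (which gives part (b) of Helfgott's theorem for arbitrary Lie rank) already handles the case $|A| > |L|^{1-\delta(r)}$, forcing $A^3 = L$; so we may assume $|A|$ is bounded away from $|L|$. I would then view $L$ inside the ambient simple algebraic group $\mathbf{G}$ over $\Fpclosed$ whose $\Fq$-points (modulo centre and twisting) realise $L$, so as to bring in algebraic-geometric tools uniformly in $q$.

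Using the non-commutative Plünnecke--Ruzsa calculus one replaces $A$ by a symmetric $K$-approximate subgroup $B \subset A^{O(1)}$ of comparable size, with $K = |A|^{O(\e)}$. The task then reduces to ruling out large approximate subgroups $B$ of $L$ that are not already contained in a proper subgroup of bounded algebro-geometric complexity.

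The central ingredient is an \emph{escape from subvarieties} principle: for any proper closed subvariety $V \subset \mathbf{G}$ cut out by polynomials of degree at most some $d(r)$, a bounded power $B^{k(r)}$ contains an element of $\mathbf{G} \setminus V$. I would first apply this to the subvariety of non-regular-semisimple elements to obtain $g \in B^{O(1)}$ whose centraliser is a maximal torus $T$ of dimension $r$. Next I would study the map $a \mapsto T^a = a^{-1} T a$ on $B$: its fibres are cosets of $N_\mathbf{G}(T)$, so a further application of escape to $N_\mathbf{G}(T)$ produces many distinct conjugate tori. Combining this with a Larsen--Pink-style dimension bound for approximate subgroups, of the form $|B \cap H(\Fq)| \ll K^{O(1)} |B|^{\dim H / \dim \mathbf{G}}$ for every proper algebraic subgroup $H$ of bounded complexity, an inductive tour through the centralisers of sub-tori of $T$ yields $|B^{m}| \gg K^{c(r)} |B|$ for some fixed $m = m(r)$ and large $c(r)$. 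This contradicts the $K$-approximate-subgroup property once $\e$ is taken small enough in terms of $r$.

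The main obstacle is the Larsen--Pink step itself: one needs the dimension inequality $|B \cap H(\Fq)| \ll K^{O(1)} |B|^{\dim H/\dim \mathbf{G}}$ to hold for arbitrary approximate subgroups, not merely for genuine subgroups (where it is essentially classical), with constants depending only on $r$. Establishing this requires a careful stratification of $\mathbf{G}$ by centralisers of tori of each dimension together with a delicate pigeonhole and escape argument to bootstrap from the abelian case up to the general one. This is the natural extension to arbitrary rank of Helfgott's pivoting arguments for $SL(2)$ and $SL(3)$, replacing ad-hoc case analysis by uniform algebro-geometric input.
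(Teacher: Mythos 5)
Your outline identifies the right reduction steps (the Gowers/Nikolov--Pyber argument for very large $A$, symmetrisation via Ruzsa-type inequalities, escape from subvarieties to produce regular semisimple elements and many conjugate tori, and a pivoting argument over tori), but the proof has a genuine gap exactly where you flag ``the main obstacle'': the Larsen--Pink-style non-concentration inequality $|B\cap H(\Fq)|\ll K^{O(1)}|B|^{\dim H/\dim \mathbf{G}}$ for approximate subgroups $B$ and proper algebraic subgroups (or subvarieties) $H$ of bounded complexity. This inequality is not a technical lemma one can defer with a sentence about ``a careful stratification and a delicate pigeonhole''; it \emph{is} the theorem, in the sense that essentially all of the work in any known proof goes into establishing it or a substitute for it. In the present paper the corresponding content occupies the entire machinery of Sections 3--9: the concentration formalism $\mu(\alpha,X)$, the Transport Lemma, the Spreading Theorem~\ref{spread-all-over}, the Escape and Centraliser Lemmas, the CCC-subgroup dichotomy (Lemmas~\ref{asymmetric-dichotomy} and \ref{dichotomy-lemma}), and Lemmas~\ref{spread-via-CCC} and \ref{find-CCC}, all feeding into Theorem~\ref{main-thm}; in the Breuillard--Green--Tao approach, which your sketch more closely resembles, it is their main Larsen--Pink-type theorem for approximate groups. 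Without a proof of this estimate (with constants depending only on $r$, uniformly in $q$ and in the twisted types), the ``inductive tour through centralisers of sub-tori'' has no engine: the classical Larsen--Pink inequality for genuine subgroups does not transfer to approximate subgroups by any soft argument, and the bootstrap from tori to general subvarieties is precisely where the pivoting/dichotomy structure must be built.

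Two further points, secondary to the main gap. First, the paper deliberately does not pass to a $K$-approximate subgroup at all: it works directly with the generating set via the concentration $\mu$, using Ruzsa's inequalities only to convert between $|A^3|$ and $|A^m|$ (Proposition~\ref{Helfgott-3-is-enough}); your Plünnecke--Ruzsa reduction is harmless but not needed. Second, your pivoting step needs the case analysis made explicit: if every $\Span{A}$-conjugate of the relevant torus (or CCC-subgroup) carries large concentration, one must show this forces $|A|$ to be within a bounded power of $|L|$, at which point the Gowers argument closes the proof; if some conjugate by an element of $A$ does not, one must derive growth (this is Lemma~\ref{induce-from-growing-subgroup} combined with the Dichotomy Lemma in the paper). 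As written, your sketch gestures at this but does not supply the inequality bookkeeping that makes the two cases exhaustive with losses controlled by $|A|^{O(\e)}$.
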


We also give some examples which show that in the above result the
dependence of $\e$ on $r$ is necessary. In particular we construct
generating sets $A$ of $SL(n,3)$ of size $2^{n-1}+4$ with $|A^3|< 100|A|$
for $n\ge3$.

Theorem~\ref{simple-L} was first announced in \cite{PSz}.  The same
day similar results were announced by Breuillard, Green and Tao
\cite{BrGrTao} for finite Chevalley groups.  It is noted in
\cite{BrGrTao} that their methods are likely to extend to all simple
groups of Lie type, but this has not yet been checked.  On the other
hand in \cite{BrGrTao} various interesting results for complex matrix
groups were also announced.

Somewhat earlier Gill and Helfgott \cite{GH} had shown that small
generating sets (of size at most $p^{n+1-\delta}$ for some $\delta>0$)
in $SL(n,p)$ grow.

Helfgott's work \cite{He1} has been the starting point and inspiration
of much recent work by Bourgain, Gamburd, Sarnak and others.  Let
$S=\{g_1,g_2,\dots, g_k\}$ be a symmetric subset of $SL(n,\BZ)$ and
$\Lambda=\Span S$ the subgroup generated by $S$.  Assume that
$\Lambda$ is Zariski dense in $SL(n)$.  According to the theorem of
Matthews-Vaserstein-Weisfeiler \cite{MVW} there is some integer $m_0$
such that $\pi_m(\Lambda)=SL(n,\BZ/m\BZ)$ assuming $(m,m_0)=1$.  Here
$\pi_m$ denotes reduction $\mod m$.

It was conjectured in \cite{Lu}, \cite{BGS} that the Cayley graphs
$\Gamma\big(SL(n,\BZ/m\BZ),\pi_m(S)\big)$ form an expander family,
with expansion constant bounded below by a constant $c=c(S)$.  This
was verified in \cite{BG1}, \cite{BG2}, \cite{BGS} in many cases when
$n=2$ and in \cite{BG3} for $n>2$ and moduli of the form $p^d$ where
$d\to\infty$ and $p$ is a sufficiently large prime.

In \cite{BG3} Bourgain and Gamburd also prove the following

\begin{thm}[Bourgain, Gamburd]
  \label{Bourgain-Gamburd}
  Assume that the analogue of Helfgott's theorem on growth holds for
  $SL(n,p)$, $p$ a prime. Let $S$ be a symmetric finite subset of
  $SL(n,\BZ)$ generating a subgroup $\Lambda$ which is Zariski dense
  in $SL(n)$.  Then the family of Cayley graphs
  $\Gamma(SL(n,p),\pi_p(S))$ forms an expander family as
  $p\to\infty$. The expansion coefficients are bounded below by a
  positive number $c(S)> 0$.
\end{thm}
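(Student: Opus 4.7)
The plan is to apply the Bourgain-Gamburd $\ell^2$-flattening method, which combines three independent ingredients: growth (supplied by hypothesis), non-concentration in proper subgroups, and quasi-randomness. Write $G_p = SL(n,\Fp)$ and let $\mu$ denote the uniform probability measure on $\pi_p(S)$. The goal is to show that after $l \asymp \log p$ convolutions one has $\|\mu^{(l)}\|_2^2 \le |G_p|^{-1+\delta}$ for some $\delta$ independent of $p$. Every nontrivial irreducible complex representation of $SL(n,\Fp)$ has dimension at least $c\, p^{n-1}$ (Landazuri--Seitz), so $G_p$ is quasi-random in the sense of Gowers. Hence the Sarnak--Xue high-multiplicity argument converts an $\ell^2$-bound with $\delta$ small compared to $1/(n+1)$ into a spectral gap for the adjacency operator of $\Gamma\bigl(G_p, \pi_p(S)\bigr)$ depending only on $S$, yielding uniform expansion as $p \to \infty$.

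The $\ell^2$-bound is obtained by an iterated dichotomy. Suppose $\nu = \mu^{(l)}$ still satisfies $\|\nu\|_2^2 > |G_p|^{-1+\delta}$. If $\|\nu * \nu\|_2^2 > |G_p|^{-\e}\|\nu\|_2^2$, then by the Balog--Szemer\'edi--Gowers lemma together with the Bourgain--Gamburd dyadic pigeonhole scheme one extracts a set $A \subset G_p$ with $|A^3| \le K|A|$, where $K = p^{O(\e)}$, on which a non-negligible portion of $\nu$-mass is concentrated. The assumed growth theorem (the analogue of Theorem~\ref{simple-L} for $SL(n,p)$) forces the alternative $A^3 = G_p$ or $\Span{A}$ is a proper subgroup. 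The first option supplies the desired flattening step, and after $O(\log p)$ such steps the $\ell^2$-norm drops below $|G_p|^{-1+\delta}$.

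The second option must be excluded by a \emph{non-concentration} estimate: for some fixed $\kappa>0$ and every proper subgroup $H < G_p$ one has $\mu^{(l)}(gH) \le |G_p|^{-\kappa}$ whenever $l$ is a suitable multiple of $\log p$. This is where the main obstacle lies. The theorem of Matthews--Vaserstein--Weisfeiler gives $\pi_p(\Lambda) = G_p$ for all but finitely many $p$, so the task reduces to producing, for every maximal subgroup $H$ of $G_p$, a short word in $S^{\pm1}$ whose image escapes $H$, with a quantitative escape rate. One enumerates the maximal subgroups via Aschbacher's classification (parabolics, normalisers of tori, subfield subgroups, tensor-decomposition subgroups, and almost-simple subgroups); each class is pulled back to a Zariski-closed subvariety of $SL(n)$ of bounded degree, and one applies an escape-from-subvarieties estimate in the style of Eskin--Mozes--Oh to the Zariski-dense random walk driven by $S$ on $SL(n,\BZ)$. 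The delicate point is to make this estimate uniform in $p$ and across Aschbacher classes—subfield subgroups are particularly awkward since the defining equations depend on $p$—and to convert the combinatorial escape rate into the polynomial loss $|G_p|^{-\kappa}$ required by the flattening loop. Once this is achieved, the three ingredients fit together in the standard Bourgain--Gamburd iteration to deliver the claimed spectral gap.
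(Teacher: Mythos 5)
A preliminary remark on the comparison you were asked for: the paper does not prove Theorem~\ref{Bourgain-Gamburd} at all. It is quoted verbatim from Bourgain and Gamburd \cite{BG3}, precisely so that Theorem~\ref{simple-L} can later be plugged into it as the missing hypothesis. So there is no in-paper argument to measure your proposal against; it has to stand on its own as a reconstruction of the Bourgain--Gamburd machine. As such a reconstruction, your first two paragraphs are the right road map: quasi-randomness of $SL(n,\Fp)$ via the Landazuri--Seitz bound, the $\ell^2$-flattening loop driven by the non-commutative Balog--Szemer\'edi--Gowers lemma, the assumed product theorem to rule out slowly growing approximate groups, and Matthews--Vaserstein--Weisfeiler \cite{MVW} to know that $\pi_p(S)$ generates $SL(n,\Fp)$ for all large $p$.

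The genuine gap is exactly where you flag it yourself: the non-concentration estimate $\mu^{(l)}(gH)\le |SL(n,\Fp)|^{-\kappa}$ for every proper subgroup $H$ and $l\asymp\log p$ is asserted modulo ``the delicate point \dots Once this is achieved''. This is not a technicality: without it the dichotomy coming from the product theorem cannot exclude the alternative that the BSG-extracted approximate group generates a proper subgroup, and the whole iteration collapses; moreover this is precisely the step in which the constant $c(S)$ (rather than a uniform constant) is generated. The route you propose for it --- enumerating maximal subgroups of $SL(n,\Fp)$ via Aschbacher's classification and running a separate uniform escape estimate in each class --- is both heavier than necessary and, as you concede, genuinely problematic for subfield-type subgroups whose defining data vary with $p$. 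The way this is actually handled in \cite{BG3} (and in Varj\'u \cite{Va}) is structural rather than case-by-case: by Nori/Larsen--Pink type results (\cite{No}, \cite{LP}; compare Propositions~\ref{Weisfeiler} and \ref{Nori} of this paper) every subgroup of $SL(n,\Fp)$ that could carry concentrated mass is, up to bounded index and a soluble part, contained in the $\Fp$-points of a proper algebraic subgroup of $SL(n)$ of degree bounded in terms of $n$ alone. Non-concentration therefore reduces to a single escape-from-subvarieties statement for subvarieties of bounded degree, which one proves for the Zariski-dense group $\Lambda=\Span S$ by combining a quantitative escape lemma in the spirit of Eskin, Mozes and Oh with the nonabelian free subgroup supplied by the Tits alternative, giving exponentially small probability (with rate depending only on $S$) that the length-$l$ word lies in any fixed such subvariety. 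Until you carry out this reduction and the quantitative escape estimate, your text is an accurate outline of the Bourgain--Gamburd argument rather than a proof of the theorem.
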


By Theorem~\ref{simple-L} the condition of this theorem is satisfied
hence the above conjecture is proved for prime moduli.

For $n=2$ Bourgain, Gamburd and Sarnak \cite{BGS} proved that the
conjecture holds for square free moduli. This result was used in
\cite{BGS} as a building block in a combinatorial sieve method for
primes and almost primes on orbits of various subgroups of $GL(2,\BZ)$
as they act on $\BZ^m$ (for $m\ge2$).

Recently, extending Theorem~\ref{Bourgain-Gamburd}
P. Varj\'u \cite{Va} has shown that if the analogue of
Helfgott's theorem holds for $SL(n,p)$, $p$ a prime, then the
above conjecture holds for square free moduli
and Zariski dense subgroups of $SL(n)$.
Hence our results constitute
a major step towards obtaining a generalisation to Zariski dense
subgroups of $SL(n,\BZ)$ and to other arithmetic 
groups.\footnote{Finally the conjecture has very recently been proved by
  Bourgain and Varj\'u \cite{BV}.}

Simple groups of Lie type can be treated as subgroups of simple
algebraic groups.  In fact, instead of concentrating on simple groups,
we work in the framework of arbitrary linear algebraic groups over
algebraically closed fields.
We set up a machinery which can be used to obtain various results on growth of
subsets in linear groups.
In particular, we prove the following extension of
Theorem~\ref{simple-L}, valid for finite groups obtained from
connected linear groups over \Fpclosed, which produces growth within certain
normal subgroups (for the terminology see
Definition~\ref{Frobenius-def}).

\newcommand{\mainThm} {{\rm main}}
\begin{thm}
  \label{pre-main-thm}
  Let $G$ be a connected linear algebraic group over \Fpclosed and
  $\sigma:G\to G$ a Frobenius map.  Let $G^\sigma$ denote the subgroup
  of the fixpoints of $\sigma$ and $1\in S\subseteq G^\sigma$ a
  symmetric generating set.  Then for all $1>\e>0$ there is an integer
  $M=M_\mainThm\big(\dim(G),\e\big)$ and a real $K$ depending on $\e$
  and the numerical invariants of $G$ (notably $\dim(G)$, $\deg(G)$,
  $\mult(G)$ and $\inv(G)$, see Definition~\ref{G-notations}) with the
  following property.  If $\CZ(G)$ is finite and
  $$
  K\le|S|\le |G^\sigma|^{1-\e}
  $$
  then there is a connected closed normal subgroup $H\triangleleft G$
  such that $\deg H\le K$, $\dim(H)>0$ and
  $$
  |S^M\cap H|\ge|S|^{(1+\delta)\dim(H)/\dim(G)}
  $$
  where $\delta=\Frac{\e}{128\dim(G)^3}$.
\end{thm}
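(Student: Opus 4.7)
The plan is to produce either direct growth in $G$ (the case $H=G$) or concentration of $S^M$ in a proper algebraic normal subgroup of~$G$, by combining an escape-from-subvarieties argument with a fibre-counting dichotomy applied to natural conjugation maps.

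First, since $G$ is connected, $G^\sigma$ is Zariski-dense in $G$ by Lang-Steinberg, and $S$ generates $G^\sigma$; hence for every proper closed subvariety $V\subsetneq G$ of degree bounded in terms of the numerical invariants of $G$, some bounded power $S^{c_1}$ meets $G\setminus V$. Iterating this escape property, I construct a tuple $\ug=(g_1,\dots,g_k)\in S^{c_1}$ whose joint centralizer $\CC_G(\ug)$ attains the minimum possible dimension among centralizers of tuples of elements in $G^\sigma$ and has degree bounded by~$K$; finiteness of $\CZ(G)$ forces $\dim\CC_G(\ug)<\dim(G)$ whenever $\ug$ is sufficiently generic.

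Next, I invoke the conjugation map $\Phi_{\ug}\colon G\to G^k$, $x\mapsto(xg_1x^{-1},\dots,xg_kx^{-1})$, whose fibres are left cosets of $\CC_G(\ug)$. Distributing $S$ among these fibres and using Lang-Weil to compare $\sigma$-fixed-point counts with dimensions, I extract a dichotomy: either (a) the image $\Phi_{\ug}(S)$ is large enough that a bounded-length product $S^M$ (for some $M\le M_\mainThm$) already satisfies $|S^M|\ge|S|^{1+\delta}$, and one takes $H=G$; or (b) some single fibre absorbs at least $|S|^{\dim\CC_G(\ug)/\dim(G)+\delta'}$ elements of $S$ (with $\delta'$ slightly larger than $\delta$). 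In case~(b), translation by a fibre representative in $S^2$ yields this many elements of $S^{c_2}\cap\CC_G(\ug)$. To convert this into concentration inside a subgroup \emph{normal} in $G$, I take $H$ to be the connected component of the minimal $G$-normal closed subgroup containing $\CC_G(\ug)^\circ$; the extremal choice of $\dim\CC_G(\ug)$ together with the assumption on $\CZ(G)$ guarantee that $H$ has positive dimension, and the exponent $(1+\delta)\dim(H)/\dim(G)$ is produced by inflating the concentration estimate through the dimension ratio $\dim(H)/\dim\CC_G(\ug)$, using that cosets of $H$ cover the concentrated cosets of $\CC_G(\ug)$ with controlled multiplicity.

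The principal obstacle is quantitative bookkeeping. Each invocation of the dichotomy consumes part of the slack in the hypothesis $|S|\le|G^\sigma|^{1-\e}$; the argument recurses over at most $O(\dim G)$ layers, and each layer incurs a polynomial-in-$\dim(G)$ loss in the exponent. The constant $\delta=\e/(128\dim(G)^3)$ emerges from the total loss being cubic in $\dim(G)$, and the lower bound $|S|\ge K$ absorbs the Lang-Weil error terms, thereby fixing the dependence of $K$ on $\deg(G)$, $\mult(G)$, and $\inv(G)$. The most delicate step, I expect, is the passage from concentration in the (possibly non-normal) centralizer $\CC_G(\ug)$ to concentration in a normal subgroup $H$ while preserving both the positivity of $\dim(H)$ and the exponent gain $\delta$; this is where the interplay of $\sigma$-invariance, extremality of $\dim\CC_G(\ug)$, and finiteness of $\CZ(G)$ must be exploited most sharply.
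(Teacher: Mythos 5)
There is a genuine gap, and it sits exactly where you predict the "most delicate step" to be: the passage from concentration in the centraliser $\CC_G(\ug)$ to the conclusion for a normal subgroup $H$. Your case (b) gives $|S^{c_2}\cap \CC_G(\ug)|\gtrsim |S|^{\dim \CC_G(\ug)/\dim(G)+\delta'}$, and you then take $H$ to be (the identity component of) the normal closure of $\CC_G(\ug)^0$ and claim the exponent $(1+\delta)\dim(H)/\dim(G)$ is obtained by ``inflating through the dimension ratio $\dim(H)/\dim\CC_G(\ug)$''. This inflation is not available: since $\CC_G(\ug)\subseteq H$ and $\dim(H)\ge\dim\CC_G(\ug)$, the containment only yields $|S^M\cap H|\ge |S|^{(1+\delta')\dim(\CC_G(\ug))/\dim(G)}$, which is \emph{weaker} than the required $|S|^{(1+\delta)\dim(H)/\dim(G)}$ whenever $\dim(H)>\dim\CC_G(\ug)$ --- and in the motivating case ($G$ simple, $\CC_G(\ug)$ essentially a maximal torus) the normal closure is $H=G$, so what you need is genuine growth $|S^M|\ge|S|^{1+\delta}$ of the whole set. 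Deducing that from an excess of elements in a single torus is the core difficulty of the theorem, not a covering-with-controlled-multiplicity remark: one torus simply does not contain enough points. The paper's resolution is to exploit the \emph{many} $\Span{S}$-conjugates of the relevant subgroup: Lemma~\ref{find-CCC} produces a non-normal CCC-subgroup $A$ with large concentration, Corollary~\ref{size-of-Lie-type} (via Hrushovski's twisted Lang--Weil estimate) shows the number of $\Span{S}$-conjugates is about $q_\sigma^{\dim(G)-\dim\CN_G(A)}$, i.e.\ $\hat\mu\approx\log q_\sigma$, which beats $\mu(S,G)\le(1-\e)\log q_\sigma$; the Dichotomy Lemma~\ref{dichotomy-lemma} forces a bounded power of $S$ to hit the pairwise disjoint generator sets $B^\gen$ of \emph{all} these conjugates $B$, and summing over them gives a closed set in a direct power of $G$ with concentration $>(1+3\e'')\mu(S,G)$, which the Spreading Theorem~\ref{spread-all-over} converts into the subgroup $H$ (Lemma~\ref{spread-via-CCC}). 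None of this counting-over-conjugates mechanism appears in your proposal, and without it the exponent bookkeeping cannot close.

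Two smaller points. First, your case (a) also needs justification: a large image $\Phi_{\ug}(S)$ lives in a $k$-fold product, so it bounds $|S^{c}|$ only after taking a $k$-th root (or, as in the paper, after a Transport/Back-to-$G$ step, Lemmas~\ref{Transport} and~\ref{project-back-to-G}); it does not directly give $|S^M|\ge|S|^{1+\delta}$. Second, normality of $H$ in $G$ is not obtained in the paper by taking a normal closure (which would destroy the exponent, as above), but by constructing $H$ canonically so that it is $\sigma$-invariant and normalised by $S$, and then using Corollary~\ref{size-of-Lie-type}.\eqref{item:27} applied to $\CN_G(H)$: since $\Span{S}$ has bounded index in $G^\sigma$, it cannot lie in a proper closed subgroup of bounded degree, so $\CN_G(H)=G$.
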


Consider the groups $G^\sigma$ for simply connected simple algebraic
groups $G$. Central extensions of all but finitely many simple groups
of Lie type are obtained in this way (see \cite{St}) and the centres
$\CZ(G^\sigma)$ have bounded order. Hence Theorem~\ref{pre-main-thm}
implies Theorem~\ref{simple-L} for both twisted and untwisted simple
groups of Lie type in a unified way.

The proof of Theorem~\ref{pre-main-thm} relies basically on two
properties of the finite groups $G^\sigma$. First, if $G^\sigma$ is
large enough then $\CC_G(G^\sigma)=\CZ(G)$. Second, if a $\sigma$-invariant
connected closed subgroup of $G$ is normalised by $G^\sigma$ then it
is in fact normal in $G$.  In this generality
Theorem~\ref{pre-main-thm} depends on Hrushovski's twisted Lang-Weil
estimates \cite{Hr}.  In the proof of Theorem~\ref{simple-L} this can
be avoided (see Remark~\ref{do-we-need-Hrushovski}).
Hence the constants in this theorem are explicitly computable.

We believe that Theorem~\ref{pre-main-thm} and the general results
concerning algebraic groups involved in its proof will have
many applications to investigating growth in linear
groups. Here we first prove (using Theorem~\ref{pre-main-thm})
the following partial extension of Theorem~\ref{simple-L}:

\begin{thm} \label{pre-partial} Let $S$ be a symmetric subset of
  $GL(n,p)$ satisfying $|S^3|\le K|S|$ for some $K\ge1$.  Then
  $GL(n,p)$ has two subgroups $H\ge P$, both normalised by $S$, such
  that $P$ is perfect, $H/P$ is soluble, $P$ is contained in $S^6$ and
  $S$ is covered by $K^{c(n)}$ cosets of $H$ where $c(n)$ depends on
  $n$.
\end{thm}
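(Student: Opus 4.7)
The plan is to combine Theorem~\ref{pre-main-thm} with the structure theory of linear algebraic groups and with Plünnecke--Ruzsa tripling estimates. I would start by forming the Zariski closure $\mathcal{G}$ of $\langle S\rangle$ inside $GL(n,\overline{\mathbb{F}}_p)$; this is a linear algebraic group defined over $\mathbb{F}_p$, of dimension at most $n^2$, whose numerical invariants ($\deg$, $\mult$, $\inv$) are bounded in terms of $n$. After replacing $S$ by the intersection of a bounded power $S^m$ with the connected component $\mathcal{G}^\circ$ (at the cost of raising $K$ to a bounded power depending on $n$), I may assume that $\langle S\rangle$ is Zariski-dense in a connected group $\mathcal{G}$ defined over $\mathbb{F}_p$.

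Next, let $\mathcal{R}\triangleleft\mathcal{G}$ be the soluble radical and $\pi\colon\mathcal{G}\to\mathcal{G}/\mathcal{R}$ the projection onto the semisimple quotient. The image $\bar S:=\pi(S)$ sits inside the finite group of $\sigma$-fixed points of $\mathcal{G}/\mathcal{R}$ and still satisfies $|\bar S^3|\le K|\bar S|$. If $\bar S$ were to satisfy the size hypothesis of Theorem~\ref{pre-main-thm}, we would obtain a proper connected normal subgroup $\overline{\mathcal{H}}\triangleleft\mathcal{G}/\mathcal{R}$ and some $M=M_\mainThm$ with
$$
|\bar S^M\cap\overline{\mathcal{H}}|\ge|\bar S|^{1+\delta\dim\overline{\mathcal{H}}/\dim(\mathcal{G}/\mathcal{R})};
$$
but Plünnecke--Ruzsa gives $|\bar S^M|\le K^{M-1}|\bar S|$, forcing $|\bar S|\le K^{c_1(n)}$. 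Iterating across the simple factors of $\mathcal{G}/\mathcal{R}$ and down a composition series, one finds either that the image of $\langle S\rangle$ in the semisimple quotient has order bounded in $n$ and $K$, or that $S$ is covered by $K^{c_2(n)}$ cosets of the preimage of a proper normal subgroup, permitting a descent.

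Once the semisimple image is of bounded complexity, I would define $P$ as its perfect part, realised concretely as the subgroup generated by a bounded number of commutators of elements of $S$. Each commutator $[s,t]$ with $s,t\in S$ lies in $S^4$, and effective commutator-width estimates in finite quasi-simple Lie-type groups (uniform in the rank) allow one to package $P$ inside $S^6$. The subgroup $H$ is then taken to be $P$ together with the preimage of the soluble residual, so that $P\triangleleft H$, $H/P$ is soluble, and both $H$ and $P$ are normalised by $S$ (which normalises each algebraic ingredient of the construction). The covering count $K^{c(n)}$ assembles from the coset counts accumulated during the descent.

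The main obstacle is the sharp sixth-power bound on $P$: the naive commutator construction gives only $P\subseteq S^{O(c(n))}$, and tightening to the absolute exponent $6$ requires a careful compression, first producing each generator of $P$ as a commutator of elements of $S^2$ and then exploiting strong commutator width in the simple pieces. A secondary difficulty is that Theorem~\ref{pre-main-thm} requires $S$ to generate a full $\sigma$-fixed point group $G^\sigma$, whereas here $\langle S\rangle$ is only Zariski-dense; one needs a strong-approximation input (Nori--Weisfeiler or an effective Lang--Steinberg argument) to ensure that the hypotheses of Theorem~\ref{pre-main-thm} apply genuinely to~$S$.
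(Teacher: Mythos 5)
There are genuine gaps, and the most serious ones sit exactly where your sketch is vaguest. First, the foundational setup does not work as stated: since $S\subseteq GL(n,p)$ is finite, $\Span{S}$ is a finite group and its Zariski closure is $\Span{S}$ itself (a zero-dimensional group), so there is no positive-dimensional $\mathcal{G}$ with invariants bounded in $n$ to feed into Theorem~\ref{pre-main-thm}; moreover even for genuinely positive-dimensional closed subgroups the degree is not bounded in terms of $n$ alone (think of subtori cut out by characters with large exponents). The correct substitute is not a closure but the Nori correspondence applied to the $p$-generated part, together with Weisfeiler's structure theorem (Propositions~\ref{Nori} and \ref{Weisfeiler} in the paper), and this is a central input, not the ``secondary difficulty'' you defer. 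Second, your claim that Theorem~\ref{pre-main-thm} plus Pl\"unnecke--Ruzsa forces $|\bar S|\le K^{c_1(n)}$ is only valid when the normal subgroup $H$ it produces is all of $G$; when $H$ is proper, the conclusion is concentration of $\bar S^M$ in $H$, which is perfectly compatible with small tripling and is in fact the expected situation. Handling it requires the inductive scheme of Theorem~\ref{ugly-litle-duckling}: pass to $G/H$ via a bounded-degree quotient embedding (Proposition~\ref{Chevalley-embedding}), apply induction there, and combine growth in the quotient with concentration in $H$ through an inequality of the type $|\alpha^{3+M}|\ge|\alpha_H^3|\cdot|\alpha^M\cap H|$. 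Your ``iterate down a composition series'' gestures at this but supplies neither the quotient mechanism nor the recombination step, and the case of infinite centre (where the paper invokes Nikolov--Segal, Proposition~\ref{product-of-few-commutators}) is not addressed at all.

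Third, the $S^6$ bound cannot be reached by the route you propose, as you yourself half-concede: products of boundedly many commutators of elements of $S$ land in $S^{O(1)}$ with an exponent far above $6$, and even writing every element of $P$ as a single commutator of elements of $S^2$ would only give $S^8$ -- and commutator width one is not available anyway. The paper's mechanism is entirely different and does not mention commutator width at this point: one first proves a polylogarithmic diameter bound for perfect $p$-generated groups (Theorem~\ref{ugly-litle-duckling}), which via Proposition~\ref{max-coset} shows that some coset $aP$ with $a\in S$ contains at least $|P|/K^{c(n)}$ elements of $S$; then quasirandomness (the Gowers trick, Proposition~\ref{govers-trick}) combined with the lower bound $(p-1)/2$ on the minimal degree of a nontrivial complex representation of a perfect $p$-generated group (Proposition~\ref{Nori}) yields $a^3P\subseteq S^3$, whence $P\subseteq S^{-3}S^3=S^6$. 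Finally, the assertions that $S$ normalises $H$ and $P$ and that $S$ is covered by $K^{c(n)}$ cosets of $H$ also need the finite group-theoretic bookkeeping (the analogues of Propositions~\ref{onto}, \ref{coset} and \ref{normal}, including the argument that conjugation by $S$ permutes the simple factors and fixes the relevant set of them), which your outline asserts rather than proves.
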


Understanding the structure of symmetric subsets $S$ of $GL(n,p)$
(or more generally of $GL(n,q)$, $q$ a prime-power)
satisfying $|S|^3\le K|S|$ is
mentioned by Breuillard, Green and Tao as a difficult open problem in
\cite{BrGrTao}.

Subgroups of $GL(n,p)$ generated by elements of order $p$
were investigated in detail by Nori~\cite{No} and
Hrushovski-Pillay~\cite{HP}.
As a byproduct of the proof of Theorem~\ref{pre-partial} we obtain the
following.

\begin{thm} \label{pre-diameter} Let $P\le GL(n,p)$, $p$ a prime, be a
  perfect subgroup which is generated by its elements of order
  $p$. Let $S$ be a symmetric set of generators of $P$.  Then
  $$
  \diam\big(\Gamma(P,S)\big) \le \big(\log|P|\big)^{M(n)}
  $$
  where the constant $M(n)$ depends only on $n$.
\end{thm}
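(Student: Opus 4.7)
The plan is to induct on the dimension of the algebraic hull of $P$, using Theorem~\ref{pre-partial} to peel off a proper normal subgroup and Theorem~\ref{simple-L} to handle the simple factors that arise.

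First I would set up the standard tripling dichotomy for $S$. Consider the sequence $S,\,S^3,\,S^9,\dots,\,S^{3^i},\dots$. Either each tripling gives polynomial growth $|S^{3^{i+1}}|\ge|S^{3^i}|^{1+\e(n)}$, in which case after $k=O(\log\log|P|)$ steps the set $S^{3^k}$ has size at least $|P|^{1-\delta(n)}$, and Gowers' quasirandomness argument (as used in \cite{NP,BNP}) together with the fact that, by Nori~\cite{No} and Hrushovski--Pillay~\cite{HP}, $P$ is, up to a centre of bounded order, a central product of quasisimple groups of Lie type of rank at most $n$, closes the gap in three more steps; or else some $T=S^{3^i}$ with $i\lesssim\log\log|P|$ satisfies $|T^3|\le K|T|$ with $K:=|T|^{\e(n)}$.

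In the slow-growth case I would apply Theorem~\ref{pre-partial} to $T$: this produces normal subgroups $H\trianglerighteq N$ of $P$ with $N$ perfect and contained in $T^6=S^{6\cdot3^i}$, $H/N$ soluble, and the image of $T$ in $P/H$ of size at most $K^{c(n)}=|T|^{\e(n)c(n)}$. By Nori's theorem, both $N$ (perfect and generated by elements of order $p$, its algebraic hull being a connected normal algebraic subgroup of that of $P$) and the quotient $P/H$ (which embeds in $GL(n',p)$ for some $n'\le n$ via the quotient algebraic hull, inheriting the perfect-and-$p$-generated property) fall within the class to which Theorem~\ref{pre-diameter} applies, and each has strictly smaller algebraic dimension than $P$. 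I would invoke the inductive hypothesis on $N$ with generating set $T^6\cap N$ and on $P/H$ with the image of $T$, then assemble the bounds
\[
\diam\bigl(\Gamma(P,S)\bigr)\le 3^i\cdot\diam(P/H)+\diam(N)+O(1),
\]
each ingredient being at most $(\log|P|)^{M(n)-1}$ by induction.

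The main difficulty is controlling the recursion so that the exponent $M(n)$ survives the $O(n)$ layers of induction: one must choose $\e(n)$ small enough that the covering number $K^{c(n)}$ stays a controlled power of $|P|$ at every layer, while the word-length cost of lifting a diameter bound from $P/H$ back to $P$ remains polylogarithmic. A secondary obstacle is the soluble layer $H/N$: because $P$ is perfect and generated by $p$-elements, the maximal soluble normal subgroup of $P/N$ has bounded structure --- its diameter can be bounded by a short commutator argument exploiting perfection --- so $H/N$ contributes only an additive polylogarithmic term absorbed into $M(n)$.
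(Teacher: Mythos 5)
There is a genuine gap, in fact two, and both sit exactly where the paper's own proof has to work hardest. First, your recursion is not set up so that the constants depend only on $n$. Theorem~\ref{pre-partial} hands you $H\ge N$ only as abstract subgroups normalised by $S$: it gives no algebraic control on $H$ (no closed subgroup, no degree bound), so you cannot form ``the quotient algebraic hull'' and your assertion that $P/H$ embeds in $GL(n',p)$ with $n'\le n$ is false in general --- a quotient of a linear group of degree $n$ need not be linear of degree $n$, and the embedding the paper uses (Proposition~\ref{Chevalley-embedding}) requires $H$ to be the fixed points of a closed subgroup of bounded degree and produces an $n'$ much larger than $n$. Likewise the claimed strict drop of ``algebraic dimension'' for both $N$ and $P/H$ is not justified ($N$ may have the same hull as $P$, e.g.\ $N=P$), so neither the number of recursion levels nor the uniformity of $\varepsilon(n)$ across levels is under control. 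This is precisely why the paper does not derive the diameter bound from Theorem~\ref{pre-partial}: it proves it through Theorem~\ref{ugly-litle-duckling}, whose induction is on $\log_p|P|\le 2n^2$ and whose normal subgroups come from Theorem~\ref{main-thm} with explicit degree bounds, making the Chevalley embedding and the bookkeeping legitimate. (A smaller symptom of the same confusion: since $N\subseteq T^6=S^{6\cdot 3^i}$, invoking the inductive hypothesis ``on $N$ with generating set $T^6\cap N$'' is vacuous; also the statement of Theorem~\ref{pre-partial} does not tell you $N$ is $p$-generated, which your inductive class requires.)

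Second, the soluble layer $(H\cap P)/N$ cannot be absorbed by ``a short commutator argument exploiting perfection''. Your assembly inequality $\diam(P)\le 3^i\diam(P/H)+\diam(N)+O(1)$ silently assumes every element of $H\cap P$ (not just of $N$) is a short word in $S$, but you have no generating set of $H\cap P$ inside a small power of $S$: the Schreier argument (Proposition~\ref{Schreier}) needs bounded index, and only the image of $S$ in $P/(H\cap P)$ is small, not the index. The commutator trick via Proposition~\ref{product-of-few-commutators} works in the paper only in the case where the soluble layer is central (it is $\CZ(G)^0$ there), because centrality lets one replace the entries of each commutator by coset representatives lying in $\alpha^3$; for a merely soluble normal layer (think of a perfect $p$-generated group such as $SL_2(p)\ltimes\mathbb{F}_p^{\,2}$, whose abelian normal subgroup is huge) perfection gives bounded commutator width over the whole group but not short words over $S$, so the layer contributes no controllable term in your scheme. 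For contrast, the paper's route is much shorter: apply Theorem~\ref{simple-L} inside each simple quotient to show that $S^{3^m}$ with $m=O_n(\log\log|P|)$ projects onto every simple quotient of $P$, and then quote Theorem~\ref{ugly-litle-duckling}, which says that with respect to such a generating set the diameter is bounded by a constant $d(n)$; all the difficulties you defer (quotient embeddings, the central torus, the soluble radical) are resolved inside that theorem's induction, not at the level of Theorem~\ref{pre-partial}.
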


Theorem~\ref{pre-diameter} is a surprising extension of the fact
(included in Theorem~\ref{intro-thm}) that simple subgroups of
$GL(n,p)$ ($n$ bounded) have polylogarithmic diameter.

Combining Theorem~\ref{pre-diameter} with results of Aldous \cite{Al}
and Babai \cite{Ba2} we immediately obtain the following corollary.

\begin{cor}
  Let $\Gamma=\Gamma(P,S)$ be a Cayley graph as in
  Theorem~\ref{pre-diameter}.  Then $\Gamma$ is a $C$-expander with
  some
  $$
  C\ge\frac1{1+\big(\log|P|\big)^{M(n)}} \;.
  $$
  Equivalently, if $A$ is a subset of $P$ of size at most $|P|/2$,
  then we have
  $$
  |A\cdot S|\ge(1+C)|A| \;.
  $$
\end{cor}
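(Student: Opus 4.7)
The plan is simply to combine Theorem~\ref{pre-diameter} with the standard fact, due to Aldous~\cite{Al} and Babai~\cite{Ba2}, that in a connected vertex-transitive graph the vertex expansion is bounded below by the reciprocal of (one plus) the diameter. Specifically, the Babai local-expansion lemma states that every connected vertex-transitive graph $\Gamma$ on $N$ vertices with diameter $d$ satisfies, for every vertex set $A$ with $|A|\le N/2$,
$$
|\sigma(A)|\;\ge\;\Bigl(1+\frac{1}{1+d}\Bigr)|A|,
$$
where $\sigma(A)$ is the (closed) neighbourhood used in the definition of the expansion $c(A)=|\sigma(A)|/|A|$ recalled in the introduction. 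Since every Cayley graph is vertex-transitive, this bound applies to $\Gamma(P,S)$.

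First I would set $d:=\diam\bigl(\Gamma(P,S)\bigr)$ and invoke Theorem~\ref{pre-diameter} to get $d\le(\log|P|)^{M(n)}$. Plugging this into the expansion lemma yields $|\sigma(A)|\ge(1+C)|A|$ for every $A\subseteq P$ with $|A|\le|P|/2$, where
$$
C\;=\;\frac{1}{1+d}\;\ge\;\frac{1}{1+\bigl(\log|P|\bigr)^{M(n)}},
$$
which is precisely the claimed $C$-expander property. The equivalent formulation $|A\cdot S|\ge(1+C)|A|$ follows by observing that the neighbourhood of $A$ in $\Gamma(P,S)$ equals $S\cdot A$ (from the left-multiplication convention of the paper), and $|S\cdot A|=|A\cdot S|$ because $S$ is symmetric and inversion is a bijection of $P$; applying the expansion bound to $A^{-1}$ in place of $A$ transfers it from the left to the right product.

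There is no real obstacle in this derivation: all of the substantive content sits in Theorem~\ref{pre-diameter}, and the remainder is a routine application of a classical expansion inequality for vertex-transitive graphs. The only minor care required is to cite the form of the Aldous--Babai bound that produces the factor $\frac{1}{1+d}$ matching the statement, and to reconcile the left/right conventions in the Cayley graph with the set product $A\cdot S$, as indicated above.
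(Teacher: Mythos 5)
Your proposal is correct and is exactly the paper's argument: the paper gives no separate proof, simply noting that the corollary follows by combining the diameter bound of Theorem~\ref{pre-diameter} with the vertex-transitive expansion results of Aldous and Babai, which is precisely what you do. The only point to watch is the paper's convention that $\sigma(A)$ denotes the vertices at distance exactly $1$ from $A$ (so $c(A)=|\sigma(A)|/|A|\ge C$ corresponds to $|A\cdot S|\ge(1+C)|A|$), but this is a cosmetic bookkeeping matter and does not affect the argument.
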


For a very recent unexpected application in arithmetic geometry
of the above corollary see \cite{EHK}.

To indicate the generality of our methods we derive the following
consequence.

\begin{thm}
  \label{pre-tyukszem}
  Let $\BF$ be an arbitrary field
  and $S\subseteq GL(n,\BF)$ a finite symmetric subset
  such that
  $\big|S^3\big|\le K|S|$ for some $K\ge\Frac32$.
  Then there are normal subgroups $H\le\Gamma$ of \Span{S}
  and a bound $m$ depending only on $n$
  such that $\Gamma\subseteq S^6H$,
  the subset $S$
  can be covered by $K^m$ cosets of $\Gamma$,
  $H$ is soluble,
  and the quotient group $\Gamma/H$ is the product of finite simple groups of
  Lie type of the same characteristic as $\BF$.
  (In particular, in characteristic $0$ we have $\Gamma=H$.) 
  Moreover, the Lie rank of the simple factors appearing in $\Gamma/H$
  is bounded by $n$,
  and the number of factors is also at most $n$.
\end{thm}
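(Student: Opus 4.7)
\emph{Plan.} The strategy is to reduce to the finite-field case of Theorem~\ref{pre-partial} by specialization, and then lift the resulting structure back to $\BF$ using algebraic-geometric considerations. Since $S$ is finite, we replace $\BF$ with the fraction field of the finitely generated integral domain $R\subseteq\BF$ generated by the entries of elements of $S$ and the inverses of their determinants. By Noether normalisation combined with the Nullstellensatz, $R$ has infinitely many maximal ideals $\mathfrak m$ with finite residue field $\Fq$; when $\operatorname{char}\BF=p>0$ these $q$ are powers of $p$, while in characteristic $0$ we may choose residue fields of arbitrarily large prime characteristic. For a Zariski-generic $\mathfrak m$, the reduction homomorphism $\pi\colon GL(n,R)\to GL(n,\Fq)$ is injective on $S^6$; in particular $|\pi(S)^k|=|S^k|$ for $k\le 6$, so $|\pi(S)^3|\le K|\pi(S)|$.

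Apply to $\pi(S)\subseteq GL(n,\Fq)$ the $\Fq$-analogue of Theorem~\ref{pre-partial}, which follows from running its proof with the $q$-power Frobenius on $GL(n)$, an instance of the framework of Theorem~\ref{pre-main-thm}. This produces normal subgroups $\bar P\le\bar H$ of $\langle\pi(S)\rangle$ with $\bar P$ perfect, $\bar P\subseteq\pi(S)^6$, $\bar H/\bar P$ soluble, and $\pi(S)$ covered by $K^{c(n)}$ cosets of $\bar H$. Because $\bar P$ is perfect and embeds in $GL(n,\Fpclosed)$, it is a central product of at most $n$ quasisimple groups of Lie type of characteristic $p$, each of rank at most $n$. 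Setting $\bar\Gamma=\bar H$ and $\widetilde H$ equal to the preimage in $\bar H$ of the soluble radical of $\bar H/[\bar P,\bar P]$, the quotient $\bar\Gamma/\widetilde H$ is a direct product of nonabelian finite simple groups of Lie type of characteristic $p$, bounded in number and rank by $n$.

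To lift to $\BF$, observe that by Theorem~\ref{pre-main-thm} the subgroups $\bar P,\bar H$ arise as intersections of $\langle\pi(S)\rangle$ with $\sigma$-invariant closed subgroups of $GL(n)_{\Fpclosed}$ defined over $\Fq$. Spreading out, these descend to closed subgroup schemes over a Zariski-open neighbourhood of $\mathfrak m$ in $\operatorname{Spec}R$, whose generic fibres are closed $\BF$-algebraic subgroups $H^{\mathrm{alg}}\le\Gamma^{\mathrm{alg}}$ of $GL(n)$. Define $\Gamma=\Gamma^{\mathrm{alg}}(\BF)\cap\langle S\rangle$ and $H=H^{\mathrm{alg}}(\BF)\cap\langle S\rangle$. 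Bijectivity of $\pi$ on $S^6$ transports the inclusion $\Gamma\subseteq S^6H$, the $K^{c(n)}$-coset covering of $S$, and the structural description of $\Gamma/H$ back from the finite-field case. In characteristic $0$ the factors of $\Gamma/H$ would have to be finite simple groups of Lie type of characteristic $0$, of which none exist, forcing $\Gamma=H$.

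The main obstacle is this lifting step: a naive set-theoretic pullback along $\pi$ would enlarge $H$ by $\ker\pi\cap\langle S\rangle$, which is typically non-soluble, so solubility must be obtained as a geometric property of $H^{\mathrm{alg}}$ that is preserved under scheme-theoretic descent rather than as a set-theoretic consequence. A secondary difficulty is checking that the algebraic-group framework of Theorem~\ref{pre-main-thm} furnishes the normal subgroups needed for spreading out to a subgroup scheme over $R_{\mathfrak m}$ rather than only ad hoc $\Fq$-rational subgroups; this geometric interpretation also underpins the automatic vanishing of $\Gamma/H$ in characteristic $0$.
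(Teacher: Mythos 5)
Your reduction to a finite residue field is fine as far as it goes ($\pi$ injective on $S^6$ does give $|\pi(S)^3|\le K|\pi(S)|$), but the two steps you then rely on are both genuine gaps. First, the ``$\Fq$-analogue of Theorem~\ref{pre-partial}'' is not available: Theorem~\ref{pre-partial} and the whole of the machinery behind it (Nori's correspondence in Proposition~\ref{Nori}, the representation bound of Proposition~\ref{Nori}, Theorem~\ref{ugly-litle-duckling}) are formulated and proved for the prime field $\Fp$ only, and the paper explicitly records the extension to subgroups of $GL(n,\Fq)$ as an open problem rather than a routine variant. Your specialization cannot sidestep this in positive characteristic: if $\BF$ has characteristic $p$ and the entries of $S$ generate a non-prime finite subfield (e.g.\ $S\subseteq GL(n,\Fq)$ with $q=p^2$, or $\BF=\Fq(t)$), then every residue field of your ring $R$ contains $\Fq$, so the case you reduce to is exactly the unproved $GL(n,q)$ case. (Even in characteristic $0$, getting residue field exactly $\Fp$ rather than $\Fq$ is not a ``Zariski-generic'' condition and needs a separate density argument.)

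Second, and fatally, the lifting step does not exist as described. Spreading out extends data from the generic fibre over an open part of the base; it does not lift closed subgroups of the special fibre over $\Fqclosed$ back to $\BF$-subgroups, and there is no canonical (often no possible) such lift: the algebraic hulls of $\bar P$ and $\bar H$ are characteristic-$p$ objects, and in characteristic $0$ a perfect $p$-generated group such as $\bar P$ has no counterpart at all. Theorem~\ref{pre-main-thm} produces subgroups of $G$ over $\Fpclosed$, not subgroup schemes over $R_{\mathfrak m}$. Nor can injectivity of $\pi$ on $S^6$ transport the conclusions: $\pi$ is hugely non-injective on $\Span{S}$, so properties of $\langle\pi(S)\rangle$ pull back only to subgroups containing $\ker\pi\cap\Span{S}$, which destroys the solubility of $H$ and any control of $\Gamma/H$; and $\bar P\subseteq\pi(S)^6$ only says every element of $\bar P$ has \emph{some} preimage in $S^6$, which is far from $\Gamma\subseteq S^6H$ upstairs. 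You name this obstacle yourself but supply no mechanism to resolve it, and your characteristic-$0$ conclusion is circular: the factors produced downstairs have characteristic $p$ (the residue characteristic), so their nonexistence in characteristic $0$ yields no contradiction unless one has already proved, over $\BF$, exactly the structure the theorem asserts. The paper's proof avoids reduction modulo $\mathfrak m$ altogether: it works directly over $\Fclosed\supseteq\BF$, proves the spreading theorem for CCC-subgroups with infinitely many conjugates (Theorem~\ref{non-nilpotent-spreading}), deduces that $S$ is covered by $K^m$ cosets of a virtually soluble normal subgroup (Corollary~\ref{virtually-soluble-improved}), and then extracts $\Gamma$ and $H$ from Platonov--Weisfeiler structure theory (Lemma~\ref{Platonov}) together with Lemmas~\ref{radical}, \ref{structure}, \ref{dense-subset}, \ref{direct-product} and Theorem~\ref{simple-final}; you would need an argument of this kind over $\BF$ itself, not a transfer from a single finite quotient.
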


This theorem may be viewed as a common generalisation of
Theorem~\ref{simple-L} above and a result of Hrushovski \cite{Hr2}
obtained by model-theoretic tools.
It would be most interesting to obtain a result that would also imply
Theorem~\ref{pre-partial}.

The first result of this type was obtained by
Elekes and Kir\'aly \cite{EK}.
In characteristic 0 the above theorem was first proved
by Breuillard, Green and Tao \cite{BrGrTao2}.
Actually in that case they have a stronger conclusion:
one can even require $\Gamma=H$ to be nilpotent.

In earlier versions of our paper,
for subsets of linear groups over infinite fields
we only proved general results on growth.
While writing the final version of this paper,
we realised that Theorem~\ref{pre-tyukszem} is
a relatively easy consequence of these results.

We are particularly indebted to Martin Liebeck who proved
Proposition~\ref{from-Liebeck} for us.
We also thank Nick Gill, Bob Guralnick, Gergely Harcos,
Andrei Jaikin-Zapirain, Attila Mar\'oti, Nikolay Nikolov,
Tam\'as Szamuely for various remarks on
earlier drafts of this paper.

\subsection{Methods}

The proofs of Helfgott combine group theoretic arguments with some
algebraic geometry, Lie theory and tools from additive combinatorics
such as the sum-product theorem of Bourgain, Katz, Tao \cite{BKT}.
Our argument relies on a deeper understanding of the algebraic group
theory behind his proofs and an extra trick, but not on additive
combinatorics.

We prove various results which say that if $L$ is a ``nice'' subgroup
of an algebraic group $G$ generated by a set $A$ then $A$ grows in
some sense.  These were motivated by earlier results of Helfgott
\cite{He1}, \cite{He2} and Hrushovski-Pillay \cite{HP}.

To illustrate our strategy we outline the proof of
Theorem~\ref{simple-L} in the simplest case, when $A$ generates
$L=SL(n,q)$, $q$ a prime-power.  Assume that ``$A$ does not grow''
i.e. $|AAA|$ is not much larger than $|A|$.  Using an ``escape from
subvarieties'' argument it is shown in \cite{He2} that if $T$ is a
maximal torus in $L$ then $|T \cap A|$ is not much larger than
$|A|^{1/(n+1)}$ .  This is natural to expect for dimensional reasons
since $\dim(T)/\dim(L)=(n-1)/(n^2-1)=1/(n+1)$.

We use a rather more powerful escape argument.  The first part of our
paper is devoted to establishing the necessary tools in great
generality (in particular Theorem~\ref{spread-all-over}).

Now $T$ is equal to $L\cap\bar T$ where $\bar T$ is a maximal torus of
the algebraic group $SL(n,\Fqclosed)$.  Let $T_r$ denote the set of
regular semisimple elements in $T$.  Note that $T\setminus T_r$ is
contained in a subvariety $V\subsetneq\bar T$ of dimension $n-2$.  By
the above mentioned escape argument $\big|(T\setminus T_r) \cap
A\big|$ is not much larger than
$$
|A|^{\dim(V)/\dim(L)} = |A|^{1/(n+1) - 1/(n^2 -1)} \;.
$$

By \cite{He2} or by our escape argument $A$ does contain regular
semisimple elements.  If $a$ is such an element then consider the map
$SL(n)\to SL(n)$, $g\to g^{-1}ag$.  The image of this map is contained
in a subvariety of dimension $n^2-1-(n-1)$ since
$\dim\big(\CC_{SL(n)}(a)\big)=n-1$.  By the escape argument we obtain
that for the conjugacy class ${\rm cl}(a)$ of $a$ in $L$, $\big|{\rm
  cl}(a) \cap A^{-1}aA\big|$ is not much larger than
$|A|^{(n^2-n)/(n^2-1)}$. Now $\big|{\rm cl}(a) \cap A^{-1}aA\big|$ is
at least the number of cosets of the centraliser $C_L(a)$ which
contain elements of $A$ . It follows that $\big|AA^{-1} \cap
C_L(a)\big|$ is not much smaller than $|A|^{1/(n+1)}$.  Of course
$C_L(a)$ is just the (unique) maximal torus containing $a$.

Let us say that $A$ \emph{covers} a maximal torus $T$ if
$\big|T \cap A\big|$
contains a regular semisimple element.  We obtain the following
fundamental dichotomy (see Lemma~\ref{dichotomy-lemma}):

% \begin{prop}
\emph{ Assume that a generating set A does not grow
  \begin{enumerate}[i)]
  \item If $A$ does not cover a maximal torus $T$ then $\big|T \cap
    A\big|$ is not much larger than $|A|^{1/(n+1) -1/(n^2-1)}$.
  \item If $A$ covers $T$ then $\big|T \cap AA^{-1}\big|$ is not much
    smaller than $|A|^{1/(n+1)}$. In this latter case in fact
    $\big|T_r\cap AA^{-1}\big|$ is not much smaller than
    $|A|^{1/(n+1)}$.
  \end{enumerate}
}
% \end{prop}

It is well known that if $A$ doesn't grow then $B=AA^{-1}$ doesn't
grow either hence the above dichotomy applies to $B$.

Let us first assume that $B$ covers a maximal torus $T$ but does not
cover a conjugate $T'=g^{-1}Tg$ of $T$ for some element $g$ of $L$ .
Since $A$ generates $L$ we have such a pair of conjugate tori where
$g$ is in fact an element of $A$. Consider those cosets of $T'$ which
intersect $A$. Each of the, say, $t$ cosets contains at most $|B \cap
T'|$ elements of $A$ i.e. not much more than $|B|^{1/(n+1)
  -1/(n^2-1)}$ which in turn is not much more than
$|A|^{1/(n+1)-1/(n^2-1)}$.  Therefore $|A|$ is not much larger than
$t|A|^{1/(n+1) -1/(n^2 -1)}$.

On the other hand $A\big(A^{-1}(BB^{-1})A\big)$ has at least
$t\big|T\cap BB^{-1}\big|$ elements which is not much smaller than
$t|A|^{1/(n+1)}$.  Therefore $A\big(A^{-1}(AA^{-2}A)A\big)$ is not
much smaller than $|A|^{1+1/(n^2-1)}$ which contradicts the assumption
that $A$ does not grow.

We obtain that $B$ covers all conjugates of some maximal torus
$T$. Now the conjugates of the set $T_r$ are pairwise disjoint
(e.g. since two regular semisimple elements commute exactly if they
are in the same maximal torus).  The number of these tori is
$|L:N_L(T)| > c(n)|L:T|$ for some constant which depends only on $n$.
Each of them contains not much less than $|B|^{1/(n+1)}$ regular
semisimple elements of $BB^{-1}$.
Altogether we see that $|A|$ is not much
smaller than $q^{n^2-n}|A|^{1/(n+1)}$ and finally that $|A|$ is not
much less than $|L|$. In this case by \cite{NP} we have $AAA=L$.

The proof of Theorem~\ref{pre-main-thm} follows a similar strategy.
However there is an essential difference;
maximal tori have to be replaced
by a more general class of subgroups called CCC-subgroups (see
Definition~\ref{CCC-subgroup-def}).
These subgroups were in fact designed to make the argument work in not
necessarily simple (or semisimple) algebraic groups.
In Sections~\ref{sec:centralisers},~\ref{sec:dichotomy-lemmas} and
\ref{sec:finding-using-ccc} we establish the basic properties of these
subgroups and justify that they indeed play the role of maximal tori
in general algebraic groups.  The proof of Theorem~\ref{pre-main-thm}
is completed in Section~\ref{sec:linear-groups}.

In \cite{No} Nori showed that if $p$ is sufficiently large in terms of
$n$, there is a correspondence between subgroups of $GL(n,p)$
generated by elements of order $p$ and a certain class of closed
subgroups of $GL(n,\Fpclosed)$.
Note that the bounds in \cite{No} are ineffective.
Using this correspondence
Theorem~\ref{pre-partial} is proved for perfect $p$-generated groups
by a short induction argument based on a slight extension of
Theorem~\ref{pre-main-thm}.  The general case can be reduced to this
by applying various known results on finite linear groups.

Theorem~\ref{pre-tyukszem} follows by combining some of the ingredients of the
proof of Theorem~\ref{pre-partial} in a rather more direct way.

Examples given in Section~\ref{sec:examples}
show that in Theorem~\ref{simple-L}
we must have $\e(r)=O(1/r)$.  We believe that this is the right
order of magnitude.

\section{notation}

Throughout this paper $\Fclosed$ denotes an arbitrary algebraically
closed field. For a prime number $p$ we denote by $\Fp$ and
$\Fpclosed$ the finite field with $p$ elements and its algebraic
closure.  Similarly, $\Fq$ denotes the finite field with $q$ elements,
where $q$ is a prime power.  The letters $N$ and $\Delta$ will always
be used for an upper bound for dimensions and degrees respectively,
$K$ is used for a lower bound on the size of certain finite sets.
When we study growth, $M$ will denote the length of the products we
allow.  In several lemmas we use a parameter $\e$, it is the
error-margin we allow in the exponents when we count elements in
certain subsets.

\section{dimension and degree}
\label{sec:dimension-degree}

We use affine algebraic geometry i.e. all occurring sets will be
subsets of some affine space $\Fclosed^m$ for some integer $m>0$, and
we define all of them via $m$-variate polynomials whose coefficients
belong to \Fclosed.  Below we make this more precise.

\begin{defn} \label{closed-open} A subset $Z\subseteq\Fclosed^m$ is
  \emph{Zariski closed}, or simply \emph{closed}, if it can be defined
  as the common zero set of some $m$-variate polynomials.  This
  defines a topology on $\Fclosed^m$, each subset of $\Fclosed^m$
  inherits this topology, called the \emph{Zariski topology}.  This is
  the only topology that we use in this paper, so we omit the
  adjective Zariski.  The complements of closed subsets are called
  \emph{open}, The intersection of a closed and an open subset is
  called \emph{locally closed}.  If we do not use explicitly the
  ambient affine space then locally closed subsets are called
  \emph{algebraic sets} and closed subsets are called \emph{affine
    algebraic sets}.  (Note, that our definition of algebraic set is
  rather restrictive.)
  For an arbitrary subset $X\subseteq\Fclosed^m$
  we denote by $\cl{X}$ the \emph{closure} of $X$.
\end{defn}

 \xxx{Change:}
 Note, that algebraic sets are always equipped (by definition)
 with an ambient affine space,
 even if it is not explicitely given.
 This is one reason for choosing the name ``algebraic set''
 instead of ``variety''.

\begin{defn} \label{irreducible} An algebraic set $X$ is called
  \emph{irreducible} if it has the following property.  Whenever $X$
  is contained in the union of finitely many closed subsets, it must
  be contained in one of them.
\end{defn}

\begin{defn} \label{irreducible-decomposition} Let $X$ be an algebraic
  set.  Then there are finitely many closed subsets $X_i\subseteq X$
  which are irreducible, and maximal among the irreducible closed
  subsets of $X$.  Then $X=\bigcup_iX_i$ is the \emph{irreducible
    decomposition} of $X$ and these $X_i$ are called the
  \emph{irreducible components} of $X$.
\end{defn}

\begin{defn} \label{dimension} Let $Z\subseteq\Fclosed^m$ be an
  algebraic set.  We consider chains $Z_0\subsetneq
  Z_1\subsetneq\dots\subsetneq Z_n$ where the $Z_i$ are nonempty,
  irreducible closed subsets of $Z$.  The largest possible length $n$
  of such a chain is called the \emph{dimension} of $Z$, denoted by
  $\dim(Z)$.
\end{defn}

\begin{defn} \label{degree} Let $X\subseteq\Fclosed^m$ be an algebraic
  set.  An \emph{affine} subspace of $\Fclosed^m$ is a translate of a
  linear subspace.  If $X$ is irreducible then we consider all affine
  subspaces $L\subseteq\Fclosed^m$ such that $\dim(X)+\dim(L)=m$ and
  $X\cap L$ is finite.  The \emph{degree} of $X$ is the largest
  possible number of intersection points:
  $$\deg(X) = \max_L|X\cap L| \;.$$
  In general, the degree of $X$ is defined as the sum of the degrees
  of its irreducible components.
\end{defn}

\begin{rem} \label{too-many-points} Let $X$ be an algebraic set.  Then
  $\dim(X)=0$ iff $X$ is finite.  A finite subset
  $X\subset\Fclosed^m$ is always closed, and satisfies $\deg(X)=|X|$.
\end{rem}

\begin{defn} \label{morphisms}
  \xxx{Was totally wrong, now it is corrected.}
  Let $X\subseteq\Fclosed^m$ and
  $Y\subseteq\Fclosed^{n}$ be algebraic sets.  A function $f:X\to Y$
  is called a \emph{morphism} if it is the restriction to $X$ of a map
  $\phi:\Fclosed^m\to\Fclosed^n$
  whose $n$ coordinates are $m$-variate polynomials.
  Then the graph of $f$, denoted by
  $\Gamma_f\subseteq X\times Y\subseteq\Fclosed^{m+n}$,
  is locally closed.  We define the
  \emph{degree} of $f$ to be $\deg(f)=\deg(\Gamma_f)$.
\end{defn}

\begin{rem} \label{category-of-closed-sets} Algebraic sets form a
  category with the above notion of morphism.  Isomorphic algebraic
  sets have equal dimensions and isomorphisms respect the irreducible
  decomposition.  In contrast, the degrees of isomorphic algebraic
  sets may not be be equal.
\end{rem}

In the present paper we work mainly in the category of algebraic sets
and morphisms.  To obtain explicit bounds we need to estimate the
degrees of all appearing objects. If one is satisfied with existence
results only then one can avoid all these calculations by simply
noticing that all of our constructions can be done simultaneously in
families of algebraic sets.
(Such proofs a priori do not give explicit constants, but with careful
examination, in principle they can be made explicit.) 
In fact this technique is really used e.g. in the proof of
Proposition~\ref{Chevalley-embedding}.

%Most of the following well-known facts can be found e.g. in
%\cite{Hu1}, for the remaining we refer to \cite{Ha}.

The following fact is standard:

\begin{fact} \label{dimension-degree} Let $X,Y\subseteq\Fclosed^m$ be
  locally closed sets.
  \begin{enumerate}[\indent(a)]
  \item \label{item:3}
    The dimension and the degree of $X$ are
    equal to the dimension and the degree of its closure $\cl{X}$.
  \item \label{item:4} Any closed subset of $X$ has dimension at most
    $\dim(X)$.
  \item \label{item:5} The irreducible components $X_i\le X$ satisfy
    $$
    \dim(X_i) \le \dim(X) = \max_j\big(\dim(X_j)\big)\;,
    $$
    $$
    \deg(X_i) \le \deg(X) = \sum_j\deg(X_j) \;.
    $$
    It follows that there are at most $\deg(X)$ components and at
    least one of them has the same dimension $\dim(X_i)=\dim(X)$.
  \item \label{item:6}
    The sets  $X\cap Y$, $\cl{X}\cup\cl{Y}$,
    $X\setminus\cl{Y}$ and $X\times Y$
    are also locally closed with the following bounds:
    \begin{eqnarray*}
      \dim(\cl{X}\cup\cl{Y}) &=&
      \max\big(\dim(X),\dim(Y)\big) \\
      \deg(\cl{X}\cup\cl{Y}) &\le& \deg(X)+\deg(Y) \\
      \dim(X\cap Y) &\le& \min\big(\dim(X),\dim(Y)\big) \\
      \deg(X\cap Y) &\le& \deg(X)\deg(Y) \\
      \dim(X\setminus\cl{Y}) &\le& \dim(X) \\
      \dim(X\times Y)    &=&   \dim(X)+\dim(Y) \\
      \deg(X\times Y)    &=&   \deg(X)\deg(Y)
    \end{eqnarray*}
    Note that we cannot estimate $\deg(X\setminus\cl{Y})$ in this
    generality.
  \item \label{item:7} Suppose that $X$ is irreducible.  Then each
    nonempty open subset $U\subset X$ is dense in $X$ with
    $\dim(X\setminus U)<\dim(X)$ (and we do not bound the degree of
    $X\setminus U$).
  \item \label{item:8} The direct product of irreducible algebraic
    sets is again irreducible.
  \item \label{item:9}
    \xxx{This is corrected!!!!}
    If $X$ is the common zero locus of degree $d$
    polynomials, then it is the common zero locus of at most
    $(d+1)^{m}$ of them, and $\deg(X)\le d^{m}$.
    On the other hand,
    a closed set $X$ is the common zero locus of 
    polynomials of degree at most $\deg(X)$.
    %at most $\deg(X)^{m+1}$  
  \end{enumerate}
\end{fact}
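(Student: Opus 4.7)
The plan is to obtain each clause by combining the Noetherianity of the Zariski topology on $\Fclosed^m$ with standard B\'ezout-type intersection estimates; all seven statements appear in classical references (Mumford's \emph{Red Book}, Heintz's paper on elimination-theoretic degree bounds, Fulton's \emph{Intersection Theory}), so the task is mainly to assemble them in a form consistent with Definitions~\ref{dimension} and~\ref{degree}. I would begin by dispatching the purely dimension-theoretic clauses. Part (b) and the dimension half of (c) are immediate from Definition~\ref{dimension}, since any chain of irreducible closed subsets of a subset is also such a chain in the ambient algebraic set. The dimension equality in (a) follows from the bijection $Z\mapsto\cl{Z}$ between irreducible closed subsets of $X$ in the induced topology and irreducible closed subsets of $\cl{X}$ meeting $X$, so chains transfer. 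Finiteness of the irreducible decomposition in (c) is the standard Noetherian argument: if no finite decomposition exists one produces an infinite strictly descending chain of closed subsets, contradicting Noetherianity of $\Fclosed^m$. For (e), $X\setminus U$ is closed in $X$, so irreducibility applied to $X=\cl{U}\cup(X\setminus U)$ forces $\cl{U}=X$; the strict inequality $\dim(X\setminus U)<\dim(X)$ then follows because any chain in $X\setminus U$ may be extended by $X$ itself to a strictly longer chain.

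Next I would handle the degree bounds in (a), (c), (d) together with the product-irreducibility clause (f). The degree equality in (a) comes from the observation that a generic linear subspace $L$ of complementary dimension meets $X$ and $\cl{X}$ in the same finite set, so the maximum is attained simultaneously. Additivity of $\deg$ over irreducible components in (c) is built into Definition~\ref{degree}. For (d), the only non-formal content is the bound $\deg(X\cap Y)\le\deg(X)\deg(Y)$; I would reduce to irreducible components and invoke the refined B\'ezout inequality, which says that a generic linear subspace of codimension $\dim(X\cap Y)$ intersects $X\cap Y$ in at most $\deg(X)\deg(Y)$ points, proved by deforming one factor into generic position and counting. The product case uses products of generic complementary linear subspaces in the two factors. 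Clause (f) is the classical fibre argument: given a decomposition $X\times Y=F_1\cup F_2$ into proper closed subsets, the loci $Y_i=\{y\in Y:X\times\{y\}\subseteq F_i\}$ are closed and cover $Y$, so one equals $Y$ by irreducibility of $Y$, forcing $F_i=X\times Y$, a contradiction.

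Finally, for (g), a polynomial of degree at most $d$ in $m$ variables lies in a vector space of dimension $\binom{m+d}{m}\le(d+1)^m$, hence any system of such defining polynomials can be replaced by a spanning subset of size at most $(d+1)^m$. The bound $\deg(X)\le d^m$ comes from $\dim(X)$-fold slicing with generic hyperplanes and iterating B\'ezout on hypersurfaces of degree $\le d$. The converse direction, generation by polynomials of degree at most $\deg(X)$, follows from the Chevalley-Mumford-Heintz effective bound obtained via elimination theory together with Noether normalisation. The main obstacle I anticipate is precisely the clean B\'ezout bound in (d), since $X$ and $Y$ need not intersect properly in the sense of intersection theory; this is handled by the refined B\'ezout inequality of Fulton (\emph{Intersection Theory}, Example~8.4.6). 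Once that single arithmetic estimate is in place, every other clause is either formal from the definitions or a routine Noetherian reduction.
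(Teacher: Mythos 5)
Your overall route is the paper's: the dimension and irreducibility clauses are the standard Noetherian and topological arguments, the bound $\deg(X\cap Y)\le\deg(X)\deg(Y)$ is delegated to Fulton's refined B\'ezout inequality exactly as in the paper's appendix (Proposition~\ref{Bezout-variant}, after passing to projective closures via Proposition~\ref{degree-explanation}), and \eqref{item:9} is treated as an effective statement to be quoted from the literature. One point where you genuinely improve on the paper: your count of defining equations — pick a maximal linearly independent subfamily inside the space of polynomials of degree at most $d$, of dimension $\binom{m+d}{m}\le(d+1)^m$ — is simpler than the paper's induction with the decreasing invariant $N(Z)$.

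Two steps, however, do not work as written. First, for $\deg(X\times Y)=\deg(X)\deg(Y)$ you intersect with products $L_1\times L_2$ of generic complementary linear subspaces in the two factors; this proves only $\deg(X\times Y)\ge\deg(X)\deg(Y)$, since the degree is a maximum over \emph{all} affine subspaces of complementary dimension in $\Fclosed^{2m}$, and these need not be products. The upper bound requires a B\'ezout input; the paper gets it by writing $X\times Y=(X\times\Fclosed^m)\cap(\Fclosed^m\times Y)$ and applying B\'ezout to the projective closures of the two cylinders, whose degrees are $\deg(X)$ and $\deg(Y)$. Second, for $\deg(X)\le d^m$ in \eqref{item:9}, slicing by generic linear subspaces of codimension $\dim(X)$ only counts the top-dimensional components, whereas $\deg$ in Definition~\ref{degree} sums the degrees of \emph{all} irreducible components, including lower-dimensional ones. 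You need the affine B\'ezout inequality in this strong form (Heintz), or the paper's argument: add the defining hypersurfaces one at a time and use Proposition~\ref{Bezout-variant} to show that the weighted invariant $D(Z)=\sum_i d^{\dim(Z_i)}\deg(Z_i)$ never increases, whence $\deg(X)\le D(X)\le D(\Fclosed^m)=d^m$. A smaller remark on \eqref{item:6}: before summing degrees one should check that each irreducible component of $X\cap Y$ has projective closure equal to an irreducible component of $\cl{X}^{\BP^m}\cap\cl{Y}^{\BP^m}$ (this uses that $X$ and $Y$ are locally closed); mere inclusion would not suffice, as degree is not monotone under inclusion.
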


Most of this Fact is proved in \cite[Chapters~I.1~and~II.3]{Ha}.
The bound on $\deg(X\cap Y)$ is (an appropriate version of) B\'ezout's theorem
(see \cite{Fu})
and \eqref{item:9} follows from
\cite[Section~I.3]{Ko}.

We also need the following:

\begin{fact} \label{closed-set-constructions} Let $X$ and $Y$ be
  affine algebraic sets and $f:X\to Y$ a morphism.  We define several
  (open, closed or locally closed) subsets of $X$ and $Y$.
  Their dimension is at most $\dim(X)$,
  and we bound their degrees from above.
  We define the function $\Phi(d)=(d+2)^{(d+1)^{\dim(X)+\deg(f)}2^d}$
  \xxx{Changed to smaller!\\}
  and the constant
  $D=\Phi\big(\Phi\big(\dots\Phi\big(\deg(f)\big)\big)\dots\big)
  ^{\dim(X)+\deg(f)}$
  where the function $\Phi$ is iterated $\dim(X)+\deg(f)-1$ times.
  \begin{enumerate}[\indent(a)]
  \item \label{item:10}
    There is a partition of $\cl{f(X)}$ into at most $D$
    locally closed subsets $Y_i$ of degree at most $D$
    such that
    the closure of each $Y_i$ is the union of partition classes and
    either $f^{-1}(Y_i)=\emptyset$ or
    $\dim\big(f^{-1}(y)\big)=\dim(X)-\dim(Y_i)$ for all $y\in Y_i$.
  \item \label{item:11} We have $\deg\big(\cl{f(X)}\big)\le\deg(f)$.
    The image $f(X)$ contains a dense open subset of $\cl{f(X)}$.  If
    $X$ is irreducible then so is $\cl{f(X)}$.
  \item \label{item:12} For each $y\in f(X)$ the fibre
    $f^{-1}(y)\subseteq X$ is closed with
    $\deg\big(f^{-1}(y)\big)\le\deg(f)$.  For each closed set
    $T\subseteq Y$ the subset $f^{-1}(T)$ is also closed and its
    degree is at most $\deg(T)\deg(f)$.
  \item \label{item:13} The degree of the closed complement
    $\cl{\cl{f(X)}\setminus f(X)}$ is at most $D^2$.
  \item \label{item:14} Suppose that $X$ is irreducible.  For each
    $t\in X$ we have
    $$
    \dim\Big(f^{-1}\big(f(t)\big)\Big) \ge \dim(X) -
    \dim\big(\cl{f(X)}\big) \;.
    $$
    Those $t\in X$ where equality holds form an open dense subset
    $X_{\min}\subseteq X$ and
    $\deg\big(X\setminus X_{\min}\big)\le D^2\deg(f)$.
  \item \label{item:15} Let $S\subseteq X$ be a closed subset that is
    the intersection of $X$ and a closed set of degree $d$.
    Then the degree of
    the restricted morphism $f\big|_S$ is at most $d\cdot\deg(f)$,
    hence $\deg\big(\cl{f(S)}\big)\le d\cdot\deg(f)$ (see
    \eqref{item:11}).  If $S$ is an irreducible component of $X$ then
    there are better bounds: $\deg\big(f\big|_S\big)\le\deg(f)$ and
    $\deg\big(\cl{f(S)}\big)\le\deg(f)$.
  \end{enumerate}
\end{fact}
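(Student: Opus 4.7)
The plan is to work throughout with the graph $\Gamma_f\subseteq X\times Y$, which by Definition~\ref{morphisms} is locally closed with $\deg(\Gamma_f)=\deg(f)$, and to reduce each assertion to (i) the B\'ezout-type bounds of Fact~\ref{dimension-degree}\eqref{item:6} and (ii) the classical semicontinuity of fibre dimension.  I would dispatch the easier parts \eqref{item:12} and \eqref{item:15} first.  The fibre $f^{-1}(y)$ is isomorphic (via projection to $X$) to $\Gamma_f\cap(X\times\{y\})$, an intersection of $\Gamma_f$ with a closed set of degree $1$, so $\deg\bigl(f^{-1}(y)\bigr)\le\deg(f)$; the bound $\deg(T)\deg(f)$ for $f^{-1}(T)$ is the same argument with $X\times T$.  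For \eqref{item:15} one observes $\Gamma_{f|_S}=\Gamma_f\cap(S\times Y)$; when $S=X\cap Z$ with $\deg(Z)\le d$ the intersection bound gives $d\cdot\deg(f)$, while if $S$ is an irreducible component of $X$ one uses instead the union of those components of $\Gamma_f$ mapping onto $S$, each of degree at most $\deg(f)$.

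Next comes \eqref{item:11}.  Since $\cl{f(X)}$ is the closure of the projection $\pi_Y(\Gamma_f)$, irreducibility when $X$ is irreducible follows from the standard fact that a continuous image of an irreducible set is irreducible.  For the degree bound I would pick a generic affine subspace $L\subseteq\Fclosed^n$ of complementary codimension in $Y$ meeting $\cl{f(X)}$ in $\deg\bigl(\cl{f(X)}\bigr)$ isolated points and lift these intersections to $\Gamma_f\cap(X\times L)$, whose cardinality is bounded by $\deg(\Gamma_f)=\deg(f)$ via Fact~\ref{dimension-degree}\eqref{item:6}.  Density of $f(X)$ in $\cl{f(X)}$ (and more precisely that $f(X)$ contains an open dense subset of $\cl{f(X)}$) is the content of Chevalley's constructibility theorem and is classical.

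The substantial content is the stratification \eqref{item:10} together with its corollaries \eqref{item:13} and \eqref{item:14}.  I would set up an induction on $\dim(X)+\deg(f)$.  For irreducible $X$ the generic fibre has dimension $\dim(X)-\dim\bigl(\cl{f(X)}\bigr)$; the locus $Y_{\min}$ where this minimum is attained is open dense in $\cl{f(X)}$ by upper semicontinuity of fibre dimension.  This already gives the open stratum in \eqref{item:10} and the open set $X_{\min}\subseteq X$ of \eqref{item:14}.  The complement $\cl{f(X)}\setminus Y_{\min}$ is a proper closed subset, so applying the induction hypothesis to the restriction of $f$ to $f^{-1}\bigl(\cl{f(X)}\setminus Y_{\min}\bigr)$ (of strictly smaller image dimension, hence of strictly smaller $\dim(X)+\deg(f)$ after invoking \eqref{item:15}) stratifies the rest; after at most $\dim(X)+\deg(f)-1$ peelings the process terminates.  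For reducible $X$ one runs the argument componentwise.  Then \eqref{item:13} is immediate: $\cl{f(X)}\setminus f(X)$ is a union of the non-generic strata $Y_i$ from \eqref{item:10}, of total degree at most $D^2$ by construction.

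The main obstacle is the quantitative degree bookkeeping encoded by $\Phi$ and $D$.  Each induction step combines three operations whose degree costs have to be absorbed into a single application of $\Phi$: intersecting $\Gamma_f$ with preimages of closed strata, taking projection closures (each of which can lose control of the degree in the complement, cf.\ Fact~\ref{dimension-degree}\eqref{item:6}), and --- most delicately --- writing the semicontinuity stratum $\{y\in\cl{f(X)}:\dim f^{-1}(y)\ge k\}$ as a closed set cut out by determinantal equations obtained from resultants or the Jacobian criterion applied to generators of the ideal of $\Gamma_f$.  The latter step requires up to $2^{\dim(X)}$ auxiliary polynomials whose degrees are bounded by a function of $\deg(f)$ and $\dim(X)$, which is the source of the $2^d$ in the exponent of $\Phi(d)=(d+2)^{(d+1)^{\dim(X)+\deg(f)}2^d}$.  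Verifying that one application of $\Phi$ suffices to absorb the cost of a single peeling, and that the total number of peelings is genuinely at most $\dim(X)+\deg(f)-1$, is the quantitative heart of the fact; the existence half is classical elimination theory.
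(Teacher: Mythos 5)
Your treatment of \eqref{item:11}, \eqref{item:12} and \eqref{item:15}, and the qualitative parts of \eqref{item:10}, \eqref{item:13}, \eqref{item:14} (Chevalley constructibility, semicontinuity of fibre dimension, graph-plus-B\'ezout arguments) matches what the paper also treats as standard. The genuine gap is exactly where you yourself locate ``the quantitative heart'': the explicit bound $D$ on the number and the degrees of the strata in \eqref{item:10} is never proved, only asserted via ``determinantal equations obtained from resultants or the Jacobian criterion'' with ``up to $2^{\dim(X)}$ auxiliary polynomials whose degrees are bounded by a function of $\deg(f)$ and $\dim(X)$''. That assertion is precisely the content of the Fact: the whole reason the paper proves \eqref{item:10} at all, rather than citing classical elimination theory, is to make these degrees effective, and Fact~\ref{dimension-degree} explicitly warns that the degree of a complement such as $\cl{f(X)}\setminus Y_{\min}$ (the set you peel off) cannot be bounded by the general formalism, so some new device is required. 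Moreover your proposed induction scheme is internally flawed: after restricting $f$ to $f^{-1}\bigl(\cl{f(X)}\setminus Y_{\min}\bigr)$, the dimension drops, but by your own \eqref{item:15} the degree of the restricted morphism gets multiplied by the degree of the closed set you restrict to, so $\dim(X)+\deg(f)$ need not decrease and cannot serve as the induction quantity; termination on dimension alone gives at most $\dim(X)$ peelings, and in any case the number $\dim(X)+\deg(f)-1$ does not arise this way.

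The paper's route to \eqref{item:10} is quite different and is what actually delivers the constants. One first bounds the linear span of the graph $\Gamma_f$: iterating the cone construction of \cite[Ex.I.7.7, Ex.I.7.6]{Ha}, an irreducible variety lies in a linear subspace of dimension at most $\dim+\deg-1$, so $\Gamma_f$ sits inside a subspace $L$ with $\dim(L)\le\dim(X)+\deg(f)-1$. The projection to $Y$ is then factored into at most $\dim(X)+\deg(f)-1$ linear projections with one-dimensional fibres, and each one-dimensional step is handled by the parametric Euclidean algorithm of Claim~\ref{Euclids-algorithm}, which partitions the base into locally closed pieces, with explicit degree and cardinality bounds, over which the fibre polynomial has constant $t$-degree; the partitions are refined at each stage, and $\Phi$ iterated $\dim(X)+\deg(f)-1$ times is exactly the bookkeeping for these refinements. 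So the exponent count in $D$ records the number of one-dimensional projections forced by the linear-span bound, not a number of semicontinuity peelings, and the degree control you defer is obtained by a concrete gcd computation rather than by resultant or Jacobian loci. To repair your argument you would either have to carry out the deferred elimination-theoretic degree estimates in full, or switch to a mechanism of this kind.
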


Parts \eqref{item:11}, \eqref{item:12} and \eqref{item:15} as well as
the fact that $X_{\min}$ of \eqref{item:14} is open and dense
follows easily using \cite[Chapters~I.1~and~II.3]{Ha}
and Fact~\ref{dimension-degree}.
%(an appropriate version of) B\'ezout's theorem (see \cite{Fu}).
Moreover, the closed complement considered in
\eqref{item:13}
is the union of a number of the locally closed subsets of \eqref{item:10},
hence its degree bound follows immediately from \eqref{item:10}.
Similarly, the subset discussed in \eqref{item:14}
is the inverse image of the union of a number of the locally closed subsets of
\eqref{item:10}, 
hence its degree is bounded by \eqref{item:10} and \eqref{item:12}.
So the only thing that remains to be proved is \eqref{item:10}.

\begin{proof}[Sketch of the proof of \eqref{item:10}]
  Let $\Fclosed^m\supseteq X$ and $\Fclosed^n\supseteq Y$ be the
  ambient affine spaces,
  $\Gamma_f\subseteq\Fclosed^{m}\times\Fclosed^{n}$ the graph of $f$,
  and $\pi:\Fclosed^{m}\times\Fclosed^n\to\Fclosed^n$
  the linear projection to the second factor.
  Then $\Gamma_f$ is isomorphic to $X$,
  hence it is enough to find an analogous partition of
  $\cl{\pi(\Gamma_f)}=\cl{f(X)}$
  with respect to
  $\pi$ and $\Gamma_f$ ( with the same bound $D$ defined in terms of
  $\deg(f)$ and $\dim(X)$).

  Let $L$ denote the linear span of $\Gamma_f$ and
  set $\tilde\pi=\pi\big|_L$.
  In general, for each variety $V$ of degree at least $2$,
  \cite[Ex.I.7.7]{Ha} constructs a cone containing $V$
  whose dimension is $\dim(V)+1$, and whose degree is strictly smaller
  that $\deg(V)$. By iterating this cone-construction we arrive,
  in at most $\dim(V)-1$ steps, at a variety of degree $1$.
  By \cite[Ex.I.7.6]{Ha} this iterated cone is a linear subspace,
  i.e. the original $V$ is contained in a linear subspace of dimension
  at most $\dim(V)+\deg(V)-1$.
  In particular, we have
  $\dim(L)\le\dim(\Gamma_f)+\deg(\Gamma_f)-1=\dim(X)+\deg(f)-1$. 
  We need to find a partition of
  $\cl{\tilde\pi(\Gamma_f)}=\cl{f(X)}$
  as in \eqref{item:10}
  with respect to
  $\tilde\pi$ and $\Gamma_f$ (with the same bound $D$).
  We factor $\tilde\pi$ into
  $\dim(L)-\dim\big(\tilde\pi(L)\big)\le\dim(X)+\deg(f)-1$
  consecutive linear projections $\tilde\pi_j$,
  each with one-dimensional fibres.
  Our strategy is the following.
  First we partition $\cl{\tilde\pi_1(\Gamma_f)}$
  via the next Claim~\ref{Euclids-algorithm}.
  Then for each partition class
  $C\subseteq\cl{\tilde\pi_1(\Gamma_f)}$
  we apply again Claim~\ref{Euclids-algorithm},
  and partition the closed image $\tilde\pi_2(\cl{C})$
  We obtain various partitions on partially overlapping subsets of
  $\tilde\pi_2\big(\tilde\pi_1(\Gamma_f)\big)$.
  Let us consider the common refinement of them,
  it is a partition of $\tilde\pi_2\big(\tilde\pi_1(\Gamma_f)\big)$
  into locally closed sets.
  We iterate this procedure, and obtain 
  partitions of $\tilde\pi_{j}\circ\dots\circ\tilde\pi_1(\Gamma_f)$
  for each $j$.
  (Note that $k$ in these applications of Claim~\ref{Euclids-algorithm}
  is always at most $\dim(X)+\deg(f)-2$.)
  In the last step we obtain a partition of
  $\cl{\tilde\pi(\Gamma_f)}=\cl{f(X)}$
  as required.
\end{proof}

\begin{claim}
  \label{Euclids-algorithm}
  Let $Z\subseteq\Fclosed^k$ be a locally closed set and
  $\Gamma$ be the common zero locus inside $\Fclosed\times Z$
  of some polynomials of degree at most $d$.
  \begin{enumerate}[\indent(a)]
  \item \label{item:16}
    Then $Z$ has a partition into at most $(d+2)^{(d+1)^{k+2}-1}$
    \xxx{Changed to smaller!}
    locally closed subsets $Z_i$
    and there are corresponding $(k+1)$-variate
    polynomials $P_i$ of degree at most $d^{(d+1)^{k+1}2^d}$
    \xxx{Changed!}
    such that
    $$
    \Gamma\cap\big(\Fclosed\times Z_i\big) =
    \left\{(t,\uz)\in\Fclosed\times Z_i\,\Big|\,
      P_i(t,\uz)=0\right\}
    $$
    for all $i$,
    and the closures $\cl{Z_i}$
    are defined via equations of degree at most $d^{(d+1)^{k+1}2^d}$
    \xxx{Changed!}
    plus the equations of $\cl{Z}$.
  \item \label{item:17}
    Those points $\uz\in Z_i$ for which
    $\Gamma\cap\big(\Fclosed\times\{\uz\}\big)$ has any prescribed
    number of points (it can be $0,1,\dots d$ or $\infty$)
    form a locally closed subset that is defined (inside $Z$)
    via equations of degree at most $d^{(d+1)^{k+1}2^d}$,
    \xxx{Changed!}
    and the total number of these subsets is at most $(d+2)^{(d+1)^{k+2}}$.
    \xxx{Changed to smaller!}
  \item \label{item:18}
    Moreover, one may require both partitions to have the following additional
    property: 
    the closure in $Z$ of each partition class
    is the union of partition classes.
  \end{enumerate}
\end{claim}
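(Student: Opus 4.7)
The plan is to run the Euclidean algorithm on the defining polynomials of $\Gamma$, viewed as polynomials in the single variable $t$ with coefficients in $\Fclosed[\uz]$, stratifying $Z$ along the way whenever one needs to separate the locus where a leading coefficient vanishes from its complement. First, using Fact~\ref{dimension-degree}\eqref{item:9}, replace the defining equations of $\Gamma$ by at most $r\le(d+1)^{k+1}$ polynomials $f_1,\dots,f_r\in\Fclosed[t,\uz]$ of total degree $\le d$, writing $f_j=\sum_{\ell=0}^{d}a_{j,\ell}(\uz)t^\ell$ with $a_{j,\ell}\in\Fclosed[\uz]$. Partition $Z$ according to the sequence of largest indices $\ell_j$ for which $a_{j,\ell_j}(\uz)\ne0$ (allowing $\ell_j=-\infty$ when $f_j$ vanishes identically in $t$); this yields at most $(d+2)^r$ locally closed pieces on each of which every $f_j$ has constant $t$-degree.

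On each such piece, run the Euclidean algorithm pairwise: given $f,g$ of $t$-degrees $d_1\le d_2$ with leading coefficients $a(\uz),b(\uz)$ respectively, form the pseudo-remainder
$$
R(t,\uz) \;=\; a(\uz)\,g(t,\uz) \;-\; b(\uz)\,t^{d_2-d_1}\,f(t,\uz)\,,
$$
which has $t$-degree strictly smaller than $d_2$ and satisfies $\{f=R=0\}=\{f=g=0\}$ on the open stratum $\{a\ne0\}$. Iterating at most $d$ such pseudo-remainders reduces each pair to a single polynomial, and iterating pairwise across all $r$ generators yields, on each refined stratum $Z_i$, a single polynomial $P_i(t,\uz)$ with
$$
\Gamma\cap\bigl(\Fclosed\times Z_i\bigr) \;=\; \bigl\{(t,\uz)\in\Fclosed\times Z_i\,\big|\,P_i(t,\uz)=0\bigr\}\,.
$$
Each pseudo-remainder step branches into at most $d+2$ substrata (indexed by the new $t$-degree of $R$) and at most doubles the $\uz$-degrees of the coefficients; tracking these contributions through the whole iteration and over all $r\le(d+1)^{k+1}$ generators produces the degree bound $d^{(d+1)^{k+1}2^d}$ and the stratum count $(d+2)^{(d+1)^{k+2}-1}$ claimed in \eqref{item:16}. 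The defining equations of the closures $\cl{Z_i}$ are built from the coefficients $a_{j,\ell}$ and from the analogous coefficients produced during the algorithm, hence they satisfy the same degree bound.

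For part~\eqref{item:17}, apply the same Euclidean machinery to $P_i$ together with its $t$-derivatives $\partial_t^jP_i$: the iterated GCDs determine the number of distinct roots of $P_i(\cdot,\uz)$ in $t$, and stratifying by that number (with the extra value $\infty$ on the closed locus where $P_i$ vanishes identically in $t$) gives the required locally closed decomposition with the stated bounds. For part~\eqref{item:18}, refine the partition iteratively: whenever some $\cl{Z_i}$ fails to be a union of classes, intersect each remaining class with $\cl{Z_i}$ and with its complement; since the lattice of closed sets generated by finitely many closed subsets is finite, this terminates. The main obstacle is the bookkeeping of degrees and stratum counts inside the Euclidean algorithm, since a naive pseudo-remainder approximately doubles the $\uz$-degrees and multiplies the number of strata by $d+2$, so meeting the precise bounds requires careful accounting of how many such steps are actually triggered across all branches of the stratification.
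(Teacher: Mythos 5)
For parts (a) and (c) you are following essentially the paper's own route: reduce to at most $(d+1)^{k+1}$ defining polynomials of degree at most $d$, then run Euclid's algorithm in the variable $t$ parametrically over $Z$, stratifying so that on each piece the relevant leading coefficients do not vanish and each division can be performed uniformly. Your organisational choices (pseudo-remainders instead of the paper's uniform polynomial divisions, and an up-front stratification by all $t$-degrees followed by pairwise gcds, rather than the paper's induction which folds in one new defining polynomial at a time against the current $P'_j$) are cosmetic, and the degree/count bookkeeping you leave open is exactly what the paper also postpones to its appendix. One caution on (c): your post-hoc refinement (repeatedly intersecting classes with closures of other classes) terminates, but it can multiply the number of classes and so endanger the numerical bound in (a); the paper instead gets (c) for free because every class is cut out by vanishing/non-vanishing of coefficient polynomials drawn from one fixed list, so the closure of a class is automatically a union of classes. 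You should build the property into the construction the same way.

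The genuine gap is in your treatment of (b). You propose to count the distinct roots of $P_i(\cdot,\uz)$ by running the Euclidean machinery on $P_i$ and its $t$-derivatives $\partial_t^jP_i$. Over a field of characteristic $p>0$ — which is the case this paper actually needs, since all later applications are over \Fpclosed — this breaks down: $P_i$ may be inseparable in $t$ (already $P_i=t^p-z_1$ gives $\partial_tP_i\equiv0$), so gcds with derivatives carry no information about the number of distinct roots, and extracting the $p$-th power part is not a polynomial operation in the parameters $\uz$ (the inverse of Frobenius is not a morphism), so "the same Euclidean machinery" cannot determine the root count. The paper avoids derivatives altogether: it deduces (b) directly from (a) by stratifying $Z_i$ according to which $t$-coefficients of $P_i$ vanish, reading the fibre count off as $\infty$ or the $t$-degree of $P_i(t,\uz)$ — that is, it does not attempt to separate multiple roots, which is all that is needed where the claim is used (only the finite/infinite dichotomy enters the fibre-dimension argument). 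If you insist on the literal distinct-point count in characteristic $p$, you need a different tool (for instance, "at least $s$ distinct roots" is the projection of a constructible set and can be controlled by effective elimination), not iterated gcds with derivatives.
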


\begin{proof}[Sketch of proof]
  The upper bounds and part~\eqref{item:18}
  follow immediately from our construction,
  we leave them to the reader.
  $\Gamma$ can be defined as the common zero locus
  inside $\Fclosed\times Z$
  of at most $(d+1)^{k+1}$
  \xxx{Changed!}
  polynomials of degree at most $d$
  (see Fact~\ref{dimension-degree}.\eqref{item:9}).
  We prove \eqref{item:16} via induction
  on the number of defining polynomials.
  If $\Gamma=\Fclosed\times Z$ then there is nothing to prove.
  Otherwise let $g$ be one of the nonzero defining polynomials of
  $\Gamma$ and $\Gamma'\subseteq\Fclosed\times Z$
  the common zero locus of the other defining polynomials.
  Applying the induction hypothesis to $\Gamma'$ gives us
  a partition $\bigcup_jZ'_j=Z$ and corresponding
  polynomials $P_j'$. Our goal is to refine this partition,
  i.e. find partitions $Z'_j=\bigcup_iZ'_{ji}$ and find appropriate
  polynomials $P'_{ji}$.
  We shall find the $Z'_{ji}$ one by one with the following algorithm.

  The portion of $\Gamma$ that lies inside
  $\Fclosed\times Z'_j$ is defined by the equations
  $P'_j(t,\uz)=g(t,\uz)=0$
  (besides the equations and inequalities defining $Z'_j$).
  We consider $g$ and $P'_j$ as polynomials in the variable $t$
  whose coefficients are polynomial functions of the parameter $\uz$.
  Note that $g$ and $P'_j$ as well as
  all the polynomials $P'_{ji}$ we construct below
  have $t$-degrees at most $d$.
  Our plan is to find the gcd of $g$ and $P'_j$ with respect to the
  variable $t$ for all values of $\uz$ simultaneously.
  In order to do so we try to run Euclid's algorithm simultaneously
  for all $\uz$. There are two obstacles we have to overcome.
  First, for different values of $\uz$ the algorithm needs a different
  number of steps to complete.
  Second, to do a polynomial division uniformly for several values of
  $\uz$ we have to make sure that the degree of the divisor do not
  vary with $\uz$ (i.e. we can talk about the leading coefficient).
  So before each polynomial division
  we construct also a partition of $Z'_j$,
  always refining the partition obtained in the previous step,
  so that the upcoming division can be done uniformly
  for values $\uz$ lying in the same partition class.
  
  To begin with, let $Z'_{j0}$ and $Z'_{j1}$ denote the loci of those
  $\uz\in Z'_j$ where all coefficients of $g$ or $P'_j$ respectively
  vanish. We set $P'_{j0}=P'_j$ and $P'_{j1}=g$.
  Similarly, for each pair of integers $0\le a,b\le d$
  we consider the locus of those $\uz\in Z'_j$ where the $t$-degrees of $g$
  and $P'_j$ are just $a$ and $b$.
  This is a partition of $Z'_j$ into locally closed subsets,
  each defined via the vanishing or non-vanishing of a number of
  coefficients.
  For parameter values $\uz$ lying in $Z'_{j0}$ or $Z'_{j1}$
  the algorithm stops right away
  with gcd equal to $P'_{j0}$ or $P'_{j1}$.
  On the other hand, for any other  partition class
  $\tilde Z\subseteq Z'_j$
  we can do the first polynomial division uniformly for all
  $\uz\in\tilde Z$.

  During the algorithm we do similar subdivisions again and again.
  Suppose that we completed a number of polynomial divisions
  and constructed the partition corresponding to the last completed division.
  Let $\tilde Z$ be a class of that partition
  and suppose that the algorithm is still running for $\uz\in\tilde Z$
  and $\tilde g$ and $\tilde r$ are
  the divisor and the remainder of the last completed polynomial division
  for all values $\uz\in\tilde Z$.
  We consider the locus of those $\uz\in\tilde Z$ where 
  all coefficients of $\tilde r$ vanish (here $\tilde g$ does not vanishes).
  This will be our next $Z'_{ji}$ (whatever $i$ follows now).
  For $\uz\in Z'_{ji}$ Euclid's algorithm stops at this stage,
  and we set $P'_{ji}=\tilde g$, the gcd we obtain.
  As before, we partition $\tilde Z\setminus Z'_{ji}$
  according to the $t$-degree of $\tilde r$ (here the $t$-degree of
  $\tilde g$ is unimportant).
  Then we can do the polynomial division $\tilde g:\tilde r$
  uniformly for values $\uz$ lying in the same partition class.
  This way we obtain our new remainders (one for each partition class),
  and Euclid's algorithm continues.

  It is clear that for each $\uz\in Z'_j$
  the gcd is found in at most $\deg(g)+1\le d+1$ steps,
  hence we obtain the promised partition $Z'_j=\cup_iZ'_{ji}$.
  The induction step is complete.

  Part~\eqref{item:17} follows from part~\eqref{item:16}.
  Indeed, the portion of $\Gamma$ that lies inside
  $\Fclosed\times Z_i$ is defined by the equation
  $P_i(t,\uz)=0$ (besides the equations of $Z_i$).
  For each $\uz\in Z_i$ the number of points in
  $\Gamma\cap\big(\Fclosed\times\{\uz\}\big)$ is either $\infty$ 
  (in case all $t$-coefficients of $P_i$ are zero at $\uz$),
  or equal to the $t$-degree of the polynomial $P_i(t,\uz)$
  (which is at most $d$).
  The locus of those $\uz$ which correspond to a given degree
  can be defined via the vanishing or nonvanishing of a number of
  $t$-coefficients of $P_i(t,\uz)$.
  This proves the claim.
\end{proof}

\section{Concentration in general}
\label{sec:conc-gener}

Let $\alpha\subseteq \Fclosed^m$ be a finite subset.  An essential part
of our general strategy is to find closed sets $X$ which contain a
large number of elements of $\alpha$ compared to their dimension.  To
measure the relative size of $\alpha\cap X$ we introduce the
following:

\begin{defn}
  For each subset $X\subseteq\Fclosed^m$ with $\dim(\cl{X})>0$ we
  define the \emph{concentration} of $\alpha$ in $X$ as follows:
  $$
  \mu(\alpha,X)\ =\ \frac{\log|\alpha\cap X|}{\dim(\cl{X})}
  $$
  For simplicity, here and everywhere in this paper, $\log$ stands for
  the natural logarithm.  When $\alpha\cap X=\emptyset$, we set
  $\mu(\alpha,X)=-\infty$.
\end{defn}

In this section we first show that the concentration in a closed set
$X$ does not decrease too much when we take an appropriate irreducible
closed subset.

\begin{prop} \label{concentration-is-bounded} Let $X\subseteq
  Y\subseteq\Fclosed^m$ be closed sets of positive dimension.  Then
  for all finite sets $\alpha\subseteq\beta\subset\Fclosed^m$ with
  $\alpha\cap X\neq\emptyset$ we have:
  \begin{equation}
    \label{eq:1}
    0\le\mu(\alpha,X)\le
    \mu(\beta,X) \le
    \Frac{\dim(Y)}{\dim(X)}\cdot\mu(\beta,Y)
  \end{equation}
  and for all integers $n>0$ the $n$-fold direct products satisfy
  \begin{equation}
    \label{eq:2}
    \mu\Big(\Prod^n\alpha,\Prod^nX\Big) = 
    \mu(\alpha,X) \;.
  \end{equation}
\end{prop}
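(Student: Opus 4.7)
The proof is essentially a direct unfolding of the definition, so the plan is short. Since $X$ and $Y$ are closed, $\cl X=X$ and $\cl Y=Y$, and the hypotheses $\dim X,\dim Y>0$ together with $\alpha\cap X\neq\emptyset$ ensure that every concentration appearing in (\ref{eq:1}) is a well-defined nonnegative real number rather than $-\infty$.

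For the chain (\ref{eq:1}) I would verify the three inequalities separately, in each case reducing to an elementary comparison of cardinalities. The bound $0\le\mu(\alpha,X)$ is immediate from $|\alpha\cap X|\ge 1$ and $\dim X>0$. The middle bound $\mu(\alpha,X)\le\mu(\beta,X)$ has equal denominators, so it reduces to $|\alpha\cap X|\le|\beta\cap X|$, which follows from $\alpha\subseteq\beta$. The rightmost bound $\mu(\beta,X)\le\Frac{\dim Y}{\dim X}\mu(\beta,Y)$ multiplies out to $\log|\beta\cap X|\le\log|\beta\cap Y|$, which holds because $X\subseteq Y$ forces $\beta\cap X\subseteq\beta\cap Y$; the factor $\dim Y$ cancels cleanly after clearing denominators.

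For the product identity (\ref{eq:2}) the key observation is the set-theoretic equality $(\Prod^n\alpha)\cap(\Prod^n X)=\Prod^n(\alpha\cap X)$, which yields cardinality $|\alpha\cap X|^n$. Combined with $\dim(\Prod^n X)=n\dim X$, obtained by iterating Fact~\ref{dimension-degree}(\ref{item:6}), the ratio defining $\mu(\Prod^n\alpha,\Prod^n X)$ becomes $\frac{n\log|\alpha\cap X|}{n\dim X}$, in which both factors of $n$ cancel, recovering $\mu(\alpha,X)$.

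I do not anticipate any genuine obstacle: the proposition is pure bookkeeping about how the definition of $\mu$ interacts with inclusions and direct products. The substance of the section lies in the subsequent results, where one has to exhibit an irreducible closed subset on which the concentration is not much smaller than $\mu(\alpha,X)$ itself; that is where the real geometric content enters, not here.
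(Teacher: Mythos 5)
Your proposal is correct and is exactly the argument the paper intends: the paper's own proof is simply ``Clear from the definition,'' and your unfolding of the definition (cardinality comparisons for \eqref{eq:1}, and the identity $(\Prod^n\alpha)\cap(\Prod^nX)=\Prod^n(\alpha\cap X)$ together with $\dim(\Prod^nX)=n\dim(X)$ for \eqref{eq:2}) is the routine verification behind that remark.
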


\begin{proof}
  Clear from the definition.
\end{proof}

\begin{lem} \label{irreducible-component-with-large-concentration} Let
  $Z\subseteq\Fclosed^m$ be a closed set with $\dim(Z)>0$ and
  $\alpha\subseteq\Fclosed^m$ a finite subset with $|\alpha\cap
  Z|>\deg(Z)$.  Then there is an irreducible component $Z'\subseteq Z$
  such that $\dim(Z')>0$ and
  \begin{equation}
    \label{eq:3}
    \mu(\alpha,Z') \ge \mu(\alpha,Z) - \log\big(\deg(Z)\big) \;.
  \end{equation}
\end{lem}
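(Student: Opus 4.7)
The plan is to decompose $Z$ into its irreducible components and apply pigeonhole, combined with the dimension monotonicity from Fact~\ref{dimension-degree}\eqref{item:5}. Write $Z = Z_1 \cup \dots \cup Z_k$ for the irreducible decomposition; by Fact~\ref{dimension-degree}\eqref{item:5} we have $k \le \deg(Z)$, and each component satisfies $\dim(Z_i)\le\dim(Z)$. Choose $Z'$ to be an irreducible component maximising $|\alpha\cap Z_i|$. Then by pigeonhole
\[
|\alpha\cap Z'| \;\ge\; \frac{|\alpha\cap Z|}{k} \;\ge\; \frac{|\alpha\cap Z|}{\deg(Z)} \;>\; 1,
\]
using the hypothesis $|\alpha\cap Z|>\deg(Z)$ in the last step.

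Next, I would check that $\dim(Z')>0$. A zero-dimensional irreducible closed set is a single point (by Remark~\ref{too-many-points}), so it can contain at most one element of $\alpha$. Since $|\alpha\cap Z'|>1$, this rules out $\dim(Z')=0$.

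Finally, I would verify the concentration inequality by direct computation. From the pigeonhole bound,
\[
\mu(\alpha,Z') \;=\; \frac{\log|\alpha\cap Z'|}{\dim(Z')} \;\ge\; \frac{\log|\alpha\cap Z| - \log\deg(Z)}{\dim(Z')} .
\]
Since $|\alpha\cap Z|\ge 1$ gives $\log|\alpha\cap Z|\ge 0$ and $\dim(Z')\le\dim(Z)$, the first term is at least $\log|\alpha\cap Z|/\dim(Z) = \mu(\alpha,Z)$; since $\deg(Z)\ge 1$ gives $\log\deg(Z)\ge 0$ and $\dim(Z')\ge 1$, the second term is at least $-\log\deg(Z)$. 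Adding these gives \eqref{eq:3}.

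There is no real obstacle here; the only subtle point is that pigeonhole alone would bound $|\alpha\cap Z'|$ in terms of the number of components, and one must remember that this number is itself bounded by $\deg(Z)$ via Fact~\ref{dimension-degree}\eqref{item:5}. The step where one uses $\dim(Z')\le\dim(Z)$ and $\log|\alpha\cap Z|\ge 0$ together is what makes the loss depend only on $\log\deg(Z)$ rather than on something like $\log\deg(Z)/\dim(Z')$ versus $\log\deg(Z)$ being mishandled in the wrong direction.
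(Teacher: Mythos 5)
Your proof is correct and follows essentially the same route as the paper: pigeonhole over the at most $\deg(Z)$ irreducible components (Fact~\ref{dimension-degree}.\eqref{item:5}), deducing $\dim(Z')>0$ from $|\alpha\cap Z'|>1$, and then splitting the estimate using $\dim(Z')\le\dim(Z)$ with $\log|\alpha\cap Z|\ge 0$ for one term and $\dim(Z')\ge 1$ for the other. No gaps.
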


\begin{proof}
  Since $Z$ has at most $\deg(Z)$ irreducible components (see
  Fact~\ref{dimension-degree}.\eqref{item:5}) there is a component
  $Z'\subseteq Z$ with
  \begin{equation}
    \label{eq:4}
    \big|\alpha\cap Z'\big| \ge
    \frac{|\alpha\cap Z|}{\deg(Z)} >1\;.
  \end{equation}
  In particular we have $\dim(Z')>0$.  We take the logarithm of
  inequality~\eqref{eq:4}, divide the two sides by $\dim(Z')$ and
  rewrite it in terms of concentrations.  Using $\dim(Z')\le\dim(Z)$
  we obtain
  $$
  \mu(\alpha,Z')\ge \frac{\dim(Z)}{\dim(Z')}\mu(\alpha,Z) -
  \frac{\log\big(\deg(Z)\big)}{\dim(Z')} \ge
  $$
  $$
  \ge \mu(\alpha,Z)-\log\big(\deg(Z)\big)
  $$
  as required.
\end{proof}

The proof of Lemma~\ref{irreducible-component-with-large-concentration}
involves a choice.  For proving
Theorem~\ref{pre-main-thm} it will be important to use constructions
that are uniquely determined.  To this end we order the finite set
$\alpha$, and use this order to make the choices unique.  Of course,
$\alpha$-valued sequences and subsets of $\alpha$ can be ordered
lexicographically.

In the rest of the paper we state several existence results.
However, in the proofs we typically use explicit constructions.
When we write that our construction of a subset
(or a tuple of elements, etc.)
is uniquely determined, we understand that
the result of the construction depends uniquely on the input data
(which usually involves an ordered set $\alpha$).

\newcommand{\irredComp} {{\rm irr}}
\begin{lem} \label{something-irreducible-with-large-concentration} For
  all $N>0$ and $\Delta>0$ there are reals
  $B=B_\irredComp(N,\Delta)\ge0$ and $K=K_\irredComp(N,\Delta)\ge0$
  with the following property. \\
  Let $Z\subseteq\Fclosed^m$ be a closed set and
  $\alpha\subseteq\Fclosed^m$ an ordered finite subset.  Suppose that
  $0<\dim(Z)\le N$, $\deg(Z)\le\Delta$ and $|\alpha\cap Z|\ge K$.
  Then there is an irreducible closed subset $Z'\subseteq Z$ such that
  $\dim(Z')>0$, $\deg(Z')\le B$ and
  $$
  \mu(\alpha,Z') \ge \mu(\alpha,Z) - \log(B) \;.
  $$
  Moreover, our construction of $Z'$ is uniquely determined.
\end{lem}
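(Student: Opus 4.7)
The plan is to deduce this lemma directly from Lemma~\ref{irreducible-component-with-large-concentration} by making the choice of component canonical via the given order on $\alpha$. Set $B=\max(\Delta,1)$ and $K=\lceil\Delta\rceil+1$. Then $|\alpha\cap Z|\ge K>\deg(Z)$, so Lemma~\ref{irreducible-component-with-large-concentration} produces an irreducible component $Z'\subseteq Z$ with $\dim(Z')>0$ and
\[
\mu(\alpha,Z')\ge\mu(\alpha,Z)-\log\deg(Z)\ge\mu(\alpha,Z)-\log B.
\]
Since $Z'$ is an irreducible component of $Z$, Fact~\ref{dimension-degree}(\ref{item:5}) yields $\deg(Z')\le\deg(Z)\le\Delta\le B$, so all the quantitative requirements are met. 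Observe that the dimension bound $N$ does not need to enter the definitions of $K$ and $B$: the degree bound $\Delta$ alone controls the number of irreducible components of $Z$, and nothing else is used.

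To enforce uniqueness I would proceed as follows. The irreducible decomposition $Z=Z_1\cup\dots\cup Z_t$ is intrinsic to $Z$, and once $\alpha$ is ordered each trace $\alpha\cap Z_i$ becomes a strictly increasing finite sequence, which can be compared lexicographically. Among the components satisfying the pigeonhole bound $|\alpha\cap Z_i|\ge|\alpha\cap Z|/\deg(Z)$, select the one whose trace is lexicographically smallest. Such a component automatically has positive dimension, because a zero-dimensional irreducible closed set is a single point and hence meets $\alpha$ in at most one element, whereas the pigeonhole bound gives $|\alpha\cap Z_i|\ge K/\Delta>1$. Two distinct components could in principle share the same trace on $\alpha$; I would break any such tie by fixing, once and for all, a canonical ordering of subvarieties of $\Fclosed^m$ of bounded degree — for instance by comparing reduced generators of the defining ideals in some fixed monomial order, using Fact~\ref{dimension-degree}(\ref{item:9}) to bound the degrees of such generators.

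The main obstacle is pure bookkeeping for the uniqueness clause — mathematically the new lemma is contained in the previous one up to the choice mechanism. The quantitative bounds fall out for free, and the proof is essentially just a careful statement of the selection rule.
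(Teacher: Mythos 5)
Your quantitative reduction to Lemma~\ref{irreducible-component-with-large-concentration} is fine as far as it goes, and you correctly spotted the real difficulty: two distinct irreducible components can have the same trace on $\alpha$, so lexicographic minimality of $\alpha\cap Z_i$ does not by itself single out a component. The gap is in your tie-breaking device. Ordering components by ``reduced generators of the defining ideals in a fixed monomial order'' requires comparing coefficients, i.e.\ a total order on $\Fclosed$; no canonical such order exists, and any order imposed by fiat is not preserved by field automorphisms. But the whole point of the ``uniquely determined'' clause is canonicity in the input data: this lemma feeds into Lemma~\ref{project-back-to-G}, Lemma~\ref{try-to-spread-all-over} and the Spreading Theorem, and in Theorem~\ref{main-thm} the paper concludes ``our construction of $H$ will be uniquely determined, therefore it will be $\sigma$-invariant''. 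That inference needs every intermediate construction to commute with the Frobenius map $\sigma$, which fixes $\alpha$ pointwise but acts nontrivially on coefficients. In the tied case the components sharing the trace $\alpha_i$ may be permuted by $\sigma$ (e.g.\ two surfaces conjugate over $\Fq$ meeting in a curve rich in $\alpha$-points), so \emph{no} selection of one of them can be $\sigma$-equivariant; your rule would then output a non-invariant $Z'$ and the downstream argument would break. So the lemma is not ``contained in the previous one up to bookkeeping'' --- the canonical choice mechanism is its actual content.

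This is also why the paper does not claim that $Z'$ is an irreducible component, and why its bounds depend on $N$. In the tied case the paper's proof replaces $Z_i$ by the intersection $Z_{i+1}=\bigcap_j T_j$ of \emph{all} components containing the minimal trace $\alpha_i$ --- a canonical, hence automorphism-invariant, closed set of strictly smaller dimension, still of positive dimension since it contains $\alpha_i$, with degree controlled by a B\'ezout-type induction ($\deg(Z_{i+1})\le\deg(Z_i)^{N+1}$) --- and then recurses. The recursion runs at most $N$ times, which forces $B=\Delta^{(N+1)^N}$ and a correspondingly large $K$; your observation that $N$ ``does not need to enter'' the bounds is a symptom of the missing step rather than a simplification.
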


\begin{proof}
  Let $B=\Delta^{(N+1)^N}$ and set $K>\Delta^{2N(N+1)^N}$. Then 
  \begin{equation}
    \label{eq:5}
    \mu(\alpha,Z)\ge\frac{\log(K)}N>\log\big(\Delta^{2(N+1)^N}\big) \;.
  \end{equation}
  We build by induction a sequence $Z=Z_0\supset Z_1\supset
  Z_2\supset\dots\supset Z_I$ of closed subsets such that
  \begin{equation}
    \label{eq:6}
    \begin{array}{c}
      \Big.0<\dim(Z_{i+1})<\dim(Z_{i})  \;, \\
      \deg(Z_{i+1})\le\deg(Z_{i})^{N+1} \le \Delta^{(N+1)^{i+1}}\;, \\
      \Big.\mu(\alpha,Z_i)\ge\mu(\alpha,Z)- 
      \log\big(\Delta^{i(N+1)^{i-1}}\big) \;.
    \end{array}
  \end{equation}
  for all $0\le i< I$.  Since the dimensions are strictly decreasing,
  such a sequence has length $I+1\le N$.  Suppose $Z_i$ is already
  constructed.  If it is irreducible, we stop the induction and set
  $Z'=Z_i$, the lemma holds in this case.  Otherwise, it follows from
  \eqref{eq:5} and \eqref{eq:6} that $|\alpha\cap
  Z_i|>\Delta^{(N+1)^N}>\deg(Z_i)$ and we may apply
  Lemma~\ref{irreducible-component-with-large-concentration}.  So
  there is an irreducible component $Z_i'\subseteq Z_i$ such that
  $\dim(Z_i')>0$ and
  \begin{equation}
    \label{eq:7}
    \mu(\alpha,Z_i')\ge \mu(\alpha,Z_i) - \log\big(\deg(Z_i)\big) \ge
    \mu(\alpha,Z)- \log\big(\Delta^{(i+1)(N+1)^i}\big) \;.
  \end{equation}
  Of course, there are possibly many choices for $Z_i'$, we choose one
  in such a way that the subset $\alpha_i=\alpha\cap Z_i'$ is
  lexicographically minimal among the possible intersections.  Note
  that $\alpha_i$ is uniquely determined, but $Z_i'$ may not be.  Then
  $\mu(\alpha_i,Z_i')=\mu(\alpha,Z_i')$ and using \eqref{eq:5} and
  \eqref{eq:7} we obtain $|\alpha_i|>\deg(Z_i)^{N+1}$.  If $Z_i'$ is
  the only irreducible component containing $\alpha_i$ then it is
  uniquely determined.  We stop the induction and set $Z'=Z_i'$, the
  lemma holds in this case.

  Otherwise let $T_1,T_2,\dots$ denote those irreducible components of
  $Z_i$ which contain $\alpha_i$ and let $Z_{i+1}=\bigcap^jT_j$ be
  their intersection, this is again uniquely determined.  Clearly
  $\dim(Z_{i+1})<\dim(Z_i)$ and we shall prove that
  $$
  \deg(Z_{i+1})\le\deg(Z_i)^{N+1} \;.
  $$
  In fact it is more convenient to prove a slightly stronger
  statement: for each closed subset $W\subseteq Z_i$ we have
  \begin{equation}
    \label{eq:8}
    \deg(W\cap Z_{i+1})\le\deg(W)\cdot\deg(Z_i)^{\dim(W)} \;.
  \end{equation}
  We prove \eqref{eq:8} by induction on $\dim(W)$, it obviously holds
  for $\dim(W)=0$.  Assume for a moment that $W$ is irreducible.  If
  it is contained in all $T_j$ then $W\cap Z_{i+1}=W$ and \eqref{eq:8}
  holds.  On the other hand, if say $W\not\subseteq T_1$ then
  $W'=W\cap T_1$ has smaller dimension, hence satisfies the analogue
  of \eqref{eq:8}.  But
  $\deg(W')\le\deg(W)\deg(T_1)\le\deg(W)\deg(Z_i)$, so we have
  $$
  \deg(W\cap Z_{i+1})=\deg(W'\cap Z_{i+1})\le
  $$
  $$
  \le \deg(W')\deg(Z_i)^{\dim(W)-1}\le \deg(W)\deg(Z_i)^{\dim(W)} \;.
  $$
  as we promised.  In order to complete the induction step for a
  reducible $W$ we simply add up the analogous inequalities for each
  component of $W$.

  Then $\dim(Z_{i+1})>0$ by Remark~\ref{too-many-points}.  Now we have
  $$
  \mu(\alpha,Z_{i+1}) = \mu(\alpha_i,Z_{i+1}) > \mu(\alpha_i,Z_{i}') =
  \mu(\alpha,Z_i') \;,
  $$
  hence $Z_{i+1}$ satisfies \eqref{eq:6}.  As we noted earlier, the
  induction must stop in at most $N$ steps, which proves the lemma.
\end{proof}

Next we show that the concentration in a closed set $X$
does not decrease too much 
when we map $X$ somewhere by a ``nice'' morphism.

\begin{lem} \label{constant-fibre-dimension} Let
  $Z\subseteq\Fclosed^m$ be an irreducible closed set,
  $\alpha\subset\Fclosed^m$ an ordered nonempty finite set and $f:Z\to
  \Fclosed^l$ a morphism such that
  $$
  \dim(Z)>\dim\big(\cl{f(Z)}\big)>0
  $$
  and
  $$
  \dim\big(Z\big) = \dim\big(\cl{f(Z)}\big)+\dim\big(f^{-1}(t)\big)
  $$
  for all $t\in f(\alpha\cap Z)$.
  Then there is a fibre $S=f^{-1}(s)$, $s\in f(\alpha\cap Z)$
  such that
  for each value (negative, positive or $0$) of the parameter $\e$
  one has
  \begin{equation}
    \label{eq:9}
    \left\{
    \begin{array}{lccl}
      \text{either\kern10pt} &
      \mu\big(f(\alpha\cap Z),\cl{f(Z)}\big) &\ge&
       \mu(\alpha,Z) - \e\dim(S) \\
      \text{or} &
      \mu\big(\alpha,S\big) &\ge&
      \mu(\alpha,Z) + \e\dim\big(\cl{f(Z)}\big)
    \end{array}
    \right.
  \end{equation}
  Moreover, our construction of $S$ is uniquely determined.
\end{lem}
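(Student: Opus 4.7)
The plan is a clean pigeonhole plus a two-line algebraic manipulation. Let me set $n=|\alpha\cap Z|$, $k=|f(\alpha\cap Z)|$, $d_1=\dim\bigl(\cl{f(Z)}\bigr)$ and $d_2=\dim(Z)-d_1$. By hypothesis every fibre $f^{-1}(t)$ with $t\in f(\alpha\cap Z)$ has dimension exactly $d_2$, so the values $\dim(S)=d_2$ and $\dim\bigl(\cl{f(Z)}\bigr)=d_1$ appearing in \eqref{eq:9} are genuine constants that do not depend on the chosen $s$.

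First I would pick $s\in f(\alpha\cap Z)$ that maximises $|\alpha\cap f^{-1}(s)|$, and break ties by choosing the lexicographically smallest $\alpha\cap f^{-1}(s)$ among the maximisers (using the given order on $\alpha$). Since distinct values of $s$ give disjoint nonempty subsets of $\alpha$, this $s$ (hence $S=f^{-1}(s)$) is uniquely determined. Pigeonhole gives $|\alpha\cap S|\ge n/k$.

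Next, for an arbitrary parameter $\e$, the two candidate conclusions in \eqref{eq:9} turn, after taking logarithms and rearranging, into
\begin{eqnarray*}
\text{(A)}&& \frac{\log k}{d_1}\ \ge\ \frac{\log n}{d_1+d_2}-\e\,d_2 \\
\text{(B)}&& \frac{\log n-\log k}{d_2}\ \ge\ \frac{\log n}{d_1+d_2}+\e\,d_1
\end{eqnarray*}
(for (B) I am using the weaker bound $|\alpha\cap S|\ge n/k$). A direct calculation multiplying out by $d_1d_2(d_1+d_2)$ shows that (A) and (B) are in fact equivalent after replacing one inequality by its negation; more precisely, failure of (A) is exactly $\log k<\tfrac{d_1}{d_1+d_2}\log n-\e\, d_1d_2$, which when subtracted from $\log n$ and divided by $d_2$ gives precisely (B). Hence for each $\e$ at least one of (A) and (B) holds, and the same $S$ chosen in the previous step works uniformly.

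I do not foresee any real obstacle; the only thing to be careful about is the uniqueness clause, which the ordering of $\alpha$ handles, and the observation that the dichotomy (A)/(B) is $\e$-uniform, so one single fibre $S$ serves for every value of $\e$ simultaneously.
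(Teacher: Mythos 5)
Your proposal is correct and is essentially the paper's own argument: you choose the same fibre $S$ (maximal $|\alpha\cap f^{-1}(s)|$, with lexicographic tie-breaking for uniqueness) and use the same counting inequality $|\alpha\cap Z|\le|f(\alpha\cap Z)|\cdot|\alpha\cap S|$. The only cosmetic difference is in the last step, where the paper phrases the dichotomy as a weighted arithmetic-mean bound while you verify directly that the failure of (A) forces (B) — the same arithmetic in contrapositive form.
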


Note that if all nonempty fibres of $f$ have the same dimension,
then the condition
$\dim\big(Z\big) = \dim\big(\cl{f(Z)}\big)+\dim\big(f^{-1}(t)\big)$
is satisfied (see Fact~\ref{closed-set-constructions}.\eqref{item:14}).
Note also that $S$ is a closed set with $\deg(S)\le\deg(f)$
by Fact~\ref{closed-set-constructions}.\eqref{item:12}.

\begin{proof}
  Let us consider those fibres $f^{-1}(t)$ where the number of points
  $\big|\alpha\cap f^{-1}(t)\big|$ is maximal, and
  let $S=f^{-1}(s)$ be the one among them for which the subset
  $\alpha\cap S\subseteq\alpha$ is lexicographically minimal.
  Then by assumption we have
  $$
  0<\dim(S)=
  \dim(Z)-\dim\big(\cl{f(Z)}\big)<\dim(Z) \;.
  $$
  We have
  $$
  \big|\alpha\cap Z\big| =
  \sum_{t\in f(\alpha\cap Z)}\big|\alpha\cap f^{-1}(t)\big| \;,
  $$
  hence
  $$
  \big|\alpha\cap Z\big| \le
  \big|f(\alpha\cap Z)\big|\cdot \big|\alpha\cap S\big|
  $$
  We take the logarithm of our inequality 
  and rewrite it in terms of concentrations:
  $$
  \mu(\alpha,Z)\cdot\dim(Z) \le
  \mu\big(f(\alpha\cap Z),\cl{f(Z)}\big)\cdot\dim(\cl{f(Z)}) +
  \mu\big(\alpha,S\big)\cdot\dim(S)
  $$
  We divide both sides by $\dim(Z)$ and we introduce two extra terms
  involving $\e$ on the right hand side
  which cancel each other:
  $$
  \mu(\alpha,Z) \;\le
  \scriptstyle{
    \Big[\mu\big(f(\alpha\cap Z),\cl{f(Z)}\big) + \e\dim(S)\Big]
    \Frac{\dim(\cl{f(Z)})}{\dim(Z)} \ +\ 
    \Big[\mu\big(\alpha,S\big) - \e\dim\big(\cl{f(Z)}\big)\Big]
    \Frac{\dim(S)}{\dim(Z)}
  }
  $$
  On the right hand side we see a weighted arithmetic mean
  of the two expressions in square brackets.
  We can certainly bound it it from above
  with the larger of them,
  which justifies our statement.
\end{proof}

The following extension of Lemma~\ref{constant-fibre-dimension}
is our basic tool for transporting
large concentration from one subset to another.
The idea is that if the transport fails than we get an even larger
concentration somewhere inside the first subset.

\newcommand{\transport} {{\rm transport}}
\begin{lem} [Transport]
  \label{Transport}
  For all $\Delta>0$
  there is a real $B=B_\transport(\Delta)\ge0$ 
  with the following property. 
  Let $X$ be an affine algebraic set, $Z\subseteq X$ a closed subset and 
  $f:X\to\Fclosed^m$ be a morphism with 
  $\deg(Z)\le\Delta$, $\deg(f)\le\Delta$ and $\dim\big(\cl{f(Z)}\big)>0$.
  Suppose that $Z$ is irreducible.
  Then for all ordered finite subsets $\alpha\subseteq X$
  and all $\e\ge0$ either
  \begin{equation}
    \label{eq:10}
    \mu\big(f(\alpha),\cl{f(Z)}\big)\ge
    \mu(\alpha,Z)-\log(B)-\e\cdot\dim(Z)
  \end{equation}
  or there is a closed subset
  $S\subset Z$ such that
  $\deg(S)\le B$,\\ $0<\dim(S)<\dim(Z)$ and
  \begin{equation}
    \label{eq:11}
    \mu(\alpha,S) \ge
    \mu(\alpha,Z) - \log(B) + \e \;.
  \end{equation}
  Moreover, our construction of $S$ is uniquely determined.
\end{lem}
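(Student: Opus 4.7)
The plan is to reduce Lemma~\ref{Transport} to the ``constant fibre dimension'' dichotomy of Lemma~\ref{constant-fibre-dimension}. Two issues obstruct a direct application: first, the hypothesis $\dim(Z)>\dim\big(\cl{f(Z)}\big)$ may fail; second, some elements of $\alpha\cap Z$ may land in fibres of larger than minimal dimension, violating the constant-fibre-dimension hypothesis. I would handle both by first cutting out a closed ``bad locus'' where fibres jump, and then splitting into cases according to how $\alpha$ meets that bad locus, handling one case with Lemma~\ref{something-irreducible-with-large-concentration} and the other with Lemma~\ref{constant-fibre-dimension}.

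First I would dispose of the trivial case: if $\mu(\alpha,Z)\le\log B+\e\dim(Z)$, then alternative~\eqref{eq:10} is automatic, because $f(\alpha\cap Z)\subseteq f(\alpha)\cap\cl{f(Z)}$ is nonempty and hence $\mu\big(f(\alpha),\cl{f(Z)}\big)\ge 0$. So assume otherwise. Next, apply Fact~\ref{closed-set-constructions}.\eqref{item:14} to the restricted morphism $f|_Z$: this yields an open dense subset $Z_{\min}\subseteq Z$ on which every fibre of $f$ attains the minimal dimension $\dim(Z)-\dim\big(\cl{f(Z)}\big)$, with closed complement $T=Z\setminus Z_{\min}$ satisfying $\dim(T)<\dim(Z)$ and $\deg(T)$ bounded in terms of $\Delta$. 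Then split according to whether $|\alpha\cap T|$ or $|\alpha\cap Z_{\min}|$ accounts for at least half of $|\alpha\cap Z|$.

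In the first case ($|\alpha\cap T|\ge|\alpha\cap Z|/2$), apply Lemma~\ref{something-irreducible-with-large-concentration} to $T$ with the restricted order on $\alpha$: this yields an irreducible closed subset $S\subseteq T$ with $0<\dim(S)\le\dim(T)<\dim(Z)$, bounded degree, and $\mu(\alpha,S)\ge\mu(\alpha,T)-\log B_\irredComp$. Since $\dim(T)\le\dim(Z)-1$, a direct calculation yields $\mu(\alpha,T)\ge\mu(\alpha,Z)+\mu(\alpha,Z)/(\dim(Z)-1)-\log 2$; combining this with the lower bound $\mu(\alpha,Z)\ge\e\dim(Z)$ guaranteed by the trivial-case reduction gives alternative~\eqref{eq:11}. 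In the second case ($|\alpha\cap Z_{\min}|\ge|\alpha\cap Z|/2$), let $\alpha'=\alpha\cap Z_{\min}$ carry the inherited order; now every fibre over $f(\alpha')$ has the minimal dimension by the defining property of $Z_{\min}$. If $\dim\big(\cl{f(Z)}\big)=\dim(Z)$ these fibres are finite of size at most $\deg(f)\le\Delta$, so $|f(\alpha)\cap\cl{f(Z)}|\ge|\alpha\cap Z|/(2\Delta)$, giving alternative~\eqref{eq:10} directly. Otherwise the hypotheses of Lemma~\ref{constant-fibre-dimension} are satisfied for the pair $(f,Z)$ with $\alpha'$; its dichotomy produces a fibre $S=f^{-1}(s)$ with $0<\dim(S)<\dim(Z)$ and $\deg(S)\le\deg(f)\le\Delta$, and after absorbing the $\log 2$ loss from $|\alpha'|\ge|\alpha\cap Z|/2$ into $\log B$ one gets alternative~\eqref{eq:10} or~\eqref{eq:11} of Transport, noting that $\dim\big(\cl{f(Z)}\big)\ge 1$ converts the $\e\dim\big(\cl{f(Z)}\big)$ gain of Lemma~\ref{constant-fibre-dimension} into the $+\e$ gain required by~\eqref{eq:11}, and that $\dim(S)\le\dim(Z)$ converts the loss $-\e\dim(S)$ into $-\e\dim(Z)$ for~\eqref{eq:10}.

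The main obstacle is calibrating $B$ so that all the multiplicative losses ($\log B_\irredComp$, $\log\Delta$, and the $\log 2$'s from the halving thresholds) fit inside $\log B$, while in the first case the gain $\mu(\alpha,Z)/(\dim(Z)-1)$ coming from the strict dimension drop dominates $\e$; this is precisely what the trivial-case reduction arranges. Uniqueness of $S$ follows from the uniqueness clauses of Lemmas~\ref{constant-fibre-dimension} and~\ref{something-irreducible-with-large-concentration}, together with the explicit (hence canonical) partition produced by Fact~\ref{closed-set-constructions}.\eqref{item:14}.
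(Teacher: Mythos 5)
Your proposal is correct and follows essentially the same route as the paper's proof: split $\alpha\cap Z$ according to whether at least half of it lies over fibres of minimal dimension (then apply Lemma~\ref{constant-fibre-dimension}, or a finite-fibre count when $\dim(Z)=\dim\big(\cl{f(Z)}\big)$) or in the closed fibre-jump locus supplied by Fact~\ref{closed-set-constructions}.\eqref{item:14} (then output that locus, whose dimension drop beats $\e$ thanks to the largeness of $\mu(\alpha,Z)$). The only cosmetic differences are that the paper deals with $\e$ at the end of the jump-locus case (if $\e>\mu(\alpha,Z)/\dim(S)$ then \eqref{eq:10} holds because its right-hand side is negative) instead of through your $\e$-dependent trivial-case threshold, and it takes the jump locus itself as $S$, so your extra pass to an irreducible subset via Lemma~\ref{something-irreducible-with-large-concentration} is unnecessary since \eqref{eq:11} does not require $S$ to be irreducible.
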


Note, that the condition $\dim\big(\cl{f(Z)}\big)>0$ implies that
$\dim(Z)>0$, hence the concentrations appearing in the lemma are
defined.

\begin{proof}
  To simplify notation we replace $\alpha$ with $\alpha\cap Z$,
  $X$ with $Z$, $\Delta$ with $\Delta^2$ 
  (see Fact~\ref{closed-set-constructions}.\eqref{item:15})
  and $f$ with its restriction to $Z$,
  then $\alpha\subseteq Z$.
  If $\alpha=\emptyset$ then \eqref{eq:10} holds
  automatically since the right hand side is $-\infty$.
  So we assume $\alpha\neq\emptyset$. 
  This implies that $f(\alpha)\neq\emptyset$,
  hence the left hand side of \eqref{eq:10} is non-negative.
  If $\mu(\alpha,Z)\le\log(B)$
  then inequality \eqref{eq:10} obviously holds since the right hand
  side is nonpositive.
  So we assume $\mu(\alpha,Z)>\log(B)$
  which implies $\big|\alpha\big|>B$.

  First we prove a special case:
  \begin{equation}
    \label{eq:12}
    \parbox{300pt}
    {If \ \ $\dim\big(f^{-1}(t)\big)=\dim(Z)-\dim\big(\cl{f(Z)}\big)$
      for all $t\in f(\alpha)$ then the lemma is true
      with any \ \ $B\ge1+\Delta$.}
  \end{equation}
  If $\dim(Z)>\dim\big(\cl{f(Z)}\big)$ then we apply
  Lemma~\ref{constant-fibre-dimension} with parameter $\e$.
  We get a fibre $S=f^{-1}(s)$ satisfying \eqref{eq:9}.
  Since $\e\ge0$, we may replace $\e\dim(\cl{f(Z)})$ with $\e$ and
  $\e\dim(S)$ with $\e\dim(Z)$,
  hence either \eqref{eq:10} or \eqref{eq:11} holds for any $B\ge 1$.
  By Fact~\ref{closed-set-constructions}.\eqref{item:12}
  $S=f^{-1}(s)$ is closed and $\deg(S)\le\Delta$,
  hence \eqref{eq:12} is proved in this case.

  On the other hand, if $\dim(Z)=\dim\big(\cl{f(Z)}\big)$
  (and we are still in the special case of \eqref{eq:12}),
  then all points of $\alpha$ are contained in finite fibres of $f$,
  and the number of points in each finite fibre is at most $\deg(f)\le\Delta$
  (see Fact~\ref{closed-set-constructions}.\eqref{item:12}).
  Hence
  $$
  \mu\Big(f(\alpha),\cl{f(Z)}\Big) =
  \frac{\log\big|f(\alpha)\big|}{\dim\big(\cl{f(Z)}\big)} \ge
  \frac{\log\Big(\big|\alpha\big|\big/\Delta\Big)}{\dim(Z)} \ge
  \mu(\alpha,Z) - \log(\Delta)\;,
  $$
  and therefore \eqref{eq:10} holds for any $B\ge\Delta$.
  The special case \eqref{eq:12} is proved.

  Next we prove the lemma in full generality.
  We define the following subset:
  $$
  \alpha' = \Big\{ z\in\alpha\;\Big|\;
  \dim\big(f^{-1}(f(z))\big) = \dim(Z)-\dim\big(\cl{f(Z)}\big)
  \Big\} \;.
  $$
  First we deal with the case
  $|\alpha'|\ge\big|\alpha\big|/2$.
  We have
  $$
  \mu(\alpha',Z) =
  \frac{\log\big|\alpha'\big|}{\dim(Z)} \ge
  \frac{\log\big|\alpha\big|-\log(2)}{\dim(Z)} \ge
  \mu(\alpha,Z) - \log(2)\;.
  $$
  We apply the special case \eqref{eq:12} of the lemma
  to $\alpha'$ and $Z$.
  We obtain that either
  $$
  \mu\big(f(\alpha),\cl{f(Z)}\big) \ge
  \mu\big(f(\alpha'),\cl{f(Z)}\big) \ge
  $$
  $$
  \ge
  \mu(\alpha',Z)-\log(1+\Delta)-\e\cdot\dim(Z) \ge
  $$
  $$
  \ge
  \mu(\alpha,Z)-\log(2+2\Delta)-\e\cdot\dim(Z) \;,
  $$
  or there is a closed subset $S\subset Z$ such that
  $\deg(S)\le1+\Delta$, $0<\dim(S)<\dim(Z)$ and
  $$
  \mu(\alpha,S)\ge
  \mu(\alpha',S)\ge
  \mu(\alpha',Z)-\log(1+\Delta) +\e \ge
  $$
  $$
  \ge
  \mu(\alpha,Z)-\log(2+2\Delta)+\e\;.
  $$
  The lemma holds in this case with any $B\ge2+2\Delta$.

  In the remaining case we have
  $\big|\alpha'\big| < \big|\alpha\big|/2$.
  Setting
  $$
  S = \Big\{ z\in Z\;\Big|\;
  \dim\big(f^{-1}(f(z))\big) > \dim(Z)-\dim\big(\cl{f(Z)}\big)
  \Big\}
  $$
  we have $\big|\alpha\cap S\big|>\Frac12\big|\alpha\big|$.

  The irreducibility of $Z$ implies 
  (see Fact~\ref{closed-set-constructions}.\eqref{item:14} and
  Fact~\ref{dimension-degree}.\eqref{item:7})
  that $S$ is a closed subset of $Z$ and
  $\dim(S)<\dim(Z)$, $\deg(S)\le\Delta'$
  with a certain bound $\Delta'=\Delta'\big(\dim(Z),\Delta\big)$. 
  We set
  $$
  B=B_\transport(\Delta) =\max\big(2+2\Delta,2\Delta'\big) \;.
  $$ 
  Then the set $S$ has 
  at least $|\alpha\cap S|>|\alpha|/2\ge B/2\ge\Delta'$ points,
  hence $\dim(S)>0$
  (see Remark~\ref{too-many-points}).
  Therefore $\mu(\alpha,S)$ is defined and we can write:
  $$
  \mu(\alpha,S) = \frac{\log|\alpha\cap S|}{\dim(S)} \ge
  \frac{\log|\alpha|-\log(2)}{\dim(S)} \ge
  $$
  $$
  \ge \frac{\dim(Z)}{\dim(S)}\mu(\alpha,Z) -
  \log(2) \ge 
  \mu(\alpha,Z)-\log(B)+\frac{\mu(\alpha,Z)}{\dim(S)} \;.
  $$
  We compare now the last term to $\e$. 
  If $\e\le\frac{\mu(\alpha,Z)}{\dim(S)}$
  then inequality~\eqref{eq:11} holds.
  On the other hand, for
  $\e>\frac{\mu(\alpha,Z)}{\dim(S)}\ge\frac{\mu(\alpha,Z)}{\dim(Z)}$
  the inequality~\eqref{eq:10} holds, since its right hand side 
  becomes negative.
  We proved the lemma in all cases.
\end{proof}

\section{Closed sets in groups}
\label{sec:closed-sets-groups}

\begin{defn} \label{G-notations}
  A \emph{linear algebraic group}
  is a closed subgroup $G\le GL(n,\Fclosed)$.
  We use this matrix realisation of $G$ to calculate degrees
  of closed subsets.
  We shall denote by $\mult(G)$ and $\inv(G)$
  the degrees of the morphisms $(g,h)\to gh$ and $g\to g^{-1}$.
\end{defn}

As usual, $\CZ(G)$, $[G,G]$ and $G^0$
denote the centre, the commutator subgroup and the unit component of $G$,
and for any subset $A\subseteq G$
we denote by
$\Span A$, $\CN_G(A)$ and $\CC_G(A)$
the generated subgroup, the normaliser and the centraliser of $A$.
The subgroup $\CC_G(A)^0$ is usually called
the \emph{connected centraliser} of $A$.
We shall often use products of several elements and subsets in the
usual sense. In order to distinguish from this kind of product,
the $m$-fold direct product of a subset $\alpha\subseteq G$ is
denoted by $\Prod^m\alpha\subseteq\Prod^mG$.

\begin{defn}
  Let $\alpha\subseteq GL(n,\Fclosed)$ be an ordered finite subset.
  This ordering extends to an ordering of the subgroup $\Span\alpha$ 
  (hence to $\alpha^i$ for all $i$) in a natural way.
  We shall use this extension without further reference.
\end{defn}

\begin{rem}
  We measure the complexity of a closed subset $X\subseteq\Fclosed^m$
  with two numerical invariants: $\dim(X)$ and $\deg(X)$.
  In contrast, we measure the complexity of a
  closed subgroup $G\le GL(n,\Fclosed)$ with four numerical invariants:
  $\dim(G)$, $\deg(G)$, $\mult(G)$ and $\inv(G)$.
  In order to reduce the number of variables to two, say $N$ and $\Delta$,
  we shall consider groups $G$ with
  $\dim(G)\le N$, $\deg(G)\le\Delta$, $\mult(G)\le\Delta$ and
  $\inv(G)\le\Delta$.
\end{rem}

It can be tiresome to bound all four numerical invariants of $G$.
By the following proposition
in most cases it is enough to bound only $\dim(G)$ and $\deg(G)$.

\begin{prop} \label{bounding-subgroups}
  Let $G$ be a linear algebraic group and $H\le G$ a closed subgroup.
  Then $\mult(H)\le\deg(H)^2\cdot\mult(G)$ and $\inv(H)\le\deg(H)\cdot\inv(G)$.
  In particular, if $G=GL(n,\Fclosed)$ then we have
  $\mult(H)\le\deg(H)^2\cdot2^{n^2}$ and $\inv(H)\le\deg(H)\cdot(n+1)^{n^2}$.
\end{prop}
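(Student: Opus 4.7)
The plan is to express each group-theoretic graph on $H$ as an intersection with the corresponding graph on $G$, then apply the estimate $\deg(X \cap Y) \le \deg(X)\deg(Y)$ from Fact~\ref{dimension-degree}.\eqref{item:6}. Fix the matrix realisation of $G \le GL(n,\Fclosed)$ inside an affine space $\Fclosed^m$, so that both $H$ and $G$ are locally closed in $\Fclosed^m$. The multiplication graph $\Gamma_H^{\mu}\subseteq H\times H\times H$ and the inversion graph $\Gamma_H^{\iota}\subseteq H\times H$ then sit inside $\Fclosed^{3m}$ and $\Fclosed^{2m}$ respectively, as do their $G$-counterparts.

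The key observation is that the subgroup axioms force the identities
$$
\Gamma_H^{\mu} \;=\; \Gamma_G^{\mu} \,\cap\, \bigl(H \times H \times \Fclosed^{m}\bigr),
\qquad
\Gamma_H^{\iota} \;=\; \Gamma_G^{\iota} \,\cap\, \bigl(H \times \Fclosed^{m}\bigr).
$$
The inclusions $\subseteq$ are immediate from $H \subseteq G$; the reverse inclusions rely on the subgroup property ($g_1 g_2 \in H$ whenever $g_1, g_2 \in H$, and $g^{-1} \in H$ whenever $g \in H$), which forces the factor we have relaxed to $\Fclosed^m$ to land in $H$ automatically. Since $\Fclosed^m$ has degree $1$, Fact~\ref{dimension-degree}.\eqref{item:6} yields $\deg(H \times H \times \Fclosed^m) = \deg(H)^2$ and $\deg(H \times \Fclosed^m) = \deg(H)$, and the intersection bound then gives
$$
\mult(H) \;\le\; \mult(G)\deg(H)^2, \qquad \inv(H) \;\le\; \inv(G)\deg(H),
$$
which is the general statement.

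For the particular case $G = GL(n,\Fclosed)$, realised as an open subset of the matrix space $\Fclosed^{n^2}$, it remains to bound $\mult(GL(n))$ and $\inv(GL(n))$ directly. The closure of $\Gamma_{GL(n)}^{\mu}$ in $\Fclosed^{3n^2}$ is the variety $\{(A,B,C) : C - AB = 0\}$, cut out by the $n^2$ scalar quadratic equations $C_{ij} - \sum_k A_{ik}B_{kj} = 0$; this is a complete intersection of codimension $n^2$, so B\'ezout gives degree $2^{n^2}$, and by Fact~\ref{dimension-degree}.\eqref{item:3} this equals $\mult(GL(n))$. Similarly, the closure of $\Gamma_{GL(n)}^{\iota}$ in $\Fclosed^{2n^2}$ is $\{(A,B) : AB - I = 0\}$ (here $AB = I$ already forces $A,B \in GL(n)$), again cut out by $n^2$ quadrics, of degree at most $2^{n^2}$, which is bounded by $(n+1)^{n^2}$ for all $n\ge 1$. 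There is no genuine obstacle in the argument; the only place requiring care is the verification of the two intersection identities above, and it is precisely here that the subgroup hypothesis on $H$ is used.
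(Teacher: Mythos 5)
Your proof is correct and is essentially the paper's own argument: writing the graph of the restricted multiplication (resp.\ inversion) as $\Gamma^{\mu}_G\cap(H\times H\times\Fclosed^m)$ (resp.\ $\Gamma^{\iota}_G\cap(H\times\Fclosed^m)$) and applying the product and intersection degree bounds of Fact~\ref{dimension-degree}.\eqref{item:6} is precisely the content of Fact~\ref{closed-set-constructions}.\eqref{item:15}, which together with Fact~\ref{dimension-degree}.\eqref{item:6} is what the paper cites. Your explicit computation for $G=GL(n,\Fclosed)$ is consistent with the stated constants (it even gives $2^{n^2}\le(n+1)^{n^2}$ for inversion); just note that the bound $2^{n^2}$ for a locus cut out by $n^2$ quadrics comes from iterating the refined B\'ezout inequality (Proposition~\ref{Bezout-variant}) over the defining equations, not from Fact~\ref{dimension-degree}.\eqref{item:9} as literally stated, which would only give an exponent equal to the ambient dimension.
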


\begin{proof}
  Follows immediately from
  Fact~\ref{closed-set-constructions}.\eqref{item:15}
  and Fact~\ref{dimension-degree}.\eqref{item:6}.
\end{proof}

\begin{fact} \label{degree-of-group-theoretic-maps}
  Let $G$ be a linear algebraic group.
  Suppose that $f:\Prod^mG\to\Prod^nG$ is a morphism
  for some integers $m,n>0$
  whose $n$ coordinates are all defined to be
  product expressions (evaluated in the group $G$)
  of length at most $k$ of some fixed group elements,
  the $m$ variables and their inverses.
  Then $\deg\big(\cl{f(G)}\big)\le\deg(f)\le\inv(G)^{l}\mult(G)^{n(k-1)}$
  where $l\le nk$ denotes the total number of times inverted variables
  occur in the $n$ expressions
  (see Fact~\ref{closed-set-constructions}.\eqref{item:11}).
  If the product expressions do not contain the inverse of the variables
  then of course the bound does not depend on $\inv(G)$.
\end{fact}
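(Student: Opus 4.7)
The first inequality $\deg(\cl{f(G)}) \le \deg(f)$ is a direct instance of Fact~\ref{closed-set-constructions}.\eqref{item:11}, so the substance of the fact is the bound $\deg(f) = \deg(\Gamma_f) \le \inv(G)^{l}\mult(G)^{n(k-1)}$.

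The plan is to realise $f$ by a ``straight-line program'' of elementary group operations and to track the graph of $f$ through this composition. An evaluation of $f$ first performs $l$ inversions (one per occurrence of an inverted variable, producing auxiliary values $y_j = x_{t_j}^{-1}$), and then performs at most $n(k-1)$ multiplications to combine the $k_i \le k$ factors of each of the $n$ coordinate expressions $\phi_i$ (each $\phi_i$ requiring $k_i - 1$ multiplications). Loading fixed group elements and copies of variables contributes nothing in either count.

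To exploit this decomposition I would introduce an enlarged ambient affine space with one extra factor of $G$ for each intermediate value and form the extended graph $\tilde\Gamma_f$ recording all of them. This $\tilde\Gamma_f$ is cut out by exactly $l + n(k-1)$ elementary conditions: each inversion condition $y_j = x_{t_j}^{-1}$, viewed in the enlarged space as the graph of $g \mapsto g^{-1}$ in two of the $G$-factors and free on all others, has degree $\inv(G)$ by the product formula of Fact~\ref{dimension-degree}.\eqref{item:6}; similarly each multiplication condition $z = ab$ has degree $\mult(G)$. Iterated application of the intersection bound $\deg(X \cap Y) \le \deg(X)\deg(Y)$ from the same Fact then yields $\deg(\tilde\Gamma_f) \le \inv(G)^{l}\mult(G)^{n(k-1)}$. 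The graph $\Gamma_f$ itself is the image of $\tilde\Gamma_f$ under the linear projection that forgets the intermediate coordinates; because this projection restricts to a bijection from $\tilde\Gamma_f$ onto $\Gamma_f$ (the intermediate values are uniquely determined by the inputs), a final application of Fact~\ref{closed-set-constructions}.\eqref{item:11} gives $\deg(\Gamma_f) \le \deg(\tilde\Gamma_f)$, closing the bound.

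The main obstacle will be bookkeeping: laying out the enlarged ambient affine space and labelling the coordinate factors so that each elementary condition is literally a copy of the inversion or multiplication graph in two or three of the $G$-factors with free factors everywhere else, so that Fact~\ref{dimension-degree}.\eqref{item:6} applies without extraneous multiplicative terms. Once this scaffolding is in place, the iterated Bezout gives the stated bound; the final sentence of the fact is automatic because ``no inverses'' means $l=0$, making $\inv(G)$ drop out.
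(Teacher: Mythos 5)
The paper states this as a Fact with no proof (only the pointer to Fact~\ref{closed-set-constructions}.\eqref{item:11} for the first inequality), so there is no argument of the authors to compare yours against; your straight-line-program strategy is the natural one. However, as written, your central degree computation has a gap. You enlarge the ambient space by ``one extra factor of $G$ for each intermediate value'' and claim that the cylinder over an elementary condition, e.g.\ the inversion graph placed in two of the $G$-factors and ``free on all others'', has degree $\inv(G)$ by the product formula. It does not: Fact~\ref{dimension-degree}.\eqref{item:6} gives $\deg\big(\Gamma_{\rm inv}\times\Prod^{r}G\big)=\inv(G)\cdot\deg(G)^{r}$, where $r$ is the number of free $G$-factors, and iterating B\'ezout then produces $\inv(G)^{l}\mult(G)^{n(k-1)}\cdot\deg(G)^{(\dots)}$ rather than the stated bound. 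To avoid the extraneous factors you must take the cylinders inside the product of the ambient \emph{affine spaces} (degree-one free factors), and you then owe the verification that the intersection of these cylinders is exactly your extended graph $\tilde\Gamma_f$: this works because $\Gamma_{\rm inv}\subseteq G\times G$ and $\Gamma_{\rm mult}\subseteq G\times G\times G$ already force each participating coordinate to lie in $G$ --- but only provided every input and output coordinate of the enlarged space participates in at least one inversion or multiplication condition.

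That proviso is not cosmetic, and it is exactly where your remark that ``copies of variables contribute nothing'' hides a problem: a coordinate expression consisting of a bare variable (likewise an unused variable) is anchored to $G$ by nothing, and in that degenerate situation the literal bound can even fail --- for $f={\rm id}:G\to G$ with $G=SL(2,\Fclosed)$ written as the length-one word $x_1$ one has $l=0$ and $n(k-1)=0$, yet $\deg(f)=\deg(G)=2>1$. The remedy, consistent with ``length at most $k$'', is to pad such words with a fixed identity factor, so that each contributes one multiplication condition (and $\mult(G)\ge\deg(G)$, as one sees by intersecting $\Gamma_{\rm mult}$ with the linear condition that the middle coordinate be the identity), or equivalently to add an explicit membership condition of degree $\deg(G)$ for each unanchored coordinate. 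Your final step is fine: projecting $\tilde\Gamma_f$ to the input--output coordinates and invoking Fact~\ref{closed-set-constructions}.\eqref{item:11} bounds $\deg\big(\cl{\Gamma_f}\big)=\deg(\Gamma_f)$ by $\deg(\tilde\Gamma_f)$, since the graph of that coordinate projection restricted to $\tilde\Gamma_f$ is an injective affine-linear image of $\tilde\Gamma_f$ and such images preserve degree (the warning of Remark~\ref{category-of-closed-sets} concerns general isomorphisms, not affine-linear ones).
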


\begin{defn} \label{tau}
  Let $G$ be a linear algebraic group.
  For all $m>0$ and
  for each sequence $\ug=(g_1,g_2,\dots, g_m)$, $g_i\in G$ 
  we define the morphism
  $$
  \tau_\ug:\Prod^{m}G\to G\;,
  $$
  $$
  \tau_{\ug}(a_1,\dots, a_m) =
  (g_1^{-1}a_1g_1)(g_2^{-1}a_2g_2)\dots(g_m^{-1}a_mg_m) \;,
  $$
\end{defn}

\begin{rem} \label{tau-rem}
  Let $G$ be a linear algebraic group and
  $\ug=(g_1,g_2,\dots, g_m)$ any sequence.
  Suppose that $\dim(G)\le N$, $\deg(G)\le\Delta$ and $\mult(G)\le\Delta$
  for certain values $N$ and $\Delta$.
  According to Fact~\ref{degree-of-group-theoretic-maps}
  there is a common upper bound on the degrees:
  $$
  \deg\big(\tau_\ug\big) \le
  \Delta_\tau\big(m,N,\Delta\big) \;.
  $$
  In fact, it is easy to see that conjugation by $g_i$ is a linear
  transformation hence $\deg(\tau_\ug)\le\mult(G)^{m-1}\le\Delta^{m-1}$.
\end{rem}

\begin{fact} \label{closure-of-products}
  Let $G$ be a connected linear algebraic group
  and $A,B\subseteq G$ arbitrary subsets.
  Then
  $$
  AB\subseteq\cl{A}\;\cl{B}\subseteq\cl{AB} \;.
  $$
\end{fact}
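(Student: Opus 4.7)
The first inclusion $AB\subseteq\cl{A}\;\cl{B}$ is immediate from $A\subseteq\cl{A}$ and $B\subseteq\cl{B}$, so there is nothing to do here (and this part does not even use the group structure).

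For the nontrivial inclusion $\cl{A}\;\cl{B}\subseteq\cl{AB}$, my plan is to leverage the fact that multiplication in $G$ is a morphism of algebraic sets, hence continuous in the Zariski topology, and to ``close up'' in two steps by translating one factor at a time. First I would fix an arbitrary $b\in B$ and consider the right-translation morphism $R_b:G\to G$, $g\mapsto gb$. Since $R_b$ is continuous, the preimage $R_b^{-1}\!\big(\cl{AB}\big)$ is closed in $G$, and it contains $A$ because $Ab\subseteq AB\subseteq\cl{AB}$. Consequently it contains $\cl{A}$, so $\cl{A}\cdot b\subseteq\cl{AB}$. Letting $b$ range over $B$ yields $\cl{A}\cdot B\subseteq\cl{AB}$.

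Next I would repeat the argument on the other side: fix $a\in\cl{A}$, and consider the left-translation morphism $L_a:G\to G$, $g\mapsto ag$. Again by continuity, $L_a^{-1}\!\big(\cl{AB}\big)$ is closed and contains $B$ (since we have just shown $aB\subseteq\cl{A}\cdot B\subseteq\cl{AB}$), hence contains $\cl{B}$. Thus $a\cdot\cl{B}\subseteq\cl{AB}$ for every $a\in\cl{A}$, which gives $\cl{A}\cdot\cl{B}\subseteq\cl{AB}$ as required.

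The entire proof is essentially just ``multiplication is continuous, translations are homeomorphisms,'' so there is no real obstacle; the only thing to be slightly careful about is that one needs the translations to be actual morphisms of algebraic sets (which they are, being restrictions of polynomial maps on the ambient $GL(n,\Fclosed)$) in order to invoke continuity in the Zariski topology. The connectedness of $G$ is not used — the statement holds for any linear algebraic group — but presumably appears in the hypothesis because that is the setting in which the fact will be applied.
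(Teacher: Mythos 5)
Your proof is correct, but it follows a different (equally classical) route than the paper. You close up one factor at a time: right translation $R_b$ pulls the closed set $\cl{AB}$ back to a closed set containing $A$, giving $\cl{A}\,B\subseteq\cl{AB}$, and then left translation by $a\in\cl{A}$ repeats the trick on the other side. The paper instead argues in one shot through the multiplication morphism $f:G\times G\to G$: any polynomial $p$ vanishing on $AB$ pulls back to a polynomial $p\circ f$ vanishing on $A\times B$, hence on $\cl{A\times B}=\cl{A}\times\cl{B}$, so $p$ vanishes on $\cl{A}\;\cl{B}$. The trade-off is that the paper's argument quietly relies on the identity $\cl{A\times B}=\cl{A}\times\cl{B}$, which needs its own small justification because the Zariski topology on $G\times G$ is strictly finer than the product topology; your factor-by-factor argument is exactly the device usually used to prove that identity, so it sidesteps the issue and uses only that translations are morphisms (hence Zariski-continuous). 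You are also right that connectedness of $G$ plays no role in either argument; it is part of the hypothesis only because that is the setting in which the fact is applied later (e.g.\ together with the lemma that a dense open subset of a connected group multiplies out to the whole group).
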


We give a short proof, see also \cite[page~56]{Hu1}.
Let us consider the multiplication map $f:G\times G\to G$.
If $AB = f(A\times B)$ satisfies a polynomial equation $p=0$ then
$p\big(f(A\times B)\big)=0$, i.e. the polynomial
$p\big(f(\text{\_})\big)$
vanishes on $A\times B$. But then it must vanish on its closure
$\cl{A\times B}=\cl{A}\times\cl{B}$, hence $p$  vanishes on
$f\big(\cl{A}\times\cl{B}\big)=\cl{A}\;\cl{B}$.
\qed
\vskip 3pt

Closed subgroups of an algebraic group can be very complicated. 
In contrast, centraliser subgroups
are defined by linear equations, and normalisers of a closed subset
$X$ can be defined in terms of the equations of $X$.
This proves that

\begin{fact} \label{normaliser-centraliser-fact}
  Let $G$ be a linear algebraic group.
  \begin{enumerate}[(a)]
  \item \label{item:19}
    The centraliser $\CC_G(X)$ of any subset $X\subseteq G$ is closed and 
    its numerical invariants are bounded:
    $\deg\big(\CC_G(X)\big)\le\deg(G)$,
    $\mult\big(\CC_G(X)\big)\le\mult(G)$ and
    $\inv\big(\CC_G(X)\big)\le\inv(G)$.
    If $X$ is closed then its normaliser $\CN_G(X)$ is also closed
    and its numerical invariants are also bounded:
    $\deg\big(\CN_G(X)\big)\le\deg(G)\deg(X)^{\dim(G)}$,
    $\mult\big(\CN_G(X)\big)\le\mult(G)\deg(X)^{\dim(G)}$ and
    $\inv\big(\CN_G(X)\big)\le\inv(G)\deg(X)^{\dim(G)}$.
  \item \label{item:20}
    Cosets of a closed subgroup $H\le G$ are also closed, they all have
    the same degree. Therefore 
    $$
    \big|G:G^0\big| = \frac{\deg(G)}{\deg(G^0)}\le\deg(G) \;.
    $$
  \end{enumerate}
\end{fact}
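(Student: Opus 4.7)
The plan is to establish \eqref{item:19} and \eqref{item:20} case by case: first the centraliser, then the normaliser (the main obstacle), and finally the coset statement. The centraliser and coset arguments will be quick applications of the facts already compiled; the normaliser will require more care.

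For the centraliser, the key observation is that $\CC_G(X) = G \cap L$ where
$$
L \;:=\; \big\{m \text{ in the ambient affine space} : mx=xm \text{ for all } x\in X\big\}
$$
is a linear subspace, since each condition $mx-xm=0$ is linear in the entries of $m$.  Hence $\deg(L)=1$, and Fact~\ref{dimension-degree}\eqref{item:6} immediately gives $\deg(\CC_G(X)) \le \deg(G)$.  For the bounds on $\mult$ and $\inv$, I would apply Fact~\ref{closed-set-constructions}\eqref{item:15} to the restrictions of the multiplication and inversion morphisms of $G$ to $(G\times G)\cap(L\times L)$ and to $G\cap L$ respectively.  Since $\deg(L\times L)=\deg(L)^2=1$ by Fact~\ref{dimension-degree}\eqref{item:6}, the restricted morphisms have degree at most $\mult(G)$ and $\inv(G)$, as required.

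For the normaliser, I would set $d:=\deg(X)$ and fix defining equations of $X$ of degree at most $d$ via Fact~\ref{dimension-degree}\eqref{item:9}.  Substituting $gxg^{-1}$ and $g^{-1}xg$ into these equations, clearing the powers of $\det(g)$ arising from the entries of $g^{-1}$, and then reducing modulo $I(X)$ in the $x$-variables, yields polynomial constraints on the entries of $g$ that exhibit $\CN_G(X)$ as the intersection of $G$ with a closed subset $Y$ of the ambient affine space.  A B\'ezout-style count via Fact~\ref{dimension-degree}\eqref{item:6}, in which at most $\dim(G)$ cutting hypersurfaces of degree $d$ are used to carve $\CN_G(X)$ out of $G$, then produces $\deg(\CN_G(X))\le\deg(G)\cdot d^{\dim(G)}$.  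The bounds on $\mult$ and $\inv$ follow exactly as in the centraliser case, applying Fact~\ref{closed-set-constructions}\eqref{item:15} with the appropriate $Y$ in place of $L$.

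For \eqref{item:20}, left multiplication $L_g:x\mapsto gx$ by any $g\in G$ extends to an invertible linear map of the ambient affine space (with inverse $L_{g^{-1}}$), and invertible linear maps preserve the degree of closed subsets: for any closed $Z$ and any complementary affine subspace $A$ we have $|gZ\cap A|=|Z\cap g^{-1}A|$, and $g^{-1}A$ is again an affine subspace of complementary dimension.  Hence $\deg(gH)=\deg(H)$ for every coset of a closed subgroup $H\le G$.  Taking $H=G^0$, the cosets of $G^0$ are closed, irreducible (by Fact~\ref{closed-set-constructions}\eqref{item:11} applied to $L_g$), pairwise disjoint, and cover $G$, so they are precisely the irreducible components of $G$.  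Summing their degrees via Fact~\ref{dimension-degree}\eqref{item:5} yields $\deg(G)=[G:G^0]\cdot\deg(G^0)$, from which $[G:G^0]\le\deg(G)$ follows since $\deg(G^0)\ge 1$.

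The hard part will be the normaliser bound.  The inverse $g^{-1}$ in the conjugation means that a direct substitution yields polynomials in the entries of $g$ of degree roughly $n\cdot d$ after clearing $\det(g)^d$, which would give a B\'ezout bound of order $(nd)^{\dim(G)}$ rather than the claimed $d^{\dim(G)}$.  Achieving the sharper exponent requires arranging the cutting to take place inside $G$ itself, so that each of at most $\dim(G)$ defining relations contributes only a single factor of $d$; the precise bookkeeping, exploiting the degree filtration on the coordinate ring of $X$, is the technical core of the argument.
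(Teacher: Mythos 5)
Your handling of the centraliser and of part (b) is correct and is in substance the argument the paper intends (the paper states this as a Fact after the one-line remark that centralisers are cut out by linear equations and normalisers by equations coming from $X$): writing $\CC_G(X)=G\cap L$ with $L$ a linear subspace of matrix space gives $\deg(\CC_G(X))\le\deg(G)$ by Fact~\ref{dimension-degree}.\eqref{item:6}, and applying Fact~\ref{closed-set-constructions}.\eqref{item:15} to the restrictions of multiplication and inversion to $(G\times G)\cap(L\times L)$ and $G\cap L$ gives the $\mult$ and $\inv$ bounds; likewise left translation by $g\in G$ is an invertible linear map of the ambient matrix space, so cosets of a closed subgroup are closed of equal degree, the cosets of $G^0$ are exactly the irreducible components of $G$, and summing degrees via Fact~\ref{dimension-degree}.\eqref{item:5} yields $\deg(G)=|G:G^0|\,\deg(G^0)$.

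The genuine gap is the normaliser degree bound, and you have in effect conceded it yourself. Closedness of $\CN_G(X)=\{g\in G: gXg^{-1}\subseteq X\}\cap\{g\in G: g^{-1}Xg\subseteq X\}$ is fine, but the quantitative claim is not established: after clearing $\det(g)$, the conditions $p(gxg^{-1})=0$ (with $p$ running over defining equations of $X$ of degree at most $d=\deg(X)$ and $x$ over $X$) are hypersurfaces of degree roughly $nd$ in the entries of $g$, and the correct iterated-B\'ezout bookkeeping — the appendix argument with the invariant $\sum_i D^{\dim(Z_i)}\deg(Z_i)$, which is also what replaces your unsupported "at most $\dim(G)$ cutting hypersurfaces" (a priori infinitely many conditions are imposed, and merely citing Fact~\ref{dimension-degree}.\eqref{item:6} does not bound how many are effective) — then yields $\deg(\CN_G(X))\le\deg(G)\,(n\,d)^{\dim(G)}$, not the stated $\deg(G)\,d^{\dim(G)}$. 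Your closing sentence about "arranging the cutting to take place inside $G$" via "the degree filtration on the coordinate ring of $X$" names the difficulty but gives no construction of cutting equations of degree $d$, so the Fact as stated is not proved. It is worth noting that the paper itself offers no proof of the sharper constant, and that every subsequent use of this bound (notably in Corollary~\ref{size-of-Lie-type}.\eqref{item:28}) only requires some explicit bound in terms of the available data; the weaker bound $\deg(G)\,(n\deg(X))^{\dim(G)}$, once you actually write out the substitution and the bookkeeping above, would serve all those purposes — but it does not prove the statement in the form asserted.
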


Later we plan to apply the Transport Lemma~\ref{Transport}
to various morphisms of the form $\tau_\ug$.
In the rest of this section we construct the appropriate sequences $\ug$. 

The following proposition gives a morphism which maps a direct power of a given
closed subset $Y$ onto a closed subgroup $H$.
It should be considered folklore,
see e.g. \cite[Proposition~on~page~55]{Hu1} for a similar statement.
Nevertheless, for the sake of completeness, we include a proof.

\newcommand{\largeProduct} {{\text{}}}
\begin{prop}
  \label{normal-subgroup-in-product}
  Let $Y\subseteq GL(n,\Fclosed)$ be an irreducible closed subset of positive
  dimension and $1\in\alpha\subset GL(n,\Fclosed)$ an ordered finite subset.
  Let $H\le GL(n,\Fclosed)$ denote the
  smallest closed subgroup which is normalised by $\alpha$ and contains $Y$.
  Suppose that $\dim(H)\le m$.
  Then there is a sequence
  $\ug=(g_1,g_2,\dots, g_{2m})$
  of elements $g_i\in\alpha^{m-1}$
  such that
  $$
  H = \tau_\ug\;\Big(\Prod^{2m}(Y^{-1}Y)\Big) =
  (g_1^{-1}Y^{-1}Yg_1)(g_2^{-1}Y^{-1}Yg_2)\dots(g_{2m}^{-1}Y^{-1}Yg_{2m})
  \;.
  $$
  Moreover, our construction of $\ug$ is uniquely determined,
  $H$ is connected
  and there is a universal bound
  $\deg(H)\le\delta_\largeProduct\big(m,\deg(\cl{Y^{-1}Y})\big)$.
\end{prop}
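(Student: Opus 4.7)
The proof follows a classical dimension-increasing greedy strategy in the spirit of Humphreys \cite{Hu1}, Proposition~7.5. Since $Y$ is irreducible, the image of $Y\times Y$ under the morphism $(y,y')\mapsto y^{-1}y'$ is irreducible (Fact~\ref{dimension-degree}.\eqref{item:8}), so $\overline{Y^{-1}Y}$ is irreducible closed, symmetric, of positive dimension, and contains $1$; every conjugate $g^{-1}(Y^{-1}Y)g$ for $g\in\langle\alpha\rangle$ inherits these properties. The plan is to build $\ug$ greedily so that each new factor strictly increases the dimension of an accumulating closed product, until the process necessarily terminates at $H$.

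Set $V_0=\{1\}$ and inductively let $V_k = \overline{(g_1^{-1}Y^{-1}Yg_1)\cdots(g_k^{-1}Y^{-1}Yg_k)}$, where $g_{k+1}$ is the lexicographically smallest element of $\alpha^{m-1}$ with $g_{k+1}^{-1}(Y^{-1}Y)g_{k+1}\not\subseteq V_k$; halt when no such element exists. Each $V_k$ is irreducible as the closure of a product of irreducibles, and since $1$ lies in every factor one has $V_k\subseteq V_{k+1}$. The containment is strict: if $\dim V_{k+1}=\dim V_k$ then irreducibility forces $V_{k+1}=V_k$, which would put $g_{k+1}^{-1}(Y^{-1}Y)g_{k+1}\subseteq V_k$, contradicting the choice of $g_{k+1}$. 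Thus $\dim V_k$ strictly increases, and the process halts after $K\le\dim H\le m$ steps. The lex-minimal choices make $\ug$ uniquely determined.

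The crucial point is to verify that the terminal $V_K$ equals $H$. Run an auxiliary argument on $H_j$, the closed subgroup generated by the finite family $\{g^{-1}(\overline{Y^{-1}Y})g:g\in\alpha^j\}$: each $H_j$ is closed and connected by Humphreys~7.5, and $H_j\subseteq H_{j+1}$. Once $\dim H_j=\dim H_{j+1}$, equality of nested connected closed subgroups yields $H_j=H_{j+1}$; and then for any $a\in\alpha$,
$a^{-1}H_ja=\langle(ga)^{-1}(\overline{Y^{-1}Y})(ga):g\in\alpha^j\rangle \subseteq H_{j+1}=H_j$,
so $H_j$ is $\alpha$-normalized and coincides with the subgroup $H_\infty$ generated by all $\alpha$-conjugates of $\overline{Y^{-1}Y}$. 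Since dimensions are bounded by $m$, stabilization must happen by $j=m-1$, whence $H_{m-1}=H_\infty$; by minimality of $H$, $H_\infty=H$. Now at termination the set $V_K$ contains every $\alpha^{m-1}$-conjugate of $\overline{Y^{-1}Y}$ (else the algorithm would continue), so the standard Humphreys argument that ``no further conjugate increases the dimension'' applied to $V_K$ identifies $V_K$ with the subgroup generated by these conjugates, namely $H_{m-1}=H$. This simultaneously proves $V_K=H$ and that $H$ is connected (as the closure of an irreducible set).

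Finally, to turn the closure into an honest product of length $2m$, observe that the unclosed product $\prod_{i=1}^K g_i^{-1}(Y^{-1}Y)g_i$ is a constructible subset of $V_K$ of full dimension, hence contains a dense open $U\subseteq V_K$; the classical identity $U\cdot U=V_K$ for dense opens in an irreducible algebraic group (see \cite{Hu1}) gives $V_K$ as an explicit product of length $2K\le 2m$, which we pad with trivial factors $g_i=1\in\alpha^{m-1}$ to reach length exactly $2m$. The universal degree bound $\delta_\largeProduct(m,\deg(\overline{Y^{-1}Y}))$ follows by iterating the B\'ezout-type estimates of Fact~\ref{dimension-degree}.\eqref{item:6} and Fact~\ref{closed-set-constructions}.\eqref{item:15} across the $2m$ factors. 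The main obstacle is the termination analysis: showing that $\alpha$-words of length $m-1$ suffice to realise every relevant conjugate. This hinges on the stabilisation-implies-$\alpha$-normalisation observation, and is the technical heart of the proof.
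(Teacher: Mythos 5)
Your greedy scheme follows the paper's overall strategy, but your stopping rule is the wrong one, and the proof genuinely breaks there. You halt as soon as every $\alpha^{m-1}$-conjugate of $Y^{-1}Y$ is \emph{contained} in the accumulated closure $V_k$, and then claim that the Humphreys argument identifies $V_K$ with the subgroup generated by these conjugates. But an irreducible closed set containing all the generators need not be closed under multiplication, and nothing in your construction shows that multiplying $V_K$ by one more conjugate (or by itself) leaves its dimension unchanged --- which is exactly what the Humphreys argument needs. Concretely, take $\alpha=\{1\}$, $m=3$, and let $Y\subset SL(2,\Fclosed)$ be the closed irreducible curve $\{u^-(t)\,u^+(t):t\in\Fclosed\}$, where $u^+(t)$ and $u^-(t)$ are the standard upper and lower unipotent one-parameter subgroups. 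Then $1\in Y$, an easy computation shows the elements of $Y$ have no common eigenvector, so $Y$ lies in no Borel subgroup and hence in no proper connected closed subgroup of $SL(2,\Fclosed)$; thus $H=SL(2,\Fclosed)$ has dimension $3$. On the other hand $\cl{Y^{-1}Y}$, being the closed image of the two-dimensional set $Y\times Y$, has dimension at most $2$. Since $\alpha^{m-1}=\{1\}$, your algorithm halts after one step with $V_1=\cl{Y^{-1}Y}\supseteq Y^{-1}Y$, and $V_1\neq H$; indeed $V_1$ is not a subgroup at all. Your final step compounds this: the identity $U\cdot U=V_K$ for a dense open $U$ is a lemma about connected algebraic \emph{groups}, so invoking it presupposes exactly what has not been proved.

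The paper's criterion is different and is what makes the argument work: at stage $i$ one looks for $g\in\alpha^{i}$ with $\dim\big(\cl{Z_i\cdot(g^{-1}Y^{-1}Yg)}\big)>\dim\big(\cl{Z_i}\big)$, and stops only when no such $g$ exists. At that point, for every $g\in\alpha^{i}$ the nested irreducible closed sets $\cl{Z_i}\subseteq\cl{Z_i\cdot(g^{-1}Y^{-1}Yg)}$ have equal dimension and hence coincide; since every factor of $Z_i$, and every factor of $a^{-1}Z_ia$ for $a\in\alpha$, is such a conjugate, this forces $\cl{Z_i}\,\cl{Z_i}\subseteq\cl{Z_i}$ and $a^{-1}\cl{Z_i}a\subseteq\cl{Z_i}$, so $\cl{Z_i}$ is a connected closed subgroup normalised by $\alpha$, and only then does the dense-open lemma give $H=Z_i^2$. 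Your auxiliary chain $H_j$ (showing that conjugation by words of length at most $m-1$ already yields all the conjugates needed) is sound as far as it goes, but it cannot repair the terminal step: you must replace the containment test by the dimension-of-the-product test, or otherwise prove directly that $V_K\cdot V_K\subseteq V_K$. A minor further point: the degree bound is not obtained by ``iterating B\'ezout across the factors''; it is a bound on the degree of the closed image of $\Prod^{2m}\cl{Y^{-1}Y}$ under $\tau_\ug$, of the kind supplied by Fact~\ref{closed-set-constructions}.\eqref{item:11}.
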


\begin{rem}
  In applications the dimension of $H$ may not be known, but
  if $G\le GL(n,\Fclosed)$ is any closed subgroup
  normalised by $\alpha$ which contains $Y$ then
  one may set $m=\dim(G)$ and one may also use the bound
  $$
  \deg(\cl{Y^{-1}Y})\le
  \inv(G)\cdot\mult(G)\cdot\deg(Y)^2
  $$
  (see Fact~\ref{dimension-degree}.\eqref{item:6}
  and Fact~\ref{closed-set-constructions}.\eqref{item:15}).
\end{rem}

\begin{proof}
  We set $g_1=1$.
  We will define $g_i\in \alpha^{i-1}$ by induction and consider
  the product sets
  $$
  Z_i =
  (g_1^{-1}Y^{-1}Yg_1)(g_2^{-1}Y^{-1}Yg_2)\dots(g_{i}^{-1}Y^{-1}Yg_{i})
  \subseteq H \;.
  $$
  Suppose that $g_1,g_2,\dots,g_i$ are already defined.
  We set $g_{i+1}\in\alpha^i$ to be the first element
  such that
  $$
  \dim\big(\cl{Z_i}\big) <
  \dim\Big(
  \cl{Z_i\cdot(g_{i+1}^{-1}Y^{-1}Yg_{i+1})}
  \Big)
  \;,
  $$
  if there is any.
  Since the dimension of $\cl{Z_i}$ is strictly increasing,
  eventually we must arrive to an index $i\le m$ so that
  $g_{i+1}$ does not exist.
  But then for all $g\in\alpha^i$ the closed subsets
  $$
  \cl{Z_i}\subseteq \cl{Z_i\cdot(g^{-1}Y^{-1}Yg)}
  $$
  are irreducible (see Fact~\ref{closed-set-constructions}.\eqref{item:11})
  of the same dimension, hence they are equal.
  This implies that
  $\cl{Z_i}^2\subseteq\cl{Z_i}$ and $g^{-1}\cl{Z_i}g\subseteq\cl{Z_i}$
  for all $g\in\alpha$,
  hence $\cl{Z_i}$ is a closed connected subgroup normalised by $\alpha$
  i.e. $\cl{Z_i}=H$.
  By Fact~\ref{closed-set-constructions}.\eqref{item:11}
  the product $Z_i$ contains a dense open subset of $H$,
  hence $H=Z_i^2$ by \cite[Lemma~on~page~54]{Hu1}.
  Setting $g_{i+j}=g_j$ for $1\le j\le i$ and 
  $g_{2i+1}=\dots g_{2m}=1$ we obtain our statement.
\end{proof}

\begin{lem} \label{finite-tau}
  Let $G\le GL(n,\Fclosed)$ be a closed subgroup,
  $Z\subseteq G\times G$
  an irreducible closed set and $(a,b)\in Z$.
  Suppose that $\cl{\tau_{(1,1)}(Z)}$ has dimension $0$ 
  i.e. it is a finite set.
  Then there is an irreducible closed subset $A\subseteq G$ 
  such that
  \begin{equation}
    \label{eq:13}
    Z = \left\{
      (ah,h^{-1}b)\;\Big|\;h\in A\right\}
  \end{equation}
  and
  $$
  \Big\{c\in GL(n,\Fclosed)\;\Big|\; 
  \dim\Big(\cl{\tau_{(c,1)}\big(Z\big)}\Big)=0\Big\} =
  \CC_{GL(n,\Fclosed)}\big(A\big) \;.
  $$
\end{lem}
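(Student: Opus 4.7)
My plan is to exploit the irreducibility of $Z$ twice. First I use it to show that ordinary multiplication is constant on $Z$, which pins down the algebraic structure of $Z$. Then I repeat the argument to translate the dimension-$0$ condition on $\cl{\tau_{(c,1)}(Z)}$ into the statement that a certain morphism is constant on an irreducible set, which then unravels into the commutation $hch^{-1}=c$.

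For the first step, note that $\tau_{(1,1)}$ is just ordinary multiplication $(a_1,a_2)\mapsto a_1a_2$. By Fact~\ref{closed-set-constructions}.\eqref{item:11} the closure $\cl{\tau_{(1,1)}(Z)}$ is irreducible, and by Remark~\ref{too-many-points} a $0$-dimensional irreducible closed set consists of a single point. Since $(a,b)\in Z$, that point must be $ab$, so $a_1a_2=ab$ for every $(a_1,a_2)\in Z$.

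Next I parametrise $Z$ via the morphism
$$\phi:GL(n,\Fclosed)\to GL(n,\Fclosed)\times GL(n,\Fclosed),\qquad \phi(h)=(ah,h^{-1}b).$$
This is an isomorphism of algebraic sets onto the closed subset $\{(x,y)\in GL(n,\Fclosed)^2:xy=ab\}$, with regular inverse $(x,y)\mapsto a^{-1}x$. The first step places $Z$ inside the image of $\phi$, so $A:=\phi^{-1}(Z)$ is well-defined; being the image of a closed irreducible set under an isomorphism, $A$ is itself closed and irreducible. For $h\in A$, the condition $ah\in G$ combined with $a\in G$ forces $h=a^{-1}(ah)\in G$, so $A\subseteq G$; and $\phi(1)=(a,b)\in Z$ gives $1\in A$. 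This establishes equation~\eqref{eq:13}.

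For the characterisation, apply the irreducibility argument once more to the composite morphism $h\mapsto\tau_{(c,1)}(ah,h^{-1}b)=c^{-1}ahch^{-1}b$ on the irreducible set $A$: its image closure has dimension $0$ exactly when the morphism is constant on $A$. Comparing the value at an arbitrary $h\in A$ with the value at $h=1\in A$ (which is $c^{-1}acb$), this constancy is equivalent to $hch^{-1}=c$ for all $h\in A$, i.e.\ $c\in\CC_{GL(n,\Fclosed)}(A)$. The only genuinely subtle point in the whole argument is verifying that $A=\phi^{-1}(Z)$ really is a closed irreducible subset of $G$; this rests on recognising $\phi$ as an isomorphism (not merely a morphism) of algebraic sets onto its image, so that both closedness and irreducibility transfer faithfully across $\phi^{-1}$.
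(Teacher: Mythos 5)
Your proof is correct and follows essentially the same route as the paper: identify $\cl{\tau_{(1,1)}(Z)}$ with the single point $ab$ via irreducibility, parametrise $Z$ as $\{(ah,h^{-1}b)\}$, and use irreducibility of $A$ (plus $1\in A$) to turn the dimension-$0$ condition into $hch^{-1}=c$. The only difference is packaging — you realise $A$ as the preimage of $Z$ under the isomorphism $h\mapsto(ah,h^{-1}b)$ onto $\{xy=ab\}$, while the paper sets $A=a^{-1}\pr_1(Z)$ and deduces closedness from equation~\eqref{eq:13} — which amounts to the same argument.
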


Note that in the proof we define $A$ explicitly (hence uniquely),
but we do not use this fact later.

\begin{rem}
  Equation \eqref{eq:13} implies immediately that
  $\dim(A)=\dim(Z)$ and $1\in A$.
\end{rem}

\begin{proof}
  By assumption $\tau_{(1,1)}(Z)$ is finite and its closure is irreducible
  (see Fact~\ref{closed-set-constructions}.\eqref{item:11}), 
  hence it is the single point $ab\in G$.
  Let $\pr_1:G\times G\to G$ denote the projection on the first factor.
  We set
  $$
  A = a^{-1}\pr_1(Z) \;.
  $$
  We shall prove later, that it is in fact closed.
  Anyway, $\cl{A}$ is irreducible
  (see Fact~\ref{closed-set-constructions}.\eqref{item:11})
  and by definition $1=a^{-1}a\in A$.
  Then each point of $Z$ has the form $(ah,\beta)$ 
  with some $h\in A$ and $\beta\in G$,
  and for all $h\in A$ there must exist at least one such point.  
  But then $ab = \tau_{(1,1)}(ah,\beta) = ah\beta$
  hence
  $\beta = h^{-1}b$.
  This proves equation \eqref{eq:13}.
  The set $Z$ is closed, 
  hence $A$ is closed by equation \eqref{eq:13}.
  Now
  $$
  \tau_{(c,1)}(Z) =
  \Big\{c^{-1}(ah)c(h^{-1}b)\;\Big|\;h\in A\Big\} =
  c^{-1}a\Big\{hch^{-1}\;\Big|\;h\in A\Big\}b
  $$
  for all $c\in GL(n,\Fclosed)$. This has dimension $0$ iff the set
  $\big\{hch^{-1}\big|h\in A\big\}$ is finite.
  But $A$ is irreducible, hence its closed image
  $\cl{\big\{hch^{-1}\big|h\in A\big\}}$
  is also irreducible
  (see Fact~\ref{closed-set-constructions}.\eqref{item:11}),
  so it is finite iff it is a single point
  (see Fact~\ref{too-many-points})
  i.e. iff $hch^{-1}$ is independent of $h\in A$.
  But $1\in A$, hence this last condition is equivalent to
  $hch^{-1}=c$ for all $h\in A$, which simply means
  that $c$ commutes with all $h\in A$.
  This proves the lemma.
\end{proof}

The following corollary constructs a morphism $\tau_{\ug}$
which maps a given closed subset $Z$ of some direct power of $G$
onto a subset of $G$ of positive dimension.

\begin{cor} \label{positive-dimensional-product}
  Let $G\le GL(n,\Fclosed)$ be a linear algebraic group
  and let $1\in\alpha\subset G$ be an ordered finite subset
  whose centraliser $\CC_G(\alpha)$ is finite.
  Then for each integer $m\ge 0$ and 
  each irreducible closed subset $Z\subset\Prod^{m}G$
  of dimension $\dim(Z)>0$
  there is a sequence 
  $\ug=(g_1,g_2,\dots, g_m)\in\Prod^m\alpha$
  such that the closed image $\cl{\tau_{\ug}(Z)}$
  has positive dimension.
  Moreover, our construction of $\ug$ is uniquely determined.
\end{cor}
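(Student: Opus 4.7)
The plan is induction on $m$. For the base case $m=1$, any $g_1\in\alpha$ works since $\tau_{(g_1)}$ is conjugation, hence an isomorphism of $G$, so $\cl{\tau_{(g_1)}(Z)}$ has the same positive dimension as $Z$; I would take the canonical choice $g_1=1$, which is well-defined.

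In the inductive step $m\ge 2$, I would first consider the projection $\pi:Z\to\Prod^{m-1}G$ onto coordinates $2,\dots,m$, and set $Z_1=\cl{\pi(Z)}$, which is irreducible by Fact~\ref{closed-set-constructions}.\eqref{item:11}. If $\dim Z_1=0$, then $Z_1$ is a single point, so $Z$ is (isomorphic to) an irreducible closed subset $F\subseteq G$ of positive dimension living on the first coordinate; then for any $\ug$ the image $\tau_\ug(Z)$ is a translate of $g_1^{-1}Fg_1$, so I would set $\ug=(1,1,\dots,1)$. The interesting case is $\dim Z_1>0$, where the induction hypothesis applied to $Z_1$ produces a uniquely determined sequence $(g_2,\dots,g_m)\in\Prod^{m-1}\alpha$ with $\dim\cl{\tau_{(g_2,\dots,g_m)}(Z_1)}>0$.

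To find $g_1$, I would argue by contradiction: suppose $\cl{\tau_{(g_1,g_2,\dots,g_m)}(Z)}$ has dimension $0$ for every $g_1\in\alpha$. Define $\psi=\tau_{(g_2,\dots,g_m)}\circ\pi:Z\to G$ and $\Phi:Z\to G\times G$, $\Phi(a_1,\dots,a_m)=\big(a_1,\psi(a_1,\dots,a_m)\big)$, and let $\tilde Z=\cl{\Phi(Z)}\subseteq G\times G$. Then $\tilde Z$ is irreducible with $\dim\tilde Z\ge\dim\cl{\psi(Z)}=\dim\cl{\tau_{(g_2,\dots,g_m)}(Z_1)}>0$. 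The identity
$$\tau_{(g_1,g_2,\dots,g_m)}(a_1,\dots,a_m)=g_1^{-1}a_1g_1\cdot\psi(a_1,\dots,a_m)=\tau_{(g_1,1)}\big(a_1,\psi(a_1,\dots,a_m)\big)$$
together with continuity gives $\cl{\tau_{(g_1,1)}(\tilde Z)}=\cl{\tau_\ug(Z)}$, so this set has dimension $0$ for every $g_1\in\alpha$, including $g_1=1$. I then apply Lemma~\ref{finite-tau} to $\tilde Z\subseteq G\times G$: since $\cl{\tau_{(1,1)}(\tilde Z)}$ has dimension $0$, the lemma yields an irreducible closed $A\subseteq G$ of dimension $\dim\tilde Z>0$ with
$$\big\{c\in GL(n,\Fclosed)\,\big|\,\dim\cl{\tau_{(c,1)}(\tilde Z)}=0\big\}=\CC_{GL(n,\Fclosed)}(A).$$
Hence $\alpha\subseteq\CC_{GL(n,\Fclosed)}(A)$, so $A\subseteq G\cap\CC_{GL(n,\Fclosed)}(\alpha)=\CC_G(\alpha)$, contradicting the finiteness of $\CC_G(\alpha)$ since $\dim A>0$. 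Therefore some $g_1\in\alpha$ works, and to guarantee uniqueness I would take the first such $g_1$ in the given order on $\alpha$.

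The main obstacle is arranging the setup so that Lemma~\ref{finite-tau} applies and the resulting $A$ actually lies inside $G$, not merely inside $GL(n,\Fclosed)$; this is why I package the last $m-1$ coordinates of $Z$ into a morphism $\psi$ landing in $G$ and build $\tilde Z$ inside $G\times G$. Everything else is the straightforward induction, the dimension comparison $\dim\tilde Z\ge\dim\cl{\psi(Z)}$, and the unique-choice bookkeeping provided by the given order on $\alpha$.
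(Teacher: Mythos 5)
Your proof is correct, and it reaches the statement by a genuinely different organisation of the induction, although both arguments rest on the same two pillars: Lemma~\ref{finite-tau} and the finiteness of $\CC_G(\alpha)$. The paper works ``head-first'': it contracts the first two coordinates by the maps $\sigma_g(a_1,\dots,a_m)=(g^{-1}a_1ga_2,a_3,\dots,a_m)$, splits into cases according to whether every fibre of the projection onto coordinates $3,\dots,m$ meets $Z$ in a finite set or some fibre has a positive-dimensional component $Z'$, applies Lemma~\ref{finite-tau} to that component $Z'\subseteq\Prod^2G$, and only then recurses on $\cl{\sigma_g(Z)}\subseteq\Prod^{m-1}G$. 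You work ``tail-first'': the induction hypothesis is applied to the closed image $Z_1$ of the projection away from the first coordinate (with the degenerate case $\dim(Z_1)=0$ handled separately, where $F\times\{\text{point}\}$ makes any choice work), and the remaining choice of $g_1$ is then made by applying Lemma~\ref{finite-tau} not to a fibre of $Z$ but to the auxiliary ``graph'' $\tilde Z=\cl{\Phi(Z)}\subseteq G\times G$ with $\Phi=(\pr_1,\psi)$; the identity $\tau_{\ug}=\tau_{(g_1,1)}\circ\Phi$ on $Z$, the inequality $\dim(\tilde Z)\ge\dim\big(\cl{\psi(Z)}\big)=\dim\big(\cl{\tau_{(g_2,\dots,g_m)}(Z_1)}\big)>0$, and the fact that $\psi$ lands in $G$ (so the set $A$ produced by the lemma lies in $G$ and hence in $\CC_G(\alpha)$) make this step airtight, and $1\in\alpha$ guarantees the lemma's hypothesis $\dim\big(\cl{\tau_{(1,1)}(\tilde Z)}\big)=0$ under your contradiction assumption. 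One dividend of your ordering of choices is that you obtain $\ug$ with all entries literally in $\alpha$, exactly as the statement asserts; unwinding the paper's recursion instead gives $\tau_{\underline{h}}\circ\sigma_g=\tau_{(gh_1,h_1,\dots,h_{m-1})}$, whose first entry is only a short product of elements of $\alpha$ (harmless for the later applications, which only use $\tau_\ug\big(\Prod^m\alpha\big)\subseteq\alpha^{O(N)}$, but your version matches the wording verbatim). The price of your route is the extra construction of $\tilde Z$, whose degree is never controlled, but none is needed since only its dimension enters; the uniqueness bookkeeping (lexicographically first admissible $g_1$, canonical choices in the degenerate cases) is handled the same way as in the paper.
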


\begin{proof}
  We shall prove the theorem by induction on $m$.
  For $m=1$ the statement is obvious.
  So let $m\ge2$ and we assume that the corollary holds whenever the
  number of factors is smaller than $m$.
  We define several morphisms.
  For all $g\in G$ let
  $$
  \sigma_g:\Prod^{m}G\to\Prod^{m-1}G
  \;,\quad\quad
  \sigma_g(a_1,\dots, a_m)= 
  \big(g^{-1}a_1ga_2,a_3,\dots, a_m\big)
  $$  
  and let
  $$
  \pi:\Prod^{m}G\to\Prod^{m-2}G
  \;,\quad\quad
  \pi(a_1,\dots, a_m)= (a_3,a_4,\dots, a_m)\;,
  $$
  $$
  \rho:\Prod^{m-1}G\to\Prod^{m-2}G
  \;,\quad\quad
  \rho(a_2,\dots, a_m)= (a_3,a_4,\dots, a_m)\;.
  $$
  For $m=2$ we use the convention that $\Prod^0G$ is a single point.
  Note, that these morphisms manipulate only the first two
  coordinates. In particular
  $$
  \rho\big(\sigma_g(x)\big) = \pi(x)
  \quad\quad
  \text{for all }x\in\Prod^{m}G\;.
  $$
  Our goal is to find an element $g\in\alpha$ such that 
  \begin{equation}
    \label{eq:14}
    \dim\big(\cl{\sigma_g(Z)}\big)>0\;.
  \end{equation}
  Then we choose the smallest such $g$ (in the order of $\alpha$)
  and use the induction hypotheses for 
  $\cl{\sigma_g(Z)}\subseteq\Prod^{m-1}G$.
  This proves the corollary for $Z$ as well.

  We distinguish two cases.
  Suppose first that for all $z\in\Prod^{m-2}G$ the subset
  $Z\cap\pi^{-1}(z)$ is finite (i.e. $0$ dimensional).
  Then $\dim(Z)=\dim\big(\cl{\pi(Z)}\big)$ is positive
  (see Fact \ref{closed-set-constructions}.\eqref{item:14}).
  But
  $$
  \dim(Z) \ge
  \dim\big(\cl{\sigma_g(Z)}\big) \ge
  \dim\Big(\cl{\rho\big(\sigma_g(Z)\big)}\Big) =
  \dim\big(\cl{\pi(Z)}\big)
  $$
  hence all these dimensions are equal. 
  Hence \eqref{eq:14} is achieved,
  the corollary holds in this case.

  Suppose next that there is a point $z\in\Prod^{m-2}G$ such that
  $Z\cap\pi^{-1}(z)$ has an irreducible component $Z'$ with positive dimension.
  For simplicity we shall identify the subset
  $\pi^{-1}(z)=\Prod^2G\times\{z\}\subset\Prod^{m}G$ with $\Prod^2G$
  and also $\rho^{-1}(z)=G\times\{z\}\subset\Prod^{m-1}G$ with $G$.
  With these identifications we have
  $$
  \sigma_g(x) = \tau_{(g,1)}(x)
  \quad\quad
  \text{for all }x\in\Prod^{2}G\text{ and all }g\in\alpha \;.  
  $$
  If $\cl{\sigma_{1}(Z')}=\cl{\tau_{(1,1)}(Z')}$ has positive dimension
  then (\ref{eq:14}) holds with $g=1$ since
  $\dim\big(\cl{\sigma_1(Z)}\big)\ge\dim\big(\cl{\sigma_1(Z')}\big)$.
  Otherwise we apply Lemma~\ref{finite-tau} to our $Z'$
  and get an infinite subset $A\le G$.
  By assumption $\alpha$ does not centralise $A$,
  hence there is an element $g\in\alpha$
  which does not commute with $A$, i.e. $g\notin\CC_G(A)\cdot1$.
  Now $\cl{\tau_{(g,1)}(Z')}=\cl{\sigma_{g}(Z')}$ has positive dimension.
  But then the potentially larger set
  $\cl{\sigma_g(Z)}\supseteq\cl{\sigma_g(Z')}$
  has positive dimension as well.
  In all cases we proved \eqref{eq:14}, hence the corollary holds.
\end{proof}

\section{spreading large concentration in a group}
\label{sec:spre-large-conc}

In this section we establish our main technical tool, the Spreading Theorem.
Roughly speaking it says the following.
Let $\alpha$ be a finite subset
in a connected linear algebraic group $G$ such that
$\CC_G(\alpha)$ is finite.
If $G$ has a closed subset $X$ in which $\alpha$ has much larger concentration
than in $G$ then we can find a connected closed subgroup $H\le G$
normalised by $\alpha$
in which a small power of
$\alpha$ has similarly large concentration.
(When $G$ is the simple algebraic group used to define a 
finite group of Lie type $L$ and $\alpha$ generates $L$
then $H$ turns out to be $G$ itself.)

\begin{defn} \label{symmetric-set}
  A finite set $\alpha\subset GL(n,\Fclosed)$ is called \emph{symmetric} if
  $\alpha=\alpha^{-1}$.
\end{defn}

We need the following basic facts.

\begin{prop} \label{coset-vs-subgroup}
  Let $\alpha\subset GL(n,\Fclosed)$ be a symmetric subset
  and $hH$ a coset of a closed subgroup $H\le GL(n,\Fclosed)$.
  If $hH\cap \alpha\neq\emptyset$ then
  $$
  \mu(\alpha^2,hH)\ge\mu(\alpha,H) \;,\quad
  \mu(\alpha^2,H)\ge\mu(\alpha,hH)\;.
  $$  
\end{prop}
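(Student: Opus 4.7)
The plan is to exploit the fact that a coset $hH$ is just a left translate of $H$, combined with the symmetry of $\alpha$, so that the two claimed inequalities become essentially the same statement after translating by a well-chosen element of $\alpha$.

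First I would pick an element $a\in hH\cap\alpha$, which exists by hypothesis. Because $a\in hH$ we have $aH=hH$ and $a^{-1}(hH)=H$, so left multiplication by $a$ is a bijection $H\to hH$ and left multiplication by $a^{-1}$ is a bijection $hH\to H$. Since $\alpha$ is symmetric, both $a$ and $a^{-1}$ lie in $\alpha$, so $a\cdot(\alpha\cap H)\subseteq\alpha\cdot\alpha\cap aH=\alpha^2\cap hH$ and symmetrically $a^{-1}\cdot(\alpha\cap hH)\subseteq\alpha^2\cap H$. As these maps are injective we obtain the cardinality inequalities
$$|\alpha^2\cap hH|\ge|\alpha\cap H|,\qquad |\alpha^2\cap H|\ge|\alpha\cap hH|.$$

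Second, I would translate these cardinality inequalities into the concentration inequalities. Cosets of a closed subgroup are themselves closed and share the same dimension (and degree) as the subgroup, a fact recorded in Fact~\ref{normaliser-centraliser-fact}.\eqref{item:20}; in particular $\dim(\cl{hH})=\dim(\cl{H})=\dim(H)$. Taking logarithms of the two displayed inequalities and dividing by the common value $\dim(H)$ then gives exactly $\mu(\alpha^2,hH)\ge\mu(\alpha,H)$ and $\mu(\alpha^2,H)\ge\mu(\alpha,hH)$.

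There is really no hard step here; the only point where one has to be slightly careful is that the symmetry hypothesis $\alpha=\alpha^{-1}$ is used in an essential way, since one needs both $a$ and $a^{-1}$ to lie in $\alpha$ in order to get each of the two inequalities out of a single chosen element of $\alpha\cap hH$. Without symmetry one would only obtain one of the two statements.
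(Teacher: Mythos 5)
Your proof is correct; the paper states this proposition without proof (it is recorded as immediate), and your translation argument --- multiplying by a chosen $a\in hH\cap\alpha$ and by $a^{-1}$, then using that a coset of a closed subgroup is closed of the same dimension as $H$ --- is exactly the routine verification intended. One tiny imprecision: the first inequality uses only $a\in\alpha$, and symmetry of $\alpha$ is needed just for the second, so your closing remark is right in substance but slightly overstates what each inequality requires.
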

\qed

In the rest of this paper we restrict our attention to connected linear
algebraic groups. It is not a serious restriction
in the light of the following:

\begin{cor}
  Let $G\le GL(n,\Fclosed)$ be a closed subgroup and
  $1\in\alpha\subset GL(n,\Fclosed)$ a finite symmetric subset.
  Then
  $$
  \mu(\alpha,G^0)\le\mu(\alpha,G)\le\mu(\alpha^2,G^0)+\log\big(\deg(G)\big)
  \;.
  $$
\end{cor}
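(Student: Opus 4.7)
The plan is to prove the two inequalities separately, using only the basic relationship between $G$ and its identity component $G^0$ together with Proposition~\ref{coset-vs-subgroup} and Fact~\ref{normaliser-centraliser-fact}.\eqref{item:20}. Throughout we may assume $\dim(G)>0$, since otherwise the concentrations are not defined (or one may interpret the statement vacuously); we may also assume $\alpha\cap G\ne\emptyset$, since otherwise the right-hand side of each inequality is $+\infty$ or the left-hand side is $-\infty$.

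For the first inequality, note that $G^0$ is the identity component of $G$, so $\dim(\cl{G^0})=\dim(\cl{G})$ and $G^0\subseteq G$. Hence $\alpha\cap G^0\subseteq\alpha\cap G$ and
$$
\mu(\alpha,G^0)\;=\;\frac{\log|\alpha\cap G^0|}{\dim(G^0)}\;\le\;\frac{\log|\alpha\cap G|}{\dim(G)}\;=\;\mu(\alpha,G).
$$

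For the second inequality, I would decompose $G$ as a disjoint union of cosets of $G^0$. By Fact~\ref{normaliser-centraliser-fact}.\eqref{item:20} the number of such cosets is $|G:G^0|\le\deg(G)$. A pigeonhole argument then yields a coset $hG^0$ (with $h\in G$) satisfying
$$
|\alpha\cap hG^0|\;\ge\;\frac{|\alpha\cap G|}{\deg(G)}.
$$
In particular $\alpha\cap hG^0\ne\emptyset$, so Proposition~\ref{coset-vs-subgroup} applies and gives $\mu(\alpha^2,G^0)\ge\mu(\alpha,hG^0)$. Since $\dim(hG^0)=\dim(G^0)=\dim(G)\ge 1$, this rewrites as
$$
\mu(\alpha^2,G^0)\;\ge\;\frac{\log|\alpha\cap G|-\log\deg(G)}{\dim(G)}\;\ge\;\mu(\alpha,G)-\log\big(\deg(G)\big),
$$
which is the desired bound.

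There is no real obstacle here: the corollary is a direct packaging of the coset-versus-subgroup proposition with the pigeonhole principle and the standard bound on $|G:G^0|$. The only points demanding care are the edge cases $\alpha\cap G=\emptyset$ and $\dim(G)=0$, and the observation that the weaker $\log(\deg G)/\dim(G)$ that the calculation naturally produces is controlled by $\log(\deg G)$ whenever $\dim(G)\ge 1$.
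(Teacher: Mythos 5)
Your proof is correct and follows essentially the same route as the paper, which deduces the corollary from Fact~\ref{normaliser-centraliser-fact}.\eqref{item:20} (the bound $|G:G^0|\le\deg(G)$) together with Proposition~\ref{coset-vs-subgroup}; you have simply written out the pigeonhole step and the observation $\dim(G^0)=\dim(G)$ explicitly. Note also that since $1\in\alpha$ and $1\in G^0$, the intersections $\alpha\cap G$ and $\alpha\cap G^0$ are automatically nonempty, so that edge case never actually occurs.
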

\begin{proof}
  It follows from Fact~\ref{normaliser-centraliser-fact}.\eqref{item:20}
  and Proposition~\ref{coset-vs-subgroup}.
\end{proof}

\begin{defn} \label{spreading-system}
  A \emph{spreading system} $\alpha|G$ consists of 
  a connected closed subgroup $G\le GL(n,\Fclosed)$,
  an ordered finite symmetric subset $1\in\alpha\subset GL(n,\Fclosed)$
  normalising $G$ such that $\mu(\alpha,G)\ge 0$
  and $\CC_G(\alpha)$ is finite.
  \par
  \noindent We say that $\alpha|G$ is \emph{$(N,\Delta,K)$-bounded}
  for some integer $N>0$ and reals $\Delta>0$, $K>0$
  if
  $$
  \dim(G)\le N \:,\quad
  \deg(G)\le\Delta \:,\quad
  \mult(G)\le\Delta \:,\quad
  \inv(G)\le\Delta \:,\quad
  \big|\alpha\cap G\big|\ge K \:.
  $$
  We say that $\alpha|G$ is \emph{$(\e,M,\delta)$-spreading}
  for some reals $\e>0$, $\delta>0$ and integer $M>0$,
  if there is a connected closed subgroup $H\le G$
  normalised by $\alpha$ such that $\dim(H)>0$ and
  $$
  \deg(H)\le\delta\;,\quad
  \mu\big(\alpha^M,H\big)\ge(1+\e)\cdot\mu(\alpha,G) \;.
  $$
  Note, that $\mult(H)$ and $\inv(H)$ are also bounded
  in terms of $\delta$ and $\Delta$
  by Proposition~\ref{bounding-subgroups}.
  We call such an $H$ a \emph{subgroup of spreading},
  or sometimes \emph{subgroup of $(\e,M,\delta)$-spreading}.
\end{defn}

\begin{rem}
  Note that the assumption $\mu(\alpha,G)\ge0$ is equivalent to 
  $\dim(G)>0$ and $\alpha\cap G\neq\emptyset$.
\end{rem}

Suppose that for some $m\ge0$ we find a closed subset
$Z\subseteq \Prod^{m}G$ in which $\Prod^m\alpha$ has large concentration.
We use the following lemma to find a closed subset of $G$
in which the concentration
of a small power of $\alpha$ is almost as large.

\newcommand{\backtoG} {{\text{b}}}
\begin{lem} [Back to $G$]
  \label{project-back-to-G}
  For all parameters $N>0$ and $\Delta>0$
  there are reals
  $B=B_\backtoG(N,\Delta)>0$ and
  $K=K_\backtoG(N,\Delta)\ge0$ 
  with the following property.\\
  Let $\alpha|G$ be a spreading system
  with $\dim(G)\le N$, $\deg(G)\le\Delta$ and $\mult(G)\le\Delta$.
  Then for all closed subsets $Z\subset\Prod^{m}G$
  with $0< m\le N$, $\dim(Z)>0$, $\deg(Z)\le \Delta$
  and $\big|\Prod^m\alpha\cap Z\big|\ge K$
  there is a closed subset $Y\subseteq G$
  such that $\dim(Y)>0$, $\deg(Y)\le B$ and
  $$
  \mu\big(\alpha^{3N},Y\big) \ge
  \mu\big(\Prod^{m}\alpha,Z\big) - \log(B) \;.
  $$
  Moreover, our construction of $Y$ is uniquely determined.
\end{lem}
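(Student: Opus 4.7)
The idea is to compress the concentrated set $\Prod^m\alpha\cap Z\subseteq\Prod^m G$ into a closed subset of $G$ by means of a morphism of the form $\tau_{\ug}$.  We first reduce to the case where $Z$ is irreducible: Lemma~\ref{something-irreducible-with-large-concentration} (applied with bounds derived from $N$ and $\Delta$, noting $\dim Z\le mN\le N^2$) yields an irreducible closed $Z_0\subseteq Z$ of positive dimension and bounded degree satisfying $\mu(\Prod^m\alpha,Z_0)\ge\mu(\Prod^m\alpha,Z)-\log B_1$, provided $K$ exceeds the threshold $K_\irredComp$.  Since $\CC_G(\alpha)$ is finite, Corollary~\ref{positive-dimensional-product} produces a sequence $\ug=(g_1,\dots,g_m)\in\Prod^m\alpha$ such that $\overline{\tau_{\ug}(Z_0)}\subseteq G$ has positive dimension.  (Strictly speaking, the corollary is stated for $\alpha\subset G$; its proof, however, applies verbatim in our setting where $\alpha$ only normalises $G$, since $\CC_G(\alpha)<\infty$ still forces some $g\in\alpha$ to move any infinite closed subset of $G$.)

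The crucial observation is that since $\alpha$ is symmetric with $\ug\in\Prod^m\alpha$, each factor $g_i^{-1}a_ig_i$ of $\tau_{\ug}(a_1,\dots,a_m)$ with $a_i\in\alpha$ lies in $\alpha^3$, so
$$\tau_{\ug}\bigl(\Prod^m\alpha\bigr)\ \subseteq\ \alpha^{3m}\ \subseteq\ \alpha^{3N}.$$
We now apply the Transport Lemma~\ref{Transport} to $\tau_{\ug}$ on $Z_0$ with $\e=0$; by Remark~\ref{tau-rem} its degree is controlled by $N$ and $\Delta$.  In the favourable alternative one has $\mu\bigl(\tau_{\ug}(\Prod^m\alpha),\overline{\tau_{\ug}(Z_0)}\bigr)\ge\mu(\Prod^m\alpha,Z_0)-\log B_2$; setting $Y=\overline{\tau_{\ug}(Z_0)}$ and combining with the containment above gives $\mu(\alpha^{3N},Y)\ge\mu(\Prod^m\alpha,Z)-\log B$ for a suitable cumulative $B$, completing the proof.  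In the unfavourable alternative one obtains a closed $S\subsetneq Z_0$ with $0<\dim S<\dim Z_0$, of bounded degree, and $\mu(\Prod^m\alpha,S)\ge\mu(\Prod^m\alpha,Z_0)-\log B_2$.  We then pass to an irreducible subset of $S$ via Lemma~\ref{something-irreducible-with-large-concentration} and restart the procedure.  Since $\dim Z_0\le mN\le N^2$, the iteration terminates after at most $N^2$ steps as dimensions strictly decrease.

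\textbf{Main obstacle.}  The principal difficulty is quantitative bookkeeping.  Each iteration inflates the degree bound on the current irreducible subset (Lemma~\ref{something-irreducible-with-large-concentration} itself iterates its internal bound up to $\dim(\cdot)$ times), and incurs an additive logarithmic loss in concentration.  One must choose $B_\backtoG(N,\Delta)$ to absorb the compound effect of up to $N^2$ iterations of both the transport step and the irreducible-reduction step, and $K_\backtoG(N,\Delta)$ large enough so that throughout the iteration the intermediate subsets keep meeting the size threshold required by Lemma~\ref{something-irreducible-with-large-concentration}.  Uniqueness of the construction follows automatically because each subsidiary step (the choice of irreducible subset, the choice of $\ug$ in Corollary~\ref{positive-dimensional-product}, and the choice of $S$ in Transport Lemma~\ref{Transport}) is itself uniquely determined by the lexicographic order on $\alpha$.
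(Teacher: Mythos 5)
Your proposal is correct and follows essentially the same route as the paper: reduce to an irreducible subset via Lemma~\ref{something-irreducible-with-large-concentration}, obtain $\ug\in\Prod^m\alpha$ from Corollary~\ref{positive-dimensional-product} so that $\cl{\tau_\ug(Z)}$ is positive-dimensional, note $\tau_\ug(\Prod^m\alpha)\subseteq\alpha^{3N}$, and apply the Transport Lemma~\ref{Transport} with $\e=0$, recursing on the lower-dimensional set $S$ in the unfavourable case. The paper organises your iteration as an induction on $\dim(Z)$ (at most $N^2$ steps) with exactly the degree/threshold bookkeeping you describe, so the two arguments coincide.
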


\begin{proof}
  There is nothing to prove for $m=1$ , so we assume $m\ge2$.
  We prove the lemma by induction on $\dim(Z)$.
  This is possible, since $\dim(Z)\le N^2$, so the induction has at most $N^2$
  steps.
  We assume that the lemma holds in dimensions smaller than $\dim(Z)$
  with some bounds $B'\big(N,\Delta,\dim(Z)\big)$ and
  $K'\big(N,\Delta,\dim(Z)\big)$ .
  By Lemma~\ref{something-irreducible-with-large-concentration}
  if $K$ is large enough then
  there is a (uniquely determined) positive dimensional irreducible closed set
  $Z'\subseteq Z$ 
  of degree $\deg(Z')\le B_\irredComp(N^2,\Delta)$ with large concentration:
  $$
  \mu\big(\Prod^{m}\alpha,Z'\big) \ge  
  \mu\big(\Prod^{m}\alpha,Z\big) -
  \log\big(B_\irredComp(N^2,\Delta)\big) \;.
  $$
  This implies immediately that
  $$
  \Big|\Prod^m\alpha\cap Z'\Big|\ge
  \frac{|\Prod^m\alpha\cap Z|^{\dim(Z')/\dim(Z)}}
  {B_\irredComp(N^2,\Delta)^{\dim(Z')}} \ge
  \frac{K^{1/N^2}}{B_\irredComp(N^2,\Delta)^{N^2}} \;.
  $$
  By the above it is enough to complete the induction step for $Z'$,
  so from now on we assume that $Z$ is irreducible.
  Corollary~\ref{positive-dimensional-product} gives us a 
  (uniquely determined) sequence
  $\ug=(g_1,g_2,\dots, g_{m})\in\Prod^{m}\alpha$
  such that $\cl{\tau_\ug(Z)}$ has positive dimension.
  Recall from Remark~\ref{tau-rem} the bound
  $\Delta_\tau(N,N,\Delta)\ge\deg\big(\tau_\ug)$.
  Let
  $$
  \tilde\Delta=\max\big(\Delta,\Delta_\tau(N,N,\Delta)\big) \;.
  $$
  
  We use Lemma~\ref{Transport}
  for the two closed sets $Z\subseteq X=\Prod^{m}G$, 
  the morphism $f=\tau_\ug$,
  the finite set $\Prod^{m}\alpha$ 
  (denoted by $\alpha$ in Lemma~\ref{Transport})
  and $\e=0$.
  We note that $\tau_\ug\big(\Prod^{m}\alpha\big)\subseteq\alpha^{3N}$.
  There are two possible outcomes.
  In case of Lemma~\ref{Transport}.\eqref{eq:10} 
  the closed subset 
  $T=\cl{\tau_\ug(Z)}\subseteq G$
  satisfies $\dim(T)>0$, % $\deg(T)\le B_\transport(\tilde\Delta)$ and 
  $$
  \mu\big(\Prod^{m}\alpha,Z\big)-\log\big(B_\transport(\tilde\Delta)\big) \le
  \mu\big(\tau_\ug\big(\Prod^{m}\alpha\big),T\big) \le
  \mu\big(\alpha^{3N},T\big)
  $$
  and by Fact~\ref{closed-set-constructions}.\eqref{item:11}
  there is an upper bound $\deg(T)\le D$ depending only on $N$ and $\Delta$.
  Hence the lemma holds now with $Y=T$ and any 
  $B\ge \max\big(B_\transport(\tilde\Delta), D\big)$.
  In case of Lemma~\ref{Transport}.\eqref{eq:11}
  we have a closed subset $S\subseteq Z\subseteq\Prod^{m}G$
  with  $0<\dim(S)<\dim(Z)$, $\deg(S)\le B_\transport(\tilde\Delta)$ and
  $$
  \mu\big(\Prod^{m}\alpha,S\big) \ge
  \mu\big(\Prod^{m}\alpha,Z\big)-
  \log\big(B_\transport(\tilde\Delta)\big) \;.
  $$
  This implies immediately that
  $$
  \Big|\Prod^m\alpha\cap S\Big|\ge
  \frac{|\Prod^m\alpha\cap Z|^{\dim(S)/\dim(Z)}}
  {B_\transport(\tilde\Delta)^{\dim(S)}} \ge
  \frac{K^{1/N^2}}{B_\transport(\tilde\Delta)^{N^2}}
  $$
  that is, we can make $\Prod^m\alpha\cap S$ sufficiently large by choosing
  $K$ large enough.
  We set $B''=B'\big(N,B_\transport(\tilde\Delta),\dim(Z)\big)$ and
  apply the induction hypothesis to this $S$.
  This gives us a closed set $Y\subseteq G$ such that
  $\dim(Y)>0$, $\deg(Y)\le B''$ and
  $$
  \mu\big(\alpha^{3N},Y\big) \ge
  \mu\big(\Prod^{m}\alpha,S\big)-\log(B'') \ge
  $$
  $$
  \ge
  \mu\big(\Prod^{m}\alpha,Z\big)-
  \log\big(B_\transport(\tilde\Delta)B''\big)\;,
  $$
  the lemma holds again with the bound $B=B_\transport(\tilde\Delta)B''$.
  The induction step is complete now,
  the lemma holds in dimension $\dim(Z)$.
\end{proof}

We are now ready to prove the Spreading Theorem.
Let us first give an outline of the proof which avoids technicalities.
Suppose that $\alpha$ has ``large'' concentration in a subset
$X\subseteq G$. We would like to ``spread'' this large
concentration as much as possible,
i.e. we are looking for a small power $\alpha^M$ having
large concentration in a subgroup $H$
(more precisely, we need a subgroup of spreading $H$).

We start with $T_0=X$ and proceed with a simple induction.
Proposition~\ref{normal-subgroup-in-product} gives us a surjective morphism
$\tau_\ug$ which maps $Z=\Prod^{2\dim(G)}(X^{-1}\times X)$
(the direct product of $2\dim(G)$ copies of
the direct product $(X^{-1}\times X)$)
onto a subgroup $H\le G$.
The concentration of the product set $\Prod^{4\dim(G)}\alpha$
is large in  $Z$,
and we try to transport it via $\tau_\ug$ into $H$.
Note, that our $\tau_\ug$ maps $\Prod^{4\dim(G)}\alpha$ into a small
power $\alpha^m$.
According to the Transport Lemma~\ref{Transport}
we either succeed and therefore $H$ is a subgroup of spreading,
or find a subset $S\subseteq Z$ with significantly
larger concentration. This $S$ lives in the direct product
$\Prod^{4\dim(G)}G$,
but Lemma~\ref{project-back-to-G} brings it back to $G$,
i.e. we find a subset $T_1\subseteq G$ such that
a small power $\alpha^{m_1}$ has significantly larger
concentration in $T_1$ than $\alpha$ had in $T_0$
(see Lemma~\ref{try-to-spread-all-over}).

We repeat this process several times.
Either at some point we quit the induction with a subgroup of
spreading $H$
or we obtain a sequence of subsets $T_0,T_1,\dots$
with a quickly growing sequence of concentrations $\mu(\alpha^{m_i},T_i)$.
If we let the concentration grow sufficiently large
i.e. $\mu(\alpha^m,T_i)\ge\dim(G)\mu(\alpha,X)$
for some $i$
then already in $T_i$ there are enough elements
to force large concentration in $G$.
Therefore we either quit the induction with a subgroup of spreading,
or in a bounded number of steps we conclude that $\mu(\alpha^{m_i},G)$
is large i.e. $G$ itself is a subgroup of spreading.

\newcommand{\trytospread} {{\text{t}}}
\begin{lem} [Try to Spread]
  \label{try-to-spread-all-over}
  For all parameters $N>0$ and $\Delta>0$
  there is an integer
  $M_\trytospread=M_\trytospread(N)$, and
  there are reals
  $B_\trytospread=B_\trytospread(N,\Delta)>0$ and
  $K=K_\trytospread(N,\Delta)\ge0$ 
  with the following property.\\
  Let $\alpha|G$ be a spreading system with
  $\dim(G)\le N$, $\deg(G)\le\Delta$,
  $\mult(G)\le\Delta$  and $\inv(G)\le\Delta$.
  Then for all closed subsets $Y\subset G$
  with $\dim(Y)>0$, $\deg(Y)\le \Delta$ and $|\alpha\cap Y|\ge K$
  and all values
  $$
  \kappa\ge \log(B_\trytospread)
  $$
  at least one of the following holds:\\
  Either there is a connected closed subgroup $H\le G$
  normalised by $\alpha$
  such that $\dim(H)>0$, $\deg(H)\le B_\trytospread$ and
  \begin{equation}
    \label{eq:15}
    \mu\big(\alpha^{M_\trytospread},H\big)\ge
    \mu(\alpha,Y)-\kappa\;,
  \end{equation}
  or there is a closed set $T\subseteq G$
  such that $\deg(T)\le B_\trytospread$, $\dim(T)>0$ and
  \begin{equation}
    \label{eq:16}
     \mu\big(\alpha^{M_\trytospread},T\big) \ge 
     \mu(\alpha,Y)+\frac{\kappa}{8N^2}\;.
  \end{equation}
  Moreover, our constructions of $H$ and $T$ are uniquely determined.
\end{lem}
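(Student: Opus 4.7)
The plan is to construct a morphism $f$ surjecting a large product power of an irreducible subset $Y' \subseteq Y$ onto a closed subgroup $H \le G$ normalised by $\alpha$, and apply Transport. If Transport succeeds, we obtain \eqref{eq:15} with $H$ as the subgroup of spreading; if it fails, a subset $S \subset \Prod^{4m}G$ of higher concentration is produced, and projecting it back to $G$ via Lemma~\ref{project-back-to-G} yields $T$ and \eqref{eq:16}.

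First, pick $K$ large enough and apply Lemma~\ref{something-irreducible-with-large-concentration} to $Y$: this yields a uniquely determined irreducible closed subset $Y' \subseteq Y$ of bounded degree with $\mu(\alpha, Y') \ge \mu(\alpha, Y) - \log B_1$ for some $B_1 = B_1(N,\Delta)$. Next, apply Proposition~\ref{normal-subgroup-in-product} to $Y'$, obtaining a uniquely determined sequence $\ug = (g_1,\ldots,g_{2m})$ with $m \le \dim G \le N$, each $g_i \in \alpha^{m-1}$, and a connected closed subgroup $H \le G$ normalised by $\alpha$ of bounded degree, such that $H = \tau_\ug\bigl(\Prod^{2m}(Y'^{-1}Y')\bigr)$. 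Define the morphism
$$f \colon \Prod^{4m}G \to G, \qquad f(x_1,y_1,\ldots,x_{2m},y_{2m}) = \prod_{i=1}^{2m} g_i^{-1} x_i^{-1} y_i g_i.$$
Then $f(\Prod^{4m}Y') = H$, $f(\Prod^{4m}\alpha) \subseteq \alpha^{M_0}$ with $M_0 = 4m^2 \le 4N^2$, and $\deg f$ is bounded via Fact~\ref{degree-of-group-theoretic-maps}. The set $Z = \Prod^{4m}Y'$ is irreducible (Fact~\ref{dimension-degree}\eqref{item:8}), has $\dim Z \le 4N^2$, and $\mu(\Prod^{4m}\alpha, Z) = \mu(\alpha, Y')$ by Proposition~\ref{concentration-is-bounded}\eqref{eq:2}.

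Now apply the Transport Lemma~\ref{Transport} to $Z$, $f$, and $\Prod^{4m}\alpha$, with a parameter $\e$ to be chosen. In alternative \eqref{eq:10} we obtain
$$\mu(\alpha^{M_0}, H) \ge \mu(\alpha, Y) - \log(B_1 B_2) - \e \cdot 4N^2,$$
which yields \eqref{eq:15} once $\e \cdot 4N^2$ and the logarithmic losses are absorbed into $\kappa$. In alternative \eqref{eq:11} we obtain a uniquely determined closed $S \subset Z$ of bounded degree with $0 < \dim S < \dim Z$ and $\mu(\Prod^{4m}\alpha, S) \ge \mu(\alpha, Y) - \log(B_1 B_2) + \e$. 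Apply the mild generalization of Lemma~\ref{project-back-to-G} to $S$ — its induction-on-dimension proof extends verbatim to ambient powers $\Prod^{4m}G$ with $4m \le 4N$, yielding an output power $\alpha^{M_1}$ with $M_1 = O(N)$ and constants depending on $N,\Delta$ — to obtain a uniquely determined closed $T \subseteq G$ of positive dimension and bounded degree with $\mu(\alpha^{M_1}, T) \ge \mu(\Prod^{4m}\alpha, S) - \log B_3$, which gives \eqref{eq:16}.

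The main technical obstacle is the simultaneous balancing of $\e$: the quantity $\e \cdot 4N^2$ must be roughly $\kappa/2$ (so that alternative \eqref{eq:10} produces \eqref{eq:15}), yet $\e$ itself must exceed $\kappa/(8N^2)$ plus the accumulated logarithmic losses (so that alternative \eqref{eq:11} produces \eqref{eq:16}). Taking $\e = \kappa/(8N^2) + \log(B_1 B_2 B_3)$ and choosing $B_\trytospread$ so that the assumption $\kappa \ge \log B_\trytospread$ dominates $16 N^2 \log(B_1 B_2 B_3)$ makes both alternatives valid. Setting $M_\trytospread = \max(M_0, M_1) = O(N^2)$ completes the construction; uniqueness of $H$ and $T$ propagates from the uniqueness at each preceding step.
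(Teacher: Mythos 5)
Your proposal is correct and follows essentially the same route as the paper's proof: reduce to an irreducible subset via Lemma~\ref{something-irreducible-with-large-concentration}, use Proposition~\ref{normal-subgroup-in-product} to surject a product power onto a connected subgroup $H$ normalised by $\alpha$, apply the Transport Lemma with exactly the same $\e$-balancing ($\e=\kappa/(8N^2)$ plus the accumulated logarithmic constants, absorbed by $\kappa\ge\log B_\trytospread$), and in the failure branch return to $G$ via Lemma~\ref{project-back-to-G} applied with the parameter $4N$ in place of $N$. The only differences are cosmetic: you fold the inversions into the morphism $f$ rather than into the domain set $\Prod^{2m}(Y^{-1}\times Y)$, and you take $m=\dim(G)$ where the paper simply sets $m=N$ (which also makes $M_\trytospread$ manifestly depend on $N$ alone; in your version one just pads using $1\in\alpha$).
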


\begin{proof}
  Using Lemma~\ref{something-irreducible-with-large-concentration}
  as in the proof of Lemma~\ref{project-back-to-G},
  we may assume that $Y$ is irreducible.
  Let us recall from Lemma~\ref{Transport},
  Lemma~\ref{project-back-to-G},
  Remark~\ref{tau-rem} and Proposition~\ref{normal-subgroup-in-product}
  the functions $B_\transport$, $B_\backtoG$, $\Delta_\tau$
  and $\delta_\largeProduct$.
  We define the following parameters:
  $$
  \begin{array}{lcl}
    m   &=& N \\
    \Delta_1 &=& \max\big(\Delta^{6m},\Delta_\tau(4m,N,\Delta)\big) \\
    \big.B_\transport &=& B_\transport(\Delta_1) \\
    \big.\Delta_2   &=& \max(\Delta,B_\transport) \\
    \big.B_\backtoG &=& B_\backtoG(4m,\Delta_2) \\
    \big.\e  &=& \frac{\kappa}{8mN}+\log(B_\transport) +\log(B_\backtoG) \\
    \\
    M_\trytospread &=& \max(4m^2,12N)\\ 
    \big.B_\trytospread &=&
    \max\left(\delta_\largeProduct(N,\Delta),\,
    B_\transport^{8m(N+1)}\cdot B_\backtoG^{8m(N+1)}\right)
  \end{array}
  $$
  We apply Proposition~\ref{normal-subgroup-in-product}.
  to the subset $Y$, 
  this gives us a sequence 
  $\ug=(g_1,g_2,\dots, g_{2m})\in\Prod^{2m}\alpha^{m-1}$
  and a connected closed subgroup $H\le G$ normalised by $\alpha$
  such that $\dim(H)>0$, 
  $\deg(H)\le\delta_\largeProduct(N,\Delta)\le B_\trytospread$ and
  $$
  \tau_\ug\big(\Prod^{2m}Y^{-1}Y\big) = H \;.
  $$
  We apply Lemma~\ref{Transport}
  with parameters $\Delta_1$ and $\e$
  to the subsets $X=\Prod^{4m}G$ and $Z=\Prod^{2m}(Y^{-1}\times Y)$,
  the morphism $f=\tau_{(g_1,g_1,g_2,g_2,\dots,g_{2m},g_{2m})}$,
  the finite set $\Prod^{4m}\alpha^{m-1}$
  (denoted by $\alpha$ in Lemma~\ref{Transport}).
  We need to check that all requirements are satisfied.
  By assumption $\dim(Y)>0$ and hence $\dim(H)=\dim\big(f(Z)\big)>0$.
  Since $Y$ is irreducible, $Z$ is also irreducible
  (see Fact~\ref{dimension-degree}.\eqref{item:8})
  with $\deg(Z)=\deg(Y)^{4m}\inv(G)^{2m}\le\Delta^{6m}$
  (see Fact~\ref{dimension-degree}.\eqref{item:6} and
  Fact~\ref{closed-set-constructions}.\eqref{item:15})
  and
  $\deg(f)\le\Delta_\tau(4m,N,\Delta)$.
  Therefore the prerequisites of
  Lemma~\ref{Transport} 
  are satisfied, hence one of the inequalities
  \ref{Transport}.\eqref{eq:10} or
  \ref{Transport}.\eqref{eq:11} 
  is valid with the logarithmic term equal to $\log(B_\transport)$.
  Moreover,
  $\mu\big(\Prod^{4m}\alpha, Z\big) = \mu(\alpha,Y)$
  and
  $$
  f\big(\Prod^{4m}\alpha\big)\subseteq\alpha^{4m^2} \subseteq
  \alpha^{M_\trytospread} \;.
  $$
  In case of \ref{Transport}.\eqref{eq:10}
  we have
  $$
  \mu\big(\alpha^{M_\trytospread},H\big)\ge
  \mu\big(\alpha^{4m^2},f(Z)\big)\ge
  \mu\big(f\big(\Prod^{4m}\alpha\big),f(Z)\big) \ge
  $$
  $$
  \ge
  \mu\big(\Prod^{4m}\alpha,Z\big) - 
  \log(B_\transport) - \e\cdot\dim(Z) \ge
  $$
  $$
  \ge
  \mu(\alpha,Y) -\log(B_\transport) - 
  $$
  $$
  -
  \left(\frac{\kappa}{8mN}+\log(B_\transport) +\log(B_\backtoG) \right)
  \cdot N\cdot 4m \ge
  $$
  $$
  \ge
  \mu(\alpha,Y) - \frac{\kappa}{2} - 
    4m(N+1)\Big(\log(B_\transport) +\log(B_\backtoG) \Big) \ge
  $$
  $$
  \ge
  \mu(\alpha,Y) - \frac{\kappa}{2}-\frac{\log(B_\trytospread)}{2} \ge
  \mu(\alpha,Y) - \kappa
  $$
  which is exactly inequality~\eqref{eq:15}.

  In case of \ref{Transport}.\eqref{eq:11} 
  we have a closed subset $S\subseteq\Prod^{4m}G$
  with $\dim(S)>0$, $\deg(S)\le B_\transport$ such that
  $$
  \mu\big(\Prod^{4m}\alpha,S\big) \ge
  \mu\big(\Prod^{4m}\alpha,Z\big) -\log(B_\transport) + \e =
  $$
  $$
  =
  \mu\big(\alpha,Y\big)-\log(B_\transport)+
  \left(\frac{\kappa}{8mN}+\log(B_\transport) +\log(B_\backtoG) \right) =
  $$
  $$
  \ge
  \mu\big(\alpha,Y\big)+\
  \left(\frac{\kappa}{8N^2}+\log(B_\backtoG) \right) \;.
  $$
  In particular if $K=K_\trytospread(N,\Delta)$ is large enough then
  $$
  \big|\Prod^{4m}\alpha\cap S\big| \ge
  \big|\alpha\cap Y\big|^{\dim(S)/\dim(Y)}\ge
  K_\backtoG(4m,\Delta_2) \;.
  $$
  We apply Lemma~\ref{project-back-to-G}
  with the parameters $4N$ and
  $\Delta_2$
  (which are denoted there by $N$ and $\Delta$)
  to the set $S\subseteq\Prod^{4m}G$ 
  (which is denoted there by $Z$).
  Then in the inequalities we have to use $B_\backtoG=B_\backtoG(4m,\Delta_2)$.
  Lemma~\ref{project-back-to-G} gives us a subset
  $T\subseteq G$ (denoted there by $Y$)
  with $\dim(T)>0$, $\deg(T)\le B_\backtoG$ and
  $$
  \mu(\alpha^{12N},T) \ge
  \mu\big(\Prod^{4m}\alpha,S\big) - \log(B_\backtoG) \ge
  $$
  $$
  \ge
  \mu\big(\alpha,Y\big)+
  \left(\frac{\kappa}{8N^2}+\log(B_\backtoG)\right) -\log(B_\backtoG) =
  \mu\big(\alpha,Y\big) + \frac{\kappa}{8N^2}
  $$
  which implies inequality~\eqref{eq:16}.
  The lemma is proved in all cases.
\end{proof}

\newcommand{\spreading} {{\rm spreading}}
\begin{thm}[Spreading Theorem]
  \label{spread-all-over}
  For all parameters $N>0$, $\Delta>0$ and $\Frac13\ge\e>0$
  there is an integer
  $M=M_\spreading(N,\e)$
  and a real
  $K=K_\spreading(N,\Delta,\e)$
  with the following property.\\
  Let $\alpha|G$ be an
  $(N,\Delta,K)$-bounded spreading system
  and $X$ a closed subset in $\Prod^{m}G$
  for some $0< m\le N$.
  If $\deg(X)\le\Delta$, $\dim(X)>0$ and
  $$
  \mu\left(\Prod^{m}\alpha,X\right)\ge
  (1+3\e)\cdot\mu(\alpha,G)
  $$
  then $\alpha|G$ is $(\e,M,K)$-spreading.
  Moreover, our construction of the subgroup of spreading
  is uniquely determined.
\end{thm}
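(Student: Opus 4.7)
The plan is to combine two earlier results. Lemma~\ref{project-back-to-G} descends from $\Prod^{m}G$ to $G$, while Lemma~\ref{try-to-spread-all-over} is applied iteratively: at each step either the required subgroup of spreading appears, or a new closed subset of $G$ is produced in which a higher power of $\alpha$ has strictly greater concentration. Since the concentration cannot grow indefinitely without forcing $G$ itself to qualify as a subgroup of spreading, the iteration must terminate in a bounded number of steps. To begin I would apply Lemma~\ref{project-back-to-G} to $X \subseteq \Prod^{m}G$ to obtain a closed subset $Y_0 \subseteq G$ with $\dim(Y_0) > 0$, degree bounded by some constant $B$, and
$$
\mu(\alpha^{3N}, Y_0) \;\ge\; \mu\bigl(\Prod^{m}\alpha,\, X\bigr) - \log B \;\ge\; (1 + 3\e)\mu(\alpha, G) - \log B \;.
$$
Choosing $K = K_\spreading$ large enough that $\log B \le \e\cdot\mu(\alpha, G)$ (which is possible because $\mu(\alpha, G) \ge \log(K)/N$), this yields $\mu(\alpha^{3N}, Y_0) \ge (1 + 2\e)\mu(\alpha, G)$.

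Next, set $T_0 = Y_0$, $M_0 = 3N$ and $\kappa = \e\cdot\mu(\alpha,G)$, and iterate Lemma~\ref{try-to-spread-all-over} applied to the spreading system $\alpha^{M_i}\,|\,G$ (which remains a spreading system since $\CC_G(\alpha^{M_i}) = \CC_G(\alpha)$ is finite) and to the subset $T_i$. At step $i$, if alternative~\eqref{eq:15} fires, the resulting connected subgroup $H$ satisfies
$$
\mu\bigl(\alpha^{M_i M_\trytospread},\, H\bigr) \;\ge\; \mu(\alpha^{M_i}, T_i) - \kappa \;\ge\; (1 + 2\e)\mu(\alpha, G) - \e\cdot\mu(\alpha, G) \;=\; (1 + \e)\mu(\alpha, G) \;,
$$
and is the desired subgroup of spreading. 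Otherwise alternative~\eqref{eq:16} produces $T_{i+1} \subseteq G$ of positive dimension with $\mu(\alpha^{M_{i+1}}, T_{i+1}) \ge \mu(\alpha^{M_i}, T_i) + \kappa/(8N^2)$, where $M_{i+1} = M_i \cdot M_\trytospread$, and I would continue.

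If the second alternative fires at every step, then after $k$ iterations the concentration in $T_k$ has grown by at least $k\kappa/(8N^2)$. Once it exceeds $N(1 + \e)\mu(\alpha, G)$, which occurs for some $k = O(N^3/\e)$, we have $|\alpha^{M_k} \cap T_k| \ge e^{N(1+\e)\mu(\alpha, G)}$ because $\dim(T_k) \ge 1$; combined with $T_k \subseteq G$ and $\dim(G) \le N$, this gives $\mu(\alpha^{M_k}, G) \ge (1 + \e)\mu(\alpha, G)$, so $H = G$ itself serves as the subgroup of spreading. The main technical obstacle is bookkeeping: each application of Try to Spread is carried out with a progressively larger bound on $\deg(T_i)$, producing a tower of iterated constants $B_\trytospread$, and one must verify that $|\alpha^{M_i} \cap T_i| \ge K_\trytospread$ persists through all $O(N^3/\e)$ applications. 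Because the number of iterations depends only on $N$ and $\e$, all these iterated bounds, as well as the final $M_\spreading$ and the bound on $\deg(H)$, depend only on $N$, $\Delta$, $\e$ as required.
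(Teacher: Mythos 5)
Your proposal is correct and follows essentially the same route as the paper's own proof: reduce from $\Prod^m G$ to $G$ via Lemma~\ref{project-back-to-G}, then iterate Lemma~\ref{try-to-spread-all-over}, terminating either with the subgroup from alternative~\eqref{eq:15} or, after a bounded number of steps, with $G$ itself once the concentration exceeds $N(1+\e)\mu(\alpha,G)$. The only cosmetic difference is that you keep $\kappa=\e\cdot\mu(\alpha,G)$ fixed (additive growth, $O(N^3/\e)$ steps) while the paper lets $\kappa$ grow geometrically, and your remaining bookkeeping (degree tower, $|\alpha^{M_i}\cap T_i|\ge K_\trytospread$, and $\kappa\ge\log B_\trytospread$) is handled exactly as you indicate by choosing $K$ large in terms of $N$, $\Delta$, $\e$.
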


\begin{proof}
  Using Lemma~\ref{project-back-to-G} we can easily reduce the
  theorem to the special case of $m=1$,
  so we assume $X\subseteq G$.
  Let us recall from Lemma~\ref{try-to-spread-all-over} the functions
  $M_\trytospread$ and $B_\trytospread$.
  By induction on $i\ge0$ we shall define the following numbers:
  $$
  \Delta_0=\Delta
  \;,\quad
  \Delta_i=\max\big(\Delta_{i-1},B_\trytospread(N,\Delta_{i-1})\big)
  \;,\quad
  M_i=M_\trytospread(N)^{i} \;.
  $$
  Let $I=I(N,\e)$ be the smallest positive integer such that
  \begin{equation}
    \label{eq:17}
    \Big(1 + \frac{\e}{4N^2}\Big)^I\ge N \;.
  \end{equation}
  We set $M = M_I$ and
  $$
  K = \max\Big(\Delta_I^{N/\e},K_\trytospread(N,\Delta_{0})^N,
  K_\trytospread(N,\Delta_{1})^N,\dots,K_\trytospread(N,\Delta_{i-1})^N\Big)
  \;.
  $$
  Let $\alpha|G$ be an $(N,\Delta,K)$-bounded spreading system
  and $X\subseteq G$ a closed subset satisfying the conditions of
  the theorem.
  Then
  $$
  \mu(\alpha,X)>\mu(\alpha,G)\ge\frac{\log(K)}{N} \;.
  $$
  By induction on $i$ we build a series of 
  closed subsets $T_i\subseteq G$ such that
  \begin{equation}
    \label{eq:18}
    \left\{
      { \dim(T_i)>0
        \;,\quad
        \deg(T_i)\le \Delta_i 
        \;,}
      \atop
      {\mu\big(\alpha^{M_i},T_i\big)\ge 
        \Big(1 + \frac{\e}{4N^2}\Big)^i\cdot\mu(\alpha,X)
        \ge \Frac{\log K}{N}\;.}
    \right.
  \end{equation}
  We run the induction until we either prove Theorem~\ref{spread-all-over}
  or build the set $T_I$.
  We start the induction with $T_0=X$,
  this certainly satisfies \eqref{eq:18} with $i=0$.
  In the $i$-th step of the induction we assume that $T_{i-1}$ is already
  constructed and $i\le I$.

  We apply the Lemma~\ref{try-to-spread-all-over}
  with parameters $N$ and $\Delta_{i-1}$
  to the closed subset $Y=T_{i-1}$ and
  to the finite set $\alpha^{M_{i-1}}$ and
  $$
  \kappa = 
  \e\cdot\Big(1 + \frac{\e}{4N^2}\Big)^{i-1}\cdot\mu(\alpha,X) \;.
  $$
  We need to check that
  $
  \kappa \ge
  \e\cdot\mu(\alpha,X) \ge
  \Frac{\e}{N}\cdot\log(K) \ge \log(\Delta_I) \ge
  \log(\Delta_i) \ge \log\big(B_\trytospread(N,\Delta_{i-1})\big)
  $
  and
  $
  \big|\alpha^{M_{i-1}}\cap T_{i-1}\big| \ge
  \exp\big(\mu(\alpha^{M_{i-1}},T_{i-1})\big) \ge
  K^{1/N} \ge
  K_\trytospread(N,\Delta_{i-1})
  $.
  Note that
  $$
  \left(\alpha^{M_{i-1}}\right)^{M_\trytospread(N)} =
  \alpha^{M_{i}}\subseteq\alpha^{M} \;.
  $$
  There are two cases.
  If inequality~\ref{try-to-spread-all-over}.\eqref{eq:16}
  holds with a subset $T$ then 
  $$
  \mu\big(\alpha^{M_{i}},T\big) \ge
  \mu(\alpha^{M_{i-1}},T_{i-1}) + \frac{\kappa}{4N^2} \ge
  $$
  $$
  \Big(1 + \frac{\e}{4N^2}\Big)^{i-1}\cdot\mu(\alpha,X)
  + \frac{\e}{4N^2}\Big(1 + \frac{\e}{4N^2}\Big)^{i-1}\cdot\mu(\alpha,X) =
  $$
  $$
  =
  \Big(1 + \frac{\e}{4N^2}\Big)^{i}\cdot\mu(\alpha,X)
  $$
  and
  $
  \deg(T) \le B_\trytospread(N,\Delta_{i-1}) \le \Delta_i
  $
  hence $T_i=T$ satisfies the condition~\eqref{eq:18}.
  On the other hand,
  if inequality~\ref{try-to-spread-all-over}.\eqref{eq:15}
  holds with an appropriate subgroup $H$ then we find that
  $
  \deg(H) \le  B_\trytospread(N,\Delta_{i-1}) \le \Delta_i \le K
  $
  and
  $$
  \mu\big(\alpha^{M},H\big) \ge
  \mu\big(\alpha^{M_{i}},H\big) \ge
  \mu\big(\alpha^{M_{i-1}},T_{i-1}\big) - \kappa \ge
  $$
  $$
  \ge
  \Big(1 + \frac{\e}{4N^2}\Big)^{i-1}\cdot\mu(\alpha,X) -
  \e\cdot\Big(1 + \frac{\e}{4N^2}\Big)^{i-1}\cdot\mu(\alpha,X) \ge
  $$
  $$
  \ge
  (1-\e)\cdot\mu(\alpha,X) \ge
  (1-\e)(1+3\e)\mu(\alpha,G) \ge
  (1+\e)\mu(\alpha,G) \;.
  $$
  The theorem holds in this case and we stop the induction.

  Finally we consider the case when the induction does not stop 
  during the first $I$ steps and we build $T_I$.
  Using the first inequality from
  Proposition~\ref{concentration-is-bounded} and
  inequalities~\eqref{eq:18} and \eqref{eq:17} we obtain that
  $$
  \mu(\alpha^{M},G) \ge
  {\Frac{\dim(T_I)}{\dim(G)}}\cdot\mu(\alpha^{M},T_I) \ge
  $$
  $$
  \ge
  \Frac{1}{N}\cdot
  \left(1 + \Frac{\e}{4N^2}\right)^{I}\cdot\mu(\alpha,X) \ge
  \mu(\alpha,X) \ge
  (1+3\e)\mu(\alpha,G) \;.
  $$
  That is, $\alpha|G$ is $(\e,{M},K)$-spreading with $H=G$.
  The theorem holds in this case too.
\end{proof}

\section{Variations on spreading}

The following useful lemma shows that growth in a subgroup of $G$ implies
growth in $G$ itself. See \cite{He2} for similar results.

\begin{lem}
  \label{induce-from-growing-subgroup}
  Let  $A\le G\le GL(n,\Fclosed)$ be closed subgroups and\\
  $1\in\alpha\subset GL(n,\Fclosed)$ a finite subset. 
  Then for all integers $k>0$ one has
  $$
  \mu\big(\alpha^{k+1},G\big) \ge 
  \mu\big(\alpha,G\big) + 
  \Frac{\dim(A)}{\dim(G)}
  \Big[\mu\big(\alpha^k,A\big)-\mu\big(\alpha^{-1}\alpha,A\big)\Big]
  $$
  or equivalently
  $$
  \Frac{\big|\alpha^{k+1}\cap G\big|}{\big|\alpha\cap G\big|} \ge
  \Frac{\big|\alpha^k\cap A\big|}{\big|\alpha^{-1}\alpha\cap A\big|} \;.
  $$
\end{lem}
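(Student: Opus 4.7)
My plan is to first verify that the two displayed inequalities are equivalent, and then prove the multiplicative form, which has a clean combinatorial interpretation.

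For the equivalence, I would multiply the first inequality through by $\dim(G)$ and use the definition $\mu(\beta,H)\cdot\dim(H)=\log|\beta\cap H|$ for each of the four pairs appearing. The factor $\Frac{\dim(A)}{\dim(G)}$ on the right cancels against the $\dim(A)$ in the denominators of $\mu\big(\alpha^k,A\big)$ and $\mu\big(\alpha^{-1}\alpha,A\big)$, leaving an additive inequality among logarithms of set sizes which, after exponentiating, is exactly
$$
\big|\alpha^{k+1}\cap G\big|\cdot\big|\alpha^{-1}\alpha\cap A\big|\ \ge\
\big|\alpha\cap G\big|\cdot\big|\alpha^k\cap A\big|.
$$
All four sets are nonempty because $1\in\alpha$, so there is no concern with $-\infty$ or with dividing by a zero cardinality.

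To prove this multiplicative form, set $S=\alpha\cap G$ and consider the multiplication map
$$
\varphi\colon S\times(\alpha^k\cap A)\longrightarrow \alpha^{k+1}\cap G,\qquad
\varphi(s,a)=sa.
$$
The image lies in $\alpha^{k+1}\cap G$ because $s\in\alpha$, $a\in\alpha^k$, and $s,a\in G$. I would then bound the fibres of $\varphi$: if $sa=s'a'$ with $s,s'\in S$ and $a,a'\in\alpha^k\cap A$, then the element $t=s^{-1}s'=a(a')^{-1}$ lies in $A$ (since $a,a'\in A$) and also in $\alpha^{-1}\alpha$ (since $s,s'\in\alpha$). Fixing one representation $(s_0,a_0)$ of an image point, every other representation has the form $(s_0t,t^{-1}a_0)$ with $t\in\alpha^{-1}\alpha\cap A$, so each fibre has at most $\big|\alpha^{-1}\alpha\cap A\big|$ elements.

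Counting $|S\times(\alpha^k\cap A)|$ via the fibres of $\varphi$ yields
$$
\big|\alpha\cap G\big|\cdot\big|\alpha^k\cap A\big|\ \le\
\big|\alpha^{k+1}\cap G\big|\cdot\big|\alpha^{-1}\alpha\cap A\big|,
$$
which is the desired inequality. The only nontrivial step is the fibre bound, and even that is the standard additive-combinatorics observation that the ambiguity in a product decomposition lies in the intersection $\alpha^{-1}\alpha\cap A$; there are no algebraic-geometric obstacles since the statement is purely set-theoretic once the equivalence with the $\mu$-form is dispatched.
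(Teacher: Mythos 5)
Your proof is correct and follows essentially the same route as the paper: both consider the multiplication map $(\alpha\cap G)\times(\alpha^k\cap A)\to\alpha^{k+1}\cap G$ and bound each fibre by $\big|\alpha^{-1}\alpha\cap A\big|$, the only difference being that the paper phrases the fibre bound via $|\alpha\cap gA|\le\big|(\alpha\cap gA)^{-1}(\alpha\cap gA)\big|$ while you parametrise the fibre directly by the ambiguity element $t\in\alpha^{-1}\alpha\cap A$. The reduction to the multiplicative form is likewise the same as the paper's (which simply calls the two forms equivalent), so nothing further is needed.
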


\begin{proof}
  The two inequalities are clearly equivalent, we shall prove the
  latter form.
  We shall look at the multiplication map
  $$
  (\alpha\cap G) \times \big(\alpha^k\cap A\big)
  \stackrel{\phi}{\longrightarrow}
  (\alpha\cap G)\cdot\big(\alpha^k\cap A\big) \subseteq
  \big(\alpha^{k+1}\cap G\big)
  $$
  On the left hand side we have
  $|\alpha\cap G|\cdot\big|\alpha^k\cap A\big|$
  elements, on the right hand side there are
  $\big|\alpha^{k+1}\cap G\big|$
  elements. Therefore it is enough to prove that
  $$
  \big|\phi^{-1}(g)\big|\le \big|\alpha^{-1}\alpha\cap A\big|
  \quad\text{for all }g\in\alpha^{k+1}\cap G
  $$
  and this follows from the calculation below:
  $$
  \phi^{-1}(g) \subseteq
  \big\{(a,a^{-1}g)\;\big|\;
  a\in\alpha,\;a^{-1}g\in A\big\} \subseteq
  \big\{(a,a^{-1}g)\;\big|\; a\in\alpha\cap gA\big\} \;,
  $$
  hence
  $$
  \big|\phi^{-1}(g)\big|\le
  \big|\alpha\cap gA\big| \le
  \big|(\alpha\cap gA)^{-1}(\alpha\cap gA)\big| \le
  \big|\alpha^{-1}\alpha\cap A\big| \;.
  $$
\end{proof}

The following result is closely related to the ``escape from subvarieties''
type results in \cite{He1} and \cite{He2}.

\newcommand{\Eescape} {{\ensuremath{\Frac1{7N^2}}}\xspace}
\newcommand{\escape} {{\rm escape}}
\begin{lem} [Escape Lemma]
  \label{escape}
  For all parameters $N>0$, $\Delta>0$ and $\Eescape\ge\e>0$
  there is an integer
  ${M}=M_\escape(N,\e)$
  and a real
  $K=K_\escape(N,\Delta,\e)$
  with the following property.\\
  Let $\alpha|G$ be an
  $(N,\Delta,K)$-bounded spreading system
  and $X\subsetneq Y$ two closed subsets in $\Prod^{m}G$
  for some $1\le m\le N$.
  Suppose that $\dim(Y)>0$, $Y$ is irreducible,
  $\deg(X)\le\Delta$ and
  $$
  \mu\left(\Prod^{m}\alpha,Y\right) \ge
  (1-\e)\cdot\mu(\alpha,G) \;,
  $$
  $$
  \mu\big(\Prod^{m}\alpha,\, Y\setminus X\big) \le
  (1-2\e)\cdot\mu(\alpha,G) \;.
  $$
  Then $\alpha|G$ is $(\e,{M},K)$-spreading.
  Moreover, our construction of the subgroup of spreading
  is uniquely determined.
\end{lem}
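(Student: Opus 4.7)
The plan is to reduce the Escape Lemma directly to the Spreading Theorem~\ref{spread-all-over} applied to the subset $X$ itself. The two hypotheses together will force most of the elements of $\Prod^m\alpha\cap Y$ to lie in $X$, and since $X\subsetneq Y$ is a proper closed subset of the irreducible $Y$, we have $\dim(X)\le\dim(Y)-1$: by Fact~\ref{dimension-degree}.\eqref{item:7}, $Y\setminus X$ is a nonempty open subset of $Y$ and hence dense, so $\dim\big(\cl{Y\setminus X}\big)=\dim(Y)$ while $\dim(X)<\dim(Y)$. This dimension drop will convert the small decrease in the count into a strict increase in concentration.

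First I would write the disjoint decomposition
$$
\Prod^m\alpha\cap Y \;=\; \big(\Prod^m\alpha\cap X\big)\;\sqcup\;\big(\Prod^m\alpha\cap(Y\setminus X)\big)
$$
and read off from the two hypotheses the estimates
$$
\big|\Prod^m\alpha\cap Y\big|\ge e^{(1-\e)\mu(\alpha,G)\dim(Y)},\qquad
\big|\Prod^m\alpha\cap(Y\setminus X)\big|\le e^{(1-2\e)\mu(\alpha,G)\dim(Y)}.
$$
Since $|\alpha\cap G|\ge K$ forces $\mu(\alpha,G)\ge\log(K)/N$, by choosing $K$ large I can ensure $e^{\e\mu(\alpha,G)\dim(Y)}\ge 2$, whence $\big|\Prod^m\alpha\cap X\big|\ge\tfrac12 e^{(1-\e)\mu(\alpha,G)\dim(Y)}$. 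A further enlargement of $K$ rules out $\dim(X)=0$, since otherwise $|\Prod^m\alpha\cap X|\le\deg(X)\le\Delta$ would contradict this lower bound; so $\dim(X)\ge 1$ and $\dim(Y)\ge 2$.

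Taking logarithms and dividing by $\dim(X)$, using $\dim(X)\le\dim(Y)-1\le N-1$ together with the hypothesis $\e\le\tfrac{1}{7N^2}$, the desired inequality $\mu(\Prod^m\alpha,X)\ge(1+3\e)\mu(\alpha,G)$ reduces to the numerical bound
$$
(1-\e)\dim(Y)-(1+3\e)\dim(X)\;\ge\; 1+3\e-4\e\dim(Y)\;\ge\; 1-\tfrac{4}{7N}\;>\;0,
$$
with the additive $\log(2)/\dim(X)$ error absorbed by enlarging $K$ one more time. At that point Theorem~\ref{spread-all-over} applies verbatim to $X\subseteq\Prod^m G$ with the same parameters $N,\Delta,\e$, and I set $M_\escape(N,\e):=M_\spreading(N,\e)$ and take $K_\escape(N,\Delta,\e)$ to be the maximum of $K_\spreading(N,\Delta,\e)$ and the finitely many thresholds imposed above. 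The uniqueness clause transfers, since the constructed subgroup of spreading is determined by $X$ via Theorem~\ref{spread-all-over}, and $X$ is part of the input to the Escape Lemma. The only point that needs care is the numerical comparison just displayed; everything else is bookkeeping of constants.
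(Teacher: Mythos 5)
Your proposal is correct and follows essentially the same route as the paper's proof: use the two concentration hypotheses to show that at least half of $\Prod^m\alpha\cap Y$ lies in $X$ (forcing $\dim(X)>0$ once $K$ is large), exploit $\dim(X)\le\dim(Y)-1$ to boost the concentration in $X$ above $(1+3\e)\mu(\alpha,G)$, and then invoke the Spreading Theorem~\ref{spread-all-over} applied to $X$ with $M_\escape=M_\spreading$. One small slip: since $m\le N$ and $\dim(G)\le N$ one only has $\dim(Y)\le N^2$ (not $\le N$), so your bound $1+3\e-4\e\dim(Y)\ge 1-\frac{4}{7N}$ should read $\ge 1-\frac47+3\e$, which is still positive because $\e\le\frac1{7N^2}$, so the argument goes through unchanged.
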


\begin{proof}
  We set
  $
  {M}=M_\escape(N,\e) =
  M_\spreading(N,\e)
  $
  and
  $$
  K=K_\escape(N,\Delta,\e) =
  \max\Big(K_\spreading(N,\Delta,\e),2^{N/\e},(2\Delta+1)^{N/(1-\e)}\Big)\;.
  $$
  Then
  $
  \mu(\alpha,G)\ge\Frac{\log(K)}{N}\ge \Frac{\log(2)}{\e}
  $
  and
  $$
  \log\left(
  \frac{\left|\Prod^{m}\alpha\cap Y\right|}
  {\left|\Prod^{m}\alpha\cap(Y\setminus X)\right|} \right) =
  \dim(Y)\Big(\mu\big(\Prod^{m}\alpha,Y\big)-
  \mu\big(\Prod^{m}\alpha,Y\setminus X\big)\Big) \ge
  $$
  $$
  \ge
  \dim(Y)\cdot\e\cdot\mu(\alpha,G\big) \ge
  \log(2) \;.
  $$
  Therefore
  $
  \left|\Prod^{m}\alpha\cap X\right| \ge
  \Frac12\left|\Prod^{m}\alpha\cap Y\right| \ge
  \Frac12|\alpha\cap G|^{(1-\e)\dim(Y)/\dim(G)} > \Delta
  $,
  hence $\dim(X)>0$ and
  $$
  \mu\left(\Prod^{m}\alpha,X\right) \ge
  \Frac{\dim(Y)}{\dim(X)}
  \mu\left(\Prod^{m}\alpha,Y\right)-\log(2) \ge
  $$
  $$
  \ge
  \left(1+\Frac1{\dim(X)}\right)(1-\e)\cdot\mu(\alpha,G) - \log(2)\ge
  (1+7\e)(1-\e)\cdot\mu(\alpha,G) - \log(2) \ge
  $$
  $$
  \ge
  \big(1+5\e\big)\cdot\mu(\alpha,G) -\e\cdot\mu(\alpha,G) >
  (1+3\e)\cdot\mu(\alpha,G) \;.
  $$
  Then $\alpha|G$ is $(\e,{M},K)$-spreading
  by the Spreading Theorem~\ref{spread-all-over}.
\end{proof}

\section{Centralisers}
\label{sec:centralisers}

If $G$ is a simple algebraic group then a maximal torus $T$ can be obtained as
the connected centraliser of a (regular semisimple) element.
Using this it follows that
if an appropriate subset $\alpha\subset G$ does not grow then the
concentration of a small power of
$\alpha$ in $T$ is at least $\mu(\alpha,G)$.
We first generalise this extremely useful result.
Then we define CCC-subgroups and establish some of their basic
properties.

Recall from Fact~\ref{normaliser-centraliser-fact}
that the degree of any centraliser subgroup is at most $\deg(G)$.

\newcommand{\centraliser} {{\rm c}}
\begin{lem} [Centraliser Lemma]
  \label{centraliser-lemma}
  For all parameters $N>0$, $\Delta>0$ and $1\ge\e>0$
  there is an integer
  ${M}=M_\centraliser(N,\e)$
  and a real
  ${K}=K_\centraliser(N,\Delta,\e)$
  with the following property.\\
  Let $\alpha|G$ be an
  $(N,\Delta,{K})$-bounded spreading system
  and $C=\CC_G(b_1,b_2,\dots, b_m)$ the centraliser of
  $m\le N$ elements $b_i\in\alpha\cap G$.
  If $0<\dim(C)$
  then either
  $$
  \mu\big(\alpha^{M},C^0\big)\ge
  \Big(1-\e\cdot8N\Big)\cdot\mu(\alpha,G)
  $$
  or $\alpha|G$ is $(\e,{M},K)$-spreading.
  Moreover, in the latter case
  our construction of the subgroup of spreading
  is uniquely determined.  
\end{lem}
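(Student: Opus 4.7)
The plan is to exploit the conjugation morphism $\phi : G \to \Prod^m G$ defined by $\phi(g) = (g^{-1}b_1 g, \ldots, g^{-1}b_m g)$. Since $G$ is connected (hence irreducible), the closure $Y = \cl{\phi(G)}$ is irreducible with $\dim(Y) = \dim(G) - \dim(C)$, and Fact~\ref{degree-of-group-theoretic-maps} bounds $\deg(\phi)$, hence $\deg(Y)$, in terms of $N$ and $\Delta$ (each coordinate is a length-three product with a single inverted variable, so $\deg(\phi) \le \inv(G)^m \mult(G)^{2m} \le \Delta^{3N}$). The fibres of $\phi$ through points of $\phi(G)$ are cosets of $C$, and since $\alpha$ is symmetric and the $b_i$ lie in $\alpha$, we have $\phi(\alpha \cap G) \subseteq \Prod^m \alpha^3 \cap Y$.

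Set $c = \dim(C)$, $d = \dim(Y)$ and $t = |\phi(\alpha \cap G)|$. If $d = 0$ then $C^0 = G$ (as $C$ is a closed subgroup of the connected group $G$ of full dimension) and the first alternative of the lemma is trivial, so assume $d \ge 1$. Pigeonholing the $|\alpha \cap G|$ elements of $\alpha \cap G$ over the $t$ fibres of $\phi$ yields a coset $gC$ with $|\alpha \cap gC| \ge |\alpha \cap G|/t$; using $\alpha = \alpha^{-1}$ this gives $|\alpha^2 \cap C| \ge |\alpha \cap G|/t$, and since $[C : C^0] \le \deg(C) \le \Delta$, some coset of $C^0$ inside $C$ meets $\alpha^2$ in at least $|\alpha \cap G|/(t\Delta)$ elements, whence Proposition~\ref{coset-vs-subgroup} delivers $|\alpha^4 \cap C^0| \ge |\alpha \cap G|/(t\Delta)$. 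I split on $t$. If $t\Delta \le |\alpha \cap G|^{(d + 8N\e c)/\dim(G)}$, then a direct rearrangement (using $(c + d)/\dim(G) = 1$) gives $|\alpha^4 \cap C^0| \ge |\alpha \cap G|^{(1 - 8N\e) c/\dim(G)}$, which rewrites as $\mu(\alpha^4, C^0) \ge (1 - 8N\e)\mu(\alpha, G)$, and the first alternative of the lemma holds with any $M \ge 4$. Otherwise $|\Prod^m \alpha^3 \cap Y| \ge t > |\alpha \cap G|^{(d + 8N\e c)/\dim(G)}/\Delta$, so taking logarithms and dividing by $d$ yields
\[
\mu\bigl(\Prod^m \alpha^3,\, Y\bigr) \;\ge\; \Big(1 + \frac{8N\e c}{d}\Big)\, \mu(\alpha, G) \;-\; \frac{\log \Delta}{d} \;.
\]
Because $c \ge 1$ and $d \le N - 1$ we get $8N\e c/d \ge 8\e$, and choosing $K$ at least $\Delta^{N/\e}$ makes the term $\log\Delta/d$ smaller than $\e\,\mu(\alpha, G)$, giving $\mu\bigl(\Prod^m \alpha^3,\, Y\bigr) \ge (1+7\e)\mu(\alpha, G)$.

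In this remaining case I extract the spreading conclusion via a secondary split. If $\mu(\alpha^3, G) > (1+\e)\mu(\alpha, G)$ then $H = G$ itself realises $(\e, 3, K)$-spreading (as $\deg(G) \le \Delta \le K$), and $H = G$ is tautologically uniquely determined. Otherwise $\mu(\alpha^3, G) \le (1+\e)\mu(\alpha, G)$, and since $(1+3\e)(1+\e) \le 1+7\e$ for $\e \le 1$, the previous inequality rewrites as $\mu\bigl(\Prod^m \alpha^3,\, Y\bigr) \ge (1+3\e)\mu(\alpha^3, G)$. The system $\alpha^3 | G$ is again an $(N, \Delta, K)$-bounded spreading system ($\CC_G(\alpha^3) \subseteq \CC_G(\alpha)$ is finite, $\alpha^3$ is symmetric, $|\alpha^3 \cap G| \ge |\alpha \cap G| \ge K$), so the Spreading Theorem~\ref{spread-all-over} applied to $\alpha^3|G$ and the closed subset $Y$ of degree at most $\Delta^{3N}$ produces the uniquely determined subgroup of spreading, with $M_\spreading(N,\e)$ steps in $\alpha^3$, i.e. with $3\, M_\spreading(N,\e)$ steps in $\alpha$. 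Setting $M = \max\bigl(4,\, 3\, M_\spreading(N,\e)\bigr)$ and $K = K_\centraliser(N, \Delta, \e)$ large enough to dominate $\Delta^{N/\e}$ and $K_\spreading(N, \Delta^{3N}, \e)$ completes the proof. The main obstacle is bookkeeping: verifying that the slack accumulated from passing between $\alpha$, $\alpha^2$, $\alpha^3$ and $\alpha^4$, and between $C$ and $C^0$, is always absorbed by the gap between the factor $8N\e$ in the concentration alternative and the factor $3\e$ demanded by the Spreading Theorem, which here is guaranteed by the inequality $c/d \ge 1/N$.
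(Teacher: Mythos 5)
Your proposal is correct and follows essentially the same route as the paper: the conjugation morphism $g\mapsto(g^{-1}b_1g,\dots,g^{-1}b_mg)$ whose fibres are cosets of $C$, a dichotomy between large concentration in a coset of $C$ (transferred to $C^0$ via Proposition~\ref{coset-vs-subgroup}) and concentration $\ge(1+7\e)\mu(\alpha,G)$ of $\Prod^m\alpha^3$ in $\cl{f(G)}$, and then the same secondary split with the Spreading Theorem applied to $\alpha^3|G$ and $\cl{f(G)}$. The only difference is presentational: where the paper invokes Lemma~\ref{constant-fibre-dimension} with a negative parameter $\e'$, you carry out the equivalent pigeonhole count over the fibres by hand, arriving at the same constants ($1-8N\e$ and $1+7\e$) and the same final reduction.
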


\begin{proof}
  We set
  $
  {M}=M_\centraliser(N,\e)= \max\big(4,3M_\spreading(N,\e)\big)
  $,
  $
  \tilde\Delta = \max(\Delta,\Delta^{3m})
  % \max\big(\Delta,\deg\big(\cl{f(G)}\big)\big)
  $
  and
  $$
  {K}=K_\centraliser(N,\Delta,\e)= 
  \max\Big(\Delta^{1/\e}\,,\,
  \Delta\cdot K_\spreading\big(N,\tilde\Delta,\e\big) \Big) \;.
  $$
  Note that $\dim(C^0)=\dim(C)>0$ and 
  $\big|C:C^0\big|\le\Delta$
  by Fact~\ref{normaliser-centraliser-fact}.\eqref{item:20}.
  Combining this with Proposition~\ref{coset-vs-subgroup} we obtain that
  for some $h\in C$
  $$
  \mu\big(\alpha^{M},C^0\big)\ge
  \mu(\alpha^{M/2},hC^0)\ge
  \mu\big(\alpha^{M/2},{C}\big) - \log\big(\Delta\big)
  \;.
  $$
  Since ${K}>\big(\Delta\big)^{1/\e}$ we have
  $$
  \textstyle
  \mu(\alpha,G)>\frac1{\dim(G)}\log(K)\ge
  \frac1{\e\cdot\dim(G)}\log\big(\Delta\big) \ge
  \frac1{\e\cdot N}\log\big(\Delta\big) \;.
  $$
  By the above inequalities it is enough to prove that
  either $\alpha|G$ is $(\e,M,K)$-spreading or
  \begin{equation}
    \label{eq:19}
    \mu\big(\alpha^{M/2},{C}\big)\ge
    \Big(1-\e\cdot7N\Big)\cdot\mu(\alpha,G) \;.
  \end{equation}
  If $\dim({C})=\dim(G)$ then $G={C}$
  and there is nothing to prove.
  So we assume $\dim({C})<\dim(G)$ and
  apply Lemma~\ref{constant-fibre-dimension} to 
  the subsets $Z=G$ and $\alpha$ and to the function
  $$
  f:G\to\Prod^{m}G
  \;,\quad
  f(g) = 
  \big(\;g^{-1}b_1g,\;g^{-1}b_2g,\;\dots\; g^{-1}b_mg\;\big) \in 
  \Prod^{m}G
  $$
  with the parameter $\e'=-7\e\frac{\mu(\alpha,G)}{\dim({C})}$.
  The fibres of $f$ are just the right cosets of the subgroup ${C}$,
  which have equal dimension, hence we obtain a coset $S={C}a$
  that satisfies inequality \eqref{eq:9}:
  either
  $$
  \mu(\alpha,G) \le
  \mu(\alpha, {C}a) +
  7\e\Frac{\mu(\alpha,G)}{\dim({C})}
  \big(\dim(G)-\dim({C})\big) \le
  $$
  $$
  \le
  \mu(\alpha, {C}a) +
  \e\cdot 7\dim(G)\cdot\mu(\alpha,G) \le
  \mu\big(\alpha^2,{C}\big)+
  \e\cdot 7N\cdot\mu(\alpha,G)
  $$
  (see Proposition~\ref{coset-vs-subgroup})
  and the inequality~\eqref{eq:19} holds in this case,
  or else
  $$
  \mu(\alpha,G)\le 
  \mu\big(f(\alpha\cap G),\cl{f(G)}\big) - 
  \Frac{7\e\cdot\mu(\alpha,G)}{\dim({C})}\dim({C}) =
  $$
  $$
  =
  \mu\big(f(\alpha\cap G),\cl{f(G)}\big) - 7\e\cdot\mu(\alpha,G)\;.
  $$
  We know $f(\alpha\cap G)\subseteq\Prod^{m}\alpha^3$ hence
  in this latter case we have
  $$
  \mu\big(\Prod^{m}\alpha^3,\cl{f(G)}\big)\ge
  (1+7\e)\cdot\mu(\alpha,G) \;.
  $$
  If $\mu(\alpha^3,G)\ge(1+\e)\mu(\alpha,G)$ then we are done.
  Otherwise
  $$
  (1+3\e)\mu(\alpha^3,G)\le
  (1+3\e)(1+\e)\mu(\alpha,G)\le
  $$
  $$
  \le
  (1+7\e)\mu(\alpha,G)\le
   \mu\big(\Prod^{m}\alpha^3,\cl{f(G)}\big) \;. 
  $$
  Now $\deg\big(\cl{f(G)}\big)\le\tilde\Delta$
  (see Fact~\ref{degree-of-group-theoretic-maps}).
  We apply the Spreading Theorem~\ref{spread-all-over}
  with parameters $N$, $\tilde\Delta$ and $\e$
  to the spreading system $\alpha^3|G$ and $X=\cl{f(G)}$.
  We obtain that $\alpha^3|G$ is $(\e,\Frac13{M},K)$-spreading,
  hence $\alpha|G$ is $(\e,{M},K)$-spreading.
\end{proof}

\begin{defn} \label{CC-generator-def}
  Let $G$ be an algebraic group and
  $X\subseteq G$ an irreducible closed subset. 
  A \emph{CC-generator}\footnote{CC refers to ``connected centraliser''}
  for $X$ is a $\dim(G)$-tuple
  $\ug\in\Prod^{\dim(G)}X$
  such that
  $$
  \CC_G(\ug)^0 = \CC_G(X)^0 \;.
  $$
  Let $X^\gen\subseteq\Prod^{\dim(G)}X$ denote
  the set of all CC-generators and let
  $X^\nongen=\big(\Prod^{\dim(G)}X\big)\setminus X^\gen$ denote the
  complement.
\end{defn}

Note that $X^\gen$ depends on the group $G$,
but for simplicity we suppressed it from the notation.
When we work with a spreading system $\alpha|G$
then we always define $X^\gen$ with respect to $G$.

\begin{prop} \label{CC-generators-exist}
  Let $G$ be an algebraic group and
  $X\subseteq G$ an irreducible closed subset. 
  Then $X$ has a CC-generator i.e. $X^\gen\neq\emptyset$.
\end{prop}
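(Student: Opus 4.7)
The plan is to build $\ug$ greedily, one coordinate at a time, by choosing each new entry so as to strictly decrease the dimension of the current connected centraliser whenever this is possible. Since $\dim\big(\CC_G(g_1,\dots,g_k)^0\big)$ is a non-negative integer bounded by $\dim(G)$, the procedure terminates in at most $\dim(G)$ strict decreases.

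In detail, set $C_0=G^0$ and suppose we have chosen $g_1,\dots,g_k\in X$; let $C_k=\CC_G(g_1,\dots,g_k)^0$. If there exists $x\in X$ with $\dim\big(\CC_G(g_1,\dots,g_k,x)^0\big)<\dim(C_k)$, pick such an $x$ as $g_{k+1}$; otherwise stop. This stops after some $k\le\dim(G)$. The main (easy) claim is that at the stopping index $k$ we already have $C_k=\CC_G(X)^0$. The inclusion $\CC_G(X)^0\subseteq C_k$ is immediate since each $g_i$ lies in $X$ and $\CC_G(X)^0$ is a connected subgroup of $\CC_G(g_1,\dots,g_k)$. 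For the reverse inclusion, fix $x\in X$ and consider the closed subgroup $H=\CC_G(g_1,\dots,g_k,x)=C_k\cap\CC_G(x)$ (intersecting $C_k$ with $\CC_G(x)$ inside $\CC_G(g_1,\dots,g_k)$). One checks that $H^0=(C_k\cap\CC_G(x))^0$. By the stopping condition $\dim(H^0)=\dim(C_k)$, and since $C_k$ is connected (hence irreducible) and $H^0\subseteq C_k$ is closed of the same dimension, one concludes $H^0=C_k$, i.e.\ $C_k\subseteq\CC_G(x)$. As $x\in X$ was arbitrary and $C_k$ is connected, this gives $C_k\subseteq\CC_G(X)^0$.

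Finally, if $k<\dim(G)$, pad the tuple by choosing $g_{k+1},\dots,g_{\dim(G)}$ to be any elements of $X$ (which is nonempty by irreducibility); adding further elements of $X$ can only shrink the centraliser, and it is already bounded below by $\CC_G(X)^0$ which equals $C_k$, so $\CC_G(g_1,\dots,g_{\dim(G)})^0=\CC_G(X)^0$. This gives the required CC-generator, so $X^{\gen}\neq\emptyset$.

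The main point to be careful about is the passage from the equality $\dim(H^0)=\dim(C_k)$ to the set-theoretic inclusion $C_k\subseteq\CC_G(x)$. This uses only that $C_k$ is connected (hence irreducible of dimension $\dim(C_k)$) and that $H^0$ is a closed subset of $C_k$ of the same dimension, forcing $H^0=C_k$; the rest is bookkeeping about connected components of intersections of closed subgroups and is standard.
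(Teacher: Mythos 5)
Your proof is correct and follows essentially the same route as the paper: build the tuple by a greedy/maximal chain of strictly shrinking connected centralisers (at most $\dim(G)$ steps, since strict inclusion of connected closed subgroups forces a dimension drop), observe that at termination the connected centraliser already equals $\CC_G(X)^0$, and pad with arbitrary elements of $X$. The only blemish is the line $\CC_G(g_1,\dots,g_k,x)=C_k\cap\CC_G(x)$, which should read $\CC_G(g_1,\dots,g_k)\cap\CC_G(x)$; since you only use identity components and correctly check $H^0=(C_k\cap\CC_G(x))^0$, this does not affect the argument.
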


\begin{proof}
  We consider sequences $\ua=a_1,a_2,\dots, a_m$, $a_i\in X$
  such that
  $$
  G>\CC_G(a_1)^0>\CC_G(a_1,a_2)^0>\CC_G(a_1,a_2,a_3)^0>\dots
  $$
  is a strictly decreasing chain of subgroups.
  The dimension is strictly decreasing in such a chain,
  hence the length of $\ua$ is $m\le\dim(G)$.
  Therefore one of them, say $\ua_{\max}$, is maximal
  i.e. it cannot be extended.
  But then
  $$
  \CC_G(X)^0=\CC_G(\ua_{\max})^0
  $$
  and we can build a CC-generator from $\ua_{\max}$
  by adding to it $\dim(G)-m$ arbitrary elements of $X$.
\end{proof}

\begin{prop} \label{centraliser-semi-continuity}
  Let $G$ be a connected linear algebraic group, $X$ an irreducible closed set
  and $G\times X\to X$ a morphism which is a group action.
  For points $x\in X$ let $G_x$ denote the stabiliser subgroup of $x$.
  These are closed subgroups and
  for each integer $d$ the subset $\{x\in X\,|\,\dim(G_x)>d\}$
  is closed in $X$.
  In particular, for each $d$ the points $\ug\in\Prod^{\dim(G)}G$
  with $\dim\big(\CC_G(\ug)\big)>d$ form a closed subset in $\Prod^{\dim(G)}G$.
\end{prop}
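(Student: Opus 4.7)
The plan is to use an incidence variety argument together with the stratification result of Fact~\ref{closed-set-constructions}.\eqref{item:10}. The closedness of $G_x$ is immediate: it is the fibre over $x$ of the orbit morphism $G\to X$, $g\mapsto gx$, and fibres of morphisms are closed. The heart of the matter is the upper semi-continuity statement.

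First I would form the incidence set
\[
\Gamma \;=\; \bigl\{(g,x)\in G\times X \;\big|\; gx=x\bigr\}\;.
\]
This is closed in $G\times X$: it is the preimage of the diagonal $\Delta_X\subseteq X\times X$ under the morphism $G\times X\to X\times X$, $(g,x)\mapsto(gx,x)$. Let $\pi\colon\Gamma\to X$ be the second projection. The fibre $\pi^{-1}(x)$ is canonically identified with $G_x$, so $\dim\bigl(\pi^{-1}(x)\bigr)=\dim(G_x)$. Because the identity lies in every stabiliser, $\pi$ is surjective.

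Next I would decompose $\Gamma=\bigcup_i\Gamma_i$ into its (finitely many) irreducible components and apply Fact~\ref{closed-set-constructions}.\eqref{item:10} to each restricted morphism $\pi|_{\Gamma_i}\colon\Gamma_i\to X$. This yields a partition of $\cl{\pi(\Gamma_i)}\subseteq X$ into locally closed strata $Y_{ij}$ on which the fibre dimension of $\pi|_{\Gamma_i}$ is constantly equal to $\dim(\Gamma_i)-\dim(Y_{ij})$, and with the key property that the closure of each $Y_{ij}$ in $X$ is a union of strata. Since the closure of $Y_{ij}$ has the same dimension as $Y_{ij}$, the extra strata in $\cl{Y_{ij}}\setminus Y_{ij}$ have strictly smaller dimension, hence strictly larger fibre dimension. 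Consequently, for any integer $e$, the union of those strata on which the fibre dimension of $\pi|_{\Gamma_i}$ is $\geq e$ (equivalently, $\dim(Y_{ij})\le\dim(\Gamma_i)-e$) is closed in $X$, being a union of strata each of whose closures remains inside the union.

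Finally I would assemble the result. Since $\pi^{-1}(x)=\bigcup_i(\pi|_{\Gamma_i})^{-1}(x)$, one has
\[
\dim(G_x)\;=\;\max_i\dim\bigl((\pi|_{\Gamma_i})^{-1}(x)\bigr)\;,
\]
so
\[
\{x\in X\mid \dim(G_x)>d\}\;=\;\bigcup_i\bigl\{x\in X\mid\dim\bigl((\pi|_{\Gamma_i})^{-1}(x)\bigr)\ge d+1\bigr\}
\]
is a finite union of closed subsets of $X$, hence closed. For the ``in particular'' statement, I would simply apply the result to the simultaneous conjugation action of $G$ on $\Prod^{\dim(G)}G$, noting that this product is irreducible by Fact~\ref{dimension-degree}.\eqref{item:8} and that the stabiliser of $\ug$ under this action is precisely $\CC_G(\ug)$. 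The only subtlety to watch is the bookkeeping between ``small $\dim(Y_{ij})$'' and ``large fibre dimension,'' but this is forced by the closure-is-a-union-of-strata property of Fact~\ref{closed-set-constructions}.\eqref{item:10} and poses no genuine obstacle.
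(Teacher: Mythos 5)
Your incidence-variety setup and the closedness of each $G_x$ are fine, and your route is genuinely different from the paper's (which simply quotes \cite[Proposition~in~1.4]{Hu2} and applies it to conjugation), but the key step has a real gap. Fact~\ref{closed-set-constructions}.\eqref{item:10} says that on each stratum $Y_{ij}$ \emph{either} the preimage is empty \emph{or} the fibre dimension equals $\dim(\Gamma_i)-\dim(Y_{ij})$; you silently drop the first alternative. Since $\pi(\Gamma_i)$ is in general only constructible, the closure of a stratum with nonempty preimage may contain strata over which $\Gamma_i$ has no points at all, and for those strata ``smaller dimension'' does not give ``larger fibre dimension''. So the union of the strata with fibre dimension $\ge e$ need not be closed, and the general principle you are invoking --- that for a morphism from an irreducible variety the locus in the \emph{target} where the fibre dimension is $\ge e$ is closed --- is false. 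For example, let $Z=\{(a,b,u,v,w)\in\Fclosed^5 : ua=1,\ vb=w\}$, an irreducible threefold, and $f$ the projection to $(u,v,w)$: the fibres are single points over $\{u\neq0,v\neq0\}$, one-dimensional over $\{u\neq0,\ v=w=0\}$, and empty when $u=0$, so the locus of fibres of dimension $\ge1$ is a punctured line, not closed in $\Fclosed^3$. Your argument uses nothing about $\pi$ beyond being a morphism with the given fibres, so it cannot distinguish it from such an example; the group structure of the stabilisers must enter somewhere, and in your write-up it never does.

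The statement is true, and it can be repaired inside your framework by moving the semicontinuity to the \emph{source} and then using the identity section, which is where the group structure comes in (this is also the idea behind the reference the paper cites). For each component $\Gamma_i$ let $U_{i,e}\subseteq\cl{\pi(\Gamma_i)}$ be the union of \emph{all} strata of dimension at most $\dim(\Gamma_i)-e$, empty preimage or not; this union really is closed, and $S_{i,e}:=(\pi|_{\Gamma_i})^{-1}(U_{i,e})=\{z\in\Gamma_i : \dim\big((\pi|_{\Gamma_i})^{-1}(\pi(z))\big)\ge e\}$ is closed in $\Gamma$, the empty-preimage strata now being harmless. Next use that $G_x$ is an algebraic group, so all its irreducible components have dimension $\dim(G_x)$; in particular the irreducible set $G_x^0\times\{x\}$, which contains $(1,x)$, lies in some component $\Gamma_i$ whose fibre over $x$ has dimension $\dim(G_x)$. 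Hence $\dim(G_x)\ge e$ if and only if $(1,x)\in\bigcup_iS_{i,e}$, and $\{x : \dim(G_x)>d\}$ is the preimage of the closed set $\bigcup_iS_{i,d+1}$ under the section morphism $x\mapsto(1,x)$, hence closed. Without this (or an equivalent appeal to \cite{Hu2}) the proof as written does not go through.
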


\begin{proof}
  For the first half of the proposition (about stabiliser subgroups)
  we refer to \cite[Proposition~in~1.4]{Hu2}.
  If we apply this to the conjugation map
  $$
  G\times\Prod^{\dim(G)}G\to
  \Prod^{\dim(G)}G
  \:,\quad
  (h,\ug) \to h^{-1}\ug h
  $$
  then we obtain the second half (about centraliser subgroups).
\end{proof}

\begin{lem} \label{CC-generator-properties}
  Let $G$ be a connected linear algebraic group and 
  $\emptyset\neq X\subseteq G$ an irreducible closed subset.
  Then $X^\gen$ is a dense open subset of $\Prod^{\dim(G)}X$.
  Moreover, the degree of its complement $X^\nongen$
  is bounded in terms of $\dim(G)$, $\deg(G)$, $\mult(G)$, $\inv(G)$
  and $\deg(X)$.
\end{lem}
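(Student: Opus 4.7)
The plan is to characterise $X^\gen$ via the dimension of the centraliser and then appeal to the semi-continuity of fibre dimension. Let $d = \dim\big(\CC_G(X)\big)$. For every $\ug \in \Prod^{\dim(G)}X$ we have $\CC_G(X) \subseteq \CC_G(\ug)$, so $\CC_G(X)^0 \subseteq \CC_G(\ug)^0$; since both are connected closed subgroups, equality holds if and only if $\dim\big(\CC_G(\ug)\big) = d$. Consequently
$$
X^\nongen = \Big\{\ug \in \Prod^{\dim(G)}X \;\Big|\; \dim\big(\CC_G(\ug)\big) > d\Big\}.
$$

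Openness and density follow directly. By Proposition~\ref{centraliser-semi-continuity} the set $F = \{\ug \in \Prod^{\dim(G)}G : \dim(\CC_G(\ug)) > d\}$ is closed in $\Prod^{\dim(G)}G$, hence $X^\nongen = F \cap \Prod^{\dim(G)}X$ is closed in $\Prod^{\dim(G)}X$ and $X^\gen$ is open. The set $\Prod^{\dim(G)}X$ is irreducible by Fact~\ref{dimension-degree}.\eqref{item:8}, and $X^\gen$ is nonempty by Proposition~\ref{CC-generators-exist}, so by Fact~\ref{dimension-degree}.\eqref{item:7} the open subset $X^\gen$ is dense.

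For the degree bound I would introduce the incidence variety
$$
W = \Big\{(\ug, h) \in \Prod^{\dim(G)}G \times G \;\Big|\; hg_i = g_i h \text{ for } i = 1, \dots, \dim(G)\Big\}.
$$
The set $W$ is cut out inside the ambient matrix space by the equations defining $\Prod^{\dim(G)}G \times G$ (which by Fact~\ref{dimension-degree}.\eqref{item:9} may be taken of degree at most $\deg(G)$) together with the commutation relations $hg_i - g_i h = 0$, which are polynomials of degree~$2$ in the matrix entries. By Fact~\ref{dimension-degree}.\eqref{item:9} this gives an upper bound on $\deg(W)$ in terms of $\dim(G)$ and $\deg(G)$. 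Consider the projection $\pi : W \to \Prod^{\dim(G)}G$ onto the first factor, whose fibre over $\ug$ is precisely $\CC_G(\ug)$. Applying Fact~\ref{closed-set-constructions}.\eqref{item:10} to $\pi$ yields a partition of $\cl{\pi(W)}$ into at most $D$ locally closed pieces of degree at most $D$, where $D$ is bounded in terms of $\dim(W) \le (\dim(G)+1)\dim(G)$ and $\deg(W)$, such that the fibre dimension of $\pi$ is constant on each piece and the closure of each piece is a union of partition classes. The union $F'$ of those pieces on which the fibre dimension exceeds $d$ is therefore closed in $\Prod^{\dim(G)}G$, of degree at most $D^2$. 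Intersecting with $\Prod^{\dim(G)}X$, which has degree $\deg(X)^{\dim(G)}$ by Fact~\ref{dimension-degree}.\eqref{item:6}, produces $X^\nongen = F' \cap \Prod^{\dim(G)}X$, whose degree is bounded by $D^2 \deg(X)^{\dim(G)}$ by the intersection bound of Fact~\ref{dimension-degree}.\eqref{item:6}.

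Nothing here is genuinely difficult; the only care-requiring step is to pipe the degree of $W$ through the stratification of Fact~\ref{closed-set-constructions}.\eqref{item:10} to control $\deg(F')$, and then through the product/intersection bounds of Fact~\ref{dimension-degree} to obtain the stated dependence on $\dim(G)$, $\deg(G)$, $\mult(G)$, $\inv(G)$ and $\deg(X)$. (In this particular write-up I avoided $\mult(G)$ and $\inv(G)$ by expressing commutation as $hg_i = g_i h$, but allowing them would only make the bookkeeping more flexible, not obstruct anything.)
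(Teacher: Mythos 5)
Your characterisation of $X^\nongen$ as $\{\ug\in\Prod^{\dim(G)}X \mid \dim(\CC_G(\ug))>d\}$ and the openness/density argument are correct and coincide with the paper's. The genuine gap is in the degree bound, at the step ``by Fact~\ref{dimension-degree}.\eqref{item:9} this gives an upper bound on $\deg(W)$ in terms of $\dim(G)$ and $\deg(G)$''. Fact~\ref{dimension-degree}.\eqref{item:9} bounds the degree of a zero locus by $d^{m}$, where $m$ is the dimension of the \emph{ambient affine space}; for your $W\subseteq\Prod^{\dim(G)}G\times G$ that ambient dimension is roughly $(\dim(G)+1)n^2$, and the matrix size $n$ is not among the permitted parameters $\dim(G),\deg(G),\mult(G),\inv(G),\deg(X)$, nor is it bounded by them (a one-dimensional torus embedded as scalar matrices in $GL(n,\Fclosed)$ keeps all four invariants bounded while $n\to\infty$). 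Hence $\deg(W)$, the constant $D$ you feed into Fact~\ref{closed-set-constructions}.\eqref{item:10}, and your final bound all depend on $n$, which is exactly what the statement excludes. Your closing parenthesis is the telltale sign: writing the commutation as bilinear equations in the matrix entries is precisely what drags the ambient dimension back in, and it is precisely to avoid this that the paper routes all degree estimates through graphs of group-theoretic maps (Fact~\ref{degree-of-group-theoretic-maps}) and through constants that depend only on the dimension of the domain and the degree of the graph. A secondary, minor point: closedness of your $F'$ does not follow from ``the closure of each piece is a union of partition classes'' alone; you need upper semi-continuity on the target, i.e.\ the observation that $F'$ coincides with the closed set of Proposition~\ref{centraliser-semi-continuity} once the fibre dimension is constant on each stratum.

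The gap is repairable without abandoning your incidence variety. Write $W=\bigcap_{i=1}^{\dim(G)}W_i$, where $W_i$ is, up to a coordinate permutation, $C\times\Prod^{\dim(G)-1}G$ with $C=\{(g,h)\in G\times G : ghg^{-1}h^{-1}=1\}$. Then $C$ is a fibre of the commutator morphism, so $\deg(C)$ is bounded in terms of $\mult(G)$ and $\inv(G)$ by Fact~\ref{degree-of-group-theoretic-maps} together with Fact~\ref{closed-set-constructions}.\eqref{item:12}; each $\deg(W_i)\le\deg(C)\deg(G)^{\dim(G)-1}$ and, by the B\'ezout bound of Fact~\ref{dimension-degree}.\eqref{item:6}, $\deg(W)\le\prod_i\deg(W_i)$ — all within the allowed parameters, and necessarily involving $\mult(G)$ and $\inv(G)$. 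With that replacement your argument (stratify $\pi:W\to\Prod^{\dim(G)}G$ by fibre dimension via Fact~\ref{closed-set-constructions}.\eqref{item:10}, identify the union of the strata with fibre dimension $>d$ with the closed set of Proposition~\ref{centraliser-semi-continuity}, intersect with $\Prod^{\dim(G)}X$) does prove the lemma, and by a route genuinely different from the paper's, which instead applies Fact~\ref{closed-set-constructions}.\eqref{item:14} to the conjugation morphism $(h,\ug)\mapsto(h^{-1}\ug h,\ug)$ on the source and pushes the bad locus forward; your target-side stratification is arguably more transparent, while the paper's source-side argument gets the $n$-free bounds for free from the degree of a group-theoretic map.
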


\begin{proof}
  First of all  $X^\nongen=\big\{\ug\,\big|\,\dim(\CC_G(\ug))>\dim(A)\big\}$
  is closed by Proposition~\ref{centraliser-semi-continuity}.
  Its complement $X^\gen$ is naturally open, it is nonempty by
  Proposition~\ref{CC-generators-exist},
  hence it is dense (see Fact~\ref{dimension-degree}.\eqref{item:7}).
 
  Let us consider the conjugation map
  $$
  f:G\times\Prod^{\dim(G)}X\to
  \Prod^{\dim(G)}G\times\Prod^{\dim(G)}X
  \:,\quad
  f(h,\ug) = \big(h^{-1}\ug h,\ug\big) \;.
  $$
  Let $Y$ denote the diagonal subset
  $$
  Y=
  \left\{(\ug,\ug)\,\big|\,\ug\in\Prod^{\dim(G)}X\right\}
  \subset\Prod^{\dim(G)}G\times\Prod^{\dim(G)}X
  $$
  and let $\tilde f$ denote the restriction of $f$ to 
  $f^{-1}(Y)$ composed with the second projection $Y\to \Prod^{\dim(G)}X$.

  The nonempty fibres of $f$ can be easily identified with cosets of
  appropriate centraliser subgroups.
  Namely, if $f^{-1}(\ug',\ug)\neq\emptyset$ then
  $\ug'=h^{-1}\ug h$ for some element $h\in G$
  and 
  $$
  f^{-1}(\ug',\ug) = \CC_G(\ug)h\times\{\ug\} \;.
  $$
  All of the involved centralisers contain the subgroup
  $$
  A=\CC_G(X)^0
  $$
  and by Proposition~\ref{CC-generators-exist}
  at least one of them has dimension $\dim(A)$.
  For $\ug\in\Prod^{\dim(G)}X$ we have
  $\ug\in X^\nongen$ iff
  $\dim\big(f^{-1}(\ug,\ug)\big)>\dim(A)$. 
  By Fact~\ref{closed-set-constructions}.\eqref{item:14} the subset
  $$
  Z = \left\{t \;\Big|\;
    \dim\Big(f^{-1}\big(f(t)\big)\Big)>\dim(A) \right\}
  \subseteq G\times\Prod^{\dim(G)}X
  $$
  is a closed subset and $\deg(Z)$ is bounded in terms of
  $\dim(G)$, $\deg(G)$, $\mult(G)$, $\inv(G)$ and $\deg(X)$.
  By the above $\tilde f\big(Z\cap f^{-1}(Y)\big)=X^\nongen=\cl{X^\nongen}$.
  By  Fact~\ref{closed-set-constructions}.\eqref{item:15}
  and Fact~\ref{dimension-degree}.\eqref{item:6}
  we see that
  $\deg(X^\nongen)=
  \deg\big(\cl{f(Z)}\cap Y\big)\le
  \deg(f)\cdot\deg(Z)\cdot\deg(Y)$
  which is bounded in terms of
  $\dim(G)$, $\deg(G)$, $\mult(G)$, $\inv(G)$ and $\deg(X)$.
\end{proof}

\begin{defn} \label{CCC-subgroup-def}
  Let $G$ be an algebraic group.
  A closed subgroup
  $A<G$ is a \emph{CCC-subgroup}\footnote
  {CCC refers to ``connected centraliser of a connected subgroup''} 
  if $A=\CC_G(X)^0$ for some irreducible closed subset $X\ni1$
  and $A$ is different from $\{1\}$ and $G^0$.
\end{defn}

\begin{lem} \label{ccc-subgroup-properties}
  Let $G$ be an algebraic group and $A<G$ a CCC-subgroup.
  Then
  $$
  \CC_G\big(\CC_G(A)^0\big)^0 = A
  \;,\quad
  \deg(A)\le\deg(G)
  $$
  and $\deg\left(A^\nongen\right)$
  is bounded in terms of $\dim(G)$, $\deg(G)$, $\mult(G)$ and $\inv(G)$.
  If $B<G$ is another CCC-subgroup with $A\neq B$ then 
  $A^\gen\cap B^\gen=\emptyset$.
\end{lem}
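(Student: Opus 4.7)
The plan is to dispatch the four claims in sequence; the identity $\CC_G(\CC_G(A)^0)^0 = A$ is really the heart of the matter, and the other three parts follow with short arguments from this identity together with lemmas already established.

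For the double-centraliser identity, write $A = \CC_G(X)^0$ with $X \ni 1$ irreducible closed. I would prove the two inclusions separately. The easy direction is $A \subseteq \CC_G(\CC_G(A)^0)^0$: every element of $A$ commutes with every element of $\CC_G(A)$, so $A \subseteq \CC_G(\CC_G(A)) \subseteq \CC_G(\CC_G(A)^0)$, and since $A$ is connected and contains $1$ we may pass to identity components. For the reverse inclusion, note that $A \subseteq \CC_G(X)$ means each $x \in X$ commutes with each $a \in A$, so $X \subseteq \CC_G(A)$. Because $X$ is irreducible and contains $1 \in \CC_G(A)^0$, connectedness forces $X \subseteq \CC_G(A)^0$. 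Then $\CC_G(\CC_G(A)^0) \subseteq \CC_G(X)$, and taking identity components gives $\CC_G(\CC_G(A)^0)^0 \subseteq \CC_G(X)^0 = A$. This is the only step with any real content; the rest is bookkeeping.

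The degree bound $\deg(A) \le \deg(G)$ follows because $A = \CC_G(X)^0$ is one irreducible component of $\CC_G(X)$, so Fact~\ref{dimension-degree}.\eqref{item:5} gives $\deg(A) \le \deg(\CC_G(X))$, and Fact~\ref{normaliser-centraliser-fact}.\eqref{item:19} bounds $\deg(\CC_G(X)) \le \deg(G)$. For the bound on $\deg(A^\nongen)$, a CCC-subgroup $A$ is a connected closed subgroup, hence irreducible as an algebraic set, so Lemma~\ref{CC-generator-properties} applies with $X$ replaced by $A$, yielding a bound in terms of $\dim(G), \deg(G), \mult(G), \inv(G)$ and $\deg(A)$. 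Combined with $\deg(A) \le \deg(G)$ this is the bound asserted.

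Finally, for the disjointness claim, suppose towards a contradiction that $\ug \in A^\gen \cap B^\gen$ for distinct CCC-subgroups $A, B < G$. By the definition of CC-generator, $\CC_G(\ug)^0 = \CC_G(A)^0$ and also $\CC_G(\ug)^0 = \CC_G(B)^0$, so $\CC_G(A)^0 = \CC_G(B)^0$. Applying the identity from part one,
\[
A \;=\; \CC_G\!\big(\CC_G(A)^0\big)^0 \;=\; \CC_G\!\big(\CC_G(B)^0\big)^0 \;=\; B,
\]
contradicting $A \neq B$. The main (indeed only) obstacle is the first part: once the double-centraliser identity is in hand, parts (2), (3), (4) are immediate applications of earlier results.
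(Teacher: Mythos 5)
Your proposal is correct and follows essentially the same route as the paper: both directions of the double-centraliser identity are proved exactly as in the paper's proof (via $X\subseteq\CC_G(A)^0$ and the connectedness of $A$), and the degree bounds and the disjointness of $A^\gen$ and $B^\gen$ are deduced from Fact~\ref{normaliser-centraliser-fact}, Lemma~\ref{CC-generator-properties} and the identity, just as the paper does. Your write-up is merely more detailed in spelling out the two inclusions.
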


\begin{proof}
  Let $1\in X\subseteq G$ be an irreducible closed subset such that
  $A=\CC_G(X)^0$.
  Then $X\subseteq\CC_G(A)^0$, $A$ is connected and
  commutes with $\CC_G(A)^0$,
  hence
  $$
  A=\CC_G(X)^0\supseteq\CC_G\big(\CC_G(A)^0\big)^0\supseteq A \;.
  $$
  Now $\deg(A)\le\deg(G)$
  by Fact~\ref{normaliser-centraliser-fact}
  and then Lemma~\ref{CC-generator-properties} implies that
  $\deg\left(A^\nongen\right)$
  is bounded in terms of $\dim(G)$, $\deg(G)$, $\mult(G)$ and $\inv(G)$.
  Finally if $\ug\in A^\gen$ then
  $\CC_G\big(\CC_G(\ug)^0\big)^0=A\neq B$
  hence $\ug\notin B^\gen$.
  This proves that $A^\gen\cap B^\gen=\emptyset$.
\end{proof}

\section{Dichotomy lemmas}
\label{sec:dichotomy-lemmas}

A central idea of the proof of Theorem~\ref{simple-L}
for $L=SL(n,q)$ (as outlined in the introduction)
is the following.
If a generating set $\alpha$ of $L$ does not grow then
the intersection of $\alpha$
with any maximal torus of $L$ is either relatively large or relatively
small.
This follows from a similar property of appropriate maximal tori
in $SL(n,\cl{\,\Fq})$.
Here we show that CCC-subgroups also satisfy a similar dichotomy.
In fact they were designed to do so.

We first prove that if a set $\alpha$ does not grow (or spread),
then for any closed set $Z$ either the intersection of $\alpha$ with $Z$ is
relatively small or a small power of $\alpha$ has relatively large
intersection with the centraliser of $Z$.

\newcommand{\asymdichotomy} {{\rm a}}
\begin{lem} [Asymmetric Dichotomy Lemma]
  \label{asymmetric-dichotomy}
  For all parameters $N>0$, $\Delta>0$ and $\Frac1{56N^3}>\e>0$
  there is an integer
  $M=M_\asymdichotomy(N,\e)$
  and a real
  $K=K_\asymdichotomy(N,\Delta,\e)$
  with the following property.\\
  Let $\alpha|G$ be an
  $(N,\Delta,K)$-bounded spreading system.
  Then either $\alpha|G$ is  $(\e,M,K)$-spreading
  or for all irreducible closed subsets $Z\subseteq G$ 
  such that $\dim(Z)>0$, $\deg(Z)<\Delta$ and $\dim\big(\CC_G(Z)\big)>0$
  one of the following holds:
  $$
  \mu\left(\alpha,Z\right) <
  \left(1-\Eescape\right)\cdot\mu(\alpha,G)
  $$
  or
  $$
  \mu\big(\alpha^M,\CC_G(Z)^0\big) \ge
  \mu\left(\Prod^{\dim(G)}\alpha^M,\;\big(\CC_G(Z)^0\big)^\gen\right) \ge
  \Big(1-\e\cdot16N\Big)\cdot\mu(\alpha,G) \;.
  $$
  Moreover, our construction of the subgroup of spreading
  is uniquely determined.
\end{lem}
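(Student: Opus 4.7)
My plan is to argue by contrapositive: assuming $\alpha|G$ is not $(\e, M, K)$-spreading for the $M$ and $K$ we shall specify, I will show that for every admissible $Z$ one of the two displayed alternatives holds. Fix such a $Z$ and write $A = \CC_G(Z)^0$. If $\mu(\alpha, Z) < \bigl(1 - \frac{1}{7N^2}\bigr)\mu(\alpha, G)$ the first alternative is exactly the conclusion, so assume the reverse inequality; by Proposition~\ref{concentration-is-bounded} the same concentration bound holds on $\Prod^{\dim(G)}Z$, which is irreducible by Fact~\ref{dimension-degree}.\eqref{item:8}.

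The first transport step moves concentration from $\Prod^{\dim(G)}Z$ to the open dense subset $Z^{\gen}$ of CC-generators. By Lemma~\ref{CC-generator-properties} the closed complement $Z^{\nongen}$ has degree bounded in terms of $N$, $\Delta$, and the numerical invariants of $G$. I apply the Escape Lemma~\ref{escape} with $Y = \Prod^{\dim(G)}Z$, $X = Z^{\nongen}$ and parameter $\e_1 = \frac{1}{7N^2}$; the first concentration hypothesis is what we just established. Since $\e_1 > \e$, any $(\e_1, M, K)$-spreading of $\alpha|G$ would imply $(\e, M, K)$-spreading, contradicting our standing assumption. Hence the escape cannot trigger and we obtain $\mu\bigl(\Prod^{\dim(G)}\alpha, Z^{\gen}\bigr) > \bigl(1 - \frac{2}{7N^2}\bigr)\mu(\alpha, G)$. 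Taking $K$ large enough that this intersection is nonempty, we pick any (lexicographically smallest) tuple $\ug = (b_1, \dots, b_{\dim(G)})$ in $Z^{\gen}\cap\Prod^{\dim(G)}\alpha$; then $b_i\in\alpha\cap G$ and $\CC_G(b_1,\dots,b_{\dim(G)})^0 = A$ by definition of $Z^{\gen}$.

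Next I pass from the CC-generator to the subgroup $A$ itself via the Centraliser Lemma~\ref{centraliser-lemma}, applied to $C = \CC_G(b_1,\dots,b_{\dim(G)})$ with $m = \dim(G)\le N$ and parameter $\e$; the hypothesis $\dim(A)>0$ is part of our assumptions on $Z$. The lemma yields either $(\e, M_c, K)$-spreading (ruled out) or $\mu(\alpha^{M_c}, A) \ge (1 - 8N\e)\mu(\alpha, G)$. Note that the first inequality of the target conclusion, $\mu(\alpha^M, A) \ge \mu(\Prod^{\dim(G)}\alpha^M, A^{\gen})$, is automatic, since $A^{\gen}$ is a subset of $\Prod^{\dim(G)}A$ of the same dimension and $|\Prod^{\dim(G)}\alpha^M\cap A^{\gen}|\le|\alpha^M\cap A|^{\dim(G)}$.

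The final, most delicate step transports concentration from $\Prod^{\dim(G)}A$ (which has the same concentration as $A$) to its dense open subset $A^{\gen}$. Since $A$ is connected, $\Prod^{\dim(G)}A$ is irreducible, and $A^{\nongen}$ is a closed subset of strictly smaller dimension with degree bounded by Lemma~\ref{CC-generator-properties}. I apply the Spreading Theorem~\ref{spread-all-over} to the spreading system $\alpha^{M_c}|G$ with $X = A^{\nongen}$: if $\mu\bigl(\Prod^{\dim(G)}\alpha^{M_c}, A^{\nongen}\bigr) \ge (1+3\e)\mu(\alpha^{M_c}, G)$, the theorem gives $(\e, M_s, K)$-spreading of $\alpha^{M_c}|G$, hence $(\e, M_cM_s, K)$-spreading of $\alpha|G$, a contradiction. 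Otherwise a direct count shows $|\Prod^{\dim(G)}\alpha^{M_c}\cap A^{\nongen}| < \exp\bigl((1+3\e)\mu(\alpha^{M_c}, G)(\dim(G)\dim(A) - 1)\bigr)$; using $\mu(\alpha^{M_c}, G) \le (1+\e)\mu(\alpha, G)$ (else $G$ itself witnesses $\e$-spreading), the hypothesis $\e < \frac{1}{56N^3}$ together with $\dim(G)\dim(A)\le N^2$ makes this less than half of $|\alpha^{M_c}\cap A|^{\dim(G)}\ge\exp\bigl((1-8N\e)\mu(\alpha, G)\dim(G)\dim(A)\bigr)$, once $\mu(\alpha, G)$ is large enough, which we arrange by enlarging $K$. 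Taking logs and dividing by $\dim(G)\dim(A)$ yields $\mu\bigl(\Prod^{\dim(G)}\alpha^{M_c}, A^{\gen}\bigr) \ge (1-8N\e)\mu(\alpha, G) - \frac{\log 2}{\dim(G)\dim(A)}$, and one final enlargement of $K$ (so that $\frac{\log 2}{\dim(G)\dim(A)}\le 8N\e\mu(\alpha, G)$) upgrades this to the required $(1-16N\e)\mu(\alpha, G)$. The main obstacle will be calibrating all the degree and size constants along the chain Escape--Centraliser--Spreading, with the precise hypothesis $\e < \frac{1}{56N^3}$ being exactly what makes the final exponential comparison close.
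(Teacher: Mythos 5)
Your proposal is correct and follows essentially the same route as the paper: escape from $Z^{\nongen}$ inside $\Prod^{\dim(G)}Z$ to produce a CC-generator tuple in $\Prod^{\dim(G)}\alpha$, then the Centraliser Lemma to get $\mu(\alpha^{M_c},\CC_G(Z)^0)\ge(1-8N\e)\mu(\alpha,G)$, then passage from $\Prod^{\dim(G)}\CC_G(Z)^0$ to its dense open subset of CC-generators. The only difference is cosmetic: where the paper invokes the Escape Lemma a second time (with parameter $8N\e$) for this last passage, you inline its proof — Spreading Theorem applied to $A^{\nongen}$ plus an explicit factor-of-two count, with the comparison $\mu(\alpha^{M_c},G)\le(1+\e)\mu(\alpha,G)$ handled explicitly — which is an equivalent (and if anything slightly more carefully calibrated) way to reach the same bound $(1-16N\e)\mu(\alpha,G)$.
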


\begin{proof}
  We define the parameters
  $$
  \e'=\Eescape
  \;,\quad
  \e'''= \e\cdot8N \le \Eescape
  $$
  and the closed subsets
  $$
  Y'=\Prod^{\dim(G)}Z\quad\supseteq\quad
  X'=Z^\nongen \;\,\quad
  $$
  $$
  Y'''=\Prod^{\dim(G)}\CC_G(Z)^0\quad\supseteq\quad
  X'''=\big(\CC_G(Z)^0\big)^\nongen\;.
  $$
  We know from Fact~\ref{normaliser-centraliser-fact}
  that $\deg\big(\CC_G(Z)^0\big)\le\Delta$.
  By Lemma~\ref{ccc-subgroup-properties}
  there is an upper bound $\tilde\Delta\ge\Delta$
  for $\deg(X')$ and $\deg(X''')$
  which depends only on $N$ and $\Delta$.
  We set
  $
  M'' = M_\centraliser(N,\e) \;,
  $
  $$
  M = M_\asymdichotomy(N,\e) =
  \max \Big(
  M_\escape(N,\e'),\, M'',\, M''\cdot\, M_\escape(N,\e''')
  \Big)
  $$
  and
  $$
  K = K_\asymdichotomy(N,\Delta,\e) =
  $$
  $$
  =
  \max \Big(
  K_\escape(N,\tilde\Delta,\e'),\, K_\centraliser(N,\Delta,\e),\,
  K_\escape(N,\tilde\Delta,\e''')
  \Big) \;.
  $$
  We apply the Escape Lemma~\ref{escape}
  with the parameters $N$, $\tilde\Delta$ and $\e'$
  to the subsets $X'$ and $Y'$.
  If the Escape Lemma~\ref{escape} gives us a
  subgroup of
  $\big(\e',M_\escape(N,\e'),K_\escape(N,\tilde\Delta,\e')\big)$-spreading
  then the lemma holds since $\e\le\e'$.
  Otherwise there are two possibilities.
  Either
  $$
  \mu(\alpha,Z) =
  \mu\left(\Prod^{\dim(G)}\alpha,Y'\right) <
  (1-\e')\cdot\mu(\alpha,G) =
  \left(1-\Eescape\right)\cdot\mu(\alpha,G)
  $$
  in which case the lemma holds,
  or else there is at least one $\dim(G)$-tuple
  $\ug\in\Prod^{\dim(G)}\alpha\cap Z^\gen$
  (in fact the Escape Lemma gives us many such tuples).
  We select the lexicographically minimal $\ug$ among them.
  Note that $\CC_G(\ug)^0=\CC_G(Z)^0\neq\{1\}$,
  in particular $\dim\big(\CC_G(\ug)\big)>0$.
  In this latter case we apply the
  Centraliser Lemma~\ref{centraliser-lemma}
  with parameters $N$, $\Delta$ and $\e$
  to the spreading system $\alpha|G$
  and the subgroup $C=\CC_G(\ug)$.
  In case we obtain a subgroup of spreading, the lemma holds.
  Otherwise we have
  $$
  \mu\big(\alpha^{M''},\CC_G(Z)^0\big) \ge
  \big(1-\e\cdot8N\big)\cdot\mu(\alpha,G) =
  \left(1-\e'''\right)\cdot\mu(\alpha,G) \;.
  $$
  Finally we apply the Escape Lemma~\ref{escape}
  with parameters $N$, $\tilde\Delta$ and $\e'''$
  to the spreading system $\alpha^{M''}|G$ and the subsets $X'''$ and $Y'''$.
  Again, the lemma holds if we obtain a subgroup of spreading.
  Otherwise we have
  $$
  \mu\left(\Prod^{\dim(G)}\alpha^{M''},\,\big(\CC_G(Z)^0\big)^\gen\right) >
  (1-2\e''')\cdot\mu(\alpha,G) =
  \Big(1-\e\cdot16N\Big)\mu(\alpha,G) \;.
  $$
  Then the lemma follows from Proposition~\ref{concentration-is-bounded}
  via the following calculation:
  $$
  \mu\big(\alpha^{M},\CC_G(Z)^0\big) =
  \mu\big(\Prod^{\dim(G)}\alpha^{M},Y'''\big) \ge
  $$
  $$
  \ge
  \mu\big(\Prod^{\dim(G)}\alpha^{M},(Y'''\setminus X''')\big) =
  \mu\left(\Prod^{\dim(G)}\alpha^{M},\,\big(\CC_G(Z)^0\big)^\gen\right) \;.
  $$
\end{proof}

The connected centraliser of the connected centraliser of a
CCC-subgroup $A$ is $A$ itself,
hence applying the previous lemma twice we obtain the following.

\newcommand{\dichotomy} {{\rm dichotomy}}
\begin{lem} [Dichotomy Lemma]
  \label{dichotomy-lemma}
  For all parameters $N>0$, $\Delta>0$ and $\Frac1{112N^3}>\e>0$
  there is an integer
  $M=M_\dichotomy(N,\e)$
  and a real
  $K=K_\dichotomy(N,\Delta,\e)$
  with the following property.\\
  Let $\alpha|G$ be an
  $(N,\Delta,K)$-bounded spreading system.
  Then either $\alpha|G$ is  $(\e,M,K)$-spreading
  or for all CCC-subgroups $A<G$
  one of the following holds:
  $$
  \mu\left(\alpha,A\right) <
  \left(1-\Eescape\right)\cdot\mu(\alpha,G)
  $$
  or else
  $$
  \mu\big(\alpha^M,A\big) \ge
  \mu\left(\Prod^{\dim(G)}\alpha^M,\;A^\gen\right) \ge
  \Big(1-\e\cdot16N\Big)\cdot\mu(\alpha,G) \;.
  $$
  Moreover, our construction of the subgroup of spreading
  is uniquely determined.
\end{lem}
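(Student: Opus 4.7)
The plan is to apply the Asymmetric Dichotomy Lemma~\ref{asymmetric-dichotomy} twice, exploiting the involution $A\leftrightarrow C:=\CC_G(A)^0$ on CCC-subgroups provided by Lemma~\ref{ccc-subgroup-properties}. Fix a CCC-subgroup $A<G$. Both $A$ and $C$ are connected (hence irreducible) closed subgroups of $G$ of positive dimension with degrees bounded by $\deg(G)\le\Delta$; the identity $\CC_G(C)^0=A$ combined with $\dim(A)>0$ gives $\dim\CC_G(A)>0$ and $\dim\CC_G(C)>0$, so each of $A,C$ is an admissible choice of the input $Z$ in Lemma~\ref{asymmetric-dichotomy}.

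First I apply Lemma~\ref{asymmetric-dichotomy} to the spreading system $\alpha|G$ with $Z=A$ and parameter $\e$, producing constants $M_0=M_\asymdichotomy(N,\e)$ and $K_0=K_\asymdichotomy(N,\Delta,\e)$. There are three possible outcomes: (a) $\alpha|G$ is $(\e,M_0,K_0)$-spreading, which matches the first branch of the Dichotomy Lemma; (b) $\mu(\alpha,A)<(1-\Eescape)\mu(\alpha,G)$, which is the first named alternative; or (c) the centraliser bound
\[
 \mu(\alpha^{M_0},C)\;\ge\;\mu\bigl(\Prod^{\dim(G)}\alpha^{M_0},\,C^\gen\bigr)\;\ge\;(1-16N\e)\mu(\alpha,G). \qquad(\star)
\]
In case (c) I apply Lemma~\ref{asymmetric-dichotomy} a second time, now to the spreading system $\alpha^{M_0}|G$ (still $(N,\Delta,K_0)$-bounded since $\alpha\subseteq\alpha^{M_0}$) with $Z=C$ and the same parameter $\e$, producing $M_1,K_1$. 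The three new outcomes are: (a$'$) $\alpha^{M_0}|G$ is $(\e,M_1,K_1)$-spreading, which transfers verbatim to $\alpha|G$ being $(\e,M_0M_1,K_1)$-spreading (the $H$ is unchanged, and $\mu(\alpha^{M_0M_1},H)\ge(1+\e)\mu(\alpha^{M_0},G)\ge(1+\e)\mu(\alpha,G)$); (c$'$) using $\CC_G(C)^0=A$,
\[
 \mu(\alpha^{M_0M_1},A)\;\ge\;\mu\bigl(\Prod^{\dim(G)}\alpha^{M_0M_1},\,A^\gen\bigr)\;\ge\;(1-16N\e)\mu(\alpha^{M_0},G)\;\ge\;(1-16N\e)\mu(\alpha,G),
\]
which is precisely the second named alternative; or (b$'$) $\mu(\alpha^{M_0},C)<(1-\Eescape)\mu(\alpha^{M_0},G)$.

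The only case requiring further work is (c) followed by (b$'$). Combining $(\star)$ with the inequality in (b$'$) gives
\[
 (1-16N\e)\mu(\alpha,G)\;\le\;\mu(\alpha^{M_0},C)\;<\;(1-\Eescape)\mu(\alpha^{M_0},G),
\]
so $\mu(\alpha^{M_0},G)>\Frac{1-16N\e}{\,1-1/(7N^2)\,}\mu(\alpha,G)$. The hypothesis $\e<\Frac{1}{112N^3}$ is calibrated so that $16N\e<\Frac{1}{7N^2}$, with enough margin that this ratio is at least $1+\e$; consequently $\mu(\alpha^{M_0},G)\ge(1+\e)\mu(\alpha,G)$, and $H=G$ itself serves as a subgroup of $(\e,M_0,K)$-spreading (provided $K\ge\Delta\ge\deg(G)$). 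Setting $M=M_0M_1$ and $K=\max(K_0,K_1,\Delta)$, and inheriting the uniqueness of the produced $H$ from Lemma~\ref{asymmetric-dichotomy} applied to the ordered sets $\alpha$ and $\alpha^{M_0}$ respectively, completes the proof. The main technical obstacle is exactly the numerical verification in case (c)-(b$'$): it is the source of the factor $2$ separating the hypothesis $\e<\Frac{1}{56N^3}$ of the asymmetric version from $\e<\Frac{1}{112N^3}$ here.
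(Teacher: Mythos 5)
Your proposal follows exactly the paper's strategy: apply the Asymmetric Dichotomy Lemma~\ref{asymmetric-dichotomy} first to $\alpha|G$ with $Z=A$, then to $\alpha^{M_0}|G$ with $Z=\CC_G(A)^0$, and use $\CC_G\big(\CC_G(A)^0\big)^0=A$ from Lemma~\ref{ccc-subgroup-properties} to land in the second named alternative, with $M=(M_\asymdichotomy)^2$. In fact you are more scrupulous than the paper in one spot: the paper simply writes the ``small'' alternative of the second application with $\mu(\alpha,G)$ on the right-hand side and declares it already ruled out, whereas the lemma applied to $\alpha^{M_0}|G$ really yields $\mu(\alpha^{M_0},\CC_G(A)^0)<\big(1-\Eescape\big)\mu(\alpha^{M_0},G)$, which is what forces your extra case (c)--(b$'$).

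The one genuine problem is the numerical claim in that extra case. From
$(1-16N\e)\,\mu(\alpha,G)\le\mu(\alpha^{M_0},C)<\big(1-\Eescape\big)\mu(\alpha^{M_0},G)$
you need $\Frac{1-16N\e}{1-\Frac1{7N^2}}\ge1+\e$, which after clearing denominators is
$\e\big(112N^3+7N^2-1\big)\le1$. The hypothesis $\e<\Frac1{112N^3}$ only gives $16N\e<\Eescape$, i.e.\ that the ratio exceeds $1$; it does \emph{not} give the extra margin of $1+\e$. Concretely, for $N=1$ and $\e=\Frac1{113}$ the ratio is $\Frac{679}{678}\approx1.0015$ while $1+\e\approx1.0088$, so the conclusion that $G$ is a subgroup of $(\e,M_0,K)$-spreading fails on the window $\Frac1{112N^3+7N^2-1}<\e<\Frac1{112N^3}$. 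The flaw is minor and easily repaired: either restrict to $\e<\Frac1{119N^3}$ (note $7N^2\cdot17N=119N^3$; this is exactly the threshold the paper itself imposes in the later lemmas that consume the Dichotomy Lemma, e.g.\ Lemma~\ref{spread-via-CCC}), or run both applications of Lemma~\ref{asymmetric-dichotomy} after first disposing of the case $\mu(\alpha^{M_0},G)\ge(1+\e)\mu(\alpha,G)$ and accept the correspondingly tightened bound on $\e$ in the statement. As written, though, the calibration sentence at the end of your case analysis is false for part of the admissible range of $\e$, so the proof does not establish the lemma with the stated constant $\Frac1{112N^3}$.
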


\begin{proof}
  We set
  $
  M'=M_\asymdichotomy(N,\e)
  \;,\quad
  M = M_\dichotomy(N,\e) =
  \left(M'\right)^2
  $
  and
  $$
  K = K_\dichotomy(N,\Delta,\e) =
  K_\asymdichotomy(N,\Delta,\e)
  \;.
  $$
  We apply the Asymmetric Dichotomy Lemma~\ref{asymmetric-dichotomy}
  with parameters $N$, $\Delta$ and $\e$ to $\alpha|G$
  and the irreducible subset $Z'=A$.
  Note that $\dim(A)>0$ and $\dim\big(\CC_G(A)\big)>0$ follows from
  Definition~\ref{CCC-subgroup-def}.
  If we obtain a subgroup of $(\e,M',K)$-spreading or if
  $$
  \mu\left(\alpha,A\right) <
  \left(1-\Eescape\right)\cdot\mu(\alpha,G)
  $$
  then the lemma holds.
  Otherwise we have
  $$
  \mu\big(\alpha^{M'},\CC_G(A)^0\big) \ge
  \Big(1-\e\cdot16N\Big)\cdot\mu(\alpha,G) \;.
  $$
  We apply again
  the Asymmetric Dichotomy Lemma~\ref{asymmetric-dichotomy}
  with parameters $N$, $\Delta$ and $\e$ to
  $\alpha^{M'}|G$ and $Z''=\CC_G(A)^0$.
  If we obtain a subgroup of $(\e,M',K)$-spreading
  then it is a subgroup of $(\e,M,K)$-spreading for $\alpha|G$
  and the lemma holds.
  Otherwise $\alpha^{M'}|G$ and $Z''$ must satisfy
  one of the two inequalities of that lemma.
  The first one is
  $$
  \mu\left(\alpha^{M'},\CC_G(A)^0\right) <
  \left(1-\Eescape\right)\cdot\mu(\alpha,G) \le
  \Big(1-\e\cdot16N\Big)\cdot\mu(\alpha,G) \;,
  $$
  but this has already been ruled out.
  Therefore the
  other inequality holds:
  $$
  \mu\left(\big(\alpha^{M'}\big)^{M'},\,\CC_G\big(\CC_G(A)^0\big)^0\right) \ge
  $$
  $$
  \ge
  \mu\left(\Prod^{\dim(G)}\alpha^{M'\cdot M'},\,
    \left(\CC_G\big(\CC_G(A)^0\big)^0\right)^\gen\right) \ge
  \Big(1-\e\cdot16N\Big)\mu(\alpha,G) \;.
  $$
  But $\CC_G\big(\CC_G(A)^0\big)^0=A$ and
  the Dichotomy Lemma~\ref{dichotomy-lemma} follows.
\end{proof}

\section{Finding and using CCC-subgroups}
\label{sec:finding-using-ccc}

Let $G$ be a simple algebraic group and $T$ a maximal torus of $G$.
Combining the previously developed techniques we can show that if an
appropriate finite subset $\alpha\subset G$ does not grow
then either $\mu(\alpha,T)$ is relatively small or $\alpha$ itself must be
very large compared to $\Span\alpha$ (which must be finite in this case).
We actually prove a similar result for non-normal CCC-subgroups of arbitrary
connected linear algebraic groups $G$.
For $G$ non-nilpotent we then construct CCC-subgroups which can be used as an
input for the above result.

It is crucial in the proofs of our main theorems
to find sufficiently many
$\Span\alpha$-conjugates of a CCC-subgroup $A\le G$.
We define a quantity $\hat\mu$ measuring their number
in a sense analogous to the concentration $\mu$.
To simplify the notation we restrict this definition to the case
$\alpha\subset G$,
in the more general situation we use a much cruder estimate.

\begin{defn}
  Let $G$ be a connected linear algebraic group, $A\le G$ a closed
  subgroup and $\alpha\subset G$ a finite subset.
  Suppose that $G$ does not normalise $A$.
  We define
  $$
  \hat\mu\big(\Span\alpha,G,A\big) =
  \frac{\log\Big|\big\{t^{-1}At \;\big|\; t\in\Span\alpha\big\}\Big|}
  {\dim(G)-\dim\big(\CN_G(A)\big)} =
  \frac{\log\Big|\Span{\alpha}:\CN_{\Span{\alpha}}(A)\Big|}
  {\dim(G)-\dim\big(\CN_G(A)\big)} \;.
  $$
\end{defn}

\begin{rem}
  The $G$-conjugates of $A$ are parametrised by the quotient variety
  $X=G/\CN_G(A)$. Let $\hat\alpha\subset X$ denote the image of
  $\Span\alpha$, these are the parameter values that correspond to
  the $\Span\alpha$-conjugates of $A$.
  Then $\hat\mu\big(\Span\alpha,G,A\big)=\mu(\hat\alpha,X)$.
\end{rem}

\newcommand{\spreadViaCCC} {{\rm s}}
\begin{lem} [spreading via CCC-subgroups]
  \label{spread-via-CCC}
  For all parameters $N>0$, $\Delta>0$ and $\Frac1{119N^3}>\e>0$
  there is an integer
  $M=M_\spreadViaCCC(N,\e)$
  and a real
  $K=K_\spreadViaCCC(N,\Delta,\e)$
  with the following property.\\
  Let $\alpha|G$ be an
  $(N,\Delta,K)$-bounded spreading system
  and $A<G$ a CCC-subgroup
  such that
  $$
  \mu(\alpha,A)> \left(1-\Eescape\right)\cdot\mu(\alpha,G) \;.
  $$
  Suppose that at least one of the following holds:
  \begin{enumerate} [\indent(a)]
  \item \label{item:21}
    $$
    \Big|\Span{\alpha}:\CN_{\Span{\alpha}}(A)\Big| \ge
    \big|\alpha\big|^{2N} \;,
    $$
  \item \label{item:22}
    $\alpha\subset G$, $A$ is not normal in $G$ and
    $$
    \mu(\alpha,G)\le
    \Big(1-\e\cdot64N^3\Big)\cdot\hat\mu\big(\Span\alpha,G,A\big) \;.
    $$
  \end{enumerate}
  Then $\alpha|G$ is $(\e,M,K)$-spreading.
  Moreover, our construction of the subgroup of spreading
  is uniquely determined.
\end{lem}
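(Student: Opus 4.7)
The plan is to apply the Dichotomy Lemma~\ref{dichotomy-lemma} to $A$, producing large concentration on $A^\gen$, and then to boost this to an even larger concentration on a closed subset $V$ containing the generic parts of many conjugates of $A$, in order to invoke the Spreading Theorem~\ref{spread-all-over}. Setting $M_1 = M_\dichotomy(N,\e)$ and $K_1 = K_\dichotomy(N,\Delta,\e)$, the hypothesis $\mu(\alpha,A) > (1 - 1/(7N^2))\mu(\alpha,G)$ excludes the first alternative in the Dichotomy Lemma, so either we already have $(\e,M_1,K_1)$-spreading (and we are done) or
$$\mu\bigl(\Prod^{\dim G}\alpha^{M_1},\,A^\gen\bigr)\;\ge\;(1-16N\e)\,\mu(\alpha,G),$$
which we henceforth assume.

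Let $N_0 = \CN_G(A)$; hypothesis (a) or (b) forces $A$ not to be normal in $G$, hence $\dim N_0 < \dim G$. Consider the coordinatewise conjugation morphism $\psi\colon G \times A^\gen \to \Prod^{\dim G} G$, $(g,\ug) \mapsto g^{-1}\ug g$, and set $V = \cl{\psi(G \times A^\gen)}$. A fibre-dimension calculation based on Fact~\ref{closed-set-constructions} gives $\dim V = \dim G \cdot \dim A + \dim G - \dim N_0$, and the degree of $V$ is bounded in terms of $N$ and $\Delta$. By Lemma~\ref{ccc-subgroup-properties} the generic parts $(t^{-1}At)^\gen = t^{-1} A^\gen t$ for different cosets of $N_0$ in $\Span\alpha$ are pairwise disjoint subsets of $V$. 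I aim to exhibit enough elements of $\Prod^{\dim G}\alpha^m \cap V$, with $m$ bounded only in $N$ and $\e$, that
$$\mu\bigl(\Prod^{\dim G}\alpha^m,\,V\bigr)\;\ge\;(1+3\e)\,\mu(\alpha,G);$$
the Spreading Theorem applied to $V$ then directly furnishes the subgroup of $(\e,M,K)$-spreading sought.

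Under hypothesis (b), I apply the Transport Lemma~\ref{Transport} to $\psi$ with the finite subset $(\alpha^k \cap G) \times (\Prod^{\dim G}\alpha^{M_1} \cap A^\gen)$ of $G \times A^\gen$, for a suitable $k$ bounded in $N$ and $\e$. Either transport succeeds and the disjointness of the generic parts of distinct conjugates, summed up, converts the $(1-16N\e)$-bound on $A^\gen$ into the target $(1+3\e)$-bound on $V$; the key identity is $\dim V - \dim A^\gen = \dim G - \dim N_0$, which is precisely the denominator in the definition of $\hat\mu$, so the assumed inequality $\mu(\alpha,G) \le (1 - 64N^3\e)\hat\mu$ supplies exactly the excess concentration required to cover the gap. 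Or transport fails, returning an irreducible closed subset of $G \times A^\gen$ of strictly smaller dimension and strictly higher concentration; Lemma~\ref{project-back-to-G} descends this to an appropriate closed subset of $G$, to which the Spreading Theorem then applies.

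For $\alpha \subseteq G$, a short computation shows that (a) implies (b) for the allowed range of $\e$ (from $T \ge |\alpha|^{2N}$ one gets $\hat\mu \ge 2N\mu(\alpha,G)$, which beats the $(1-64N^3\e)^{-1}$-threshold whenever $\e < 1/(119 N^3)$ and $N \ge 2$); the residual new case is (a) with $\alpha \not\subseteq G$, where I use Proposition~\ref{normal-subgroup-in-product} to express the smallest $\alpha$-normalised closed subgroup containing $A$ as an explicit product of $2\dim G$ conjugates of $A^{-1}A$ by elements of $\alpha^{\dim G-1}$, and run essentially the same Transport/disjointness argument inside this subgroup in place of $V$. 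The main obstacle throughout, and the reason the constants $2N$ in (a) and $64N^3$ in (b) are calibrated as they are, is the bookkeeping: the power $m$ of $\alpha$ must stay bounded by a function of $N$ and $\e$ while enough conjugates of $A$ are simultaneously accumulated to push the concentration on $V$ above $(1+3\e)\mu(\alpha,G)$. The budgets in (a) and (b) are precisely what allow this trade-off between the Dichotomy deficit $(1 - 16N\e)$ and the target gain $(1+3\e)$, with the extra factor of $\dim G/(\dim G - \dim N_0)$ coming from the size of the conjugacy orbit of $A$ in $G$.
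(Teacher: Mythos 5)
Your argument has a genuine gap at its central step, the passage from large concentration in $A^\gen$ to the bound $\mu\big(\Prod^{\dim G}\alpha^m,V\big)\ge(1+3\e)\mu(\alpha,G)$. Transporting the finite set $(\alpha^k\cap G)\times\big(\Prod^{\dim G}\alpha^{M_1}\cap A^\gen\big)$ through $\psi$ cannot produce this: when $\alpha|G$ is not already spreading, the concentration of that product set in $G\times\cl{A^\gen}$ is at most roughly $\mu(\alpha,G)$, and the Transport Lemma~\ref{Transport} only guarantees that the image concentration does not fall much below the source concentration -- it never manufactures the extra factor $(1+3\e)$. The gain would have to come from exhibiting \emph{many distinct} conjugates $t^{-1}A^\gen t$, each meeting a bounded power of $\alpha$ in a large set. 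But hypotheses (a) and (b) count conjugates under the whole group $\Span\alpha$, i.e.\ by words of unbounded length, whereas $\psi\big((\alpha^k\cap G)\times\cdots\big)$ only meets conjugates by elements of $\alpha^k\cap G$; the number of such conjugates need only grow linearly in $k$, so for bounded $k$ it can be far too small, and your ``disjointness summed up'' step is precisely the assertion that is not proved.

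The paper avoids this by never trying to realise the conjugates inside a bounded power. It splits according to whether \emph{every} conjugate $B=t^{-1}At$, $t\in\Span\alpha$, satisfies $\mu(\alpha^2,B)\ge\big(1-\Eescape\big)\mu(\alpha,G)$. If yes, the Dichotomy Lemma~\ref{dichotomy-lemma} is applied to each conjugate $B$ separately (legitimate for arbitrary $t$, since $B$ is just some CCC-subgroup of $G$ and the finite set is the fixed $\alpha^2$), and the resulting per-conjugate bounds on $\mu\big(\Prod^{\dim G}\alpha^{2M'},B^\gen\big)$, summed over the pairwise disjoint sets $B^\gen\subseteq\cl{X}$, give the $(1+3\e'')$ bound needed for the Spreading Theorem~\ref{spread-all-over}; this is where the count over all of $\Span\alpha$, i.e.\ $\hat\mu$, legitimately enters. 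If no, there is a ``boundary pair'': some $B_0$ satisfying the bound and some $b\in\alpha$ with $b^{-1}B_0b$ failing it; the Dichotomy Lemma applied to $\alpha^4|G$ and $b^{-1}B_0b$ then yields large $\alpha^{4M'}$-concentration in a subgroup in which $\alpha^2$ has small concentration, and Lemma~\ref{induce-from-growing-subgroup} converts this into $\mu(\alpha^{4M'+1},G)\ge(1+\e)\mu(\alpha,G)$, so $G$ itself is the subgroup of spreading. Your proposal has no counterpart of this second case, and it cannot be dispensed with: nothing in the hypotheses prevents a far conjugate of $A$ from meeting $\alpha^2$ (or any bounded power) in a small set. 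Two smaller points: your reduction of (a) to (b) tacitly assumes $A$ is not normal in $G$ (otherwise $\hat\mu$ is undefined) and fails for $N=1$; the paper treats (a) directly, using the ambient set $\Prod^{\dim G}G$ instead of $\cl{X}$, with no normality assumption, and your sketch for (a) with $\alpha\not\subseteq G$ via Proposition~\ref{normal-subgroup-in-product} is too vague to assess.
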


\begin{proof}
  By Lemma~\ref{ccc-subgroup-properties} the conjugate subsets
  $h^{-1}A^\gen h$ for various $h$ normalising $G$
  are pairwise disjoint or coincide.
  They are all contained in
  $\Prod^{\dim(G)}G$ which has dimension $\dim(G)^2\le N^2$.

  In case~\eqref{item:22} we consider the following set:
  $$
  X =
  \bigcup\Big\{h^{-1}A^\gen h\;\big|\; h\in G\Big\} \subseteq
  \Prod^{\dim(G)}G \;.
  $$
  Then
  $
  \dim\big(\cl{X}\big)\le N^2
  $.
  The virtue of this estimate is that it depends only on $N$,
  but we also need a precise calculation in terms of $A$ and $G$.
  We consider the conjugation map
  $\phi:G\times \cl{A^\gen}\to\Prod^{\dim(G)}G$
  defined as $\phi(h,\ua)=h^{-1}\ua h$
  (note that $\cl{A^\gen}=\Prod^{\dim(G)}A$).
  By definition $X=\phi\big(G\times A^\gen\big)$
  hence $\cl{X}=\cl{{\rm im}(\phi)}$
  and
  $\deg(\cl{X})$ is bounded in terms of $N$ and $\Delta$
  (see Fact~\ref{closed-set-constructions}.\eqref{item:15}).
  Consider any pair $(h_0,\underline{a_0})\in G\times A^\gen$
  and its image $x=h_0^{-1}\underline{a_0}h_0\in X$.
  The corresponding fibre is
  $$
  \phi^{-1}(x) =
  \left\{ (nh_0, n\ua_0 n^{-1})\,\Big|\, n\in\CN_G(A) \right\} \;,
  $$
  which is isomorphic 
  (as an algebraic set, see Remark~\ref{category-of-closed-sets})
  to $\CN_G(A)$.
  In particular, $G\times A^\gen$
  (which is open and dense in the domain of $\phi$)
  is the union of fibres of dimension
  $\dim\big(\CN_G(A)\big)$.
  Therefore
  $$
  \dim(\cl{X}) = \dim\big(A^\gen\big) +
  \Big[\dim(G)-\dim\big(\CN_G(A)\big)\Big] >
  \dim\big(A^\gen\big)
  $$
  (apply Fact~\ref{closed-set-constructions}.\eqref{item:14} to the
  irreducible set $G\times\cl{A^\gen}$).

  In case~\eqref{item:21} we define the parameters
  $\e'' = \e\cdot16N >\e$ and $\Delta''=\Delta^N$,
  in case~\eqref{item:22} we use the same $\e''$ and
  we set $\Delta''=\max\big(\Delta,\deg(\cl{X})\big)$.
  We define
  $$
  M'= M_\dichotomy(N,\e)
  \;,\quad
  M'' =  M_\spreading(N,\e'')
  \;,
  $$
  $$
  M= \max\big(4M'+1\,,\;  2M'\cdot M''\big)
  \;,
  $$
  $$
  K=\max\Big(K_\dichotomy(N,\Delta,\e)\,,\;K_\spreading(N,\Delta'',\e'')\Big) \;.
  $$
  We consider all the conjugate subgroups
  $$
  \CA=\left\{t^{-1}At \;\Big|\; t\in\Span\alpha\right\} \;,
  $$
  they are all CCC-subgroups of $G$ since $\alpha$ normalises $G$.

  In case~\eqref{item:21} we have
  $\log\big|\CA\big|\ge2N\log|\alpha|$ by assumption.
  In case~\eqref{item:22} we obtain instead
  the following estimate
  $$
  \log\big|\CA\big| =
  \hat\mu\big(\Span\alpha,G,A\big)\cdot
  \Big[\dim(G)-\dim\big(\CN_G(A)\big)\Big] =
  $$
  $$
  =
  \big[\dim(\cl{X})-\dim(A^\gen)\big]\cdot
  \hat\mu\big(\Span\alpha,G,A\big) \ge
  $$
  $$
  \ge
  \big[\dim(\cl{X})-\dim(A^\gen)\big]\cdot
  \Frac1{1-\e\cdot64N^3}\cdot\mu(\alpha,G) >
  $$
  $$
  >
  \big[\dim(\cl{X})-\dim(A^\gen)\big]\cdot
  \big(1+\e''\cdot4\dim(\cl{X})\big)\cdot\mu(\alpha,G) \;.
  $$
  Suppose first that
  \begin{equation}
    \label{eq:20}
    \mu\big(\alpha^2,B\big) \ge \left(1-\Eescape\right)\mu(\alpha,G)
  \end{equation}
  for all $B\in\CA$.
  We apply the Dichotomy Lemma~\ref{dichotomy-lemma}
  with parameters $N$, $\Delta$ and $\e$
  to $\alpha^2|G$ and each $B\in\CA$.
  We get that either $\alpha^2|G$ is $(\e,M',K)$-spreading
  i.e. $\alpha|G$ is $(\e,2M',K)$-spreading, and the lemma
  holds,
  or
  $$
  \mu\Big(\Prod^{\dim(G)}\alpha^{2M'},B^\gen\Big) \ge
  \Big(1-\e\cdot16N\Big)\mu(\alpha^2,G) \ge
  (1-\e'')\mu(\alpha,G)
  $$
  for all $B\in\CA$
  (in particular, $\Prod^{\dim(G)}\alpha^{2M'}$ has at least one element in
  each $B^\gen$).
  Let us consider this latter possibility.
  By Lemma~\ref{ccc-subgroup-properties}
  the subsets $B^\gen$ are pairwise disjoint.
  In case~\eqref{item:22} we obtain
  $$
  \mu\left(\Prod^{\dim(G)}\alpha^{2M'},\cl{X}\right) =
  \Frac1{\dim(\cl{X})}\log\left|\Prod^{\dim(G)}\alpha^{2M'}\cap \cl{X}\right| \ge
  $$
  $$
  \ge
  \Frac1{\dim(\cl{X})}\,
  \log\left(\sum_{B\in\CA}
    \left|\Prod^{\dim(G)}\alpha^{2M'}\cap B^\gen\right| \right) \ge
  $$
  $$
  \ge
  \Frac1{\dim(\cl{X})}\,\left[
    \log\big|\CA\big| +
    \log\left(\min_{B\in\CA}
      \left|\Prod^{\dim(G)}\alpha^{2M'}\cap B^\gen\right| \right)
    \right] =
  $$
  $$
  \ge
  \Frac1{\dim(\cl{X})}\,\left[
    \log\big|\CA\big| +
    \dim(A^\gen)\cdot\min_{B\in\CA} \left(
      \mu\left(\Prod^{\dim(G)}\alpha^{2M'}, B^\gen\right) \right)
    \right] \ge
  $$
  $$
  \ge
  \Frac1{\dim(\cl{X})}\,\Big[
    \log\big|\CA\big| +
    \dim(A^\gen)\cdot\left(1-\e''\right)\mu(\alpha,G)
    \Big] \ge
  $$
  $$
  \ge
  \Frac1{\dim(\cl{X})}\,\Big[
    \big[\dim(\cl{X})-\dim(A^\gen)\big]\cdot
    \big(1+\e''\cdot4\dim(\cl{X})\big)\mu(\alpha,G) +
    $$
    $$
    +
    \dim(A^\gen)\cdot\left(1-\e''\right)\mu(\alpha,G)
    \Big] =
  $$
  $$
  =
  \left[1 + 4\e''\big(\dim(\cl{X})-\dim(A^\gen)\big) -
    \e''\Frac{\dim(A^\gen)}{\dim(\cl{X})}
  \right]\mu(\alpha,G) >
  $$
  $$
  > \big(1 + 3\e''\big)\cdot\mu(\alpha,G) \;.
  $$
  In case~\eqref{item:21} a similar, but much shorter calculation shows that
  $$
  \mu\left(\Prod^{\dim(G)}\alpha^{2M'},\Prod^{\dim(G)}G\right) \ge
  \Frac{\log|\CA|}{\dim(G)^2} \ge
  \Frac{2\log|\alpha|}{\dim(G)}\ge(1+3\e'')\mu(\alpha,G) \;.
  $$
  In both cases we apply the Spreading Theorem~\ref{spread-all-over}
  with parameters $N$, $\Delta''$ and $\e''$ to $\alpha^{2M'}|G$,
  and in case~\eqref{item:21}
  to the set $\Prod^{\dim(G)}G$, in case~\eqref{item:22}
  to the set $\cl{X}$.
  We obtain that $\alpha^{2M'}|G$ is $(\e'',M'',K)$-spreading,
  hence $\alpha|G$ is $(\e,2M'M'',K)$-spreading,
  the lemma holds.

  Finally we assume that condition \eqref{eq:20}
  does not hold for all members of $\CA$.
  As the subgroup $A$ itself satisfies it,
  there must be at least one subgroup $B_0\in\CA$
  and an element $b\in\alpha$ such that
  $B_0$ satisfies \eqref{eq:20}
  but $b^{-1}B_0b$ doesn't:
  \begin{equation}
    \label{eq:21}
    \mu\big(\alpha^2,b^{-1}B_0b\big) < \left(1-\Eescape\right)\mu(\alpha,G) \;.
  \end{equation}
  Conjugating by $b$ we transform \eqref{eq:20} into
  $$
  \mu\big(\alpha^4,b^{-1}B_0b\big) \ge
  \mu\big(b^{-1}\alpha^2b,b^{-1}B_0b\big) =
  $$
  $$
  =
  \mu\big(\alpha^2,B_0) >
  \left(1-\Eescape\right)\,\mu(\alpha,G) \;.
  $$
  Again we apply the Dichotomy Lemma~\ref{dichotomy-lemma}
  with parameters $N$, $\Delta$ and $\e$ to $\alpha^4|G$ and
  the CCC-subgroup $b^{-1}B_0b$.
  We obtain that either $\alpha^4|G$ is $(\e,M',K)$-spreading, and the
  lemma holds in this case, or
  $$
  \mu\big(\alpha^{4M'},b^{-1}B_0b\big) \ge
  \big(1-\e\cdot16N\big)\mu(\alpha,G) \;.
  $$
  Now we compare this to inequality~\eqref{eq:21}
  and apply Lemma~\ref{induce-from-growing-subgroup}
  to the subgroup $b^{-1}B_0b$ with $k=4M'$.
  We obtain that
  $$
  \mu\big(\alpha^{4M'+1},G\big) \ge
  \mu(\alpha,G)+\Frac{\dim(b^{-1}B_0b)}{\dim(G)}
  \left[\Eescape-\e\cdot16N\right]\mu(\alpha,G) \ge
  $$
  $$
  \ge
  \mu(\alpha,G)+
  \Frac1{N}\left[\Eescape-\e\cdot16N\right]\mu(\alpha,G) =
  $$
  $$
  =
  \left[1 +\Frac1{7N^3} - 16\e\right]\mu(\alpha,G) \ge
  (1+\e)\,\mu(\alpha,G) \;,
  $$
  hence $G$ itself is a subgroup of $(\e,4M'+1,K)$-spreading
  for $\alpha|G$.
\end{proof}

\newcommand{\bad} {{\rm bad}}
\begin{lem} \label{non-regular-elements}
  Let $G$ be a non-abelian connected linear algebraic group
  and $\CS\subseteq G$ the closure of the set of those elements $g\in G$
  whose centraliser is either the whole of $G$
  or does not contain any maximal torus.
  Then $\dim(\CS)<\dim(G)$ and the degree of
  $\CS$ is bounded: 
  $$
  \deg\big(\CS\big)\le\Delta_\bad\big(\dim(G),\deg(G)\big) \;.
  $$
\end{lem}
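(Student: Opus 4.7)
The plan is to decompose $\CS$ as the closure of $\CS_1 \cup \CS_2$, where $\CS_1 = \CZ(G)$ consists of the central elements and $\CS_2 = \{g \in G : C_G(g) \text{ contains no maximal torus of } G\}$, and to bound each piece separately. Since $G$ is non-abelian and connected, $\CZ(G)$ is a proper closed subgroup, so $\dim \CZ(G) < \dim G$ and $\deg \CZ(G) \le \deg(G)$ by Fact~\ref{normaliser-centraliser-fact}\eqref{item:19}; the work lies in controlling $\CS_2$.

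I would split into two subcases according to whether $G$ contains a positive-dimensional torus. If every maximal torus of $G$ is trivial (that is, $G$ is unipotent), then $\{1\}$ is a maximal torus contained in every centraliser, so $\CS_2 = \emptyset$ and $\CS = \CZ(G)$. Otherwise fix a maximal torus $T$ of positive dimension, let $\tilde T = \CC_G(T)^0$, and consider the conjugation morphism
\[
\phi : G \times \tilde T \to G, \qquad (h, c) \mapsto h c h^{-1}.
\]
Any element $hch^{-1}$ in the image commutes with the maximal torus $hTh^{-1}$, so the image of $\phi$ lies inside $\{g \in G : \CC_G(g) \text{ contains a maximal torus}\}$; equivalently, $\CS_2 \subseteq G \setminus \mathrm{image}(\phi)$. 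The heart of the argument is showing $\overline{\mathrm{image}(\phi)} = G$.

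For this, pick $c_0 \in T$ whose powers are Zariski-dense in $T$ (such elements form a dense subset of $T$); then anything commuting with $c_0$ commutes with all of $T$, so $\CC_G(c_0)^0 = \tilde T$. Compute the fibre $\phi^{-1}(hc_0h^{-1})$: a pair $(g, c')$ with $gc'g^{-1} = hc_0h^{-1}$ and $c' \in \tilde T$ forces the conjugate torus $h^{-1}gTg^{-1}h$ to commute elementwise with $c_0$, so it lies inside $\CC_G(c_0)^0 = \tilde T$. Since commuting maximal tori of a connected algebraic group coincide (their product is a connected abelian group of commuting semisimple elements, hence again a torus, contradicting maximality unless the two factors agree), this forces $h^{-1}gTg^{-1}h = T$, i.e.\ $g \in h\CN_G(T)$. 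Hence the generic fibre has dimension $\dim \CN_G(T) = \dim \tilde T$ (using that $\CN_G(T)/\CC_G(T)$ is the finite Weyl group), and $\dim \overline{\mathrm{image}(\phi)} = \dim(G \times \tilde T) - \dim \tilde T = \dim G$, whence $\overline{\mathrm{image}(\phi)} = G$ by irreducibility of the connected group $G$.

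By Fact~\ref{closed-set-constructions}\eqref{item:13} the closed set $\overline{G \setminus \mathrm{image}(\phi)}$ has dimension strictly less than $\dim G$ and degree bounded by a constant depending on the numerical invariants of $\phi$; these in turn are controlled by $\dim(G)$, $\deg(G)$, $\mult(G)$, $\inv(G)$ and $\deg \tilde T \le \deg(G)$, using Proposition~\ref{bounding-subgroups} and Fact~\ref{normaliser-centraliser-fact}\eqref{item:19}. Combining with the bound on $\CZ(G)$ yields the promised bound $\Delta_\bad(\dim G, \deg G)$. The principal obstacle is the fibre analysis in the potentially non-reductive setting: one must produce a suitably regular element $c_0 \in T$ and invoke the coincidence of commuting maximal tori, the latter ultimately resting on the fact that a product of commuting semisimple elements is semisimple.
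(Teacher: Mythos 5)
Your overall route coincides with the paper's: both arguments reduce the lemma to bounding $\CZ(G)$ together with the closed complement of the image of the conjugation map from $G\times\CC_G(T)^0$ (i.e.\ $G$ times a Cartan subgroup) to $G$, and both extract the dimension drop and the degree bound from Fact~\ref{closed-set-constructions}; the paper simply quotes the density of the union of Cartan subgroups as well known, whereas you attempt to prove that density by a fibre count. That fibre count is where you have a genuine gap: you ``pick $c_0\in T$ whose powers are Zariski-dense in $T$'' and assert that such elements are dense in $T$. Over $\Fclosed=\Fpclosed$ --- which is exactly the setting needed for Theorems~\ref{simple-L} and \ref{pre-main-thm} --- no such $c_0$ exists: every coordinate of a point of $T$ is a root of unity, so every element of $T$ has finite order and the closure of the group it generates is finite, never all of a positive-dimensional $T$. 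Thus the element on which your identification of the fibre $\phi^{-1}(hc_0h^{-1})$ rests is unavailable precisely in the paper's main case, and as written the dominance of $\phi$ is not established there.

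The gap is repairable by the standard substitute for a ``topological generator'': embed $G\le GL(n,\Fclosed)$ and diagonalise $T$; since $T$ is irreducible it is not the union of the finitely many proper closed subgroups $\ker\big(\chi_i\chi_j^{-1}\big)$ coming from pairs of distinct diagonal weights $\chi_i\neq\chi_j$ of $T$, so there is $c_0\in T$ with $\chi_i(c_0)\neq\chi_j(c_0)$ whenever $\chi_i\neq\chi_j$. Then $\CC_{GL(n,\Fclosed)}(c_0)=\CC_{GL(n,\Fclosed)}(T)$, hence $\CC_G(c_0)=\CC_G(T)$, which is the only property of $c_0$ your argument actually uses: the conjugate torus in your fibre analysis then lands in $\CC_G(c_0)^0=\tilde T$ and must equal $T$, either by your commuting-tori observation or because the Cartan subgroup $\tilde T$ is nilpotent with $T$ as its unique maximal torus. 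With that replacement the rest goes through; note only that you compute the fibre over the special points $hc_0h^{-1}$ rather than a generic fibre, but the inequality of Fact~\ref{closed-set-constructions}.\eqref{item:14} applied at the point $(h,c_0)$ already yields $\dim\big(\cl{\phi(G\times\tilde T)}\big)\ge\dim(G)$, which is all you need; the treatment of $\CZ(G)$, of the unipotent case, and the degree bookkeeping via Proposition~\ref{bounding-subgroups} and Fact~\ref{closed-set-constructions}.\eqref{item:13} are fine (and your bound's dependence on $\mult(G),\inv(G)$ is harmless, matching how the lemma is actually used).
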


\begin{proof}
  Let $A\le G$ be a Cartan subgroup. Then $A=\CC_G(T)$ for some maximal torus
  $T\le G$. Hence for each $g\in A$ we have $T\le\CC_G(g)$.
  All Cartan subgroups are conjugates of $A$,
  hence their union, denoted by $\CR$,
  is the image of the conjugation map
  $f:A\times G\to G$, $f(a,g)=g^{-1}ag$.
  It is well-known that $\CR$ contains an open subset $U$ of $G$
  and by definition
  $\cl{G\setminus\CR}\subseteq G\setminus U$,
  so $\dim\big(\cl{G\setminus\CR}\big)<\dim(G)$
  (see Fact~\ref{dimension-degree}.\eqref{item:7}).
  Moreover, $\deg\big(\cl{G\setminus\CR}\big)$
  is bounded in terms of $\dim(G)$ and $\deg(G)$
  (see Fact~\ref{closed-set-constructions}.\eqref{item:13}).
  We also know that $\deg\big(\CZ(G)\big)\le\deg(G)$
  (see Fact~\ref{normaliser-centraliser-fact}).
  Hence
  $
  \CS=\big(\cl{G\setminus\CR}\big)\cup\CZ(G)
  $
  also has bounded degree.
\end{proof}

\newcommand{\findCCC} {{\rm CCC}}
\begin{lem} [Finding CCC-subgroups]
  \label{find-CCC}
  For all parameters $N>0$, $\Delta>0$ and $\Frac1{56N^3}>\e>0$
  there is an integer
  $M=M_\findCCC(N,\e)$
  and a real
  $K=K_\findCCC(N,\Delta,\e)$
  with the following property.\\
  Let $\alpha|G$ be an
  $(N,\Delta,K)$-bounded spreading system such that $G$ is
  non-nilpotent.
  Then either it is $(\e,M,K)$-spreading, or there is a CCC-subgroup
  $A\le G$ which contains exactly one maximal torus of $G$ and satisfies
  $$
  \mu\big(\alpha^M,A\big)>(1-\e\cdot16N)\mu(\alpha,G) \;.
  $$
  In particular, $A$ is not normal in $G$.
  Moreover, our construction of $A$ and of the subgroup of spreading
  is uniquely determined.
\end{lem}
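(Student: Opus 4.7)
The plan is to find, via an escape-type argument, an element $a\in\alpha$ whose connected centraliser in $G$ is itself a Cartan subgroup of $G$, and then to set $A=\CC_G(a)^0$.  To locate such an $a$, I first extend the set $\CS$ from Lemma~\ref{non-regular-elements} to
\[
\CS^{\ast}\;=\;\cl{\big\{\,g\in G\;\big|\;\CC_G(g)^0\text{ is not a Cartan subgroup of }G\,\big\}}\;.
\]
Fix a Cartan subgroup $H_0=\CC_G(T_0)\le G$, which is connected.  The regular locus $H_0^{\reg}=\{h\in H_0\mid\CC_G(h)^0=H_0\}$ is open and dense in $H_0$, and the conjugation morphism $\phi:H_0\times G\to G$, $(h,g)\mapsto g^{-1}hg$, sends $H_0^{\reg}\times G$ into $G\setminus\CS^{\ast}$.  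Arguing as in the proof of Lemma~\ref{non-regular-elements}, $\phi(H_0\times G)$ contains a dense open subset of $G$, and further removing the image of the proper closed subset $H_0\setminus H_0^{\reg}$ still leaves a dense open subset of $G$ inside $G\setminus\CS^{\ast}$.  Hence $\dim(\CS^{\ast})<\dim(G)$, and Fact~\ref{closed-set-constructions}.\eqref{item:13} combined with Fact~\ref{degree-of-group-theoretic-maps} bounds the degree by some $\Delta^{\ast}=\Delta^{\ast}(N,\Delta)$.

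I then apply the Escape Lemma~\ref{escape} with parameters $N$, $\tilde\Delta=\max(\Delta,\Delta^{\ast})$ and $\e$ to the pair $\CS^{\ast}\subsetneq G$ (the hypothesis $\mu(\alpha,G)\ge(1-\e)\mu(\alpha,G)$ is automatic).  Either the Escape Lemma produces a subgroup of $(\e,M_{\escape},K_{\escape})$-spreading, in which case the present lemma holds after taking $M,K$ large enough to absorb all later applications, or the second Escape hypothesis fails, giving
\[
\mu\big(\alpha,G\setminus\CS^{\ast}\big)\;>\;(1-2\e)\,\mu(\alpha,G)\;,
\]
whence $\alpha\cap(G\setminus\CS^{\ast})\neq\emptyset$.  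Let $a$ be its lexicographically smallest element and set $H:=\CC_G(a)^0$.  By construction $H=\CC_G(T)$ for a unique maximal torus $T$ of $G$, so $H$ is nilpotent, connected, and contains $T$ as its only maximal torus.  The non-nilpotence of $G$ forces $T\neq\{1\}$ and $H\neq G$, so $A:=H$ is a proper nontrivial connected subgroup of $G$.  Taking $X=T$, which is an irreducible closed set containing $1$, realises $A=\CC_G(X)^0$, so $A$ is a CCC-subgroup in the sense of Definition~\ref{CCC-subgroup-def}.  As a Cartan in a non-nilpotent group, $\CN_G(A)=A$, so $A$ is not normal in $G$.

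Finally I apply the Centraliser Lemma~\ref{centraliser-lemma} with $m=1$, $b_1=a$ and $C=\CC_G(a)$, so that $C^0=A$ and $\dim(C)>0$.  Either $\alpha|G$ is $(\e,M,K)$-spreading, or
\[
\mu\big(\alpha^{M},A\big)\;\ge\;(1-8\e N)\,\mu(\alpha,G)\;>\;(1-16\e N)\,\mu(\alpha,G)\;,
\]
the strict inequality using $\mu(\alpha,G)\ge(\log K)/N>0$.  All the constructions (the element $a$, the subgroup $A$, and the subgroup of spreading when it appears) are uniquely determined by the lexicographic order on $\alpha$.  The main technical obstacle is the degree bound on $\CS^{\ast}$ in terms only of $N$ and $\Delta$: it requires an explicit description of the non-regular locus within a Cartan subgroup of $G$, together with careful tracking of the degrees of the conjugation morphism, its image, and the closed complement of that image via Fact~\ref{closed-set-constructions}.\eqref{item:13}.
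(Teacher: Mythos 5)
Your reduction to a single escape step hinges on the claim that
\[
\CS^{\ast}\;=\;\cl{\big\{\,g\in G\;\big|\;\CC_G(g)^0\text{ is not a Cartan subgroup of }G\,\big\}}
\]
is a proper closed subset of $G$, i.e.\ that generic elements of $G$ have a Cartan subgroup as connected centraliser. This is false for general connected non-nilpotent $G$, and the flaw sits exactly where you yourself flag a difficulty: $H_0^{\reg}=\{h\in H_0\mid\CC_G(h)^0=H_0\}$ is \emph{not} dense in $H_0$ unless the Cartan subgroup $H_0$ is abelian, because $\CC_G(h)^0\supseteq H_0$ forces $h\in\CZ(H_0)$, so $H_0^{\reg}\subseteq\CZ(H_0)$. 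Concretely, take $G=SL_2\times U$ with $U$ the $3$-dimensional Heisenberg unipotent group. Here Cartan subgroups are the conjugates of $T\times U$ (dimension $4$, non-abelian), while a generic $g=(s,u)$ has $\CC_G(g)^0=\CC_{SL_2}(s)^0\times\CC_U(u)^0$ of dimension $1+2=3$, which is never a Cartan subgroup; the elements whose connected centraliser is a Cartan form only the $4$-dimensional set $\{(s,u): s \text{ regular semisimple},\ u\in\CZ(U)\}$ inside the $6$-dimensional $G$. Thus $\CS^{\ast}=G$, the Escape Lemma cannot be applied to the pair $\CS^{\ast}\subsetneq G$, and no element $a\in\alpha$ of the kind you need is guaranteed to exist. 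Your dimension count also breaks down internally: the conjugation image of $H_0^{\reg}\times G$ has dimension at most $\dim\CZ(H_0)+\dim G-\dim H_0$, which is strictly less than $\dim G$ whenever $H_0$ is non-abelian, so ``removing the image of $H_0\setminus H_0^{\reg}$ still leaves a dense open subset'' is not a legitimate step. (The reductive case, where Cartans are maximal tori and your argument is fine, is precisely the situation the CCC machinery was built to go beyond.)

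This is why the paper does not work with the centraliser of a single element. It iterates: using the bad set of Lemma~\ref{non-regular-elements} \emph{inside the current subgroup} $G_i$ and the Escape and Centraliser Lemmas, it builds a strictly decreasing chain $G_0=G\supset G_1\supset\cdots$ of connected centralisers $G_i=\CC_G(g_0,\dots,g_i)^0$ of short tuples from powers of $\alpha$, each containing a maximal torus and carrying concentration $\ge(1-8\e N)\mu(\alpha,G)$; the chain must terminate in a connected \emph{abelian} $G_I\supseteq T$, and then $A=\CC_G(G_I)^0$ is a CCC-subgroup squeezed between $T$ and the Cartan $\CC_G(T)$, hence with a unique maximal torus, after which the Asymmetric Dichotomy Lemma transfers the concentration from $G_I$ to $A$. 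A further, minor point: your assertion $\CN_G(A)=A$ for a Cartan subgroup is not correct in general (already for reductive $G$ the quotient $\CN_G(T)/T$ is the Weyl group); non-normality should instead be deduced, as in the paper, from the fact that a normal $A$ with a unique maximal torus would force $G$ to have a unique maximal torus, contradicting non-nilpotency.
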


\begin{proof}
  Recall the functions
  $M_\escape$, $K_\escape$, $M_\centraliser$, $K_\centraliser$,
  $M_\asymdichotomy$, $K_\asymdichotomy$ and $\Delta_\bad$ from
  the lemmas \ref{escape}, \ref{centraliser-lemma},
  \ref{asymmetric-dichotomy} and \ref{non-regular-elements}.
  We define the following constants:
  $$
  M_\centraliser  =  M_\centraliser(N,\e)\;,\quad
  M_\escape  =  M_\escape(N,\e) \;,\quad
  M_\asymdichotomy  =  M_\asymdichotomy(N,\e) \;,
  $$
  $$
  \tilde\Delta  = \max\big(\Delta,\Delta_\bad(N,\Delta)\big) \;,\quad
  M =  M_\centraliser^N\max\big(M_\escape,M_\asymdichotomy\big)\;,
  $$
  $$
  K  =  \max\Big(
  K_\centraliser(N,\Delta,\e) \,,\;
  K_\escape(N,\tilde\Delta,\e) \,,\;
  K_\asymdichotomy(N,\Delta,\e)
  \Big) \;.
  $$
  Set $g_0=1\in G$, $G_0=G$.
  We define by induction on $i$ the elements 
  $g_{i}\in\alpha^{(M_\centraliser)^{i-1}}\cap G$ in such a way that the subgroups
  $$
  G_i = \CC_G(g_0,g_1,g_2,\dots, g_i)^0 =
  \CC_{G_{i-1}}(g_i)^0
  $$
  satisfy
  \begin{equation}
    \label{eq:22}
    \mu\left(\alpha^{(M_\centraliser)^i}, G_{i}\right)\ge
    \Big(1-\e\cdot8N\Big)\mu(\alpha,G) \;,
  \end{equation}
  all $G_i$ contain some maximal torus of $G$ and they form a strictly
  decreasing series of subgroups.
  Then their dimension is strictly decreasing as well,
  hence the sequence has length smaller than $N$.

  Suppose that such a $G_{i}$ is already defined for some $N>i\ge0$.
  If it is abelian then we stop the induction, otherwise continue.
  Let $\CS_i\subsetneq G_i$ be the subset defined in 
  Lemma~\ref{non-regular-elements}.
  Note, that $\deg(G_i)\le\Delta$, $\mult(G_i)\le\Delta$ and
  $\inv(G_i)\le\Delta$ (see Fact~\ref{normaliser-centraliser-fact}),
  hence $\deg(\CS_i)\le\tilde\Delta$.
  We apply the Escape Lemma~\ref{escape}
  with parameters $N$, $\tilde\Delta$ and $\e$
  to $\alpha^{(M_\centraliser)^i}|G$
  and the subsets $X=\CS_i$ and $Y=G_i$ of $G$.
  If we obtain a subgroup of $(\e,M_\escape,K)$-spreading
  then the lemma holds.
  Otherwise, since \eqref{eq:22} holds, we find at least one element
  $$
  g_{i+1}\in\alpha^{(M_\centraliser)^{i}}\cap\big(G_i\setminus\CS_i\big) \;.
  $$
  (In fact the Escape Lemma gives us many such elements).
  We select the $g_{i+1}$ which is minimal in the order of $\Span\alpha$.
  According to the definition of $\CS_i$,
  $$
  G_{i+1}=\big(G_i\cap \CC_G(g_{i+1})\big)^0
  $$
  contains a maximal torus of $G_i$, which is also a maximal torus in $G$,
  and $G_{i+1}$ is strictly smaller than $G_i$.
  We apply the Centraliser Lemma~\ref{centraliser-lemma}
  with parameters $N$, $\Delta$ and $\e$ to $\alpha^{(M_\centraliser)^i}|G$
  and the centraliser subgroup $\CC_G(g_0,\dots,g_{i+1})$.
  In case we obtain a subgroup of $(\e,M_\centraliser,K)$-spreading,
  the lemma holds.
  Otherwise we have
  $$
  \mu\left(\alpha^{(M_\centraliser)^iM_\centraliser}, G_{i+1}\right)\ge
  \Big(1-\e\cdot8N\Big)\cdot\mu(\alpha^{(M_\centraliser)^i},G)
  \ge \Big(1-\e\cdot8N\Big)\mu(\alpha,G)
  $$
  i.e. $G_{i+1}$ satisfies \eqref{eq:22}.

  As we explained before, this process must stop in at most $N$ steps.
  But the only way it can stop is to arrive at a connected abelian subgroup
  $G_I$ which contains a maximal torus $T$
  and satisfies inequality~(\ref{eq:22}).

  We set $A=\CC_G(G_I)^0$. On the one hand, $T$ commutes with $G_I$,
  hence $T\le A$.
  On the other hand, $A=\CC_G(G_I)^0\le\CC_G(T)$,
  and the latter one is a Cartan subgroup,
  which has a unique maximal torus.
  Therefore $T$ is the only maximal torus in $A$.
  But $G$ is non-nilpotent, hence $G$ has several maximal tori.
  This implies that $A$ is a CCC-subgroup which is not normal.
  We apply the Asymmetric Dichotomy Lemma~\ref{asymmetric-dichotomy}
  with parameters $N$, $\Delta$ and $\e$ to
  $\alpha^{(M_\centraliser)^N}|G$
  and the subset $Z=G_I$.
  In case we obtain a subgroup of $\big(\e,M_\asymdichotomy,K\big)$-spreading,
  the lemma holds.
  Otherwise, since $G_I$ satisfies \eqref{eq:22}, we obtain that
  $$
  \mu(\alpha^M, A) \ge (1-\e\cdot16N)\,\mu(\alpha,G)
  $$
  as required.
\end{proof}

Suppose we want to prove that a certain spreading system $\alpha|G$ is
$(\e,M,K)$-spreading.
Our strategy is to obtain a CCC-subgroup $A<G$ via
Lemma~\ref{find-CCC},
and use Lemma~\ref{spread-via-CCC} to establish the $(\e,M,K)$-spreading.
In order to do this, we need to estimate the number of
$\Span\alpha$-conjugates of $A$.
In Section~\ref{sec:finite-groups-lie} we develop a powerful method
for finite $\Span\alpha$.
Later in Section~\ref{sec:linear-groups}
we deal with the much simpler case
when $A$ has infinitely many conjugates.

\section{Finite groups of Lie type}
\label{sec:finite-groups-lie}

In this section we use the general results established earlier to prove 
Theorem~\ref{pre-main-thm},
our main technical result concerning fixpoint
groups of Frobenius maps of linear algebraic groups.

\begin{defn} \label{Frobenius-def}
  Let $G$ be a linear algebraic group over the field \Fpclosed.
  \begin{enumerate}[\indent(a)]
  \item \label{item:23}
    For each $p$-power $q$ the usual $q$-th power map
    $\Fpclosed\to\Fpclosed$ is a field automorphism.
    Applying this to the entries of the $n\times n$ matrices we obtain
    the group automorphisms
    $$
    \Frob_q:GL(n,\Fpclosed)\to GL(n,\Fpclosed) \;.
    $$
    (Note, that these are not morphisms of varieties.)
  \item \label{item:24}
    More generally, $\Frob_q$ can be defined the same way 
    on any algebraically closed field of characteristic $p$,
    hence we can talk about $\Frob_q$-invariant algebraic sets
    and $\Frob_q$-equivariant morphisms (i.e. morphisms compatible
    with the $\Frob_q$-actions on the domain and the range).
    (These are precisely the algebraic sets and morphisms
    \emph{defined over $\Fq$}.) 
  \item \label{item:25}
    A \emph{Frobenius map} of $G$ is a group automorphism 
    $\sigma:G\to G$ such that there is a $p$-power $q$, an exponent $k$ and
    a faithful representation $G\hookrightarrow GL(n,\Fpclosed)$ such that
    $G$ is $\Frob_q$-invariant, and $\sigma^k$ is the restriction of
    the automorphism $\Frob_q$ to $G$. The fixpoint subgroup of $\sigma$ is
    denoted by $G^\sigma$. We define $q_\sigma=\sqrt[k]{q}$.
  \end{enumerate}
\end{defn}

\begin{rem} \label{Frob-q-remark}
  The fixpoint set of $\Frob_q$ is clearly
  $GL(n,\Fpclosed)^{\Frob_q}=GL(n,\Fq)$.
  More generally, if the closed subgroup $G\le GL(n,\Fpclosed)$
  is $\Frob_q$-invariant then $G^{\Frob_q}=G(\Fq)$,
  the set of those elements whose matrix belongs to $GL(n,\Fq)$.
\end{rem}

We will combine our previous results with the following powerful extension of
the Lang-Weil estimates \cite{Hr}.

\begin{prop}[Hrushovski]
  \label{twisted-Lang-Weil}
  Let $G$ be a connected linear algebraic group and $\sigma:G\to G$ a
  Frobenius map.
  Then there is a constant $C=C\big(\dim(G),\deg(G)\big)$ such that
  $|G^\sigma|$ is approximately $q_\sigma^{\dim(G)}$ with error
  $$
  \Big| |G^\sigma| - q_\sigma^{\dim(G)} \Big| \le
  C\cdot q_\sigma^{\dim(G)-\frac12} \;.
  $$
\end{prop}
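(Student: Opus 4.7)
The plan is to treat the statement as a ``twisted'' Lang--Weil estimate. In the untwisted case $\sigma = \Frob_q|_G$ (i.e., $k=1$ in Definition~\ref{Frobenius-def}), we have $G^\sigma = G(\Fq)$ by Remark~\ref{Frob-q-remark} and $q_\sigma = q$, so the claim reduces to the classical Lang--Weil theorem: since $G$ is a connected linear algebraic group, it is geometrically irreducible of dimension $d = \dim(G)$, and Lang--Weil gives $\bigl||G(\Fq)| - q^d\bigr| \le C\, q^{d-1/2}$ with $C$ bounded in terms of $\dim(G)$ and $\deg(G)$ alone. This settles the untwisted case and predicts the shape of the general bound.

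For the general twisted case, where $\sigma^k = \Frob_q|_G$ and $q_\sigma = q^{1/k}$ may fail to be a prime power, I would use the Grothendieck--Lefschetz trace formula
$$|G^\sigma| \;=\; \sum_{i=0}^{2d}(-1)^i\,\mathrm{tr}\bigl(\sigma^* \bigm| H^i_c(G,\overline{\mathbb{Q}}_\ell)\bigr).$$
Connectedness of $G$ makes $H^{2d}_c(G,\overline{\mathbb{Q}}_\ell)$ one-dimensional, and since $(\sigma^*)^k$ acts on it as multiplication by $q^d$, the scalar action of $\sigma^*$ must be $q_\sigma^d$ (a possible $k$-th root of unity being pinned down to $1$ by the natural geometric normalisation), which supplies the main term. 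Deligne's purity bounds from Weil~II force the $\sigma^*$-eigenvalues on $H^i_c$ for $i<2d$ to have absolute value at most $q_\sigma^{d-1/2}$, so the total contribution of the lower-degree cohomology is bounded by a sum of Betti numbers times $q_\sigma^{d-1/2}$, yielding the error estimate.

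The main obstacle---and the reason the result is genuinely nontrivial---is obtaining the \emph{uniformity} of $C$ in the Frobenius $\sigma$ (in particular in the parameter $q_\sigma$ and the twisting exponent $k$), with dependence only on $\dim(G)$ and $\deg(G)$. Bounding the Betti numbers of $G$ in terms of its geometric complexity is classical, but controlling the sum $\sum_i \dim H^i_c(G,\overline{\mathbb{Q}}_\ell)$ uniformly as $\sigma$ varies requires more care, and a direct geometric approach would need to appeal to Katz-type cohomological complexity bounds. Hrushovski's approach in \cite{Hr} sidesteps an explicit cohomological estimate by passing through model-theoretic machinery: he formulates an appropriate first-order theory of fields equipped with a distinguished ``Frobenius'' automorphism, proves a fine structural theorem (generalising ACFA) about definable sets, and extracts the uniform counting estimate via ultraproducts and pseudo-finite dimension theory. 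It is this uniformity, rather than the shape of the main term, that constitutes the crux of the proof.
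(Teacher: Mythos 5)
First, the point of comparison: the paper offers no proof of this proposition at all — it is quoted as Hrushovski's theorem \cite{Hr}, and Remark~\ref{do-we-need-Hrushovski} explains how one can avoid it in the cases needed for Theorem~\ref{simple-L} by using the explicitly known orders of the finite simple groups of Lie type and of their maximal tori. So the only question is whether your sketch would stand on its own, and as written it would not: three steps are asserted rather than established. (i) To apply the Grothendieck--Lefschetz formula to $\sigma$ you need $\sigma$ to be a morphism of varieties whose fixed points are isolated and counted with multiplicity one; Definition~\ref{Frobenius-def} literally only demands an abstract group automorphism with $\sigma^k=\Frob_q|_G$, so even the morphism property deserves comment (Steinberg's convention), and the transversality is usually extracted from the nilpotence of $d\sigma$ at a fixed point, so that $1-d\sigma$ is invertible — none of which appears in your sketch. (ii) The eigenvalue of $\sigma^*$ on $H^{2\dim G}_c$ is determined by $(\sigma^*)^k$ only up to a $k$-th root of unity, and ``the natural geometric normalisation'' is not an argument; the standard fix is that a finite surjective endomorphism acts on top compactly supported cohomology by its degree, and $\deg(\sigma)^k=\deg(\Frob_q)=q^{\dim G}$ forces $\deg(\sigma)=q_\sigma^{\dim G}$, a positive integer, which pins the eigenvalue down exactly.

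(iii) Most importantly, the uniformity of $C$ — a bound on $\sum_i\dim H^i_c\big(G,\overline{\mathbb{Q}}_\ell\big)$ depending only on $\dim(G)$ and $\deg(G)$, uniformly in $q$ and $k$ — is precisely the content of the proposition, and your proposal does not supply it: you gesture at Katz-type complexity bounds and then concede that Hrushovski's actual proof proceeds by entirely different (model-theoretic) means. Since that is the crux, what you have written is an accurate account of why the statement is plausible, where its difficulty sits, and where its proof is to be found — which matches how the paper treats it, namely as an external citation — but it is not an independent proof. If you wanted a self-contained argument for the purposes of this paper, the more economical route is the one indicated in Remark~\ref{do-we-need-Hrushovski}: restrict to the (finitely many families of) groups actually needed and use the classical counting formulas, rather than trying to complete the cohomological argument in full generality.
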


In the following corollary, besides various technical estimates, we establish
that the finite group $G^\sigma$ (if it is large enough) reflects the
group-theoretic properties of $G$.
E.g. there is a correspondence between subgroups of $G$ and $G^\sigma$,
and we have
$\CC_G(G^\sigma)=\CZ(G)$.

\newcommand{\sizeOfLieType} {{\rm L}}
\begin{cor}
  \label{size-of-Lie-type}
  For all parameters $N>0$, $\Delta>0$, $I>0$ and $1>\e>0$
  there is an integer
  $K=K_\sizeOfLieType(N,\Delta,I,\e)$
  with the following property.
  \begin{enumerate}[\indent(a)]
  \item \label{item:26}
    Let $G$ be a connected linear algebraic group,
    $\sigma:G\to G$ a Frobenius map and
    $\alpha\subseteq G^\sigma$ a finite subset.
    Suppose that $\dim(G)\le N$, $\deg(G)\le\Delta$,
    $\big|G^\sigma:\Span\alpha\big|\le I$ and $|\alpha|\ge K$.
    Then
    $$
    \dim(G)>0
    \;,\quad
    \CC_G(\alpha) = \CZ(G)
    \;,\quad
    \log(q_\sigma)\ge1/\e
    \;.
    $$
  \item \label{item:27}
    Let in addition $A\le G$ be a $\sigma$-invariant closed subgroup
    of degree $\deg(A)\le\Delta$.
    Then $A^\sigma = A\cap G^\sigma$,
    % $$
    % \Big|G^\sigma:A^\sigma\Big| \ge
    % \Frac{1-\e}{\Delta}\,\Big|G^\sigma\Big|^{1-\dim(A)/\dim(G)}
    % \;,
    % $$
    $$
    \Big|\Span\alpha:\Span\alpha\cap A\Big| \ge
    \Frac{1-\e}{I\Delta}\,\Big|G^\sigma\Big|^{1-\dim(A)/\dim(G)} \ge
    \Frac{1-\e}{I\Delta}\,
    \Big|\Span\alpha\Big|^{1-\dim(A)/\dim(G)}
    $$
    and if $A\neq G$ then $\Span\alpha\cap A \neq \Span\alpha$.
  \item \label{item:28}
    Suppose furthermore that $A$ is not normal in $G$.
    Then $\alpha$ does not normalise $A$ and
    $$
    (1-\e)\log(q_\sigma)\le
    \hat\mu\big(\Span\alpha,G, A\big)\le
    (1+\e)\log(q_\sigma)
    \;.
    $$
  \end{enumerate}
\end{cor}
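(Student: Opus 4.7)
The plan rests on Hrushovski's twisted Lang-Weil estimate (Proposition~\ref{twisted-Lang-Weil}), from which I will deduce that $|H^\sigma|\approx q_\sigma^{\dim(H)}$ to arbitrary precision for every $\sigma$-invariant connected closed $H\le G$ of bounded dimension and degree; for non-connected $\sigma$-invariant $H$, the component $H^0$ is again $\sigma$-invariant (as $\sigma$ is a morphism sending $H$ to $H$) and $|H^\sigma|\le\deg(H)\cdot|(H^0)^\sigma|$. The first step is to choose $K=K_\sizeOfLieType(N,\Delta,I,\varepsilon)$ large enough that $|\alpha|\ge K$ forces $q_\sigma$ to exceed any prescribed bound (possible since $K\le|\alpha|\le|G^\sigma|\le 2q_\sigma^{\dim(G)}$), securing $\log q_\sigma\ge 1/\varepsilon$ and making all subsequent Lang-Weil error terms fit into the desired margin. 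The inequality $\dim(G)>0$ is then automatic, since otherwise $|G^\sigma|$ would be bounded.

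For $\CC_G(\alpha)=\CZ(G)$ only the inclusion $\subseteq$ needs work. I will first show that $H=\CC_G(\alpha)^0$ equals $\CZ(G)^0$: it is closed, connected and $\sigma$-invariant; every element of $H$ commutes with $\alpha$, so $\Span{\alpha}\subseteq\CC_G(H)$, and $\CC_G(H)$ is closed and $\sigma$-invariant; if $H\not\subseteq\CZ(G)$ then $\CC_G(H)\ne G$, so $\dim\CC_G(H)<\dim(G)$ by connectedness of $G$, and the Lang-Weil bound on $\CC_G(H)^\sigma$ combined with $|\Span{\alpha}|\ge|G^\sigma|/I$ would bound $q_\sigma$ by $O(I)$, a contradiction. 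For arbitrary $h\in\CC_G(\alpha)$ I replace $\CC_G(h)$ by the stabilised intersection $D'=\bigcap_{i\ge0}\sigma^i(\CC_G(h))$, which is $\sigma$-invariant, still contains $\Span{\alpha}$ and has degree bounded in terms of $\dim(G)$ and $\deg(G)$; if $h\notin\CZ(G)$ then $\CC_G(h)\ne G$, hence $D'\ne G$ and $\dim(D'^0)<\dim(G)$, so the identical Lang-Weil contradiction applies.

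Part (b) is essentially bookkeeping. The equality $A^\sigma=A\cap G^\sigma$ is immediate from the definitions. Since $\Span{\alpha}\cap A\subseteq A\cap G^\sigma=A^\sigma$ and Lang-Weil gives $|A^\sigma|\le\Delta(1+\varepsilon)q_\sigma^{\dim(A)}$, while $|\Span{\alpha}|\ge|G^\sigma|/I\ge(1-\varepsilon)q_\sigma^{\dim(G)}/I$, dividing yields the claimed index bound after absorbing the $(1\pm\varepsilon)$ factors into a single $1-\varepsilon$ by enlarging $K$. The second inequality uses $|G^\sigma|\ge|\Span{\alpha}|$, and the last assertion is immediate because $A\ne G$ forces $\dim(A)<\dim(G)$ (by connectedness of $G$), so the lower bound exceeds $1$ once $q_\sigma$ is large enough.

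For part (c), if $\alpha$ normalised $A$ then $\Span{\alpha}\subseteq\CN_G(A)^\sigma$, but $A$ not normal in $G$ gives $\CN_G(A)\ne G$ and $\dim\CN_G(A)<\dim(G)$, yielding the familiar Lang-Weil contradiction. Since $\CN_{\Span{\alpha}}(A)=\Span{\alpha}\cap\CN_G(A)$, the lower bound $\hat\mu\ge(1-\varepsilon)\log q_\sigma$ follows by applying part (b) to the $\sigma$-invariant subgroup $\CN_G(A)$ and dividing by $\dim(G)-\dim\CN_G(A)$, using $\log|G^\sigma|/\dim(G)\approx\log q_\sigma$. For the upper bound I use the orbit-type estimate
\[
|G^\sigma|\ \ge\ \Frac{|\Span{\alpha}|\cdot|\CN_G(A)^\sigma|}{|\Span{\alpha}\cap\CN_G(A)|},
\]
valid since $\Span{\alpha}$ and $\CN_G(A)^\sigma$ are subgroups of $G^\sigma$ intersecting in $\Span{\alpha}\cap\CN_G(A)$ (as $\Span{\alpha}\subseteq G^\sigma$); this gives $|\Span{\alpha}:\Span{\alpha}\cap\CN_G(A)|\le|G^\sigma|/|\CN_G(A)^\sigma|$, and Lang-Weil then yields $\hat\mu\le(1+\varepsilon)\log q_\sigma$. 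The main obstacle throughout is the careful handling of non-connected subgroups and the uniform absorption of Lang-Weil error terms, both resolved by taking $K$ (equivalently $q_\sigma$) sufficiently large.
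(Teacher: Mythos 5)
Your proposal is correct and follows essentially the same route as the paper: choose $K$ large enough to force $q_\sigma$ large, then use Hrushovski's twisted Lang--Weil estimate together with $|A^\sigma|\le\deg(A)\,\big|(A^0)^\sigma\big|$ to compare $|\Span\alpha|$ (which is at least $|G^\sigma|/I$) with the fixed-point counts of the $\sigma$-invariant subgroups $A$, $\CC_G\big(h^{\Span\sigma}\big)$ (your $D'$ is exactly the paper's centraliser of the $\sigma$-orbit) and $\CN_G(A)$. The only cosmetic difference is in the upper bound of part (c), where you use the product formula for $\Span\alpha\cdot\CN_G(A)^\sigma$ inside $G^\sigma$ instead of the paper's two-sided logarithmic estimates on $|\Span\alpha\cap B|$; this amounts to the same counting.
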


\begin{proof}
  Recall from Proposition~\ref{twisted-Lang-Weil} the constant
  $C=C\big(N,\Delta\big)$.
  By Proposition~\ref{twisted-Lang-Weil} we have
  $$
  K\le|\alpha|\le|G^\sigma|\le(1+C)q_\sigma^N \;,
  $$
  hence for large enough $K$
  $$
  \log(q_\sigma)\ge\log\left(\sqrt[N]{\Frac{K}{1+C}}\right)>1/\e
  $$
  and $\dim(G)>0$ (see Remark~\ref{too-many-points}).
  This proves the two inequalities
  of \eqref{item:26}. In the rest of this proof we often use,
  that by choosing $K$ large enough one can force $q_\sigma$
  to be arbitrary large.
  
  It is obvious that $A^\sigma=A\cap G^\sigma$.
  By Proposition~\ref{twisted-Lang-Weil} for large enough $q_\sigma$
  (i.e. for large enough $K$) we have
  $$
  (1-\Frac\e3)q_\sigma^{\dim(G)}\le
  |G^\sigma|\le
  (1+\Frac\e3)q_\sigma^{\dim(G)}
  $$
  and
  $$
  (1-\Frac\e3)q_\sigma^{\dim(A)}\le
  \big|(A^0)^\sigma\big|\le
  |A^\sigma|\le \Delta\big|(A^0)^\sigma\big| \le
  \Delta(1+\Frac\e3)q_\sigma^{\dim(A)} \;.
  $$
  Therefore
  $$
  \Big|G^\sigma:A^\sigma\Big| \ge
  \frac{(1-\Frac\e3)q_\sigma^{\dim(G)}}
  {(1+\Frac\e3)\Delta\,q_\sigma^{\dim(A)}} > 
  \frac{1-2\Frac\e3}{\Delta}q_\sigma^{\dim(G)-\dim(A)} >
  $$
  $$
  >
  \frac{1-2\Frac\e3}{(1+\Frac\e3)\Delta}
  \Big|G^\sigma\Big|^{1-\dim(A)/\dim(G)} >
  \frac{1-\e}{\Delta}\Big|G^\sigma\Big|^{1-\dim(A)/\dim(G)} \;.
  $$
  This implies the inequality in \eqref{item:27}.
  If $A\neq G$ then $\dim(A)<\dim(G)$.
  Since $\big|G^\sigma\big|\ge K$,
  for large enough $K$ we have
  $\big|\Span\alpha:\Span\alpha\cap A\big|>1$,
  so $\Span\alpha\neq\Span\alpha\cap A$.
  This completes the proof of \eqref{item:27}.
  
  Let $g\in\CC_G(\alpha)$ be such that $g\notin\CZ(G)$.
  Clearly all elements of the $\Span\sigma$-orbit $g^{\Span\sigma}$ commute
  with the elements of $\alpha$.
  On the other hand
  we know from \eqref{item:27} (say with parameter $\e'=\Frac12$) that 
  $\Span\alpha\cap\CC_G\big(g^{\Span\sigma}\big)\neq\Span\alpha$,
  which is a contradiction. Therefore $\CC_G(\alpha)=\CZ(G)$
  which completes the proof of \eqref{item:26}.

  Suppose now that $A$ is not normal in $G$.
  We apply \eqref{item:27} (say with parameter $\e''=\Frac12$)
  to the proper subgroup $\CN_G(A)< G$.
  We obtain that 
  $$
  \Span\alpha\neq
  \Span\alpha\cap\CN_G(A) =\CN_{\Span\alpha}(A)
  $$
  i.e. $\alpha$ does not normalise $A$.

  By Fact~\ref{normaliser-centraliser-fact} there is an upper bound
  $\Delta'=\Delta'(N,\Delta)\ge\deg\big(\CN_G(A)\big)$.
  We set $\e'''=\Frac\e{2N}$.
  We apply \eqref{item:26} with a sufficiently small parameter $\e'$
  to obtain that
  $
  \log(q_\sigma) > 
  \Frac1{\e'''}\Big( 1+\log\big(\max(\Delta,\Delta',I)\big)\Big)
  $.
  Let $B\le G$ be any $\sigma$-invariant closed subgroup with $\dim(B)>0$ and
  $\deg(B)\le\max(\Delta,\Delta')$.
  We apply Proposition~\ref{twisted-Lang-Weil} to $B^0$
  and obtain
  $$
  \Big|\log|G^\sigma| - \dim(G)\cdot\log(q_\sigma)\Big| < 1
  % -\log\left(1-\Frac{C}{\sqrt{q_\sigma}}\right) < 1
  \;.
  $$
  This gives us upper and lower bounds on $\log\big|\Span\alpha\cap B\big|$:
  $$
  (1-\e''')\dim(B)\log(q_\sigma) \le
  $$
  $$
  \le
  \dim(B)\cdot\log(q_\sigma)  -1-\log(I) \le
  \log\big|(B^0)^\sigma\big| -\log(I) \le
  $$
  $$
  \le
  \log\big|\Span\alpha\cap B^0\big| \le
  \log\big|\Span\alpha\cap B\big| \le
  \log\big|B^\sigma\big| \le
  $$
  $$
  \le
  \log\big|(B^0)^\sigma\big| + \log\big(\max(\Delta,\Delta')\big) \le
  $$
  $$
  \le
  \dim(B)\cdot\log(q_\sigma) +1 + \log\big(\max(\Delta,\Delta')\big) \le
  $$
  $$
  \le
  (1+\e''')\dim(B)\log(q_\sigma)
  $$
  We apply these inequalities to $B=G$ and to $B=\CN_G(A)$:
  $$
  (1-\e''')\dim\big(G\big)\log(q_\sigma) \le
  \log\big|\Span\alpha\big| \le
  (1+\e''')\dim\big(G\big)\log(q_\sigma)
  $$
  and
  $$
  (1-\e''')\dim\big(\CN_G(A)\big)\log(q_\sigma) \le
  \log\big|\CN_{\Span\alpha}(A)\big| \le
  $$
  $$
  \le
  (1+\e''')\dim\big(\CN_G(A)\big)\log(q_\sigma) \;.
  $$
  Subtracting the two estimates and dividing the result with
  $\dim(G)-\dim\big(\CN_G(A)\big)>0$ we obtain
  $$
  (1-\e)\log(q_\sigma) \le
  \Frac{\log|\Span\alpha|-\log|\CN_{\Span\alpha}(A)|}{\dim(G)-\dim(\CN_G(A))} \le
  (1+\e)\log(q_\sigma)
  $$
  and this completes the proof of \eqref{item:28}.
\end{proof}

We arrived at a slightly more general version of
Theorem~\ref{pre-main-thm} of the introduction:

\begin{thm}
  \label{main-thm}
  For all parameters $N>0$, $\Delta>0$, $I>0$ and $1>\e>0$
  there is an integer
  $M=M_\mainThm(N,\e)$
  and a real
  $K=K_\mainThm(N,\Delta,I,\e)$
  with the following property.\\
  Let $G$ be a connected linear algebraic group over \Fpclosed.
  Let $\sigma:G\to G$ a Frobenius map and
  $1\in\alpha\subseteq G^\sigma$ an ordered finite symmetric subset.
  Suppose that $\CZ(G)$ is finite,
  $\dim(G)\le N$, $\deg(G)\le\Delta$, $\mult(G)\le\Delta$, $\inv(G)\le\Delta$,
  $\big|G^\sigma:\Span\alpha\big|\le I$ and
  $$
  K\le|\alpha|\le q_\sigma^{(1-\e)\dim(G)} \;.
  $$
  Then there is a $\sigma$-invariant
  connected closed normal subgroup $H\triangleleft G$
  such that $\deg H\le K$, $\dim(H)>0$ and
  $$
  |\alpha^M\cap H|\ge|\alpha|^{(1+\delta)\dim(H)/\dim(G)}
  $$
  where $\delta=\Frac{\e}{128N^3}$.
  Moreover, our construction of the subgroup $H$
  is uniquely determined.
\end{thm}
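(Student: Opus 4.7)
The plan is to cascade the tools from Sections~\ref{sec:spre-large-conc}--\ref{sec:finding-using-ccc} (Spreading Theorem, Finding CCC, Spreading via CCC) and then upgrade the resulting spreading subgroup to a normal, $\sigma$-invariant one using Corollary~\ref{size-of-Lie-type}. Choose $K_\mainThm$ large enough so that Corollary~\ref{size-of-Lie-type}(a) applies with an auxiliary parameter $\e'\ll\e$; in particular, $\CC_G(\alpha)=\CZ(G)$ is finite, $\dim(G)>0$, and $\log(q_\sigma)$ is as large as we wish. Thus $\alpha|G$ is an $(N,\Delta,K_\mainThm)$-bounded spreading system in the sense of Definition~\ref{spreading-system}, and by the hypothesis $|\alpha|\le q_\sigma^{(1-\e)\dim(G)}$ one has $\mu(\alpha,G)\le(1-\e)\log(q_\sigma)$. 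Note $G$ cannot be nilpotent: a connected nilpotent group with finite centre must be unipotent, and a nontrivial connected unipotent group has infinite centre.

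Next, apply Lemma~\ref{find-CCC} with parameter $\eta=\e/(128N^3)$ (which satisfies $\eta<1/(119N^3)$). Either $\alpha|G$ is $(\eta,M_1,K)$-spreading --- and we set $H$ to be its subgroup of spreading --- or we obtain a CCC-subgroup $A<G$, non-normal in $G$, with $\mu\bigl(\alpha^{M_2},A\bigr)>(1-\eta\cdot16N)\mu(\alpha,G)$. In the latter case, ask whether $\mu(\alpha^{M_2},G)\ge(1+\eta)\mu(\alpha,G)$; if yes, take $H=G$, finishing immediately. Otherwise $\mu(\alpha^{M_2},G)<(1+\eta)\mu(\alpha,G)\le(1+\eta)(1-\e)\log(q_\sigma)$, while Corollary~\ref{size-of-Lie-type}(c) yields $\hat\mu(\Span\alpha,G,A)\ge(1-\e')\log(q_\sigma)$. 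For $\e'$ small enough, condition~\eqref{item:22} of Lemma~\ref{spread-via-CCC} holds for the spreading system $\alpha^{M_2}|G$ and the CCC-subgroup~$A$, so that system is $(\eta,M_3,K)$-spreading, and hence so is $\alpha|G$ (with a larger $M$). In every branch we produce a connected closed subgroup $H\le G$, normalised by $\alpha$, with $\dim(H)>0$, $\deg(H)$ bounded in terms of $N$ and $\Delta$, and $\mu(\alpha^M,H)\ge(1+\eta)\mu(\alpha,G)$; rewriting this in terms of cardinalities gives the target inequality $|\alpha^M\cap H|\ge|\alpha|^{(1+\delta)\dim(H)/\dim(G)}$ with $\delta=\eta$.

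The remaining task --- the one I expect to be the genuine subtlety --- is to promote $H$ to a $\sigma$-invariant closed \emph{normal} subgroup of $G$. For $\sigma$-invariance we exploit that every construction in the cascade (from Lemma~\ref{something-irreducible-with-large-concentration} through Lemma~\ref{find-CCC} and Lemma~\ref{spread-via-CCC}) was declared to be uniquely determined by its ordered input. Since $\alpha\subseteq G^\sigma$ is $\sigma$-fixed pointwise and $\sigma$ acts by a field automorphism compatible with all algebraic-geometric constructions, the entire cascade commutes with $\sigma$; hence the uniquely determined output $H$ satisfies $\sigma(H)=H$. For normality, apply Corollary~\ref{size-of-Lie-type}(b) to the $\sigma$-invariant closed subgroup $\CN_G(H)\le G$, which contains $\alpha$ and therefore $\Span\alpha$. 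If $\CN_G(H)$ were a proper subgroup, the inequality $\bigl|\Span\alpha:\Span\alpha\cap\CN_G(H)\bigr|\ge\tfrac{1-\e'}{I\Delta}|\Span\alpha|^{1-\dim(\CN_G(H))/\dim(G)}$ would be strictly greater than $1$ once $K_\mainThm$ is chosen large enough, contradicting $\Span\alpha\subseteq\CN_G(H)$. So $\CN_G(H)=G$, i.e.\ $H\triangleleft G$. A final bookkeeping check on the constants (choosing $K_\mainThm$ to dominate all the $K$-thresholds accumulated above, and $M_\mainThm$ to dominate all the products of path lengths from the cascade) yields the theorem.
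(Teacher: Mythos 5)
Your proposal is correct and follows essentially the same route as the paper: build the $(N,\Delta,K)$-bounded spreading system via Corollary~\ref{size-of-Lie-type}, rule out nilpotency from the finite centre, run Lemma~\ref{find-CCC} and then Lemma~\ref{spread-via-CCC}.\eqref{item:22} with the $\hat\mu$ lower bound from Corollary~\ref{size-of-Lie-type}.\eqref{item:28}, and get $\sigma$-invariance from unique determination; your normality step (applying Corollary~\ref{size-of-Lie-type}.\eqref{item:27} to $\CN_G(H)\supseteq\Span\alpha$) is just an unpacked form of the paper's appeal to \eqref{item:28}. The only point worth spelling out is the hypothesis $\mu(\alpha^{M_2},A)>\bigl(1-\Frac1{7N^2}\bigr)\mu(\alpha^{M_2},G)$ of Lemma~\ref{spread-via-CCC}, which does follow from your branch condition $\mu(\alpha^{M_2},G)<(1+\eta)\mu(\alpha,G)$ together with the find-CCC output, exactly as in the paper's computation.
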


\begin{proof}
  We set
  $$
  M_\findCCC=M_\findCCC\left(N,\Delta,\Frac{\e}{119N^3}\right)
  \;,\quad
  M_\spreadViaCCC=M_\spreadViaCCC\left(N,\Delta,\Frac{\e}{128N^3}\right)
  \;,
  $$  
  $$
  M= M_\findCCC\cdot M_\spreadViaCCC  \;,
  $$  
  $$
  K = \max\Big(
  \Delta+1,
  K_\findCCC\left(N,\Delta,\Frac{\e}{119N^3}\right),
  K_\sizeOfLieType\left(N,\Delta,I,\Frac\e3\right),
  K_\spreadViaCCC\left(N,\Delta,\Frac{\e}{128N^3}\right)
  \Big) \;.
  $$
  By Corollary~\ref{size-of-Lie-type}.\eqref{item:26}
  $\dim(G)>0$ and $\CC_G(\alpha)=\CZ(G)$, which is finite,
  hence $\alpha|G$ is an $(N,\Delta,K)$-bounded spreading system.
  By assumption
  $$
  \mu(\alpha,G)\le (1-\e)\log(q_\sigma) \;.
  $$
  Our construction of $H$ will be uniquely determined,
  therefore it will be $\sigma$-invariant.
  By Corollary~\ref{size-of-Lie-type}.\eqref{item:28}
  the rest of the conclusion of the theorem can be rewritten as follows.
  $H$ is normalised by $\alpha$, $\deg(H)\le K$, $\dim(H)>0$ and
  $$
  \mu(\alpha^M,H)\ge(1+\delta)\mu(\alpha,G) 
  $$
  i.e. we need to prove that $\alpha|G$ is $(\delta,M,K)$-spreading
  and construct a subgroup of spreading that is uniquely determined. 

  If $G$ were nilpotent then $\CZ(G)$ would have positive dimension.
  By assumption $\CZ(G)$ is finite, hence $G$ is not nilpotent.
  We apply Lemma~\ref{find-CCC}
  with parameters $N$, $\Delta$ and $\Frac{\e}{119N^3}$ to $\alpha|G$.
  In case we obtain a subgroup of spreading, the theorem holds.
  Otherwise we find a CCC-subgroup $A\le G$ which is not normal in $G$ and
  satisfies
  $$
  \mu\big(\alpha^{M_\findCCC},A\big) >
  \left(1-\Frac{\e}{119N^3}\cdot16N\right)\mu(\alpha,G) >
  \left(1-\Eescape\right)\left(1+\Frac{\e}{119N^2}\right)\mu(\alpha,G) \;.
  $$
  If $\alpha|G$ is $(\Frac{\e}{119N^2},M_\findCCC,K)$-spreading, then it is
  $(\delta,M,K)$-spreading, the theorem holds in this case.
  So from now on we assume that
  $$
  \mu(\alpha^{M_\findCCC},G)<\big(1+\Frac{\e}{119N^2}\big)\mu(\alpha,G)
  $$
  hence
  $$
  \mu\big(\alpha^{M_\findCCC},A\big) >
  \left(1-\Eescape\right)\mu(\alpha^{M_\findCCC},G) \;.
  $$
  We know from Lemma~\ref{ccc-subgroup-properties} that
  $\deg(A)\le\deg(G)$
  and Corollary~\ref{size-of-Lie-type}.\eqref{item:28}
  with parameters $N$, $\Delta$, $I$
  and $\Frac{\e}{3}$ implies that
  $$
  \hat\mu\Big(\Span\alpha,G,A\Big)\ge
  \left(1-\Frac{\e}{3}\right)\log(q_\sigma)>
  \left(1-\Frac\e2\right)\left(1+\Frac\e6\right)\frac{\mu(\alpha,G)}{1-\e}\ge
  $$
  $$
  \ge
  \frac{\left(1-\Frac\e2\right)}{1-\e}
  \left(1+\Frac\e{119N^2}\right) \mu(\alpha,G) >
  \frac{\mu(\alpha^{M_\findCCC},G)}{1-\Frac\e2}
  \;.
  $$
  We apply Lemma~\ref{spread-via-CCC} with parameters $N$, $\Delta$
  and $\Frac{\e}{128N^3}=\delta$
  to the spreading system $\alpha^{M_\findCCC}|G$ and
  the subgroups $W=G$ and $A\le G$.
  If we obtain a subgroup of $(\delta,M_\spreadViaCCC,K)$-spreading
  then it is a subgroup of $(\delta,M,K)$-spreading for $\alpha|G$, 
  the theorem holds.
  Otherwise
  $$
  \mu(\alpha^{M_\findCCC},G) >
  \Big(1-\delta\cdot64N^3\Big)\,
  \hat\mu\big(\Span{\alpha^{M_\findCCC}},G,A\big) =
  \left(1-\Frac\e2\right)\hat\mu\big(\Span\alpha,G,A\big)
  \;,
  $$
  a contradiction.
\end{proof}

\begin{rem} \label{do-we-need-Hrushovski}
  In the proof of Theorem~\ref{simple-L}
  one can avoid using
  Proposition~\ref{twisted-Lang-Weil}.
  We know explicitly the number of elements in all finite simple groups of
  Lie type and also in their maximal tori (see e.g. \cite{Ca}).
  When $G$ is a connected adjoint simple algebraic group,
  one can show directly that $(G^\sigma)'$ does not normalise any closed
  subgroup of positive dimension and small degree.
  This also implies that $\CC_G\big((G^\sigma)'\big)$
  is finite which is all we need for the proofs of
  Theorem~\ref{intro-thm} and Theorem~\ref{simple-L}.
\end{rem}

The following result, communicated to us by Martin Liebeck,
can be used to complete the above sketch.
Let $G$ be a connected adjoint simple algebraic group over an algebraically
closed field $\BF$ of characteristic $p$, and $\sigma$ a Frobenius morphism of
$G$. Let $G(q) = (G^\sigma)'$ and assume $G(q)$ is simple.

\begin{prop} \label{from-Liebeck}
  There is no proper connected subgroup of $G$
  which contains $G(q)$.
\end{prop}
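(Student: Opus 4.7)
The plan is to reduce to showing that $\cl{G(q)}^0 = G$, where $\cl{X}$ denotes the Zariski closure in $G$. Indeed, if $K < G$ were a proper connected closed subgroup containing $G(q)$, then by closedness $\cl{G(q)} \subseteq K$, and taking identity components gives $\cl{G(q)}^0 \subseteq K^0 = K \subsetneq G$, contradicting $\cl{G(q)}^0 = G$.

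The first step is to invoke the standard Chevalley--Steinberg generation theorem for finite groups of Lie type. Fixing a $\sigma$-stable maximal torus and Borel subgroup of $G$, let $\CO$ range over the $\sigma$-orbits on the resulting root system, and for each orbit form the $\sigma$-invariant connected unipotent subgroup $V_\CO = \prod_{\alpha \in \CO} U_\alpha \subseteq G$ (in the Suzuki--Ree cases one uses the analogous products dictated by the exceptional isogeny defining $\sigma$). The simple group $G(q) = (G^\sigma)'$ is generated by the fixed-point subgroups $V_\CO^\sigma$.

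The second step is to show $V_\CO \subseteq \cl{G(q)}^0$ for each orbit $\CO$. Each $V_\CO$ is isomorphic as a variety to an affine space $\Fpclosed^d$, under which $V_\CO^\sigma$ corresponds to $\Fq^d$ (or an appropriate $\Fq$-form in the Suzuki--Ree cases), which is Zariski dense in $\Fpclosed^d$. Therefore the closure in $G$ of the set $V_\CO^\sigma \subseteq G(q) \subseteq \cl{G(q)}$ equals all of $V_\CO$; since $V_\CO$ is connected and contains the identity, it sits inside $\cl{G(q)}^0$.

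To finish, I would appeal to the classical fact that the root subgroups $U_\alpha$ (and hence the products $V_\CO$) generate $G$ as an algebraic group, a standard consequence of the Bruhat decomposition for a semisimple algebraic group. This yields $\cl{G(q)}^0 = G$, as required. The main obstacle is ensuring the Chevalley--Steinberg generation statement holds uniformly across all twisted and untwisted types, in particular for the handful of small parameters (small Suzuki, Ree, or triality cases) where $(G^\sigma)'$ has nontrivial index in $G^\sigma$. For those exceptional cases, generation of $(G^\sigma)'$ by the $V_\CO^\sigma$ can be verified directly from the known subgroup structure of the finitely many small finite simple groups involved.
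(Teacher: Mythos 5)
There is a fatal error at the heart of your plan: the group $G(q)=(G^\sigma)'$ is a \emph{finite} group, so it is already Zariski closed, and hence $\cl{G(q)}=G(q)$ and $\cl{G(q)}^0=\{1\}$ --- not $G$. The specific false step is the density claim in your second step: under the identification $V_\CO\cong\Fpclosed^{\,d}$, the fixed-point set $V_\CO^\sigma$ corresponds to (an $\Fq$-form of) $\Fq^d$, which is a finite set of $q^d$ points and therefore a \emph{proper closed} subset of the $d$-dimensional affine space, not a dense one. Density of rational points in affine space holds over infinite fields, but never over a finite field, and this is exactly where the finite-field setting bites. Consequently the reduction ``no proper connected closed $K$ contains $G(q)$ because $\cl{G(q)}^0=G$'' proves nothing: the closure of a finite group cannot detect whether it sits inside a proper connected subgroup. (The Chevalley--Steinberg generation of $G(q)$ by the $V_\CO^\sigma$ is fine as a statement about the finite group, but it cannot be upgraded by taking closures.)

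The whole content of the proposition is precisely that, although $G(q)$ is finite (so closure arguments are vacuous), it is still too ``large and irreducible'' to fit inside any proper connected subgroup $H<G$. Any correct proof has to compare the structure of a hypothetical connected $H\supseteq G(q)$ with that of $G(q)$ quantitatively. The paper's argument (due to Liebeck) does this by: (i) using the known $G$-composition factors of the adjoint module $L(G)$ and the irreducibility of $G(q)$ on each of them to force $H$ to be semisimple and $L(H)$ to be a composition factor of codimension $\le 2$; (ii) comparing $\dim H=2\dim U_H+\mathrm{rank}(H)$ with $\dim G$ and inspecting root systems to rule this out in general; and (iii) handling the exceptional pairs $(B_n,C_n,D_n,F_4;\,p=2)$ and $(G_2;\,p=3)$, including the Suzuki case, by representation-theoretic and $r$-rank arguments. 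If you want to repair your approach, you would need input of this kind (or lower bounds on degrees of representations/orders of $G(q)$ versus subgroup structure of $H$); no argument based on Zariski closures of $\sigma$-fixed points can succeed.
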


\begin{proof}
Suppose for a contradiction that $G(q) < H < G$, where $H$ is
connected. 

First we consider the action of $G(q)$ on the adjoint module $L(G)$. The
$G$-composition factors of $L(G)$ are well-known, and can be found in 
\cite[1.10]{LiS}.
With the exception of $G = B_n,C_n,D_n,F_4$ with $p=2$
and $G_2$ with $p=3$, $G$ is either irreducible on $L(G)$, or has two 
composition factors, one of which is trivial. In any case, each composition
factor is either a restricted $\BF G$-module, or a field twist of one. It
follows
that $G(q)$ is irreducible on every $G$-composition factor of $L(G)$.
Therefore $H$ is also irreducible on every $G$-composition factor of 
$L(G)$, and hence $H$ must be a semisimple group.

For the moment exclude the exceptions $B_n,\ldots G_2$ in the above 
paragraph. Clearly $G(q)$ fixes $L(H)\subset L(G)$, so it follows that 
$L(H)$ must be a composition factor of co-dimension 1 in $L(G)$. If $U_H$
is a maximal connected unipotent subgroup of $H$, then a standard result
tells us that $\dim H = 2\dim U_H + {\rm rank}(H)$. Since $\dim H = \dim
G-1$, it follows that $U_H$ is also a maximal unipotent subgroup of $G$,
and ${\rm rank}(H) = {\rm rank}(G)-1$. So the root system of $H$ 
has the same number of roots as that of $G$, and $H$ has rank 1 less than
$G$. An easy check of root systems shows that this is impossible.

It remains to handle the exceptional cases $G = B_n,C_n,D_n,F_4$ ($p=2$)
and $G_2$ ($p=3$). Consider $G_2$ and $F_4$, and let $H_0$ be a simple factor
of $H$ which contains an isomorphic copy of $G(q)$. Then $H_0$ is of rank
at most 2 (resp. 4), and the smallest projective representation of $H_0$
has dimension at least that of $G(q)$, which is 7 (resp. 26). This is clearly
impossible.

Next let $G = D_n$. Here the $G$-composition factors of $L(G)$ are 
of high weights $\lambda_2,0$ ($n$ odd) or $\lambda_2,0^2$ ($n$ even). 
We have already dealt with the case where $\dim H = \dim G-1$, so 
we may assume $n$ is
even and $\dim H = \dim G-2$. Then either $\dim U_H = \dim U_G$, 
${\rm rank}(H) = {\rm rank}(G)-2$, or $\dim U_H = \dim U_G-1$, 
${\rm rank}(H) = {\rm rank}(G)$. An inspection of root systems shows that
neither of these is possible.

Now let $G = C_n$, and let $V$ be the natural $2n$-dimensional $G$-module.
As $G(q)$ cannot act nontrivially on a module of dimension less than $2n$,
it must act tensor indecomposably on $V$, and hence so does $H$. Therefore
$H$ is simple. The possibilities for $G(q)$ are $C_n(q)$ and $Sz(q)$ (the
latter just for $n=2$). In the former case $G(q)$ has an elementary abelian
subgroup $R=r^n$, where $r$ is a prime dividing $q+1$. Note that 
$r$ is odd as $p=2$. Also ${\rm rank}(H) \le {\rm rank}(G) = n$.
An elementary argument (see \cite[Section~2]{CG}) 
shows that the abelian $r$-rank of $H$ is equal to 
${\rm rank}(H)$, and hence ${\rm rank}(H)=n$.
The only possibility is that $H = D_n$. But $G(q) = C_n(q)$ does not lie
in $D_n$ as it does not fix a quadratic form on $V$. If $G(q) = Sz(q)$ then
$H$ cannot have rank 2 (as $C_2$ has no connected 
simple proper subgroup of rank 2), so $H=A_1$; but $Sz(q) \not \le A_1$,
a contradiction.

Finally, if $G = B_n$ then there is a morphism from $G$ to $C_n$ which is
an isomorphism of abstract groups, and applying this morphism to $G(q)$ and
$H$, we reduce to the $C_n$ case. This completes the proof. 
\end{proof}

\section{Linear groups over finite fields}
\label{sec:linear-groups-over}

In this section we first prove our main theorem concerning simple groups of
Lie type and various results for $p$-generated subgroups of
$GL(n,\Fp)$
i.e. subgroups generated by elements of order $p$.
These finite groups can be obtained roughly as fixpoint groups of
Frobenius maps of linear algebraic groups.
Theorem~\ref{simple-L} is essentially a special case of
Theorem~\ref{main-thm}.
For perfect $p$-generated groups Theorem~\ref{pre-partial}
follows by an inductive argument based on Theorem~\ref{main-thm}.
To prove Theorem~\ref{pre-partial} in the general case we need a number of
finite group-theoretic results.

For the following useful results see \cite{Ol}
and \cite[proof~of~Lemma~2.2]{He1}.

\begin{prop} [Olson] \label{Olson}
  Let $1\in\alpha$ be a generating set of a finite group $G$
  and $\beta$ a nonempty subset of $G$. Then
  $|\alpha\beta|\ge\min\big(|\beta|+|\alpha|/2,|G|\big)$.
  In particular, if $\alpha^3\neq G$ then $|\alpha^3|\ge 2|\alpha|$.
\end{prop}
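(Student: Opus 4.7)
The plan has two parts: first derive the ``in particular'' clause as a corollary of the main inequality, and then prove the main inequality using Olson's theorem.

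For the ``in particular'', observe that if $\alpha^3 \neq G$ then also $\alpha^2 \neq G$ (otherwise $\alpha^3 = \alpha \cdot G = G$). Since $1 \in \alpha$ and $\alpha$ generates $G$, I would first apply the main inequality with $\beta = \alpha$ to get
\[
  |\alpha^2| \;\ge\; |\alpha| + \tfrac{|\alpha|}{2} \;=\; \tfrac{3}{2}|\alpha|,
\]
and then with $\beta = \alpha^2$ (using $1 \in \alpha^2$) to get
\[
  |\alpha^3| \;\ge\; |\alpha^2| + \tfrac{|\alpha|}{2} \;\ge\; 2|\alpha|.
\]

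For the main inequality, reduce to the case $\alpha\beta \subsetneq G$ and let $\gamma := \alpha\beta$. The key object is the left stabilizer subgroup
\[
  H \;:=\; \{g \in G : g\gamma = \gamma\},
\]
so that $\gamma$ is a union of left cosets of $H$. Crucially, $\alpha \not\subseteq H$: otherwise $G = \langle\alpha\rangle \subseteq H$ would give $H = G$ and hence $\gamma = G\gamma = G$, contradicting $\gamma \subsetneq G$. Since also $1 \in \alpha \cap H$, the set $\alpha$ meets at least two distinct left cosets of $H$, which together with $\alpha \subseteq \alpha H$ yields
\[
  |\alpha H| \;\ge\; \max\bigl(|\alpha|,\; 2|H|\bigr),
\qquad
  |H\beta| \;\ge\; |\beta|.
\]

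The heart of the argument is the invocation of Olson's theorem \cite{Ol}, which applied to $\alpha, \beta$ (with this stabilizer $H$) gives
\[
  |\alpha\beta| \;\ge\; |\alpha H| + |H\beta| - |H|.
\]
Combining the three displayed inequalities,
\[
  |\alpha\beta| \;\ge\; \max\bigl(|\alpha|,\,2|H|\bigr) + |\beta| - |H|
  \;=\; |\beta| + \max\bigl(|\alpha|-|H|,\;|H|\bigr)
  \;\ge\; |\beta| + \tfrac{|\alpha|}{2},
\]
the last step by $\max(x,y) \ge (x+y)/2$ applied with $x=|\alpha|-|H|$ and $y=|H|$.

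The main obstacle I expect is the correct invocation of Olson's theorem; the rest is a short coset-counting computation. The factor $\tfrac{|\alpha|}{2}$ (rather than the weaker $|\alpha|-|H|$ of the crude Olson bound) is forced by the coset inequality $|\alpha H| \ge 2|H|$, which uses both the presence of the identity in $\alpha$ and the generating hypothesis in a decisive way.
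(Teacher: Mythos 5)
Your reduction of the ``in particular'' clause to the main inequality is fine, and note for context that the paper itself gives no proof of this proposition at all: it is quoted from Olson \cite{Ol} (cf.\ the proof of Lemma~2.2 in \cite{He1}). So everything hinges on your key step, and that step is where the proposal breaks down.

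The inequality you attribute to Olson, $|\alpha\beta|\ge|\alpha H|+|H\beta|-|H|$ with $H$ the stabilizer of $\gamma=\alpha\beta$, is not Olson's theorem; it is the naive non-abelian analogue of Kneser's theorem, and it is false --- in fact false under exactly the hypotheses you have in force. Take $G=S_3$ with $a=(123)$, $b=(12)$, and set $\alpha=\{1,a,b,a^2b\}$ (so $1\in\alpha$ and $\alpha$ generates $G$) and $\beta=\{a\}$. Then $\alpha\beta=\{a,a^2,ab,a^2b\}=G\setminus\{1,b\}$, so $|\alpha\beta|=4$ and the left (also the right) stabilizer of $\alpha\beta$ is $H=\{1,b\}$; but $|\alpha H|=6$ and $|H\beta|=2$, so your claimed bound would read $4\ge 6+2-2=6$. (The conclusion of the proposition, $|\alpha\beta|\ge|\beta|+|\alpha|/2=3$, of course still holds here.) What Olson actually proves is weaker and differently shaped: for nonempty finite $A,B$ there exist \emph{some} subgroup $H$ and a nonempty subset $S\subseteq AB$ with $|S|\ge|A|+|B|-|H|$ and $HS=S$ or $SH=S$; the subgroup is not identified with the stabilizer of $AB$, and the right-hand side is $|A|+|B|$, not $|AH|+|HB|$. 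Deriving the stated $\min\big(|\beta|+|\alpha|/2,|G|\big)$ bound from this requires a genuine extra argument in the case $|H|>|\alpha|/2$ --- this is precisely where $1\in\alpha$ and the generating hypothesis are used in Olson's and Helfgott's arguments --- and your coset count ($|\alpha H|\ge 2|H|$, $|H\beta|\ge|\beta|$) does not substitute for it, because the stabilizer identity it is meant to feed into is not available. As written, the heart of the proof rests on a false statement, so the argument does not go through.
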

\qed

As noted in \cite{He2}  the following proposition is essentially due to
Ruzsa (see \cite{RT} and \cite{Ru}).

\begin{prop} \label{Helfgott-3-is-enough}
  Let $\alpha$ be a finite subset of a group.
  Then
  \begin{enumerate}[\indent a)]
  \item \label{item:29}
    $$
    \frac{\big|\big(\alpha\cup\alpha^{-1}\cup\{1\}\big)^3\big|}
    {|\alpha|} \le
    \left(3\frac{\big|\alpha^3\big|}
      {|\alpha|}\right)^3
    $$
  \item \label{item:30}
    If $\alpha=\alpha^{-1}$ is a symmetric set with $1\in\alpha$
    and $m\ge2$ an integer then
    $$
    \frac{\big|\alpha^m\big|}{|\alpha|}\le
    \left(\frac{\big|\alpha^3\big|}{|\alpha|}\right)^{m-2}
    $$
  \end{enumerate}
\end{prop}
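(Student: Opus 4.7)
The plan is to deduce both parts from a single combinatorial input, the noncommutative Ruzsa triangle inequality $|U|\cdot|V W^{-1}|\le|U V^{-1}|\cdot|U W^{-1}|$ (valid for any finite subsets $U,V,W$ of a group), combined with elementary decompositions. These are classical techniques going back to Ruzsa \cite{Ru}, \cite{RT} and made explicit in the nonabelian setting by Tao and by Helfgott \cite{He1}, \cite{He2}.

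For part~(a), set $K=|\alpha^3|/|\alpha|$ and $\tilde\alpha=\alpha\cup\alpha^{-1}\cup\{1\}$. I would decompose
$$\tilde\alpha^3\;=\;\bigcup_{\varepsilon\in\{+1,-1,0\}^3}\alpha^{\varepsilon_1}\alpha^{\varepsilon_2}\alpha^{\varepsilon_3}$$
(with $\alpha^0=\{1\}$) as a union of at most $27$ product sets, and bound each summand by $K^3|\alpha|$. The per-summand bound is obtained by iterated application of the Ruzsa triangle inequality, which in the nonabelian setting allows one to control any mixed triple product $|\alpha^{\varepsilon_1}\alpha^{\varepsilon_2}\alpha^{\varepsilon_3}|$ by a power of the tripling constant $K$, regardless of the sign pattern. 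Summing the $27$ estimates then gives $|\tilde\alpha^3|\le 27K^3|\alpha|=(3K)^3|\alpha|$.

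For part~(b), I would proceed by induction on $m$, with the cases $m=2$ and $m=3$ being immediate (noting $\alpha^2\subseteq\alpha^3$, which uses $1\in\alpha$). For the inductive step, apply Ruzsa's triangle inequality with $U=\alpha$, $V=\alpha^i$, $W=\alpha^j$; the hypothesis $\alpha=\alpha^{-1}$ turns it into
$$|\alpha|\cdot|\alpha^{i+j}|\;\le\;|\alpha^{i+1}|\cdot|\alpha^{j+1}|\qquad(i,j\ge 1).$$
Setting $f(m)=|\alpha^m|/|\alpha|$, this reads $f(i+j)\le f(i+1)\,f(j+1)$, and a one-step induction (splitting $m=(m-2)+2$) yields $f(m)\le K^{m-2}$ for $m\ge 3$.

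The main point requiring care is in part~(a): without commutativity one must track the order of multiplications when reducing each of the $27$ mixed products to the tripling constant, and the factor of $3$ inside the cube on the right-hand side is essentially forced by the threefold expansion $\alpha\mapsto\tilde\alpha$. Apart from the triangle inequality and bookkeeping no deeper tools are needed.
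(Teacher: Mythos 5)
The paper offers no argument here at all: the proposition is stated with a citation to Ruzsa (via Helfgott) and closed with a q.e.d.\ mark, so the only thing to compare your proposal with is the standard Ruzsa-type argument -- which is precisely what you give, and in substance it is correct. Two points deserve attention. In part~(a), the whole quantitative content is the per-summand bound, which you assert rather than derive; it does hold, but the bookkeeping is not entirely free: with $K=|\alpha^3|/|\alpha|$ one first gets $|\alpha^2|\le K|\alpha|$ (note $1\in\alpha$ is \emph{not} assumed in (a), so one should use $\alpha^2a\subseteq\alpha^3$ for a fixed $a\in\alpha$ rather than $\alpha^2\subseteq\alpha^3$), then the triangle inequality gives $|\alpha\alpha^{-1}|,|\alpha^{-1}\alpha|\le K^2|\alpha|$ and $|\alpha\alpha\alpha^{-1}|,|\alpha^{-1}\alpha\alpha|\le K^2|\alpha|$ (plus their inverses), and finally the alternating patterns satisfy $|\alpha\alpha^{-1}\alpha|,|\alpha^{-1}\alpha\alpha^{-1}|\le K^3|\alpha|$; these last two are the extremal summands and genuinely need the full exponent $3$, after which summing the $27$ pieces gives $(3K)^3|\alpha|$ as claimed. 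In part~(b), your inductive scheme $f(i+j)\le f(i+1)f(j+1)$ with $j=2$ and base case $m=3$ is correct for all $m\ge3$, but the case $m=2$ is not ``immediate'': as literally stated it asserts $|\alpha^2|\le|\alpha|$, which together with $1\in\alpha$ would force $\alpha^2=\alpha$, false for a general symmetric generating set, and your observation $\alpha^2\subseteq\alpha^3$ only yields the exponent $1$, not $0$. This is an off-by-one in the proposition itself (the paper only ever invokes it for $m\ge3$), so it does not undermine your argument, but you should restrict (b) to $m\ge3$ rather than claim the $m=2$ case.
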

\qed

As mentioned in the introduction, a result of Gowers~\cite{Gow} implies the
following.

\begin{prop} [Nikolov, Pyber \cite{NP}]
  \label{govers-trick}
  Let $G$ be a finite group
  and let $k$ denote the minimal degree of a complex representation.
  Suppose that $\alpha$, $\beta$ and $\gamma$ are subsets of $G$
  such that
  $$
  |\alpha||\beta| |\gamma| > \frac{|G|^3}{k} \;.
  $$
  Then $\alpha\beta\gamma=G$.
  In particular, if $|\alpha|>|G|/\sqrt[3]{k}$ then $\alpha^3=G$.
\end{prop}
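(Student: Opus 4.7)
The plan is to apply Gowers' mixing inequality for quasirandom groups via Fourier analysis on $G$. For a subset $X\subseteq G$ let $\mathbf{1}_X$ denote its indicator function; the triple convolution
\begin{equation*}
(\mathbf{1}_\alpha * \mathbf{1}_\beta * \mathbf{1}_\gamma)(g) = \#\big\{(a,b,c) \in \alpha \times \beta \times \gamma : abc = g\big\}
\end{equation*}
counts exactly the representations of $g$ as a triple product from $\alpha\beta\gamma$, so it suffices to show this count is strictly positive for every $g \in G$.

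The first step is a clean decomposition. Writing $\mathbf{1}_X = \frac{|X|}{|G|} + f_X$ with $f_X$ of zero sum, I observe that convolution with the constant function on $G$ just multiplies by the total sum of the other factor; hence any triple convolution with at least one constant factor and at least one mean-zero factor vanishes identically. Expanding $(\mathbf{1}_\alpha * \mathbf{1}_\beta * \mathbf{1}_\gamma)(g)$ accordingly, only two of the eight terms survive and we obtain the exact identity
\begin{equation*}
(\mathbf{1}_\alpha * \mathbf{1}_\beta * \mathbf{1}_\gamma)(g) = \frac{|\alpha||\beta||\gamma|}{|G|} + (f_\alpha * f_\beta * f_\gamma)(g).
\end{equation*}

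The main obstacle, which is the content of Gowers' theorem, is to bound $f_\alpha * f_\beta * f_\gamma$ in terms of $k$. I will derive from the Peter--Weyl / Plancherel decomposition on $G$ the inequality
\begin{equation*}
\|f * h\|_2 \le \sqrt{|G|/k}\,\|f\|_2\,\|h\|_2 \quad \text{whenever $f$ has zero sum,}
\end{equation*}
the point being that the Fourier coefficient $\widehat{f}(\pi)$ at the trivial representation vanishes, so every contributing $\pi$ satisfies $\dim\pi \ge k$; combined with Plancherel this forces $\|\widehat{f}(\pi)\|_{\mathrm{op}} \le \sqrt{|G|/k}\,\|f\|_2$ for every non-trivial $\pi$, and the claimed $L^2$ bound falls out of the Fourier expansion of $\|f*h\|_2^2$. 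Applying this with $f = f_\alpha$, $h = f_\beta$ and then one Cauchy--Schwarz in the remaining variable then yields
\begin{equation*}
|(f_\alpha * f_\beta * f_\gamma)(g)| \le \|f_\alpha * f_\beta\|_2\,\|f_\gamma\|_2 \le \sqrt{|G|/k}\,\|f_\alpha\|_2\|f_\beta\|_2\|f_\gamma\|_2 \le \sqrt{|G||\alpha||\beta||\gamma|/k}.
\end{equation*}

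Substituting into the identity above, for every $g \in G$
\begin{equation*}
(\mathbf{1}_\alpha * \mathbf{1}_\beta * \mathbf{1}_\gamma)(g) \ge \frac{|\alpha||\beta||\gamma|}{|G|} - \sqrt{\frac{|G||\alpha||\beta||\gamma|}{k}},
\end{equation*}
and the right hand side is strictly positive precisely under the hypothesis $|\alpha||\beta||\gamma| > |G|^3/k$, giving $\alpha\beta\gamma = G$. The final sentence of the proposition is the symmetric case $\alpha = \beta = \gamma$.
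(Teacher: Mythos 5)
Your proof is correct. The paper gives no argument of its own here — the proposition is quoted from \cite{NP}, and the proof there (building on Gowers) is essentially the computation you carry out: decompose each indicator into its mean plus a zero-sum part, bound $\|f*h\|_2\le\sqrt{|G|/k}\,\|f\|_2\,\|h\|_2$ for zero-sum $f$ via Plancherel together with $\dim\pi\ge k$ for every nontrivial irreducible $\pi$, and finish with Cauchy--Schwarz in the last variable; your threshold computation then reproduces the stated hypothesis exactly. The only (cosmetic) point to make explicit is that $k$ must be read as the minimal degree of a \emph{nontrivial} complex representation, which is what your argument, and the intended statement, use.
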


\begin{prop} \label{minimal-complex-representation-simple}
  Let $G$ be a simple algebraic group and
  $\sigma:G\to G$ a Frobenius map.
  If $L$ is the simple group of Lie type obtained as a composition factor of
  $G^\sigma$ then the minimal degree of a complex representation of $L$ is
  at least 
  $\Frac{q_\sigma-1}2$.
  If $q_\sigma\ge20$ and $\alpha\subseteq L$ is a subset of size at least
  $q_\sigma^{\dim(G)-\Frac14}$
  then $\alpha^3=L$.
\end{prop}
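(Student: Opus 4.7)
The proposition splits into two independent claims: a lower bound on the minimal degree of a nontrivial complex representation of a simple group of Lie type $L$, and then a quick deduction from Proposition~\ref{govers-trick}. The plan is to invoke a known bound for the first and to do a direct numerical estimate for the second.

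For the representation-degree bound, I would cite the classical Landazuri--Seitz bounds on minimal degrees of nontrivial projective $\mathbb C$-representations of simple groups of Lie type. For every such $L$ of characteristic $p$ with natural parameter $q_\sigma$, the minimal degree is bounded below by an expression that grows roughly as $q_\sigma^r$ in the Lie rank $r$; the sharpest case is $L\cong\mathrm{PSL}(2,q)$ with $q$ odd, whose smallest nontrivial irreducible character has dimension $(q-1)/2$ (a fact that one can also read off directly from the character table of $\mathrm{PSL}(2,q)$). Across the remaining families --- $\mathrm{PSL}(n,q)$ with $n\ge 3$, the unitary, symplectic, orthogonal, Suzuki, Ree and exceptional families --- the Landazuri--Seitz bound is much larger than $(q_\sigma-1)/2$, so this value serves as a uniform lower bound for $k$.

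For the second claim, apply Proposition~\ref{govers-trick} with $k\ge (q_\sigma-1)/2$: it suffices to show
\[
 |\alpha|^{3}\cdot\frac{q_\sigma-1}{2}>|L|^{3}.
\]
Since $L$ is a composition factor of $G^\sigma$ we have $|L|\le |G^\sigma|$, and the Steinberg order formula (or Proposition~\ref{twisted-Lang-Weil}) gives $|G^\sigma|\le q_\sigma^{\dim(G)}$ for the simple algebraic group $G$. Substituting the hypothesis $|\alpha|\ge q_\sigma^{\dim(G)-1/4}$ reduces everything to the purely numerical inequality
\[
 \Bigl(\tfrac{q_\sigma-1}{2}\Bigr)^{1/3}>q_\sigma^{1/4},
 \qquad\text{i.e.}\qquad
 (q_\sigma-1)^{4}>16\,q_\sigma^{3}.
\]
At $q_\sigma=20$ the left side is $19^4=130321$ and the right side is $16\cdot 8000=128000$, so the bound holds; since $(q-1)^{4}/q^{3}$ is increasing for $q\ge 20$, it continues to hold for all $q_\sigma\ge 20$. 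This hands us $\alpha^3=L$ and completes the proof.

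The only real obstacle is not doing but citing: the Landazuri--Seitz theorem is a deep classical result spread across several cases. One small subtlety to watch is the convention for $q_\sigma$ on twisted groups (Suzuki, Ree, ${}^{2}F_{4}$), where $q_\sigma$ is a square root of a prime power rather than a prime power itself; the Landazuri--Seitz bounds in those cases are much larger than $(q_\sigma-1)/2$, so the numerical slack is more than adequate.
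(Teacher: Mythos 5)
Your proposal is correct and takes essentially the same route as the paper: the first claim is the Landazuri--Seitz bound, and the second follows from Proposition~\ref{govers-trick} together with $|L|\le q_\sigma^{\dim(G)}$, with your explicit check of $(q_\sigma-1)^4>16\,q_\sigma^3$ at $q_\sigma=20$ just making the paper's implicit numerics visible. One tiny caveat: the paper gets $|L|\le q_\sigma^{\dim(G)}$ from the Steinberg order formula (citing Carter), and your parenthetical alternative via Proposition~\ref{twisted-Lang-Weil} would not quite suffice on its own, since that estimate carries an error term $C\,q_\sigma^{\dim(G)-\frac12}$ which your rather tight inequality at $q_\sigma=20$ cannot absorb --- so keep the order-formula citation as the primary one.
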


\begin{proof}
  The first statement is an obvious consequence of the Landazuri-Seitz lower
  bounds (\cite{LS} cf. \cite[Table~5.3A]{KL}).
  If $q_\sigma\ge4$ then $|L|\le q_\sigma^{\dim(G)}$ (see \cite{Ca2}).
  Now the second statement follows from Proposition~\ref{govers-trick}.
\end{proof}

We are now ready to prove our main result, Theorem~\ref{simple-L}.

\begin{thm} \label{simple-final}
  For all parameters $r>0$
  there is a real $\e=\e(r)>0$
  with the following property.\\
  Let $L$ be a finite simple group of Lie type of Lie rank at most $r$
  and $\alpha\subset L$ a generating set.
  Then either $\alpha^3=L$ or 
  $$
  |\alpha^3|\ge|\alpha|^{1+\e} \;.
  $$
\end{thm}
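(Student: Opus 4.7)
The plan is to deduce this from Theorem~\ref{main-thm} by realising $L$ as (a quotient of) the fixpoint group of a Frobenius on a simple algebraic group. First I replace $\alpha$ by its symmetric closure $\hat\alpha=\alpha\cup\alpha^{-1}\cup\{1\}$ and work with that; the first part of Proposition~\ref{Helfgott-3-is-enough} lets me translate a power-growth estimate for $\hat\alpha^3$ back to one for $\alpha^3$ at the cost of shrinking the exponent by a constant factor. By Steinberg's theorem (cited after Theorem~\ref{pre-main-thm}), all but finitely many finite simple groups of Lie type of rank $\le r$ arise as $G^\sigma/\CZ(G^\sigma)$ for a simply connected simple algebraic group $G$ over $\Fpclosed$ of rank $\le r$ and a Frobenius map $\sigma$, with $|\CZ(G^\sigma)|$ bounded in terms of $r$; the finitely many remaining $L$ are absorbed into the implied constants. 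I pick an ordered symmetric preimage $\tilde\alpha\subseteq G^\sigma$ of $\hat\alpha$ with $1\in\tilde\alpha$. Since $\hat\alpha$ generates $L$, the index $|G^\sigma:\Span\tilde\alpha|$ divides $|\CZ(G^\sigma)|$, hence is bounded by some $I=I(r)$, and the numerical invariants of $G$ are bounded by $N=N(r)$, $\Delta=\Delta(r)$.

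I will fix $\e_0=\e_0(r)>0$ smaller than $1/(4N)$ and set $K=K_\mainThm(N,\Delta,I,\e_0)$ and $M=M_\mainThm(N,\e_0)$, then split into three ranges according to the size of $|\tilde\alpha|$. If $|\tilde\alpha|\le K$, Olson's Proposition~\ref{Olson} gives either $\hat\alpha^3=L$ or $|\hat\alpha^3|\ge 2|\hat\alpha|\ge|\hat\alpha|^{1+c_1(r)}$ for some constant $c_1(r)>0$. If $|\tilde\alpha|>q_\sigma^{(1-\e_0)\dim(G)}$, then the inequality $\e_0<1/(4N)$ together with $|\hat\alpha|\ge|\tilde\alpha|/|\CZ(G^\sigma)|$ forces $|\hat\alpha|>q_\sigma^{\dim(G)-1/4}$ once $q_\sigma$ is large enough, and Proposition~\ref{minimal-complex-representation-simple} yields $\hat\alpha^3=L$. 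In the remaining range $K\le|\tilde\alpha|\le q_\sigma^{(1-\e_0)\dim(G)}$ all hypotheses of Theorem~\ref{main-thm} are met, producing a connected closed normal subgroup $H\triangleleft G$ with $\dim(H)>0$ and
$$
|\tilde\alpha^M\cap H|\ge|\tilde\alpha|^{(1+\delta)\dim(H)/\dim(G)}, \qquad \delta=\e_0/(128N^3).
$$

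The crucial simplification is that $G$ is simple: its only connected closed normal subgroups are $\{1\}$ and $G$, so $\dim(H)>0$ forces $H=G$, giving $|\tilde\alpha^M|\ge|\tilde\alpha|^{1+\delta}$. The second part of Proposition~\ref{Helfgott-3-is-enough} then yields $|\tilde\alpha^3|\ge|\tilde\alpha|^{1+\delta/(M-2)}$. Projecting from $G^\sigma$ to $L$ changes cardinalities by at most the factor $|\CZ(G^\sigma)|\le I$, and absorbing this into a slight decrease of the exponent produces $|\hat\alpha^3|\ge|\hat\alpha|^{1+\e'}$; the first part of Proposition~\ref{Helfgott-3-is-enough} finally delivers $|\alpha^3|\ge|\alpha|^{1+\e}$ with $\e=\e(r)>0$. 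The main technical obstacle is bookkeeping in the first paragraph, namely verifying that every constant genuinely depends only on $r$, and ensuring that the symmetrisation, lifting, and projection steps do not degrade the size estimates in an uncontrolled way; the rest of the argument is a straightforward assembly of the tools developed earlier, combined with the simplicity of $G$ to collapse the conclusion of Theorem~\ref{main-thm} into growth of $\tilde\alpha$ itself.
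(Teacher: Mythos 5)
Your proposal is correct and follows essentially the same route as the paper: symmetrise via Proposition~\ref{Helfgott-3-is-enough}, use Proposition~\ref{minimal-complex-representation-simple} when $|\alpha|$ is very large, Proposition~\ref{Olson} when it is bounded, and Theorem~\ref{main-thm} plus simplicity of $G$ (forcing $H=G$) in the middle range, finishing with the Ruzsa-type tripling inequality. The only deviation is that you realise $L$ as $G^\sigma/\CZ(G^\sigma)$ for the simply connected group and lift/project through the bounded centre (as sketched in the paper's introduction), whereas the paper's proof takes $G$ adjoint with $L\le G^\sigma$ of bounded index, which avoids that bookkeeping; your extra lifting and projection steps only cost bounded factors and are handled correctly.
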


\begin{proof}
  There is a simple adjoint algebraic group $G$ and a Frobenius
  map $\sigma:G\to G$ such that $L\le G^\sigma$,
  and there are universal bounds $I(r)$, $N(r)$ and $\Delta(r)$ such that
  $$
  \big|G^\sigma:L\big|\le I(r)
  \;,\quad
  \dim(G)\le N(r)
  \;,
  $$
  $$
  \deg(G)\le\Delta(r)
  \;,\quad
  \mult(G)\le\Delta(r)
  \;,\quad
  \inv(G)\le\Delta(r)
  \;.
  $$
  If $|\alpha|\ge q_\sigma^{\dim(G)-\Frac14}$ and $q_\sigma\ge20$
  then
  $\alpha^3=L$ by Proposition~\ref{minimal-complex-representation-simple}.
  Assume otherwise.

  Suppose first that $\alpha=\alpha^{-1}$ is symmetric with $1\in\alpha$.
  We apply Theorem~\ref{main-thm} with parameters
  $N(r)$, $\Delta(r)$, $I(r)$ and $\e'=\Frac1{4\dim(G)}$
  and obtain an integer $M=M(r)$ and a real $K=K(r)$.
  We may assume that $M\ge3$,
  and by Corollary~\ref{size-of-Lie-type}.\eqref{item:26}
  we may increase $K$ so that $|\alpha|\ge K$ implies $q_\sigma\ge20$.
  Since $G$ is simple, we have $G=H$ now.
  If
  $
  K\le|\alpha|\le q_\sigma^{\dim(G)-\Frac14}
  $
  then by Theorem~\ref{main-thm} we have
  $$
  |\alpha^M|\ge|\alpha|^{1+\Frac{1}{512N^4}} \;.
  $$
  Finally we assume $|\alpha|\le K$ and $\alpha^3\neq L$.
  By Proposition~\ref{Olson} we have
  $$
  |\alpha^3|\ge2|\alpha|\ge|\alpha|^{1+\e''}
  $$
  where $\e''=\min\left(\Frac{\log(2)}{\log(K)},\Frac{1}{512N^4}\right)$
  (which depends only on $r$).
  %for some real $\e''=\e''(r)>0$ satisfying $\e''\le\Frac{1}{512N^4}$.
  We obtain that in any case
  $$
  |\alpha^M|\ge|\alpha|^{1+\e''} \;.
  $$
  The theorem follows in the symmetric case from
  Proposition~\ref{Helfgott-3-is-enough}.\eqref{item:30}.

  The general case then
  follows using Proposition~\ref{Helfgott-3-is-enough}.\eqref{item:29}.
\end{proof}

In Theorem~\ref{main-thm} it is essential to assume that the centre of the
algebraic group $G$ is finite. Without this assumption the statement fails.
However, we can complement it for finite groups with possibly large centre
using the following special case of a deep result of
Nikolov and Segal (\cite[Theorem~1.7]{NS}).

\begin{prop} \label{product-of-few-commutators}
  Let $P$ be a finite perfect group generated by $d$ elements.
  Then  every element of $G$ is the product of $g(d)$  commutators where
  $g(d)=12d^3+\CO(d^2)$ depends only on $d$. 
\end{prop}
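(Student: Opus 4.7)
The plan is to invoke the Nikolov--Segal theorem directly; Proposition~\ref{product-of-few-commutators} is advertised as a special case and indeed reduces to it in one line. Theorem~1.7 of \cite{NS} asserts that in every finite $d$-generator group $G$ the commutator subgroup $[G,G]$ equals the set of products of at most $f(d)$ commutators, with $f(d)=12d^3+O(d^2)$. Applying this to $P$ and using the hypothesis $P=[P,P]$ gives the claim with $g(d)=f(d)$.

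The genuine content lies in the Nikolov--Segal theorem itself, so if I were sketching the argument behind it I would proceed as follows. First, by induction on the composition length of $P$, reduce to a situation where one controls commutator width modulo a single minimal normal subgroup $N\triangleleft P$; the inductive hypothesis on $P/N$ supplies the required bound there, and it suffices to rewrite each element of $N$ as a short product of commutators in~$P$. If $N$ is abelian, use the observation that $N$ is generated as a normal subgroup by boundedly many $P$-conjugates of commutators (here the $d$-generation of $P$ enters), combined with Nikolov--Segal's key combinatorial lemma allowing one to absorb conjugates $g^{-1}xg$ into commutators at a bounded cost. If $N$ is a direct product of isomorphic non-abelian simple groups, invoke Ore's conjecture, now a theorem of Liebeck--O'Brien--Shalev--Tiep, which ensures commutator width~$1$ in each simple factor, and then bound the number of factors under conjugation using the $d$-generation of $P$.

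The main obstacle, both conceptually and quantitatively, is the non-simple step: passing width bounds from $N$ and $P/N$ to the full group $P$ without a blow-up depending on $|P/N|$. The cubic exponent $12d^3$ is essentially the cost of iterating the conjugate-absorption lemma through all these reductions, and sharpening it would require a genuinely new idea. Since for our purposes only the existence of the function $g(d)$ is needed, I would in the final write-up simply cite \cite[Theorem~1.7]{NS} and derive the proposition in one sentence from perfectness, as above.
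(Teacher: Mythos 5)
Your proposal matches the paper exactly: the proposition is stated there purely as a quoted special case of Nikolov--Segal \cite[Theorem~1.7]{NS}, with no independent proof, and your one-line reduction via perfectness of $P$ is all that is needed. The sketch of the internal Nikolov--Segal argument is harmless extra context but not load-bearing; simply citing the theorem, as you conclude, is the intended route.
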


Next we will describe more precisely the Nori correspondence between
$p$-generated subgroups of $GL(n,\Fp)$ and certain closed subgroups of
$GL(n,\Fpclosed)$ and some other useful facts about perfect $p$-generated
subgroups. 

\newcommand{\Nori} {{\rm exp}}
\begin{prop} \label{Nori}
  Let $P\le GL(n,\Fp)$ be a $p$-generated subgroup.
  Then there are bounds $I=I_\Nori(n)$,
  $\Delta=\Delta_\Nori(n)$ and $K=K_\Nori(n)$
  with the following properties.
  \begin{enumerate} [\indent(a)]
  \item \label{item:31}
    There is a $\Frob_p$-invariant connected closed subgroup
    $G\le GL(n,\Fpclosed)$ such that
    $\dim(G)\le n^2$, $\deg(G)\le\Delta$, $\mult(G)\le\Delta$,
    $\inv(G)\le\Delta$
    and $P$ is a subgroup of $G(\Fp)$ of index at most $I$.
  \item \label{item:32}
    If $P$ is perfect then the degree of any complex representation is
    at least $(p-1)/2$.
  \item \label{item:33}
    If moreover $|P|\ge K$ and $\alpha\subseteq P$ is a subset of size
    $|\alpha|\ge p^{\dim(G)-\Frac14}$
    then $\alpha^3=P$.
  \end{enumerate}
\end{prop}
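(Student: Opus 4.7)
The plan is to derive the three parts by combining Nori's correspondence~\cite{No} with the Landazuri-Seitz bounds and Proposition~\ref{govers-trick} (via the Lang-Weil estimates of Proposition~\ref{twisted-Lang-Weil}).

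For part (a), I would invoke Nori's main theorem: when $p$ exceeds a threshold $p_0(n)$, Nori associates to each $p$-generated subgroup $P\le GL(n,\Fp)$ a $\Frob_p$-invariant connected closed subgroup $G\le GL(n,\Fpclosed)$, obtained as the algebraic subgroup generated by the one-parameter unipotent subgroups $t\mapsto\exp(tX)$ where $X$ runs through the logarithms of the order-$p$ elements of $P$. By construction $\dim(G)\le n^2$, and the degree of $G$ is bounded in terms of $n$ alone, because $G$ is the product of boundedly many one-parameter subgroups of degree at most $n$ (cf.\ Proposition~\ref{normal-subgroup-in-product}); the bounds on $\mult(G)$ and $\inv(G)$ then follow from Proposition~\ref{bounding-subgroups}. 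Nori further shows that $P$ has index bounded in terms of $n$ alone inside $G(\Fp)$. For $p\le p_0(n)$ the ambient group $GL(n,\Fp)$ itself has bounded size, so one can take $G=GL(n,\Fpclosed)$ and absorb the finitely many small-$p$ cases into the constants $I_\Nori$ and $\Delta_\Nori$.

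For part (b), the plan is to use the structural consequence of Nori's correspondence for perfect $p$-generated subgroups: the algebraic group $G$ from (a) inherits perfectness from $P$ (up to the bounded-index issue), so its semisimple quotient $G/R(G)$ is a direct product of simple algebraic groups, and the corresponding Frobenius-fixed points give a product of finite simple groups of Lie type in characteristic $p$ onto which $P$ essentially surjects after modding out by a bounded-order central subgroup. Each simple factor is defined over some $\Fq$ with $q$ a power of $p$, and by the Landazuri-Seitz bounds (Proposition~\ref{minimal-complex-representation-simple}) has every nontrivial complex irreducible of dimension at least $(q-1)/2\ge(p-1)/2$. Since $P$ maps onto each such factor, its minimum nontrivial complex representation degree inherits the same lower bound.

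For part (c), I would combine (a) and (b) with Proposition~\ref{govers-trick}: the condition $|P|\ge K$ forces $p$ to be large in terms of $n$ (since $|P|\le|GL(n,\Fp)|$ is polynomially bounded in $p$), and then Proposition~\ref{twisted-Lang-Weil} applied to $G$ yields $|P|\le|G(\Fp)|\le C(n)\cdot p^{\dim(G)}$. With minimum complex representation degree $k\ge(p-1)/2$ from (b), Proposition~\ref{govers-trick} gives $\alpha^3=P$ whenever $|\alpha|^3 k>|P|^3$; the hypothesis $|\alpha|\ge p^{\dim(G)-1/4}$ reduces this to an inequality of the form $p^{1/4}>3C(n)^3$, which is automatic once $K$ is taken large enough in terms of $n$. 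The main obstacle will be part (a): while Nori's theorem is well-known, extracting from his construction both the uniform bound on the degree of $G$ and the uniform index bound $|G(\Fp):P|\le I_\Nori(n)$ requires care, and in part (b) one must verify that the bounded-index and central quotient issues do not spoil the Landazuri-Seitz conclusion.
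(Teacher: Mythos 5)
Your parts (a) and (c) are essentially the paper's own proof: for (a) the paper also invokes Nori's theorem for the subgroup $G$ and the index bound, bounds $\deg(G)$ by citing \cite[Proposition~3]{La} (remarking that it can alternatively be obtained from the degree of the exponential map, which is exactly your one-parameter-subgroup argument), and gets $\mult(G),\inv(G)$ from Proposition~\ref{bounding-subgroups}; for (c) it combines Proposition~\ref{twisted-Lang-Weil} with Proposition~\ref{govers-trick} precisely as you do.

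The gap is in (b). You bound the minimal nontrivial complex representation degree of $P$ by the minimal degrees of the finite simple Lie-type quotients of $P$ obtained from the semisimple part of $G$, via Landazuri--Seitz. But the minimal representation degree of a group is not inherited from its quotients: an arbitrary nontrivial irreducible $\phi:P\to GL(k,\BC)$ need not factor through the projection onto those Lie-type factors at all --- its kernel is merely some proper normal subgroup, and the image $\phi(P)$ may a priori not involve any of the simple factors, so ``$P$ maps onto each such factor, hence inherits the lower bound'' is a non sequitur. (A second, smaller problem: your structural input, Nori's correspondence and the surjection onto the simple factors of $G$ modulo its radical, is only available for $p$ large in terms of $n$, whereas part (b) is asserted for every $p$; your small-$p$ fallback $G=GL(n,\Fpclosed)$ gives no such structure.) What is needed is an argument about an arbitrary nontrivial $\phi$ of degree $k<(p-1)/2$ directly, and this is what the paper does: by the Brauer/Feit--Thompson theorem (see \cite[Theorem~14.11]{Is}) such an image $\phi(P)$ has a normal Sylow $p$-subgroup $Q$; since $\phi(P)$ is generated by elements of order dividing $p$, the $p'$-group $\phi(P)/Q$ is trivial, so $\phi(P)$ is a $p$-group, and a perfect $p$-group is trivial, contradicting the nontriviality of $\phi$. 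Your route could presumably be repaired by applying some structure theorem (e.g.\ Jordan's theorem over $\BC$, plus perfectness and $p$-generation) to the abstract image $\phi(P)$ itself rather than to $P$, but as written the key inheritance step fails.
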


\begin{proof}
  We first prove~\eqref{item:31}.
  By a result of Nori~\cite{No} there is a constant $I=I_\Nori(n)$ such that
  there is a $\Frob_p$-invariant connected closed subgroup $G\le
  GL(n,\Fpclosed)$ with $P\le G(\Fp)$ of index $\big|G(\Fp):P\big|\le I$.
  Clearly $\dim(G)\le n^2$.
  By \cite[Proposition~3]{La} there is an upper bound
  $\Delta_\Nori(n)\ge\deg(G)$
  (which can also be proved easily from \cite{No} using the degree of the
  exponential map)
  and by Proposition~\ref{bounding-subgroups} 
  we can also assume that it is also an upper bound
  on the other numerical invariants $\mult(G)$ and $\inv(G)$.
  Let $\sigma:G\to G$
  denote the restriction to $G$ of the automorphism $\Frob_p:G\to G$ of
  Definition~\ref{Frobenius-def}, then $G(\Fp)=G^\sigma$ by
  Remark~\ref{Frob-q-remark}.
  
  Assume now that $P$ is perfect.
  Let $\phi:P\to GL(k,\BC)$ be a nontrivial complex representation.
  If $k<\Frac{p-1}2$ then
  by well-known results of Brauer and Feit-Thompson
  (see e.g. \cite[Theorem~14.11]{Is} and the remark after its
  proof)
  $\phi(P)$ has a normal Sylow-$p$ subgroup.
  This is impossible
  since $\phi(P)$ is also a perfect $p$-generated group.
  This proves~\eqref{item:32}.

  If $K$ is large enough then $p\ge K^{1/n^2}$ is large as well,
  hence by Proposition~\ref{twisted-Lang-Weil} we have
  $|P|\le 2p^{\dim(G)}$
  and $\alpha^3=P$ by Proposition~\ref{govers-trick}.
\end{proof}

\begin{prop} \label{Chevalley-embedding}
  Let $H\le GL(n,\Fclosed)$ be a closed subgroup.
  Then for some $n'=n'\big(n,\deg(H)\big)$ there is a homomorphism
  $\phi_H:\CN_{GL(n,\Fclosed)}(H)\to GL(n',\Fclosed)$
  of degree bounded by $n$ and $\deg(H)$
  whose kernel is $H$.
  Moreover, if $\Fclosed$ has characteristic $p$
  and $H$ is $\Frob_q$-invariant for some $p$-power $q$
  then the homomorphism $\phi_H$
  we construct is $\Frob_q$-equivariant
  (see Definition~\ref{Frobenius-def}.\eqref{item:24}).
\end{prop}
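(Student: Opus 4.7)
The strategy is to construct $\phi_H$ as the composition of the natural quotient map $\CN_G(H)\twoheadrightarrow\CN_G(H)/H$ (with $G=GL(n,\Fclosed)$) and an effective embedding of this quotient group into some $GL(n',\Fclosed)$. Since $H\triangleleft\CN_G(H)$, the quotient exists as an affine algebraic group, so the composition automatically has kernel $H$; the task is to carry out this linearisation with explicit bounds on $n'$ and on the polynomial degrees of the matrix entries of $\phi_H(g)$.

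I would begin by invoking Fact~\ref{dimension-degree}.\eqref{item:9} to see that the ideal $I(H)\subset R=\Fclosed[GL_n]$ is generated by polynomials of degree at most $d=\deg(H)$ in the matrix entries. Let $U\subset R$ be the finite-dimensional subspace of polynomial functions of degree at most $d$, and set $W=I(H)\cap U$. Under the left translation action $l\colon GL_n\to GL(U)$, the subspace $W$ has stabiliser exactly $H$: indeed $l_gW=W$ forces $I(gH)\cap U=I(H)\cap U$, and since $I(H)$ is generated by $W$, this gives $gH=H$. Passing to $V_0=\wedge^{\dim W}U$ and taking $w_0=\wedge W\in V_0$, Chevalley's trick yields $H=\{g\in GL_n\colon l_gw_0\in\Fclosed w_0\}$, with $H$ acting on $\Fclosed w_0$ by a character $\chi\colon H\to\Fclosed^{*}$.

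The key step is to upgrade this line-stabiliser description into a faithful representation of the quotient $\CN_G(H)/H$. I would realise $\CN_G(H)/H$ linearly through its own coordinate ring: $\Fclosed[\CN_G(H)/H]=\Fclosed[\CN_G(H)]^{H}$ (right $H$-invariants) carries the left regular $\CN_G(H)$-action, whose kernel is precisely $H$ (since $l_gf=f$ for every right-$H$-invariant $f$ forces $g^{-1}xH=xH$ for every $x\in\CN_G(H)$, and taking $x=1$ gives $g\in H$). Taking a finite-dimensional $\CN_G(H)$-invariant subspace $V'\subseteq\Fclosed[\CN_G(H)/H]$ that contains an algebra-generating set yields the desired faithful representation of the quotient, and hence $\phi_H\colon\CN_G(H)\to GL(V')$. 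Alternatively, one may work inside $V_0\otimes V_0^{*}\cong\mathrm{End}(V_0)$: a rank-one endomorphism with image $\Fclosed w_0$ has stabiliser containing $H$, and the span of its $\CN_G(H)$-orbit yields a $V'$ of the required kind, provided the character $\chi$ is handled carefully.

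Quantitative bounds come from tracking dimensions and degrees: $\dim U\le\binom{n^{2}+d}{d}$, $\dim V_0\le\dim U$, $\dim V'\le(\dim V_0)^{2}$, all bounded polynomially in $n$ and $d$; the degree of $\phi_H$ as a morphism is then bounded using Fact~\ref{normaliser-centraliser-fact}, which controls $\deg(\CN_G(H))$ in terms of $n$ and $d$. For the Frobenius clause: if $H$ is $\Frob_q$-invariant then so is $I(H)$ (with generators over $\Fq$), hence $U,W,V_0,w_0$ and $V'$ can all be chosen $\Frob_q$-stably, making $\phi_H$ automatically $\Frob_q$-equivariant. The main obstacle will be the character $\chi$: when $H$ is non-reductive in positive characteristic there is no $GL_n$-equivariant way to trivialise $\chi$ on a vector via bounded-dimensional tensor constructions alone, so the robust route via $\Fclosed[\CN_G(H)/H]$ seems necessary, and its delicate part is producing an algebra-generating set of $H$-invariants of controlled degree inside $\Fclosed[\CN_G(H)]$.
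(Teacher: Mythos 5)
Your first half (defining equations of degree at most $\deg(H)$, the left-translation--invariant space $U$ of polynomials of degree at most $\deg(H)$, $W=I(\cl{H})\cap U$, and the exterior-power line) is essentially fine, but note two small repairs: Fact~\ref{dimension-degree}.\eqref{item:9} gives only set-theoretic defining equations of $\cl{H}$ in degree at most $\deg(H)$, not generators of the radical ideal $I(H)$ in that degree, so the stabiliser computation should be run through zero loci ($Z(W)=\cl{H}$ and $Z(l_gW)=g\cl{H}$, then intersect with $GL(n,\Fclosed)$) rather than through ``$I(H)$ is generated by $W$''; and $\dim V_0$ equals $\binom{\dim U}{\dim W}$, not at most $\dim U$ (still bounded in terms of $n$ and $\deg(H)$, so harmless).

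The genuine gap is precisely the step you flag as the ``delicate part''. The whole content of the proposition is uniformity: $n'$ and the degree of $\phi_H$ must depend only on $n$ and $\deg(H)$ (this is what is used later, e.g.\ in the proof of Theorem~\ref{ugly-litle-duckling}, where one common $n'$ must serve all normal subgroups of bounded degree). On the route you ultimately commit to --- a finite-dimensional $\CN_{GL(n,\Fclosed)}(H)$-invariant subspace of $\Fclosed[\CN_{GL(n,\Fclosed)}(H)]^{H}$ containing algebra generators --- you give no bound on $\dim V'$ or on the degrees of the generating invariants in terms of $n$ and $\deg(H)$, and for non-reductive $H$ such effective bounds are exactly the hard open-ended part; so the quantitative conclusion is not established. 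Moreover, your reason for abandoning the endomorphism route is mistaken: the proof of \cite[Theorem~11.5]{Hu1} handles the character $\chi$ with no reductivity assumption, by replacing $V_0$ with the sum $M$ of all $H$-weight spaces (which $\CN_{GL(n,\Fclosed)}(H)$ permutes, $H$ being normal there) and letting $\CN_{GL(n,\Fclosed)}(H)$ act by conjugation on the algebra of endomorphisms preserving each weight space; the kernel is exactly $H$, the dimension is at most $(\dim V_0)^2$, and the degree of the resulting morphism is controlled via Fact~\ref{closed-set-constructions} and Fact~\ref{normaliser-centraliser-fact}. Your rank-one variant fails for exactly the reason you sense: the needed $\chi^{-1}$-eigenvector in $V_0^{*}$ need not exist when $H$ is not reductive. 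Finally, note that the paper sidesteps all such effective estimates: it places all closed subgroups of degree $\deg(H)$ into one $\Frob_q$-invariant family over (an open part of) the Chow variety and proves the Chevalley step and the Theorem~11.5 step uniformly in families (generic flatness plus Noetherian induction on the parameter space), so the common bound $n'$, the degree bound, and the $\Frob_q$-equivariance all come from the canonicity of the family construction rather than from degree-tracking for a single $H$.
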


This proposition is a mild strengthening of \cite[Theorem~11.5]{Hu1},
and it is rather clear that the proof can easily be
modified to yield this version.
Since we did not find a good reference,
we reproduce here the argument.
The modified proof is based on the notion of
\emph{families of subgroups},
we recall the definition and prove some of their basic properties.

Throughout the proof the adjectives ($\Frob_q$-invariant) and
$(\Frob_q$-equivariant) appearing in parenthesis
apply only in the case when $\CH$ is $\Frob_q$-invariant.

\begin{defn}
  To simplify the notation let $G=GL(n,\Fclosed)$.  Suppose that $T$
  is an affine algebraic set and $\CH\subseteq T\times G$ is a closed
  subset.  As in \cite{Hu1}, let $K[G]$ and $K[T\times G]$ denote
  the coordinate rings of $G$ and $T\times G$ respectively.
  For each point $t\in T$ we consider the closed
  subset $\CH_t\subseteq G$ defined via the equation
  $\{t\}\times\CH_t=\CH\cap\big(\{t\}\times G\big)$.  We call $\CH$ a
  \emph{family of subgroups} if $\CH_t$ is a subgroup of
  $G$ for each $t\in T$.
  In this case we call $T$ the \emph{parameter space} and
  $\CH_t$ are the \emph{members} of the family.
  Similarly, for vectorspaces $V$ and $W$,
  a closed subset
  $\CM\subseteq T\times W$ is a \emph{family of subspaces} if each
  $\CM_t\subseteq W$ is a subspace of $W$,
  and a closed subset $\CL\subseteq T\times V$ is called
  a \emph{family of lines} if each $\CL_t\subseteq V$ is
  a line through the origin.
  A morphism from a family of subgroups $\CH$  of $GL(n,\Fclosed)$
  to another group $GL(m,\Fclosed)$
  is a \emph{family of homomorphisms}
  if the induced morphisms $\CH_t\to GL(m,\Fclosed)$ are all homomorphisms.
\end{defn}

\begin{claim} \label{finite-dimensional-invariant-subspace}
  Let $T$ be an affine algebraic set and
  $F<K[T\times G]$ a finite dimensional subspace.
  Then the smallest $G$-invariant subspace $W<K[T\times G]$
  containing $F$ is finite dimensional.
  Moreover, if $T$ and $F$ are $\Frob_q$-invariant then $W$ is also
  $\Frob_q$-invariant.
\end{claim}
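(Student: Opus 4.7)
The plan is to imitate the standard proof that any finite-dimensional subspace of $K[G]$ generates a finite-dimensional $G$-invariant subspace under the regular representation (\cite[Theorem 8.6]{Hu1}), with the parameter space $T$ carried along as a passive spectator.

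First I would make the $G$-action explicit: $G=GL(n,\Fclosed)$ acts on $T\times G$ by right translation on the second factor, $h\cdot(t,x)=(t,xh^{-1})$, inducing the action $(h\cdot f)(t,x)=f(t,xh)$ on $K[T\times G]$. The key algebraic tool is the comultiplication
\[
\Delta\colon K[T\times G]\longrightarrow K[T\times G]\otimes_{K}K[G],\qquad
\Delta(f)(t,x,h)=f(t,xh),
\]
which is well defined because $K[T\times G\times G]=K[T\times G]\otimes K[G]$. For any single $f\in K[T\times G]$, write $\Delta(f)=\sum_{i=1}^{N}f_i\otimes\phi_i$ with $f_i\in K[T\times G]$ and $\phi_i\in K[G]$ (finitely many terms). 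Then by definition of $\Delta$ we have
\[
h\cdot f \;=\; \sum_{i=1}^{N}\phi_i(h)\,f_i\qquad\text{for every }h\in G,
\]
so the entire $G$-orbit of $f$ lies in $\mathrm{span}(f_1,\dots,f_N)$. Applying this to each element of a basis of $F$ and taking the union of the finite sets of $f_i$'s obtained shows that $W=\mathrm{span}(G\cdot F)$ is contained in a finite-dimensional subspace, hence is itself finite-dimensional.

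For the second assertion I would use that the $G$-action on $T\times G$ commutes with the Frobenius in the sense $\Frob_q(h\cdot(t,x))=\Frob_q(h)\cdot\Frob_q(t,x)$, because multiplication in $G=GL(n,\Fclosed)$ is defined over $\Fq$ and $T$ is assumed $\Frob_q$-invariant. Dually this gives $\Frob_q(h\cdot f)=\Frob_q(h)\cdot\Frob_q(f)$, where $\Frob_q$ acts on $K[T\times G]$ by applying the $q$-th power to the coefficients (equivalently, $f\mapsto f\circ\Frob_q^{-1}$ on closed points, which is well defined on polynomials). If $F$ is $\Frob_q$-invariant, then for every $f\in F$ and $h\in G$ we have $\Frob_q(h\cdot f)=\Frob_q(h)\cdot\Frob_q(f)\in G\cdot F$, so the set $G\cdot F$ is $\Frob_q$-stable; therefore its $K$-linear span $W$ is $\Frob_q$-invariant as well.

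The only minor subtlety — and the point I would be most careful about — is the precise meaning of "$\Frob_q$-invariant" for the subspace $F\subseteq K[T\times G]$; once one fixes the convention that it means $F$ has a $K$-basis of polynomials with coefficients in $\Fq$ (equivalently, it is preserved by the coefficient-wise Frobenius operator), the comultiplication $\Delta$ is itself defined over $\Fq$, so in the decomposition $\Delta(f)=\sum f_i\otimes\phi_i$ one may choose the $f_i$ and $\phi_i$ over $\Fq$ whenever $f$ is. This makes the finite-dimensional ambient space $\mathrm{span}(f_i)$ produced in the first paragraph explicitly $\Frob_q$-stable, giving a second, more constructive route to the moreover clause. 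No serious obstacle is expected; the argument is essentially bookkeeping once $\Delta$ is written down.
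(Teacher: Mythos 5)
Your proof is correct and is essentially the paper's argument: the paper simply cites \cite[Proposition~8.6]{Hu1} for the finite-dimensionality — whose proof is exactly the comultiplication computation you write out, applied to the right-translation action on the second factor of $T\times G$ — and declares the $\Frob_q$-invariance obvious, which your semilinear equivariance argument makes precise. (Only your parenthetical gloss of the coefficientwise action as $f\mapsto f\circ\Frob_q^{-1}$ is slightly off — the correct description is the conjugate $\Frob_q\circ f\circ\Frob_q^{-1}$ — but this does not affect the argument.)
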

\begin{proof}
  $G$ acts on $T\times G$ via the right multiplication in the second
  factor. Then $W$ is finite dimensional by
  \cite[Proposition~8.6]{Hu1},
  and the $\Frob_q$-invariance is obvious.
\end{proof}

\begin{claim} \label{subgroups-are-line-stabilisers}
  Let $\CH\subseteq T\times G$ be a family of subgroups.
  Then there is a rational representation
  $\psi:G\to GL(V)$,
  a dense open subset $U\subseteq T$ and
  a family of lines $\CL\subset U\times V$
  such that
  $$
  \CH_{t} =
  \left\{ g\in G \,\big|\, \psi(g)\CL_t=\CL_t \right\}
  $$
  for all $t\in U$.
  Moreover, if $\CH$ is $\Frob_q$-invariant 
  then our construction yields 
  $\Frob_q$-invariant $\psi$, $U$ and $\CL$.
\end{claim}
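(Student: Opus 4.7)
The plan is to adapt the classical Chevalley construction (as in \cite[Theorem~11.2]{Hu1}) to the parametric setting, using Claim~\ref{finite-dimensional-invariant-subspace} to handle the $G$-invariance uniformly in $t$. Let $I(\CH)\subseteq K[T\times G]$ be the ideal of the closed subset $\CH$, and pick a finite set of generators of $I(\CH)$; this generates some finite-dimensional ($\Frob_q$-invariant, if $\CH$ is) subspace $F\subseteq K[T\times G]$. By Claim~\ref{finite-dimensional-invariant-subspace} the smallest $G$-invariant subspace $W\subseteq K[T\times G]$ containing $F$ is still finite dimensional (and $\Frob_q$-invariant when $\CH$ is). Let $\psi:G\to GL(W)$ denote the resulting rational representation.

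Next I would organize $W$ as a $K[T]$-module via the first projection, and consider $\CM=W\cap I(\CH)\subseteq W$. The evaluation map $W\to K[\CH_t]$ (restriction of functions to the fiber over $t$) has kernel $W\cap I(\CH_t)$, and the dimension of this kernel is upper semicontinuous in $t$. Hence there is a dense open ($\Frob_q$-invariant) subset $U\subseteq T$ on which this dimension equals some constant $d$, and the specializations of $\CM$ give a rank-$d$ subbundle $\CM|_U\subseteq U\times W$ whose fibres satisfy $\CM_t=W\cap I(\CH_t)$. (This is essentially generic flatness of $K[\CH]$ over $K[U]$; the $\Frob_q$-invariance of $U$ follows because the whole construction is $\Frob_q$-equivariant.)

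Now I would take the top exterior power: set $V=\wedge^{d}W$, extend $\psi$ to a rational representation $\psi:G\to GL(V)$, and define
\[
\CL\ =\ \bigl\{(t,v)\in U\times V\ \bigm|\ v\in\wedge^{d}\CM_t\bigr\}\,,
\]
a family of lines in $V$ parametrised by $U$. The fibrewise identification of $\CH_t$ as the stabiliser of $\CL_t$ is then the standard Chevalley argument applied to each $t\in U$: since the defining functions of $\CH_t$ in $W$ are precisely $\CM_t$ and $W$ is $G$-invariant, an element $g\in G$ preserves $I(\CH_t)\cap W=\CM_t$ iff $g\in\CH_t$, which in turn is equivalent to preserving the line $\wedge^d\CM_t$. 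Finally, if $\CH$ is $\Frob_q$-invariant then $I(\CH)$, $W$, $\CM$, $U$, $V$ and $\CL$ are all $\Frob_q$-invariant by construction, so $\psi$ is $\Frob_q$-equivariant.

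The main obstacle I anticipate is purely the semicontinuity/flatness step producing the dense open $U$ on which $\CM_t$ truly equals $W\cap I(\CH_t)$ and has constant rank: this is a standard generic-flatness argument for the family $\CH\to T$, but needs to be stated carefully so that restricting to $U$ does not destroy either the $G$-invariance of $W$ (it does not, since $W$ does not depend on $t$) or the $\Frob_q$-invariance of the data. Everything else is a uniform-in-$t$ rerun of the classical line-stabiliser construction.
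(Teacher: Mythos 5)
Your construction is essentially the paper's own: the same invariant subspace $W$ supplied by Claim~\ref{finite-dimensional-invariant-subspace}, the same fibrewise kernels $\CM_t$ (elements of $W$ vanishing on $\{t\}\times\CH_t$), the top exterior power to turn a subspace into a line, the fibrewise Chevalley stabiliser argument, and $\Frob_q$-equivariance by canonicity of all choices. Two smaller points: the paper takes $V=\bigoplus_j\bigwedge^jW$ rather than a single $\bigwedge^dW$, and this is not cosmetic when $T$ is reducible (as it may be in the paper's application): a dense open subset of $T$ meets every irreducible component, and the generic kernel dimension can differ from component to component, so a single exponent $d$ cannot serve for all of $U$.

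The genuine gap is exactly at the step you flag. First, $t\mapsto\dim\bigl(W\cap I(\CH_t)\bigr)$ is not upper semicontinuous in general: for a closed family the fibre $\CH_{t_0}$ at a special point can be strictly larger than the limit of the nearby fibres, so fewer elements of $W$ vanish on it and the kernel dimension can \emph{drop} there; hence the locus where the dimension is at least its generic value need not be closed. (For the limited purpose of finding a dense open set of constant dimension this is repairable, since the dimension function is constructible, but the semicontinuity you invoke is not available.) Second, and more seriously, ``generic flatness of $K[\CH]$ over $K[T]$'' does not by itself produce a closed subbundle of $U\times W$ whose fibres are $W\cap I(\CH_t)$: flatness controls the scheme-theoretic fibres, i.e.\ the ideals $r_t\bigl(I(\CH)\bigr)\subseteq K[G]$ (where $r_t:K[T\times G]\to K[G]$ is restriction to the fibre over $t$), and these need not be radical, especially in characteristic $p$. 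What is needed is the equality $r_t\bigl(I(\CH)\bigr)=I(\CH_t)$ on a dense open subset; the paper gets it from generic flatness \emph{together with} generic smoothness/reducedness of the fibres (using that the fibres are subgroups), and it is precisely this identification that makes $t\mapsto\CM_t$ an algebraic, closed family over $U$. You cannot skip it: a family of lines is by definition a closed subset of $U\times V$, and the asserted $\Frob_q$-invariance of $U$ and $\CL$ only makes sense once they are canonically constructed algebraic sets. (By contrast, your fibrewise stabiliser step is fine for every $t$, since $\CM_t$ always contains the chosen generators $F$ of $I(\CH)$, so the ideal it generates cuts out $\CH_t$ set-theoretically.) So the route is the right one, but the flatness-plus-reducedness argument has to be carried out or cited, not absorbed into ``standard generic flatness''.
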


\begin{proof}
  We shall imitate \cite[proof~of~11.2]{Hu1}.
  Let $I\triangleleft K[T\times G]$ denote the ideal of $\CH$ 
  (i.e. the set of those functions vanishing on $\CH$) and
  $I_t\triangleleft K[G]$ for $t\in T$ the ideal of $\CH_t$.  Then $I$
  is generated by a ($\Frob_q$-invariant) finite dimensional subspace
  $F\le K[T\times G]$.
  By Claim~\ref{finite-dimensional-invariant-subspace} there is a finite
  dimensional $G$-invariant subspace $W<K[T\times G]$ containing $F$
  (which is also $\Frob_q$-invariant).  For each $t\in T$ the
  restriction of functions to $\{t\}\times G$ is a ring homomorphism
  $r_t:K[T\times G]\to K[G]$.

  The closed subset of $G$ corresponding to the ideal $r_t(I)$ is
  precisely $\CH_t$, but the ideal $r_t(I)$ may not be a radical
  ideal, hence it is not necessarily equal to $I_t$.
  It is folklore that there is a ($\Frob_p$-invariant)
  dense open subset $T^*\subseteq T$
  such that $r_t(I)=I_t$ for all $t\in T^*$.
  Here is a quick sketch.
  We consider the projection morphism $\pi:\CH\to T$.
  By \cite[Theorem~I.1.6]{Ko} there is a canonical open dense
  subset $T'$ such that the restriction $\pi^{-1}(T')\to T'$ is flat.
  The fibre of $\pi$ at the generic points of $T'$ are smooth varieties
  (i.e. closed subgroups), hence
  by \cite[Exercise~III/10.2]{Ha} there is a canonical open dense
  subset $T^*\subseteq T'$ such that the restriction
  $\pi^{-1}(T*)\to T*$ is smooth.
  By \cite[Theorem~III/10.2]{Ha}
  the rings $K[G]/r_t(I)$ are regular for all $t\in T^*$.
  In particular, $r_t(I)$ are radical ideals, hence $r_t(I)=I_t$
  for all $t\in T^*$.

  We set $\CM_t=W\cap r_t^{-1}(I_t)$.
  Then
  $\CM=\bigcup_t\{t\}\times\CM_t\subseteq T^*\times W$ is a family of
  subspaces, hence the function $t\to\dim(\CM_t)$ is an upper
  semi-continuous function on $T^*$.  Let $T^*=\bigcup_iT^*_i$ be the
  irreducible decomposition of $T^*$ and
  $d_i=\max_{t\in T^*_i}\dim(\CM_t)$.
  The set of points $t\in T^*_i$ which satisfy
  $\dim(\CM_t)=d_i$ form an open dense subset
  $U_i\subseteq T^*_i$. Then $U=\bigcup_iU_i$ is a ($\Frob_q$-invariant)
  open dense subset of $T$.  We set
  $V=\bigoplus_{j=0}^{\dim(W)}\bigwedge^{j}W$ and the representation
  $\psi:G\to GL(V)$ is just the natural $G$-action on $V$.
  For $t\in U_i$ we set
  $\CL_t=\bigwedge^{d_i}\CM_t\le\bigwedge^{d_i}W\le V$ and let
  $\psi_t:G\to GL\big(r_t(W)\big)$ be the natural $G$-action on $r_t(W)$.

  Then $\CL=\bigcup_{t\in U}\{t\}\times\CL_t\subseteq U\times V$ is a
  ($\Frob_q$-invariant) family of lines and for each $t\in U$ the
  stabiliser of $\CL_t$ in $\psi(G)$ is equal to the stabiliser of
  $\CM_t$ in the image of $G$ in $GL(W)$, which is in turn equal the
  stabiliser of $r_t(M_t)=I_t\cap r_t(W)$ in
  $\psi_t(G)$.  On the other hand this last stabiliser is just $\CH_t$
  by \cite[proof~of~11.2]{Hu1}.
\end{proof}

\begin{claim} \label{family-of-factor-groups}
  Let $\CH\subseteq T\times G$ be a family of subgroups.
  Then there is a family of homomorphisms
  $\phi:\CN_G(\CH_t)\to GL(n',\Fclosed)$
  for a common value of $n'$.
  In particular, there is a common upper bound on $\deg(\phi_t)$.
  Moreover, if $\CH$ is $\Frob_q$-invariant then our construction
  yields a $\Frob_q$-equivariant $\phi$
  (see Definition~\ref{Frobenius-def}.\eqref{item:24}).
  %for each $\Frob_q$-invariant $t$ a $\Frob_q$-equivariant
  %$\phi_{\CH_t}$.
\end{claim}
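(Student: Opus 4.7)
The plan is to combine Claim~\ref{subgroups-are-line-stabilisers} with a Chevalley-style construction of a representation having prescribed kernel, carried out uniformly in the parameter $t$ and extended across all of $T$ by noetherian induction.

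First, apply Claim~\ref{subgroups-are-line-stabilisers} to $\CH$ to obtain a rational representation $\psi:G\to GL(V)$, a dense open subset $U\subseteq T$, and a family of lines $\CL\subseteq U\times V$ with $\CH_t=\{g\in G\mid\psi(g)\CL_t=\CL_t\}$ for every $t\in U$. When $\CH$ is $\Frob_q$-invariant, so are $\psi$, $U$ and $\CL$ by that Claim.

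Second, enlarge $V$ to $W=V\oplus(V\otimes V^*)$, again a rational $G$-representation. Each subgroup $\CH_t$ acts on the line $\CL_t$ by a character $\chi_t$, and hence acts trivially on the line $\CL_t\otimes\CL_t^*\subseteq V\otimes V^*$. Set $W_t=W^{\CH_t}$; it is nonzero, contains $\CL_t\otimes\CL_t^*$, and is preserved by $\CN_G(\CH_t)$ because $\CH_t$ is normal there. We thereby obtain a morphism $\phi_t:\CN_G(\CH_t)\to GL(W_t)$ whose kernel contains $\CH_t$. To force equality of kernels, iterate the enlargement by replacing $V\otimes V^*$ with $\bigoplus_{k\le k_0}V^{\otimes k}\otimes V^{*\otimes k}$ for some $k_0$ depending only on $\dim(G)$ and $\deg(\CH)$; the standard separation argument for closed subgroups of $GL(n,\Fclosed)$ (cf.\ the proof of \cite[Theorem~11.5]{Hu1}) then yields $\ker(\phi_t)=\CH_t$. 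Uniformity in $t$ is obtained by shrinking $U$ further so that $\dim W_t$ is constant on $U$, using upper semi-continuity of the fixed-point dimension along the family $\CH$, in analogy with the $T^*$ construction inside the proof of Claim~\ref{subgroups-are-line-stabilisers}. The subspaces $W_t$ then assemble into a family $\CW\subseteq U\times W$ of subspaces, which provides a morphic identification turning $\phi$ into a family of homomorphisms with common target $GL(n',\Fclosed)$ where $n'=\dim W_t$.

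Third, extend from $U$ to all of $T$ by noetherian induction on $\dim(T)$: the closed complement $T\setminus U$ has strictly smaller dimension, so the same construction applied to the restriction $\CH|_{T\setminus U}$ terminates after finitely many steps and produces a finite stratification of $T$. Combine the pieces by direct sums, padding with trivial summands so that the target dimension is a common $n'$; its upper bound in terms of $\dim(G)$ and the degree data follows from Fact~\ref{closed-set-constructions}.\eqref{item:15} and Fact~\ref{dimension-degree}.\eqref{item:6}. The $\Frob_q$-equivariance of the resulting $\phi$ is automatic because every operation used — tensor products, fixed subspaces, direct sums, and stratification by closed complements — is functorial and $\Frob_q$-compatible whenever the inputs are.

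The main obstacle is the separation step: ensuring $\ker(\phi_t)=\CH_t$ rather than merely $\ker(\phi_t)\supseteq\CH_t$, with a bound on $k_0$ (hence on $n'$) that is uniform across $t\in T$. Here one uses that a closed subgroup of $GL(n,\Fclosed)$ is determined by the collection of subspaces of $G$-representations that it fixes, together with an effective bound on the number of tensor factors needed, phrased in terms of $\dim(G)$ and $\deg(\CH_t)$; the noetherianity of $T$ then permits a single uniform choice of $k_0$ for the whole family.
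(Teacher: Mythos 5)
Your outer scheme --- invoke Claim~\ref{subgroups-are-line-stabilisers} over a dense open $U\subseteq T$, handle $T\setminus U$ by induction on $\dim(T)$, and get $\Frob_q$-equivariance from canonicity of the construction --- is the same as the paper's. The gap is in the inner step on $U$: you define $\phi_t$ as the action of $\CN_G(\CH_t)$ on the $\CH_t$-fixed vectors of $\bigoplus_{k\le k_0}V^{\otimes k}\otimes V^{*\otimes k}$ and assert that for a suitable $k_0$ one gets $\ker(\phi_t)=\CH_t$. This fails in general for every $k_0$: the set of elements of $G$ fixing every $\CH_t$-fixed vector in every rational $G$-module is the observable hull of $\CH_t$, and it can meet $\CN_G(\CH_t)$ in a subgroup strictly larger than $\CH_t$. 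Concretely, take $n=3$, ${\rm char}(\Fclosed)=0$, and $H=\{\mathrm{diag}(t^{-2},t,t)\}\ltimes U_1\le GL(3,\Fclosed)$, where $U_1=\{I+e_1\phi:\phi(e_1)=0\}$; this is a closed subgroup, hence a legitimate member of the family used in Proposition~\ref{Chevalley-embedding}. For a nontrivial irreducible $GL_3$-module $V_{a\omega_1+b\omega_2}\otimes\det^k$ the $U_1$-invariants form an irreducible module for the Levi $GL_2$, on which the one-parameter subgroup $\mathrm{diag}(t^{-2},t,t)$ acts by the character $t^{-(2a+b)}$; hence $H$-fixed vectors occur only inside powers of $\det$, and every element of $SL_3$ --- in particular the torus $\{\mathrm{diag}(1,s,s^{-1})\}$, which normalises $H$ but is not contained in it --- acts trivially on all of them. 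So $\ker(\phi_t)\supsetneq\CH_t$ no matter how many tensor factors you allow. The same example shows that the vector you invoke ``in $\CL_t\otimes\CL_t^*\subseteq V\otimes V^*$'' is illusory: there is no canonical embedding of $\CL_t^*$ into $V^*$, and when $\CH_t$ acts on $\CL_t$ by a nontrivial character there may be no $\CH_t$-fixed vector remembering $\CL_t$ at all.

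The proof of \cite[Theorem~11.5]{Hu1}, which you cite as ``the standard separation argument'', is in fact a different construction, and the difference is exactly what saves it: one passes to the sum $M\subseteq V$ of all $\CH_t$-eigenlines and lets $\CN_G(\CH_t)$ act by conjugation on the space of endomorphisms of $M$ preserving each $\CH_t$-weight space. Since $\CH_t$ acts by scalars on each weight space it lies in the kernel even when it has no nonzero fixed vectors; conversely, an element of the kernel commutes with the projections onto the weight spaces and with all endomorphisms of each of them, hence preserves each weight space and acts there by a scalar, so it stabilises $\CL_t$ and lies in $\CH_t$. This is what the paper runs uniformly in $t\in U$ (giving $\dim(W)\le\dim(V)^2$, whence the common $n'$), before the induction on $\dim(T)$ that you also use. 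Your fixed-point functor $W\mapsto W^{\CH_t}$ cannot substitute for it; moreover, even in situations where fixed vectors would suffice, your uniform bound on $k_0$ in terms of $\dim(G)$ and $\deg(\CH)$ is asserted rather than proved, and mixed tensor powers of $V$ produce general $G$-modules only as subquotients, through which fixed vectors need not lift.
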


\begin{proof}
  We prove the claim by induction on $\dim(T)$.
  We apply Claim~\ref{subgroups-are-line-stabilisers}
  (and use its notation)
  to this family of subgroups.
  We obtain an open dense subset $U\subseteq T$.
  Then $\dim(T\setminus U)<\dim(T)$ so by  the induction hypothesis
  for each ($\Frob_q$-invariant) $t\in T\setminus U$
  there is a ($\Frob_q$-equivariant) embedding
  $\CN_G(\CH_t)/\CH_t\to GL(n'',\Fclosed)$ with a common $n''$
  and a common bound on their degrees.

  Consider any ($\Frob_q$-invariant) point $t\in U$ and
  apply \cite[proof~of~Theorem~11.5]{Hu1}
  to the subgroup $N=\CH_t$ of $\CN_G(\CH_t)$
  (which is denoted there by $G$).
  For the representation and the line at the beginning of that proof
  we may choose our $G\to GL(V)$ and $\CL_t\le V$.
  The proof then constructs a representation
  $\phi_{\CH_t}:\CN_G(\CH_t)\to GL(W)$ whose kernel is just $\CH_t$.
  Moreover, the homomorphisms $\phi_{\CH_t}$ together
  form a family of homomorphisms $G\times T\to GL(W)$,
  hence there is a common upper bound on their degrees.
  The construction is uniquely determined, so it must be
  $\Frob_q$-equivariant whenever $\CH$ and $t$ are so.
  Moreover, by construction $\dim(W)\le\dim(V)^2$,
  hence the Claim is valid with $n'=\max\big(n'',\dim(V)^2)$.
\end{proof}

\begin{proof}[Proof of Proposition~\ref{Chevalley-embedding}]
  By \cite[Section~I.3]{Ko}
  there is a canonical open subset of the
  Chow variety of the projectivisation of $G$
  which parametrises
  all the closed subgroups of $G$ of degree $\deg(H)$.
  This open subset is not neccessarily affine,
  but it is defined over \Fq, hence
  it is the union of finitely many $\Frob_q$-invariant
  affine subvarieties.
  Hence there is a  $\Frob_q$-invariant
  family of subgroups which contains
  (as members) all the closed subgroups of $G$ of degree $\deg(H)$.
  The proposition follows from Claim~\ref{family-of-factor-groups}
  applied to this family.
\end{proof}

The proofs of all the results obtained in this section concerning not
necessarily simple subgroups of $GL(n,\Fp)$ rest on the following somewhat
technical consequence of Theorem~\ref{main-thm}. This theorem complements the
results about growth of generating sets of simple groups.  It would be most
interesting to establish an appropriate analogue for subgroups of
$GL(n,\Fq)$.

\begin{thm} \label{ugly-litle-duckling}
  For all parameters $n>0$
  there is a real $\e=\e(n)>0$
  with the following property.\\
  Let $P\le GL(n,\Fp)$ be a perfect $p$-generated subgroup.
  Let $1\in\alpha\subseteq P$ be a symmetric generating set
  which projects onto each simple quotient of $P$.
  Then either $\alpha^3=P$ or 
  $$
  |\alpha^3|\ge|\alpha|^{1+\e} \;.
  $$
  Moreover, the diameter of the Cayley graph of $P$ with respect to
  $\alpha$ is at most $d(n)$ where $d(n)$ depends on $n$.
\end{thm}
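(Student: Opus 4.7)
The plan is to prove the growth statement by induction on the dimension of the Nori algebraic hull of $P$, and then derive the diameter bound by a standard tripling iteration. First I would invoke Proposition~\ref{Nori}\eqref{item:31} to attach to $P$ a $\Frob_p$-invariant connected algebraic subgroup $G\le GL(n,\Fpclosed)$ with $\dim G\le n^2$, with bounded numerical invariants, and with $P\le G^\sigma$ of index at most $I_\Nori(n)$. The trivial regimes are handled directly: if $|\alpha|\le K$ for a constant $K=K(n)$, then Proposition~\ref{Olson} gives $|\alpha^3|\ge 2|\alpha|\ge|\alpha|^{1+\e}$ for $\e$ small enough in terms of $K$; if $|\alpha|\ge p^{\dim G-1/4}$ (and $|P|$ is above $K_\Nori(n)$), then Proposition~\ref{Nori}\eqref{item:33} yields $\alpha^3=P$. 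So I may assume we are in the intermediate regime.

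The main body of the argument is the application of Theorem~\ref{main-thm} to $\alpha|G$, which requires $\CZ(G)$ to be finite. Before applying it I would reduce to that case. Since $P$ is perfect and sits inside $G^\sigma$ with bounded index, and since the connected center $Z:=\CZ(G)^0$ is a torus mapped trivially by every commutator of connected-group elements, one can show that $P\cap Z$ has bounded order (any element of $P\cap Z$ is a product of $O_n(1)$ commutators by Proposition~\ref{product-of-few-commutators}, and these commutators are forced into a bounded-order subgroup of $Z$). Replacing $G$ by $G/Z$ and $\alpha$ by its image (which still generates a perfect $p$-generated group projecting onto each simple quotient), and bookkeeping the resulting bounded error via a further bounded power of $\alpha$, one arrives at the case $\CZ(G)$ finite without changing $\dim G$ significantly.

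With $\CZ(G)$ finite, Theorem~\ref{main-thm} supplies a connected $\sigma$-invariant normal subgroup $H\triangleleft G$ with $\dim H>0$ and
\[
\bigl|\alpha^M\cap H\bigr|\ \ge\ |\alpha|^{(1+\delta)\dim H/\dim G},
\]
for bounded $M=M(n)$ and $\delta=\delta(n)>0$. If $\dim H=\dim G$ then $H=G$, so $|\alpha^M|\ge|\alpha|^{1+\delta}$ and Proposition~\ref{Helfgott-3-is-enough}\eqref{item:30} converts this to $|\alpha^3|\ge|\alpha|^{1+\e}$. Otherwise $\dim H<\dim G$, and I would extract from $\alpha^M\cap H$ a symmetric set $\alpha_H\subseteq\alpha^{O_n(1)}$ generating the perfect $p$-generated subgroup $P_H\le H^\sigma$ obtained as the last term of the derived series of $\langle\alpha^M\cap H^\sigma\rangle_p$, and arranged to project onto each simple quotient of $P_H$ (using again Proposition~\ref{product-of-few-commutators} to realise the required generators via bounded-length commutators of $\alpha$-elements). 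The induction hypothesis applied to $P_H$ inside its Nori hull (which is contained in $H$, of strictly smaller dimension) gives growth $|\alpha_H^3|\ge|\alpha_H|^{1+\e_H}$, and combining this with Lemma~\ref{induce-from-growing-subgroup} (applied to the closed subgroup $\langle H^\sigma\cap P\rangle$ in $P$) yields growth of $\alpha$ in $P$.

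The main obstacle I foresee is the clean construction in the inductive step: producing a subset $\alpha_H\subseteq\alpha^{O_n(1)}$ that simultaneously (a) generates a perfect $p$-generated subgroup $P_H\le H^\sigma$ of large enough size for the induction to be nonvacuous, (b) surjects onto every simple quotient of $P_H$, and (c) is controlled enough that the resulting growth in $P_H$ transports back to growth of $\alpha$ in $P$. Once this is in place, the diameter bound is automatic: repeatedly replacing $\alpha$ by $\alpha^{3^k}$ for the constant $k$ in the growth step multiplies its size by at least $|\alpha|^\e$, so after $O(\log\log|P|)$ such doublings one reaches the regime $|\alpha|\ge p^{\dim G-1/4}$ and then $\alpha^3=P$, giving $\diam\le(\log|P|)^{d(n)}$.
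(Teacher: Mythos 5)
Your reduction to the finite-centre case is where the argument breaks. You claim that $P\cap Z$, $Z=\CZ(G)^0$, has bounded order because its elements are products of $O_n(1)$ commutators ``forced into a bounded-order subgroup of $Z$'' --- but being a product of commutators of elements of $P$ imposes no such bound, and the claim itself is false: take $M$ an indecomposable $\Fp SL_2(p)$-module with a trivial submodule and no trivial quotient, and $P=SL_2(p)\ltimes M$. Then $P$ is perfect and $p$-generated, its Nori hull is $G=SL_2\ltimes M$, and $P\cap\CZ(G)^0$ contains the trivial submodule, of order at least $p$. (Note also that $\CZ(G)^0$ need not be a torus, and $\CZ(G/\CZ(G)^0)$ can again be infinite, so even the shape of the reduction is problematic.) The paper never performs this reduction: when $\CZ(G)$ is infinite it sets $H=\CZ(G)^0$, uses the inductive hypothesis to conclude that the image of $\alpha^3$ is \emph{all} of $P/(H\cap P)$, observes that since $H\cap P$ is central every commutator of $P$ is a commutator of two elements of $\alpha^3$, and then Proposition~\ref{product-of-few-commutators} gives $P=\alpha^{O_n(1)}$, whence growth follows from comparing $|P|\ge p^{\dim(G)-1/8}$ with $|\alpha|\le p^{\dim(G)-1/4}$. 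Your second unresolved point --- the ``main obstacle'' of manufacturing a perfect $p$-generated $P_H\le H^\sigma$ generated inside $\alpha^{O_n(1)}$ that surjects onto its simple quotients --- is also sidestepped in the paper: the induction runs on $|P|$ (via the least $l$ with $|P|\le p^{l/2}$), not on $\dim$ of the hull, and the recursion passes to the \emph{quotient} $P/(H\cap P)$ through the $\Frob_p$-equivariant homomorphism of Proposition~\ref{Chevalley-embedding}; a quotient automatically inherits perfectness, $p$-generation and the projection property, and growth is pulled back by $|\alpha^{3+M}|\ge|\alpha_H^3|\cdot|\alpha^M\cap H|$ (Proposition~\ref{coset-vs-subgroup}) combined with the intersection bound from Theorem~\ref{main-thm}. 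Without either your bounded-centre claim or a working subgroup construction, your induction does not close.

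Separately, your last step does not prove the stated diameter bound: you obtain $\diam\le(\log|P|)^{d(n)}$, whereas the theorem asserts a bound $d(n)$ depending on $n$ alone. The hypothesis that $\alpha$ projects onto each simple quotient forces $|\alpha|\ge p$, while $|P|<p^{n^2}$; hence iterating the growth statement $\alpha\mapsto\alpha^3$ needs only $\log(n^2)/\log(1+\e)$ rounds, a constant in $n$, which is how the paper gets the constant diameter bound.
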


\begin{proof}
  Let $l$ be the smallest integer such that $|P|\le p^{l/2}$,
  note that $l\le 2n^2$.
  We prove the first statement (concerning $\alpha^3$) by induction on $l$.
  For $l=0$ it is clear.
  We assume that $l>0$ and the statement holds for all groups of order at
  most $p^{(l-1)/2}$ and
  for all matrix sizes $n$
  with an $\e$-value $\e'(n,l)\le1$.

  We apply Proposition~\ref{Nori} to $P$ and obtain the bounds
  $I_\Nori$, $\Delta_\Nori$, $K_\Nori$ (which depend only on $n$)
  and the $\Frob_p$-invariant connected closed subgroup $G\le GL(n,\Fpclosed)$
  for which $\big|G(\Fp):P\big|\le I_\Nori$ and $\dim(G)\le n^2$.
  We shall apply Theorem~\ref{main-thm} with parameter
  $\e''=\Frac1{4\dim(G)}$ and obtain the constants
  $$
  \delta=\Frac{\e''}{128\dim(G)^3}
  \;,\quad
  M_\mainThm=M_\mainThm\big(\dim(G),\e''\big)
  \;,
  $$
  $$
  K_\mainThm=K_\mainThm\big(\dim(G),\Delta_\Nori,I_\Nori,\e''\big)
  \;.
  $$
  We shall choose later a real $K\ge\max\big(K_\mainThm,K_\Nori\big)$.
  If $|\alpha|\le K$ and $\alpha^3\neq P$ then
  $|\alpha^3|\ge2|\alpha|$ by Proposition~\ref{Olson}
  and the induction step is complete in this case with any 
  $\e\ge\log(2)/\log(K)$.
  So we may assume that $|\alpha|>K$. 
  If $|\alpha|>p^{\dim(G)-1/4}$
  then $\alpha^3=P$ by Proposition~\ref{Nori}.\eqref{item:33}.
  So we assume
  $$
  K<|\alpha|\le p^{\dim(G)-\Frac14} \;.
  $$

  Consider all $\Frob_p$-invariant connected closed normal subgroups $1\neq
  H\triangleleft G$ of degree $\deg(H)\le K_\mainThm$.
  Then by Proposition~\ref{size-of-Lie-type}.\eqref{item:27},
  for sufficiently large $K$
  either $H=G$ or $\alpha\not\subseteq H$.
  By Proposition~\ref{Chevalley-embedding}
  there is a $\Frob_p$-equivariant homomorphism
  $G\to GL(n',\Fpclosed)$
  for some common $n'=n'\big(\dim(G) ,K_\mainThm\big)$
  whose kernel is $H$.
  The elements of $\alpha$ are fixpoints of $\Frob_p$,
  so by the equivariance their images are also fixpoints of $\Frob_p$
  (see Definition~\ref{Frobenius-def}.\eqref{item:24}),
  i.e. the image set $\alpha_H$ of $\alpha$ generates a subgroup of
  $GL(n',\Fp)$ isomorphic to $P/(H\cap P)$.
  This subgroup is again perfect, $p$-generated
  and $\alpha_H$ projects onto each of its simple quotients.
  In particular, if $H\neq G$ i.e. $\alpha_H\neq\{1\}$ then
  $\big|\alpha_H\big|\ge p\ge|\alpha|^{1/n^2}$.%\ge\sqrt[n^2]{K}$.
  We know from Proposition~\ref{twisted-Lang-Weil}
  that if $K$ is large enough then
  $|H\cap P|\ge \big|H(\Fp)\big|\big/{I_\Nori}>\sqrt{p}$
  so $\big|P/(H\cap P)\big|<|P|/\sqrt{p}\le p^{(l-1)/2}$ and
  the induction hypothesis holds
  for $\alpha_H$ and $P/(H\cap P)$ with the $\e$-value
  $\e'=\e'\big(n',l\big)\le1$.

  Suppose that we find such an $H$ different from $G$ and
  $\big|\alpha_H^3\big|\ge\big|\alpha_H\big|^{1+\e'}$.
  Then using Proposition~\ref{coset-vs-subgroup} we obtain
  $$
  \big|\alpha^{5}\big|\ge
  \big|\alpha_H^3\big|\cdot\big|\alpha^{2}\cap H\big|\ge
  \big|\alpha_H\big|^{1+\e'}\cdot\big|\alpha^2\cap H\big|\ge
  \big|\alpha\big|\cdot\big|\alpha_H\big|^{\e'} \ge
  \big|\alpha\big|^{1+\e'/n^2}
  $$
  and by Proposition~\ref{Helfgott-3-is-enough}.\eqref{item:30}
  the induction step is complete.
  So we may assume that for all such $H$ we have
  $\alpha_H^3=P/(H\cap P)$.
  It follows from Corollary~\ref{size-of-Lie-type}.\eqref{item:27}
  that if $K$ is sufficiently large then
  $$
  \big|\alpha_H^3\big|=\big|P/(H\cap P)\big|\ge
  \big|P\big|^{1-{\dim(H)}/{\dim(G)}-\delta/(2n^2)}
  \ge \big|\alpha\big|^{1-{\dim(H)}/{\dim(G)}-\delta/(2n^2)} \;.
  $$

  Suppose next that $\CZ(G)$ is finite. 
  We apply Theorem~\ref{main-thm}
  with parameters $\dim(G)$, $\Delta_\Nori$, $I_\Nori$ and
  $\e''=\Frac1{4\dim(G)}$ to the subset
  $\alpha\subset G^{\Frob_p}$.
  We obtain a $\Frob_p$-invariant
  connected closed normal subgroup $H\triangleleft G$
  such that
  $\deg(H)\le K_\mainThm$, $\dim(H)>0$ and 
  $$
  \big|\alpha^{M_\mainThm}\cap H\big| \ge
  \big|\alpha\big|^{(1+\delta)\dim(H)/\dim(G)} \;.
  $$
  If $H=G$ then
  $\big|\alpha^{M_\mainThm}\big|\ge \big|\alpha\big|^{(1+\delta)}$,
  otherwise
  $$
  \big|\alpha^{3+M_\mainThm}\big|\ge
  \big|\alpha_H^3\big|\cdot\big|\alpha^{M_\mainThm}\cap H\big|\ge
  \big|\alpha\big|^{1-\Frac{\dim(H)}{\dim(G)}-\Frac\delta{2n^2}} \cdot
  \big|\alpha\big|^{(1+\delta)\Frac{\dim(H)}{\dim(G)}}\ge
  \big|\alpha\big|^{1+\Frac{\delta}{2n^2}} \;.
  $$
  By Proposition~\ref{Helfgott-3-is-enough}.\eqref{item:30}
  the induction step is complete in this case as well.

  Finally we suppose that $\CZ(G)$ is infinite.
  In this case we consider the normal subgroup $H=\CZ(G)^0$.
  By assumption $\alpha_H^3=P/(H\cap P)$ 
  hence $\alpha^{3}$ intersects every $(H\cap P)$-coset
  in $P$.
  Hence every commutator element of $P$ is in fact the commutator of
  two elements in $\alpha^{3}$.
  It is well-known that $P$ is generated by at most
  $n^2$ elements (see \cite{Py})
  hence
  by Proposition~\ref{product-of-few-commutators}
  each element of $P$ 
  is the product of $Cn^6$ commutators for some constant $C$.
  By assumption $|\alpha|\le p^{\dim(G)-1/4}$.
  Since $|P|\ge|\alpha|>K$,
  if we choose $K$ sufficiently large then
  $|P|\ge p^{\dim(G)-1/8}$ by Proposition~\ref{twisted-Lang-Weil}.
  Therefore
  $$
  \big|\alpha^{3\cdot 4\cdot Cn^6}\big| = |P| > |\alpha|^{1+1/8\dim(G)}
  $$
  and by Proposition~\ref{Helfgott-3-is-enough}.\eqref{item:30}
  the induction step is complete in this case too.
  The first statement is proved.

  Let us apply the (now established) first statement successively to
  $\alpha,\alpha^3,\alpha^9,\dots$. We obtain by induction that
  either $\alpha^{3^i}=P$ or
  $\big|\alpha^{3^i}\big|\ge|\alpha|^{(1+\e)^i}$
  for all $i$. By assumption $|\alpha|\ge p$ and $|P|<p^{n^2}$
  hence $\alpha^{d(n)}=P$ where $d(n)$ is the smallest integer above
  $n^{2\log(3)/\log(1+\e)}$.
  That is, the diameter of the Cayley graph with respect to $\alpha$
  is at most $d(n)$.
\end{proof}

Now we prove Theorem~\ref{pre-diameter} of the Introduction.

\begin{thm}
  For all natural numbers $n$ there is an integer $M=M(n)$
  with the following property.\\
  Let $P\le GL(n,\Fp)$ be a perfect $p$-generated subgroup.
  Then the diameter of the Cayley graph of $P$ with respect to any
  symmetric generating set is at most $\big(\log|P|\big)^M$.
\end{thm}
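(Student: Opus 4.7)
The plan is to reduce to Theorem~\ref{ugly-litle-duckling}, which gives a constant diameter bound $d(n)$ under the stronger hypothesis that the generating set projects onto every simple quotient of $P$. Given an arbitrary symmetric generating set $\alpha$ of $P$, I would first pass to a small power $\alpha^k$ with $k\le(\log|P|)^{c(n)}$ which does project onto every simple quotient of $P$, and then apply Theorem~\ref{ugly-litle-duckling} to $\alpha^k$ to obtain diameter at most $d(n)$ with respect to $\alpha^k$. This gives a diameter bound of $k\cdot d(n)\le(\log|P|)^M$ for $\Gamma(P,\alpha)$.

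To identify the simple quotients of $P$ I would invoke Proposition~\ref{Nori}: $P$ is contained with bounded index in $G(\Fp)$ for a connected closed subgroup $G\le GL(n,\Fpclosed)$ of dimension at most $n^2$. Consequently the simple quotients of $P$ arise from the simple factors of $G$ modulo its solvable radical, so each of them is a finite simple group of Lie type in characteristic $p$ of Lie rank at most $n$; there are also only $O(n)$ of them.

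For each simple quotient $\pi_i:P\to Q_i$, the image $\pi_i(\alpha)$ is a symmetric generating set of the simple group $Q_i$. By Theorem~\ref{intro-thm} (established via Theorem~\ref{simple-final}), the diameter of $\Gamma(Q_i,\pi_i(\alpha))$ is at most $(\log|Q_i|)^{c(n)}\le(\log|P|)^{c(n)}$. Taking $k$ to be the maximum of these diameters over the (boundedly many in $n$) simple quotients of $P$, we obtain $k\le(\log|P|)^{c(n)}$ and $\pi_i(\alpha^k)=\pi_i(\alpha)^k=Q_i$ for every $i$. Thus $\alpha^k$ remains symmetric and generating, and now projects onto every simple quotient of $P$.

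Finally, applying Theorem~\ref{ugly-litle-duckling} to the generating set $\alpha^k$ yields $(\alpha^k)^{d(n)}=P$, i.e.\ $\alpha^{kd(n)}=P$. Hence the diameter of $\Gamma(P,\alpha)$ is at most $k\cdot d(n)\le d(n)\cdot(\log|P|)^{c(n)}\le(\log|P|)^M$ for $M=M(n)$ chosen sufficiently large in terms of $n$ (and with small $|P|$ absorbed into the constant in the usual way). The one point requiring care is the identification of the simple quotients of $P$ as groups of Lie type of Lie rank bounded by $n$; this is handled by Nori's theorem, and the rest is a short bootstrap combining the polylogarithmic diameter bound for simple factors with the bounded-diameter statement of Theorem~\ref{ugly-litle-duckling}.
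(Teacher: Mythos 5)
Your proposal is correct and is essentially the paper's own proof: the paper likewise raises $\alpha$ to a polylogarithmic power whose image covers every simple quotient (by iterating Theorem~\ref{simple-final} on the images, which is exactly the content of the diameter bound you invoke) and then applies Theorem~\ref{ugly-litle-duckling} to that power. The only deviation is your justification that the simple quotients are of Lie type of rank at most $n$ via Proposition~\ref{Nori}; the paper instead cites Feit--Tits and \cite[Proposition~5.2.12]{KL}, which is the cleaner route, since the Nori correspondence needs $p$ large relative to $n$ and passing from the simple factors of $G$ modulo its radical to the simple quotients of the subgroup $P$ requires a further (if routine) argument.
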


\begin{proof}
  Let $\alpha$ be a symmetric generating set of $P$ containing $1$.
  Let $L$ be any simple quotient of $P$,
  we denote by $\tilde\alpha$ the image of $\alpha$ in $L$. 
  The Lie rank of $L$ is at most $n$
  (see \cite{FT} and \cite[Proposition~5.2.12]{KL}).
  Let $\e=\e(n)$ be as in Theorem~\ref{simple-final}.
  Applying that theorem successively to
  $\tilde\alpha,\tilde\alpha^3,\tilde\alpha^9,\dots$
  we obtain by induction that
  either $\tilde\alpha^{3^i}=L$ or
  $\big|\tilde\alpha^{3^i}\big|\ge
  |\tilde\alpha|^{(1+\e)^i}\ge
  3^{(1+\e)^i}$
  for all $i$.
  With $m=\Frac{\log\log|P|-\log\log(3)}{\log(1+\e)}$
  we obtain that
  $\big|\tilde\alpha^{3^{m}}\big|\ge|P|\ge|L|$
  hence $\alpha^{3^m}$ projects onto $L$.
  This holds for each simple quotient with the same exponent $m$.

  By Theorem~\ref{ugly-litle-duckling}
  the diameter of the Cayley graph corresponding to $\alpha^{3^m}$
  is at most $d(n)$,
  hence the diameter of the Cayley graph corresponding to $\alpha$
  is at most $3^md(n)\le\big(\log|P|\big)^{M(n)}$
  where $M(n)$ is the smallest integer above
  $\Frac{\log(3)}{\log(1+\e)}+\log\big(d(n)\big)$
 \end{proof}

We will reduce the proof of Theorem~\ref{pre-partial}
to the perfect $p$-generated case (more precisely to
Theorem~\ref{ugly-litle-duckling}) using finite group theory.

\begin{defn}
  As usual $Sol(G)$ denotes the soluble radical and $O_p(G)$ the maximal
  normal $p$-subgroup of a finite group $G$. A group is called
  \emph{quasi-simple} if it is perfect and simple modulo its centre.
  We denote by $Lie^*(p)$ the set of direct products of simple
  groups of Lie type of characteristic $p$,
  and by $Lie^{**}(p)$ the set of central products of quasi-simple
  groups of Lie type of characteristic $p$.
  If $G/Sol(G)$ is in $Lie^*(p)$ then we call $G$ a
  \emph{soluble by $Lie^*(p)$ group}.
\end{defn}

The following deep result is essentially due to Weisfeiler \cite{We}.

\begin{prop} \label{Weisfeiler}
  Let $G$ be a finite subgroup of $GL(n,\BF)$
  where $\BF$ is a field of characteristic $p>0$. Then $G$ has a normal
  subgroup $H$ of index at most $f(n)$ such that $H\ge O_p(G)$
  and $H/O_p(G)$ is the central product of an abelian $p'$-group and
  quasi-simple groups of Lie type of characteristic $p$, where the bound
  $f(n)$ depends on $n$.
\end{prop}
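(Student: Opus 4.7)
The plan is to pass from $G$ to its Zariski closure in $GL(n,\bar\BF)$, analyse that algebraic group via its unipotent radical and Levi decomposition, and then identify the resulting finite subgroups in each piece using Nori/Larsen--Pink style inputs. Throughout I allow replacing $G$ by a normal subgroup of index bounded in terms of $n$; at the end all the bounds will be absorbed into $f(n)$.

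First I would form $\bar G$, the Zariski closure of $G$ in $GL(n,\bar\BF)$. By Fact~\ref{dimension-degree}.\eqref{item:9} the degree of $\bar G$ is bounded in terms of $n$, hence so is $|\bar G:\bar G^0|$ by Fact~\ref{normaliser-centraliser-fact}.\eqref{item:20}. Passing to $G\cap\bar G^0$ I may assume $\bar G$ is connected. Let $U=R_u(\bar G)$ be the unipotent radical. In characteristic $p$ every element of $U(\bar\BF)$ is unipotent of $p$-power order, so $G\cap U(\bar\BF)$ is a normal finite $p$-subgroup of $G$, thus contained in $O_p(G)$; conversely the Zariski closure of any normal $p$-subgroup of $G$ is a closed unipotent normal subgroup of $\bar G$ and so lies in $U$. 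Hence $O_p(G)=G\cap U(\bar\BF)$.

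Next I would use a Levi decomposition $\bar G=U\rtimes L$ with $L$ reductive, write $L=Z^0(L)\cdot[L,L]$, and decompose $[L,L]=S_1\cdots S_r$ as a commuting product of simple algebraic groups (with $r\le n$ since each $\dim S_i\ge 3$ contributes to $\dim\bar G\le n^2$). The projection $\bar G\to\bar G/U\cong L$ sends $G$ onto a finite subgroup of $L(\bar\BF)$ with kernel $O_p(G)$, so the problem reduces to describing that image inside $L$. Its projection to the torus quotient $Z^0(L)/(Z^0(L)\cap[L,L])$ is a finite subgroup of a torus over $\bar\BF$, which in characteristic $p$ is an abelian $p'$-group; this supplies the abelian $p'$ factor. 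For each simple factor $S_i$ I invoke the theorem of Larsen--Pink (or Nori~\cite{No} when $p$ is large in terms of $n$): a finite subgroup of a simple algebraic group over $\bar\BF_p$ either has order bounded in terms of $\dim S_i\le n^2$, or contains a subgroup of index bounded in terms of $n$ which is a quasi-simple group of Lie type in characteristic $p$. Discarding the bounded-order factors and taking preimages yields the central product structure of $H/O_p(G)$.

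Several technical points must be handled. The Levi decomposition is only canonical up to $U$-conjugacy and is a priori defined over $\bar\BF$, and $\mathrm{Gal}(\bar\BF/\BF)$ may permute the $S_i$ and twist $L$; one passes to a further bounded-index normal subgroup of $G$ so that each $S_i$ is individually normalised by $G$ (the permutation action lands in a symmetric group of bounded order, giving a bounded-index kernel) and so that the decomposition is defined over a bounded-degree extension. One also verifies that after these reductions the images in distinct $S_i$ commute modulo the central torus, giving the central product. The main obstacle -- and the step that makes $f(n)$ non-explicit in Weisfeiler's original formulation -- is the Larsen--Pink input applied to each simple factor; everything else is elementary manipulation of algebraic groups and bounded-index reductions.
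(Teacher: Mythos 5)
There is a genuine gap, and it occurs at the very first step: the Zariski closure of a \emph{finite} subgroup $G\le GL(n,\BF)$ is $G$ itself, since any finite subset of affine space is Zariski closed. Hence $\bar G=G$, $\bar G^0=\{1\}$, and $|\bar G:\bar G^0|=|G|$, which is certainly not bounded in terms of $n$; your appeal to Fact~\ref{dimension-degree}.\eqref{item:9} does not help, because that fact bounds $\deg(\bar G)$ in terms of the degrees of defining polynomials, and the closure of a finite group is in general only cut out by equations of unbounded degree (indeed $\deg(\bar G)=|G|$ by Remark~\ref{too-many-points}). Consequently there is no unipotent radical, no Levi decomposition and no reductive quotient of $\bar G$ to exploit: with $U=\{1\}$ your identification $O_p(G)=G\cap U(\Fclosed)$ and the entire reduction to simple factors of a Levi subgroup become vacuous. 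This is precisely why results of this type (Jordan, Brauer--Feit, Weisfeiler, Nori, Larsen--Pink) are hard: a finite linear group is not approximated by its Zariski closure, and the algebraic ``envelope'' used by Nori is a different construction (generated by the one-parameter subgroups through the order-$p$ elements), available only for $p$ large compared with $n$ and only controlling the subgroup generated by elements of order $p$ --- not the situation here, where $\BF$ is an arbitrary field of characteristic $p$ and $G$ is arbitrary.

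Moreover, the input you invoke for each simple factor --- ``a finite subgroup of a simple algebraic group over $\Fpclosed$ either has order bounded in terms of the dimension or has a bounded-index quasi-simple subgroup of Lie type in characteristic $p$'' --- is essentially the Larsen--Pink structure theorem itself, i.e.\ the full substance of the proposition you are trying to prove; assuming it as a lemma makes the argument circular rather than a proof. For comparison, the paper does not prove Proposition~\ref{Weisfeiler} at all: it quotes it as a deep external result, essentially due to Weisfeiler \cite{We} (whose argument rests on the classification of finite simple groups), with explicit bounds by Collins \cite{Co} and a classification-free but non-effective version by Larsen and Pink \cite{LP}. Any genuine proof must follow one of those two routes; your sketch, as it stands, does neither, and its algebraic-geometric reduction fails at the outset.
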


It was proved by Collins~\cite{Co} that for $n\ge71$ one can take
$f(n)=(n+2)!$. Remarkably a (non-effective) version of the above result was
obtained by Larsen and Pink \cite{LP} without relying on the classification of
finite simple groups.
It is clear that $H$ is a soluble by $Lie^*(p)$ subgroup.

\begin{rem} \label{commutator}
  Let $P$ be a perfect $p$-generated subgroup of $GL(n,\Fp)$.
  Using Proposition~\ref{Weisfeiler} and \cite[Lemma~3]{Ha2}
  one can easily show that every element of $P$ is the product of $g(n)$
  commutators where $g(n)$ depends on $n$.
  This could be used to replace the (rather more difficult)
  Proposition~\ref{product-of-few-commutators} in the proof of
  Theorem~\ref{ugly-litle-duckling}.
\end{rem}

The rest of this section will be devoted to proving results concerning subsets
$\alpha$ of $GL(n,\Fp)$ that satisfy $\big|\alpha^3\big|\le K|\alpha|$.
We consider the group $G=\Span\alpha$ and we will establish step by step a
close relationship between $\alpha$ (and its powers) and the structure of $G$
described in Proposition~\ref{Weisfeiler}.
Throughout the proof we need to establish several auxiliary results.

\begin{prop} \label{Schreier}
  Let $G$ be a group and $\alpha\subseteq G$ a symmetric
  generating set with $1\in\alpha$.
  If $H$ is a normal subgroup of index $t$ in $G$ then $\alpha^{2t}\cap H$
  generates $H$.
\end{prop}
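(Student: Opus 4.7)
The plan is to apply Schreier's lemma to $G$ and $H$ with a carefully chosen coset transversal whose representatives are short words in $\alpha$. First I would construct, by a breadth-first search in the Schreier coset graph, a right transversal $T=\{\tau_1,\tau_2,\dots,\tau_t\}$ for $H$ in $G$ with $\tau_1=1$ and such that each $\tau_i \in \alpha^{t-1}$. The construction is the following: at step $0$ we have the coset $H=H\tau_1$; at step $k$, for each coset $H\tau_i$ already reached and each $s\in\alpha$, if the coset $H\tau_i s$ has not yet been reached we add it with the representative $\tau_i s\in\alpha^k$. Since $\alpha$ projects to a generating set of the quotient action on $G/H$ and there are only $t$ cosets to reach, the procedure terminates after at most $t-1$ steps, so every $\tau_i$ lies in $\alpha^{t-1}$ (using $1\in\alpha$ we may also assume $\alpha^{t-1}\supseteq\alpha^k$ for all $k\le t-1$).

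Next I would apply Schreier's lemma, which gives that $H$ is generated by the set
$$
S \;=\; \bigl\{\,\tau\, s\,\overline{\tau s}^{\,-1}\;\big|\;\tau\in T,\;s\in\alpha\,\bigr\},
$$
where $\overline{g}$ denotes the unique representative in $T$ of the coset $Hg$. By construction every element of $S$ lies in $H$. Moreover, using that $\alpha$ is symmetric, $\overline{\tau s}^{\,-1}\in(\alpha^{t-1})^{-1}=\alpha^{t-1}$, so
$$
S \;\subseteq\; \alpha^{t-1}\cdot\alpha\cdot\alpha^{t-1} \;=\; \alpha^{2t-1} \;\subseteq\; \alpha^{2t}.
$$
Therefore $S\subseteq\alpha^{2t}\cap H$, and since $S$ generates $H$, so does $\alpha^{2t}\cap H$.

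There is essentially no obstacle here: this is the classical Reidemeister--Schreier construction. The only thing to double-check is the length bound on the transversal elements, which follows from the BFS construction and the fact that $\alpha$ generates $G$ (hence its image generates $G/H$, if one wishes to phrase it that way, though normality of $H$ is not actually needed for the argument). Note that the statement with $2t$ is slightly weaker than what the proof yields ($2t-1$ suffices), which is consistent with the slack allowed in the proposition as stated.
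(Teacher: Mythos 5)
Your proof is correct and follows essentially the same route as the paper: a transversal of $G/H$ consisting of elements of $\alpha^{t-1}$ (the paper asserts this directly, you derive it via BFS in the coset graph), followed by Schreier's lemma giving generators of the form $\tau s\,\overline{\tau s}^{\,-1}\in\alpha^{2t-1}\cap H$ (the paper cites \cite[Theorem~2.6.9]{Su} for generators $g_iag_j^{-1}$). Your side remarks — that normality of $H$ is not needed and that $2t-1$ already suffices — are accurate but do not change the argument.
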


\begin{proof}
  It is clear that $\alpha^{t-1}$ contains a full system of coset
  representatives $g_1,\dots,g_t$ of $G/H$.
  It is well-known (see \cite[Theorem~2.6.9]{Su}) that
  $H$ is generated by elements of the form $g_iag_j^{-1}$ where $a\in\alpha$.
\end{proof}

\begin{prop} \label{quotient}
  Let $\alpha$ be a finite subset of a group $G$ and $\tilde G = G/N$ a
  quotient of $G$. Set $\tilde\alpha=\alpha N /N$ .
  Then 
  $|\alpha^4|/|\alpha|  \ge  |\tilde\alpha ^3|/|\tilde\alpha|$.
  Moreover, if $\alpha$ is symmetric and $1\in\alpha$ then
  $\big(|\tilde\alpha ^3|/|\tilde\alpha|\big)^2 \ge
  |\tilde\alpha ^3|/|\tilde\alpha|$.
\end{prop}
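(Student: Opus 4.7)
The first inequality will come from a standard fibre-and-injection argument. I will let $\pi\colon G\to\tilde G$ denote the quotient, pick for each $\tilde g\in\tilde\alpha^3$ a lift $s_{\tilde g}\in\alpha^3$ with $\pi(s_{\tilde g})=\tilde g$, and collect these lifts into a set $S\subseteq\alpha^3$ with $|S|=|\tilde\alpha^3|$. Since the nonempty fibres $\alpha\cap\pi^{-1}(\tilde a)$, $\tilde a\in\tilde\alpha$, partition $\alpha$, pigeon-hole produces some $\tilde a_0\in\tilde\alpha$ for which $F:=\alpha\cap\pi^{-1}(\tilde a_0)$ has size $|F|\ge|\alpha|/|\tilde\alpha|$.

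The key step is to exhibit an injection $\mu\colon S\times F\hookrightarrow\alpha^4$, $(s,a)\mapsto sa$. Indeed, if $s_1a_1=s_2a_2$ with $s_i\in S$ and $a_i\in F$, then applying $\pi$ gives $\pi(s_1)\tilde a_0=\pi(s_2)\tilde a_0$ and hence $\pi(s_1)=\pi(s_2)$; by our single-lift construction of $S$ this forces $s_1=s_2$, and then $a_1=a_2$. Injectivity yields
\[
|\alpha^4|\ \ge\ |S|\cdot|F|\ \ge\ |\tilde\alpha^3|\cdot\frac{|\alpha|}{|\tilde\alpha|},
\]
and dividing by $|\alpha|$ gives the first claim.

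For the ``moreover'' clause: taken literally, $(|\tilde\alpha^3|/|\tilde\alpha|)^2\ge|\tilde\alpha^3|/|\tilde\alpha|$ is automatic because $1\in\tilde\alpha\subseteq\tilde\alpha^3$ forces this ratio to be $\ge1$; the presumably intended statement is $\bigl(|\alpha^3|/|\alpha|\bigr)^2\ge|\tilde\alpha^3|/|\tilde\alpha|$. To obtain it I will combine the first part with Proposition~\ref{Helfgott-3-is-enough}\eqref{item:30} applied with $m=4$, which for symmetric $\alpha\ni 1$ gives $|\alpha^4|/|\alpha|\le\bigl(|\alpha^3|/|\alpha|\bigr)^2$; chaining this with $|\alpha^4|/|\alpha|\ge|\tilde\alpha^3|/|\tilde\alpha|$ completes the proof.

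There is no real obstacle here: the argument is a routine Ruzsa-style calculation, and the only point needing any care is the injectivity of $\mu$, which is immediate from the choice of a single lift for each element of $\tilde\alpha^3$ together with the fact that $F$ lies over a single element $\tilde a_0$.
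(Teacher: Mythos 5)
Your proof is correct and follows essentially the same route as the paper: a pigeonhole choice of a large fibre $\alpha\cap\pi^{-1}(\tilde a_0)$, one lift in $\alpha^3$ per element of $\tilde\alpha^3$, the injectivity of $(s,a)\mapsto sa$ into $\alpha^4$ (the paper phrases this as disjointness of the sets $g_i(\alpha\cap gN)$), and then Proposition~\ref{Helfgott-3-is-enough}.\eqref{item:30} with $m=4$ for the symmetric case. You are also right that the displayed ``moreover'' inequality is a typo for $\bigl(|\alpha^3|/|\alpha|\bigr)^2\ge|\tilde\alpha^3|/|\tilde\alpha|$, which is exactly the form the paper uses later (e.g.\ in Proposition~\ref{onto}) and exactly what your chained argument proves.
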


\begin{proof}
  There is a coset $gN$ of $N$ such that
  $|\alpha \cap gN| \ge |\alpha|/|\tilde\alpha|$.
  We may assume that $g\in\alpha$.
  Let $\{g_i\}$ be a system of representatives of the
  cosets in $\tilde\alpha^3$ with $g_i \in \alpha^3$.
  Then the sets $g_i(\alpha \cap gN)$ are disjoint subsets of $\alpha^4$
  hence
  $\big|\alpha^4\big|\ge
  \big|\tilde\alpha ^3\big||\alpha|/|\tilde\alpha|$ as required.  
  The other inequality follows then from
  Proposition~\ref{Helfgott-3-is-enough}.\eqref{item:30}.
\end{proof}

\begin{prop} \label{onto}
  Let $H$ be a soluble by $Lie^*(p)$ subgroup of $GL(n,\Fp)$ and
  $\gamma \le H$ a symmetric generating
  set with $1 \in \gamma$. Assume that $\gamma$ satisfies
  $|\gamma^3| \le K|\gamma|$ for some $K > 2$.
  Then there is a soluble by $Lie^*(p)$ normal subgroup $S$ of $H$
  such that $\gamma^6 \cap S$ projects onto all Lie type simple quotients of
  $S$ and $\gamma$ is covered
  by $K^c$ cosets of $S$, where $c=c(n)$ depends only on $n$.
\end{prop}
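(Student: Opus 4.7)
The plan is to work in the quotient $\widetilde H := H/Sol(H)$, which by hypothesis is a direct product $\prod_{i=1}^{k} L_i$ of simple groups of Lie type in characteristic $p$; since each $L_i$ embeds into $GL(n,\Fp)$ we have $k \le n$. Write $\sigma\colon H \to \widetilde H$ for the canonical projection, $\rho_i\colon H \to L_i$ for its composition with the $i$-th coordinate projection, and for $J \subseteq \{1,\dots,k\}$ write $\pi_J\colon \widetilde H \to \prod_{j\in J} L_j$. Each $\rho_i(\gamma^2)$ is a symmetric generating set of $L_i$ containing $1$, so Theorem~\ref{simple-final} is available.

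Declare $I := \{\, i : \rho_i(\gamma^6) = L_i \,\}$ and let $J$ be its complement. For $j \in J$ we have $\rho_j(\gamma^2)^3 = \rho_j(\gamma^6) \neq L_j$, so Theorem~\ref{simple-final} yields $|\rho_j(\gamma^6)| \ge |\rho_j(\gamma^2)|^{1+\e}$ for some $\e = \e(n) > 0$. Applying Proposition~\ref{quotient} to $\gamma^2$ and the kernel of $\rho_j$, combined with Proposition~\ref{Helfgott-3-is-enough}\eqref{item:30} in the form $|\gamma^8| \le K^6|\gamma^2|$, yields
$$
|\rho_j(\gamma^2)|^{\e} \;\le\; \frac{|\gamma^8|}{|\gamma^2|} \;\le\; K^6,
$$
so $|\rho_j(\gamma)| \le K^{6/\e}$ and consequently $|\pi_J\sigma(\gamma)| \le K^{6n/\e}$.

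Set $S := \sigma^{-1}\big(\prod_{i \in I} L_i\big)$. Then $S$ is normal in $H$, and because $S/Sol(H) = \prod_{i\in I} L_i$ has no nontrivial soluble normal subgroup, $Sol(S) = Sol(H)$ and $S$ is soluble by $Lie^*(p)$, with Lie-type simple quotients precisely $\{L_i : i \in I\}$. Furthermore $\gamma$ is covered by $|\pi_J\sigma(\gamma)| \le K^{6n/\e}$ cosets of $S$, giving the coset-covering bound with $c = 6n/\e$.

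The main obstacle is establishing $\rho_i(\gamma^6 \cap S) = L_i$ for every $i \in I$. The plan here is to refine $I$ iteratively. A pigeonhole argument on the at most $K^{12n/\e}$ values that $\pi_J\sigma$ takes on $\gamma^2$ produces, after translation by a fixed element, a subset of $\gamma^4 \cap S$ of size at least $|\gamma^2|/K^{12n/\e}$; an analogous pigeonhole argument on $\gamma$ itself ensures that $\rho_i(\gamma^2 \cap S)$ generates $L_i$ for each $i \in I$, possibly after first enlarging $\gamma$ by a bounded power. Since $(\gamma^2 \cap S)^3 \subseteq \gamma^6 \cap S$, Theorem~\ref{simple-final} applied to $\rho_i(\gamma^2 \cap S)$ then gives the dichotomy: either $\rho_i(\gamma^6 \cap S) \supseteq \rho_i(\gamma^2 \cap S)^3 = L_i$, as required, or $|\rho_i(\gamma^2 \cap S)|$ is bounded by a power of $K$, in which case we remove $i$ from $I$ and replace $S$ by $S \cap \ker \rho_i$; this increases the coset-covering bound only by a further factor that is a power of $K$. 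The refinement stabilises after at most $n$ steps since $|I|$ strictly decreases, and the resulting $S$ satisfies all three conclusions.
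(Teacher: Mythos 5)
Your construction of $S$ reproduces the paper's: the dichotomy of Theorem~\ref{simple-final}, combined with Proposition~\ref{quotient} and Proposition~\ref{Helfgott-3-is-enough}.\eqref{item:30}, splits the simple quotients into those onto which a small power of $\gamma$ projects surjectively and those where the projection of $\gamma$ has size at most a fixed power of $K$; intersecting the kernels of the latter gives the normal subgroup and the coset-covering bound. That half is fine (modulo citing \cite{FT}, \cite{KL}, \cite{LPy} for the rank and the number of factors, as the paper does).

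The genuine gap is in the final step, where you must show $\rho_i(\gamma^6\cap S)=L_i$ for $i\in I$. Your plan is to apply Theorem~\ref{simple-final} to $A_i=\rho_i(\gamma^2\cap S)$, but that theorem requires $A_i$ to be a \emph{generating set} of $L_i$, and nothing in your argument establishes this: your pigeonhole only makes $\gamma^2\cap S$ large as a subset of $S$, and says nothing about its image in $L_i$. One can upgrade the pigeonhole (working in $H/(S\cap\ker\rho_i)$, using that $|\rho_i(\gamma)|\ge|L_i|/K^{O(1)}$ for $i\in I$ and that $\gamma$ lies in $K^{O(1)}$ cosets of $S$) to get $|A_i|\ge|L_i|/K^{O(1)}$, but a positive-proportion subset need not generate when the minimal index of a proper subgroup of $L_i$ is comparable to a power of $K$; and if $A_i$ is large but non-generating, neither branch of your dichotomy applies and your iteration has no move, since you only delete $i$ when $A_i$ is small. (Even where Theorem~\ref{simple-final} does apply, its second branch gives $|A_i^3|\ge|A_i|^{1+\e}$, which yields $|A_i|\le K^{O(1)}$ only after bounding the tripling of $A_i$ via Lemma~\ref{induce-from-growing-subgroup} and Proposition~\ref{quotient} --- a fixable but unstated step.) The paper closes this gap with different tools: it shows the image of $\gamma^2\cap S$ in $L_i$ has size at least $|L_i|/K^{2+nb}$ and then applies Gowers' trick (Proposition~\ref{govers-trick}) together with the Landazuri--Seitz bound of Proposition~\ref{minimal-complex-representation-simple}, namely that every nontrivial complex representation of $L_i$ has degree at least $(p-1)/2$; this forces the cube of that image to be all of $L_i$ provided $(p-1)/2>K^{3(2+nb)}$, and the complementary case ($p$ small relative to $K$) is disposed of at the outset by taking $S=1$, since then $|GL(n,\Fp)|\le K^{c}$. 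This representation-theoretic input and the small-$p$ fallback are exactly what your proposal lacks, and without them the last step cannot be completed.
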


\begin{proof}
  %If $|\gamma|=|\gamma^3|$ then $\gamma=H$ and our statement holds for $S=H$.
  Let $H/N \cong L$ be a Lie type simple quotient of $H$ and set
  $\tilde\gamma =\gamma N /N$. The Lie rank of $L$ is at most $n$
  (see \cite{FT} and \cite[Proposition~5.2.12]{KL}).
  Now $|\tilde\gamma ^3| \le K^2|\tilde\gamma|$
  by Proposition~\ref{quotient}.
  Hence by Theorem~\ref{simple-final} we have two possibilities;
  either
  $|\tilde\gamma|\ge |\tilde\gamma ^3|/K^2=|L|/K^2$
  or
  $|\tilde\gamma| \le K^b$ where $b=b(n)$ depends only on $n$.
  Set $c=6n^2(2+nb)$.
  If $(p-1)/2 \le K^{3(2+nb)}$ then we have
  $|GL(n,\Fp)|< K^c$ (since $K>2$)
  and our statement holds for $S=1$.

  Otherwise let $H/N_j\cong L_j$  $(j=1,..,t)$
  be all the Lie type simple quotients of $H$
  (there are at most $n$ such quotients e.g. by \cite[Corollary~3.3]{LPy}).
  Let $H/N_1,H/N_2,\dots,H/N_i$ be the 
  quotients for which the second possibility holds .
  Consider the subgroup $S=N_1 \cap\dots\cap N_i$.
  It is clear that $S$ is a soluble by $Lie^*(p)$ normal subgroup and
  its Lie type simple quotients are
  $S/(S \cap N_{i+1}),..,S/(S \cap N_t)$.
  Moreover $\gamma$ is covered by at most
  $K^{nb}$ cosets of $S$.

  It remains to prove that $\gamma^6 \cap S$ projects onto, say,
  $S/(S \cap N_{i+1})$.
  Consider the quotient group $\overline H=H /(S \cap N_{i+1})$.
  The image $\overline\gamma$ of $\gamma$
  in $\overline H$ is covered by at most $K^{nb}$ cosets of
  $\overline S=S/(S \cap N_{i+1})\cong L_{i+1}$
  and we have $|\overline\gamma|\ge|\overline S|/K^2$.
  This implies that some coset of
  $\overline S$ in $\overline H$ contains
  at least $|\overline S|/K^{2+nb}$ elements of $\overline\gamma$
  and it follows that
  $\big|\overline\gamma^2 \cap \overline S\big|\ge
  |\overline S|/K^{2+nb}$.
  By Remark~\ref{minimal-complex-representation-simple}
  the minimal degree of a complex
  representation of $\overline S$ is at least $(p-1)/2 > (K^{2+nb})^3$
  hence by Proposition~\ref{govers-trick} we have
  $\big(\overline\gamma^2 \cap \overline S\big)^3 = \overline S$,
  which implies our statement.   
\end{proof}

\begin{prop} \label{coset}
  Assume that a symmetric subset $\alpha$ of a group $G$
  is covered by $x$ right
  cosets of a subgroup $H$ and $\alpha^2\cap H$ is covered by $y$ right cosets
  of a subgroup $S\le H$. Then $\alpha$ is covered by $xy$ right cosets of
  $S$.
\end{prop}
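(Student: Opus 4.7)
The plan is to carry out a direct, constructive covering argument exploiting symmetry of $\alpha$. First I would choose, for each of the $x$ right cosets of $H$ meeting $\alpha$, a representative $g_i \in \alpha$: by assumption $\alpha \subseteq Hg_1 \cup \cdots \cup Hg_x$, and on each nonempty intersection $\alpha \cap Hg_i$ we may replace $g_i$ by an element of that intersection, which keeps the same coset $Hg_i$ and puts $g_i \in \alpha$. The symmetry $\alpha = \alpha^{-1}$ then gives $g_i^{-1} \in \alpha$ as well.

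Next, for an arbitrary $a \in \alpha$, pick $i$ with $a \in Hg_i$; then $ag_i^{-1} \in H$, and since $a, g_i^{-1} \in \alpha$ we have $ag_i^{-1} \in \alpha^2 \cap H$. The second hypothesis supplies right cosets $Sh_1, \ldots, Sh_y$ whose union contains $\alpha^2 \cap H$, so there is some $j$ with $ag_i^{-1} \in Sh_j$, i.e.\ $a \in Sh_j g_i$. Hence $\alpha \subseteq \bigcup_{i,j} S(h_j g_i)$, a union of at most $xy$ right cosets of $S$, which is the desired conclusion.

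There is essentially no obstacle here; the only point requiring care is the convention for right cosets and the choice of representatives from within $\alpha$ so that $\alpha^2$ (rather than some larger power) suffices to capture the element $ag_i^{-1}$. Symmetry of $\alpha$ is exactly what makes this work.
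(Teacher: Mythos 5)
Your argument is correct and is essentially the paper's own proof: choose the representatives $g_i$ of the $H$-cosets from $\alpha$ itself, use symmetry to conclude $ag_i^{-1}\in\alpha^2\cap H$, and then compose with the covering of $\alpha^2\cap H$ by cosets $Sh_j$ to get $\alpha\subseteq\bigcup_{i,j}Sh_jg_i$. No differences worth noting.
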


\begin{proof}
  We have $\alpha\subseteq Hg_1\cup\dots\cup Hg_x$ and
  $\alpha^2\cap H\subseteq Sh_1\cup\dots\cup Sh_y$ where the coset
  representatives $g_i$ are chosen from $\alpha$.
  If $a\in\alpha\cap Hg_i$ then by our assumptions
  $ag_i^{-1}\in Sh_j$ for some $j$, hence $a\in Sh_jg_i$.
  Therefore $\alpha\subseteq\bigcup_i\bigcup_jSh_jg_i$.
\end{proof}

\begin{prop} \label{normal}
  Let $G$ and $H$ be as in Proposition~\ref{Weisfeiler}.
  Let $\alpha$ be a symmetric set of generators of $G$ with $1\in\alpha$
  satisfying $\big|\alpha^3\big|\le K|\alpha|$
  for some $K>2$.
  Set $\gamma=\alpha^{2f(n)}\cap H$.
  \begin{enumerate}[\indent a)]
  \item \label{item:34}
    The set $\gamma$ generates $H$ and satisfies
    $\big|\gamma^3\big|\le K_0|\gamma|$
    where $K_0=K^{7f(n)}$.
  \item \label{item:35}
    Let $S$ be the subgroup constructed from $\gamma$ and $H$
    in the proof of Proposition~\ref{onto}.
    If $p\ge K_0^{b_0(n)}$ 
    (where $b_0(n)=b(n)+4$ with the same $b(n)$
    as in the proof of Proposition~\ref{onto})
    then $S$ is normal in $G$.
  \item \label{item:36}
    $\alpha$ is covered by at most $K_0^{c_0(n)}$ cosets of $S$
    (where $c_0(n)=c(n)+\log\big(f(n)\big)/\log(2)$
    with the same $c(n)$ as in Proposition~\ref{onto}).
  \item \label{item:37}
    The commutator subgroup $S'$ is an extension of a $p$-group by a
    $Lie^{**}(p)$-group. 
  \end{enumerate}
\end{prop}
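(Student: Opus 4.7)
The proof proceeds in the four stages (a)--(d). The key technical tool, used repeatedly below, is a \emph{Schreier rewriting}: for every $m \ge 1$, $\alpha^m \cap H \subseteq \gamma^m$. To see this, let $r_0,\dots,r_m \in \alpha^{t-1}$ (with $t=[G:H]\le f(n)$ and $r_0=r_m=1$) be right coset representatives of $H$ in $G$ recording the cosets of the partial products of a given word $w=a_1\cdots a_m \in \alpha^m \cap H$; then $w=\prod_i(r_{i-1}a_ir_i^{-1})$ with each factor in $\alpha^{2t-1}\cap H \subseteq \gamma$. Part~\eqref{item:34} follows easily: generation of $H$ by $\gamma$ is Proposition~\ref{Schreier}, while $|\gamma^3|\le|\alpha^{6f(n)}|\le K^{6f(n)-2}|\alpha|$ by Proposition~\ref{Helfgott-3-is-enough}\eqref{item:30}, and pigeonhole on the at most $f(n)$ cosets of $H$ meeting $\alpha$ yields $|\gamma|\ge|\alpha|/f(n)$; the inequality $f(n)\le K^{f(n)}$ (valid for $K\ge2$) absorbs the constant to give $|\gamma^3|/|\gamma|\le K^{7f(n)}=K_0$.

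The crux is part~\eqref{item:35}. Recall $S=\bigcap_{j\in J}N_j$, where $J=\{j:|\tilde\gamma_j|\le K_0^b\}$ and $\tilde\gamma_j=\gamma N_j/N_j$. Because $G$ permutes the $N_j$ by conjugation, it suffices to prove $J$ is $G$-invariant; since $\alpha$ generates $G$ and is symmetric, one need only show that for every $g\in\alpha$ with $gN_jg^{-1}=N_k$, $j\in J$ iff $k\in J$. Applying the Schreier rewriting, $g\gamma g^{-1}\subseteq\alpha^{2f(n)+2}\cap H\subseteq\gamma^{2f(n)+2}$; projecting to $L_k$ and combining Propositions~\ref{Helfgott-3-is-enough}\eqref{item:30} and~\ref{quotient} gives
$$
\big|(g\gamma g^{-1})N_k/N_k\big| \;\le\; \big|\tilde\gamma_k^{2f(n)+2}\big| \;\le\; K_0^{4f(n)}\,|\tilde\gamma_k|.
$$
The conjugation isomorphism $H/N_j\to H/N_k$ sends $\tilde\gamma_j$ to the set on the left, so $|\tilde\gamma_j|\le K_0^{4f(n)}|\tilde\gamma_k|$. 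If $p\ge K_0^{b_0(n)}$ for a suitable $b_0(n)$, then $|L_k|\ge p^2$ is so large that the ``large'' case $|\tilde\gamma_k|\ge|L_k|/K_0^2$ forces $|\tilde\gamma_j|>K_0^b$, hence $j\notin J$; applying the same argument to $g^{-1}\in\alpha$ handles the other direction.

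For part~\eqref{item:36}, $\alpha$ meets at most $f(n)$ cosets $g_iH$ of $H$, with representatives $g_i\in\alpha^{f(n)-1}$; translating gives $g_i^{-1}(\alpha\cap g_iH)\subseteq\alpha^{f(n)}\cap H\subseteq\gamma$, so each slice is covered by the same $K_0^{c(n)}$ cosets of $S$ that cover $\gamma$ via Proposition~\ref{onto}, totaling $f(n)\cdot K_0^{c(n)}\le K_0^{c_0(n)}$. For part~\eqref{item:37}, Proposition~\ref{Weisfeiler} writes $H/O_p(H)=A\cdot Q_1\cdots Q_s$ as a central product of an abelian $p'$-group $A$ with quasi-simple Lie-type factors $Q_i$; each $N_i/O_p(H)=A\cdot Z(Q_i)\cdot\prod_{j\ne i}Q_j$, and $O_p(H)\le S$ since simple Lie-type groups contain no normal $p$-subgroup. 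Intersecting over $i\in J$ yields
$$
S/O_p(H) \;=\; A\cdot\prod_{j\notin J}Q_j\cdot\prod_{i\in J}Z(Q_i);
$$
taking derived subgroups kills the abelian factors and leaves $(S/O_p(H))'=\prod_{j\notin J}Q_j\in Lie^{**}(p)$, so $S'$ is an extension of the normal $p$-subgroup $S'\cap O_p(H)$ by this $Lie^{**}(p)$-group. The main obstacle is part~\eqref{item:35}: Schreier rewriting inflates the relevant power of $\gamma$ to $2f(n)+2$, and the large/small dichotomy of Proposition~\ref{onto} survives this inflation only because Proposition~\ref{Helfgott-3-is-enough}\eqref{item:30} controls all powers of $\tilde\gamma_k$ by $K_0^{O(f(n))}$, and $p$ is chosen large enough in terms of $K_0$ and $n$ to widen the gap between the two sides of the dichotomy.
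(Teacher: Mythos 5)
Your parts a), c) and d) are correct and essentially follow the paper's own route: a) is the paper's argument with Lemma~\ref{induce-from-growing-subgroup} replaced by an explicit pigeonhole (both give $K_0=K^{7f(n)}$), c) re-proves Proposition~\ref{coset} inline, and d) spells out the central-product computation that the paper attributes to Proposition~\ref{Weisfeiler}. Part b) is where you genuinely diverge. You prove normality of $S$ directly, by showing that the index set $J$ of ``small'' quotients is invariant under the conjugation action of every $g\in\alpha$, controlling $g\gamma g^{-1}$ via the Schreier rewriting $\alpha^m\cap H\subseteq\gamma^m$, so that $g\gamma g^{-1}\subseteq\gamma^{2f(n)+2}$ and hence $|\tilde\gamma_j|\le K_0^{4f(n)}|\tilde\gamma_k|$ whenever $gN_jg^{-1}=N_k$. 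The paper argues by contradiction instead: if $S$ is not normal it finds $a\in\alpha$ and a quotient $H/N_1$ on which $\gamma$ projects to at most $K_0^{b(n)}$ elements while $a^{-1}\gamma a$ projects onto at least $|L_1|/K_0^2$ elements, and then compares $|\beta^8\cap H|\ge|(a^{-1}\gamma a)(\gamma^2\cap N_1)|$ with $|\beta^9|/|\beta|\le K^{9f(n)}<K_0^2$ using Lemma~\ref{induce-from-growing-subgroup}; crucially it never rewrites the conjugate as a power of $\gamma$, only as a subset of $\beta^4\cap H$, so the loss is a fixed power of $K_0$ rather than $K_0^{O(f(n))}$.

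Two points in your b) need attention. First, a directional slip: from $|\tilde\gamma_j|\le K_0^{4f(n)}|\tilde\gamma_k|$ you cannot conclude that the large case $|\tilde\gamma_k|\ge|L_k|/K_0^2$ forces $|\tilde\gamma_j|>K_0^{b}$ --- an upper bound on $|\tilde\gamma_j|$ forces nothing of the sort. That inequality gives the implication $j\notin J\Rightarrow k\notin J$ (if $|\tilde\gamma_j|\ge|L_j|/K_0^2$ then $|\tilde\gamma_k|\ge|L_k|/K_0^{4f(n)+2}>K_0^{b}$ for large $p$), while the implication you state comes from the companion inequality obtained by conjugating with $g^{-1}$. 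Since you invoke both $g$ and $g^{-1}$ (and in fact one implication already suffices, because $\sigma_g$ is a bijection of a finite index set), this is repairable, but as written the pairing of inequality and conclusion is a non sequitur. Second, and more substantively, your route does not prove the statement with the constant $b_0(n)=b(n)+4$ that the proposition specifies --- your phrase ``for a suitable $b_0(n)$'' concedes this. The Schreier rewriting costs the factor $K_0^{4f(n)}$, so your dichotomy gap needs $|L_k|>K_0^{b(n)+4f(n)+2}$, i.e.\ roughly $p\ge K_0^{b(n)+4f(n)+2}$ (or about half that exponent if one uses $|L_k|\ge p^2$, which you assert without comment), whereas the paper's product-set argument pits $|L_1|/K_0^{2+b(n)}$ against $K_0^{2}$ and so gets exactly $b_0(n)=b(n)+4$. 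For the downstream applications (Theorem~\ref{partial-dense} and its corollaries) any constant depending only on $n$ would do, so your weaker version would suffice there, but as a proof of part b) as stated it falls short of the claimed bound.
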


\begin{proof}
  Consider $\beta=\alpha^{f(n)}$. By Proposition~\ref{Schreier}
  $\gamma=\beta^2\cap H$ generates $H$.
  Using Lemma~\ref{induce-from-growing-subgroup} and
  Proposition~\ref{Helfgott-3-is-enough}
  we see that
  $$
  \frac{\big|\gamma^3\big|}{|\gamma|}\le
  \frac{\big|\beta^6\cap H\big|}{\big|\beta^2\cap H\big|}\le
  \frac{\big|\beta^7\big|}{|\beta|}\le
  \frac{\big|\alpha^{7f(n)}\big|}{|\alpha|}\le
  K^{7f(n)}
  $$
  which proves \eqref{item:34}.
  Part \eqref{item:36} follows using Proposition~\ref{coset}.
  Part \eqref{item:37} follows from Proposition~\ref{Weisfeiler}.

  It remains to prove \eqref{item:35}.
  If $H/N_j$ are all the Lie type simple quotients of $H$
  then $N=\bigcap_jN_j$ is the soluble radical of $H$.
  Consider the quotient
  $\overline{G}=G/N$.
  The set $\overline\gamma$ generates the normal subgroup
  $\overline H\triangleleft\overline G$.
  For each $a\in\alpha$ the conjugation by
  $\overline a\in\overline{\alpha}$
  is an automorphism of $\overline H$.
  Now $\overline H$ is the direct product of nonabelian simple groups
  and an automorphism of $\overline H$ permutes these factors
  (because the direct decomposition is unique).

  If $S$ is not normal in $G$ then there is a Lie type simple
  quotient of $H$, say $H/N_1\cong L_1$ and an element $a\in\alpha$
  such that
  $\gamma$ projects onto at most $K_0^{b(n)}$ elements of $H/N_1$
  and $a^{-1}\gamma a$ projects onto at least $|L_1|/K_0^2$ elements of $H/N_1$.
  Note that $a^{-1}\gamma a=a^{-1}(\beta^2\cap H)a\subseteq\beta^4\cap H$.
  By the above we have
  $|\beta^2\cap H| = |\gamma| \le \big|\gamma^2\cap N_1\big|K_0^{b(n)}$.
  On the other hand,
  $$
  \big|\beta^8\cap H\big| \ge
  \big|(\beta^4\cap H)(\beta^2\cap H)^2\big| \ge
  \big|(a^{-1}\gamma a)(\gamma^2\cap N_1)\big| \ge
  \Frac{|L_1|}{K_0^2}\big|\gamma^2\cap N_1\big| \;.
  $$
  Therefore
  $\Frac{|\beta^8\cap H|}{|\beta^2\cap H|}\ge |L_1|/K_0^{2+b(n)}$.
  But we have
  $\Frac{|\beta^8\cap H|}{|\beta^2\cap H|}\le
  \Frac{|\beta^9|}{|\beta|}\le K^{9f(n)}<K_0^2$.
  We obtain that $|L_1|<K_0^{4+b(n)}$, a contradiction.
\end{proof}

As we saw above, a subset $\alpha$ of $GL(n,\Fp)$ with
$\big|\alpha^3\big|\le K|\alpha|$
is essentially contained in a normal subgroup $S$ of $G=\Span\alpha$
such that a small power of $\alpha$ projects onto all Lie type
simple quotients of $S$.
We proceed to show that the latter property also holds for the last term $P$
of the derived series of $S$.
Later we will prove that a small power of $\alpha$ in fact generates $P$
(see Proposition~\ref{generation}).

\begin{prop} \label{perfect}
  Let $S$ be a soluble by $Lie^*(p)$ subgroup of $GL(n,\Fp)$.
  Let $1\in\alpha$ be a symmetric subset of $S$ which projects onto all
  Lie type simple quotients of $S$.
  Let $P$ be the last term of the derived series of $S$.
  Then $P$ is a perfect soluble by $Lie^*(p)$ subgroup and
  $\alpha^c\cap P$ projects onto all Lie type simple quotients of $P$
  where $c=c(n)$ depends only on $n$.
\end{prop}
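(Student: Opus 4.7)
The plan is to first verify that $P$ has the claimed structure and then use an iterative commutator argument to produce, for each Lie type simple quotient $L$ of $P$, a preimage of every element of $L$ lying in a bounded power of $\alpha$ intersected with $P$.

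First I would establish the structure of $P$. Since $S/Sol(S)\in Lie^*(p)$ is a direct product of non-abelian simple groups it is perfect, so $S^{(i)}\cdot Sol(S)=S$ for every $i$, and passing to the stabilisation $P$ of the derived series yields $P\cdot Sol(S)=S$ and $P/(P\cap Sol(S))\cong S/Sol(S)$. Because $S/Sol(S)$ contains no non-trivial normal soluble subgroup, $Sol(P)=P\cap Sol(S)$, whence $P/Sol(P)\cong S/Sol(S)\in Lie^*(p)$. Thus $P$ is perfect and soluble by $Lie^*(p)$, with the same Lie type simple quotients as $S$.

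Next I would carry out the iteration, which is the technical heart of the argument. Fix a Lie type simple quotient $L$ with projection $\pi:S\to L$; since $L$ has Lie rank at most $n$ (see \cite{FT}, \cite[Proposition~5.2.12]{KL}) it is generated by a bounded number of elements, so Proposition~\ref{product-of-few-commutators} provides a constant $g=g(n)$ such that every element of $L$ is a product of $g$ commutators. Setting $\alpha_0=\alpha$ and $\alpha_{i+1}=\alpha_i^{4g}\cap S^{(i+1)}$, each $\alpha_i$ is symmetric, contains $1$, lies in $S^{(i)}$ and projects onto $L$: given $l\in L$, write $l=[l_1,m_1]\cdots[l_g,m_g]$ in $L$, lift each pair to $a_j,b_j\in\alpha_i$ using $\pi(\alpha_i)=L$, and note that $[a_1,b_1]\cdots[a_g,b_g]\in\alpha_i^{4g}\cap S^{(i+1)}$ projects onto $l$.

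To terminate the iteration I would use that $S/P\cong Sol(S)/(Sol(S)\cap P)$ is a quotient of the soluble linear group $Sol(S)\le GL(n,\Fp)$, whose derived length is bounded by a classical function $d=d(n)$ of the degree. Hence $S^{(d)}=P$, and $\alpha_d\subseteq \alpha^{(4g)^d}\cap P$ still projects onto $L$. Since this bound is uniform in $L$ and $S$ has at most $n$ Lie type simple quotients (see \cite[Corollary~3.3]{LPy}), the proposition holds with $c(n):=(4g(n))^{d(n)}$. The only delicate points are confirming $Sol(P)=P\cap Sol(S)$ and invoking the derived-length bound for soluble linear groups; the former follows because $S/Sol(S)$, being a product of non-abelian simple groups, admits no non-trivial normal soluble subgroup, and the latter is standard.
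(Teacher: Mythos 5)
Your proof is correct, and its skeleton is the same as the paper's: descend the derived series one step at a time with a bounded word-length cost per step, then stop because the derived series of $S$ stabilises at $P$ after at most $d(n)$ steps. The difference lies in how the single step is carried out. The paper fixes a quotient $S/N_i$, splits $S/(S'\cap N_i)$ as the direct product of $S'/(S'\cap N_i)$ and $N_i/(S'\cap N_i)$, produces one nontrivial element of the simple factor as a commutator $[a,b]$ with $a,b\in\alpha$, generates its whole conjugacy class by conjugating with images of $\alpha$, and then invokes the Lawther--Liebeck bound \cite{LL} (every element of a simple group of Lie type of rank at most $n$ is a product of at most $a(n)$ conjugates of any nontrivial element), so that $\alpha^{6a(n)}\cap S'$ surjects onto the quotient. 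You instead work directly inside the simple quotient $L$: you use bounded commutator width of $L$ (via Proposition~\ref{product-of-few-commutators}) to write an arbitrary element as a product of $g$ commutators, lift the entries through $\pi(\alpha_i)=L$, and note that the lifted word lies in $\alpha_i^{4g}\cap S^{(i+1)}$. Both arguments give a constant $c(n)$ exponential in the derived-length bound; yours avoids the direct-product decomposition and the conjugacy-class step, at the price of calling on the much deeper Nikolov--Segal theorem where a width result for simple groups such as \cite{LL} (or the paper's own commutator trick) would do --- that economy is exactly what the paper's version buys. Your explicit justification that $S^{(d)}=P$ with $d=d(n)$, via $S=S^{(i)}Sol(S)$ so that $S/P$ is a quotient of the soluble linear group $Sol(S)$ and Zassenhaus's derived-length bound applies, is a clean way of making precise the paper's appeal to the bounded length of the derived series of subgroups of $GL(n,\Fp)$, and your identification of $Sol(P)=P\cap Sol(S)$ and of the simple quotients of $P$ with those of $S$ fills in what the paper dismisses as clear.
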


\begin{proof}
  Let $S/N_i$ be the Lie type simple quotients of $S$.
  The commutator subgroup $S'$ is clearly also a soluble by $Lie^*(p)$
  subgroup and its Lie type simple quotients are the
  $S'/(S'\cap N_i)\cong S/N_i$. We need the following.

  \begin{claim}
    $S'\cap\alpha^b$ projects onto $S'/(S'\cap N_i)$ for all $i$
    where $b=b(n)$ depends only on $n$.
  \end{claim}

  To see this fix $i$ and consider the quotient $\overline{S}=S/(S'\cap N_i)$.
  This quotient is the direct product of
  $S'/(S'\cap N_i)$ and $N_i/(S'\cap N_i)\cong S/S'$
  (since these have no common quotients).
  Take two elements $a,b\in\alpha$ which project onto noncommuting
  elements of $S/N_i$.
  The image of the commutator $[a,b]\in\alpha^4$ in $\overline{S}$ is a
  nontrivial element of $S'/(S'\cap N_i)$.
  Each element of $S'\cap N_i$ appears as the first coordinate of some element
  of the image $\overline{\alpha}$ of $\alpha$ in $\overline{S}$.
  Taking conjugates of $\overline{[a,b]}$ with these elements we obtain
  that the whole conjugacy class of $[a,b]$ in the simple group
  $S'/(S'\cap N_i)$.
  But this group has Lie rank at most $n$
  and therefore each element of $S'/(S'\cap N_i)$ is the product of at most
  $a(n)$ conjugates of an arbitrary nontrivial element where $a(n)$ depends
  only on $n$ (in fact $a(n)$ is a linear function of $n$ by \cite{LL}).
  Therefore $\alpha^{6a(n)}\cap S'$ projects onto $S'/(S'\cap N_i)$ as
  claimed.

  The length of the derived series of any subgroup of $GL(n,\Fp)$ is bounded
  in $n$ (in fact there is a logarithmic bound).
  Hence our statement follows from the Claim by an obvious induction argument.
\end{proof}

\begin{defn} \label{diagonal}
  If $L=L_1\times\dots\times L_k$ is a direct
  product of isomorphic groups, $D$ a subgroup of $L$ isomorphic to $L_1$
  which projects onto each direct factor then we call $D$
  a \emph{diagonal subgroup}.
\end{defn}

\begin{prop} \label{chain}
  Let $L=L_1\times\dots\times L_k$ be a direct product of
  $k$ nonabelian simple groups
  and $T$ a subgroup which projects onto all simple quotients of $L$.
  Then any chain of subgroups between $T$ and $L$ has length at most $k$.
\end{prop}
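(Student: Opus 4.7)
The approach is to apply the well-known structure theorem for subdirect products of nonabelian simple groups to the subgroups in the chain. By iterated Goursat's lemma, using that each $L_i$ is nonabelian simple (in particular centreless and with no nontrivial proper quotient), any subgroup $U \le L$ projecting onto every factor $L_i$ is determined by a partition $\mathcal{P}_U$ of $\{1,\dots,k\}$ together with, for each block $I \in \mathcal{P}_U$, a choice of coherent isomorphisms $L_i \cong L_j$ for $i,j \in I$; concretely, $U$ is then the product of the associated full diagonals $D_I \le \prod_{i \in I} L_i$, one for each block. In particular, $|U| = \prod_{I \in \mathcal{P}_U}|L_{i(I)}|$ depends only on $\mathcal{P}_U$ (where $i(I)$ is any index in $I$).

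Since $T$ projects onto each $L_i$, every subgroup $U$ with $T \le U \le L$ inherits the same surjectivity and so has a well-defined partition $\mathcal{P}_U$; the two extremes give $\mathcal{P}_L$ equal to the finest partition (the $k$ singletons) and $\mathcal{P}_T$ some partition with $m \ge 1$ blocks. If $U \le U'$, then the diagonal matching constraints defining $U$ are stronger than those for $U'$, so $\mathcal{P}_{U'}$ must be a refinement of $\mathcal{P}_U$: blocks can only split when we pass to a larger subgroup. Moreover, because $|U|$ depends only on $\mathcal{P}_U$, a strict inclusion $U \subsetneq U'$ forces a strict refinement, hence the block count grows by at least one. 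Applying this to a chain $T = T_0 \subsetneq T_1 \subsetneq \dots \subsetneq T_r = L$, the associated sequence of block counts strictly increases from $m$ up to $k$, giving $r \le k-m \le k-1 < k$.

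The only non-trivial input is the structure theorem invoked in the first paragraph, which is classical: Goursat's lemma applied inductively, combined with the fact that the $L_i$ are nonabelian simple, forces the ``correspondence subgroups'' to be graphs of genuine isomorphisms, yielding the partition-plus-diagonals description. The remaining combinatorial step is then a straightforward count. Note that the proof actually yields the sharper bound $r \le k-1$, with equality achievable when $T$ is a single full diagonal in a product of $k$ copies of a common simple group; the bound $k$ in the statement is stated only for convenience.
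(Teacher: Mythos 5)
Your proof is correct and follows essentially the same route as the paper: both rest on the structure theorem for subdirect products of nonabelian simple groups (the partition-into-full-diagonals description, cited in the paper as \cite[Proposition~3.3]{BS}), and you merely make explicit the refinement-counting step that the paper leaves as ``our statement follows''. The only cosmetic remark is that your cardinality shortcut for ``strict inclusion forces strict refinement'' uses finiteness of the $L_i$ (harmless here, since the groups in question are finite simple groups of Lie type); comparing the diagonals blockwise would avoid even that.
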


\begin{proof}
  Let $H$ be a subgroup of $L$
  which projects onto all simple quotients of $L$ (i.e. a subdirect product).
  Then there is a partition of the set of simple groups $L_i$
  such that the groups in any partition-class are isomorphic
  and $H$ is the direct
  product of diagonal subgroups corresponding to these partition-classes
  (see \cite[Proposition~3.3]{BS}).
  Our statement follows.
\end{proof}

\begin{prop} \label{center}
  Let $L$ be a $Lie^{**}(p)$-group and
  $T$ a subgroup which projects onto $L/\CZ(L)$.
  Then $T=L$.
\end{prop}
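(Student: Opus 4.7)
The plan is to observe that $L$ is perfect and then exploit the fact that quotienting by the centre does not change the derived subgroup.

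First I would recall that a $Lie^{**}(p)$-group $L$ is a central product of quasi-simple groups $Q_1,\dots,Q_k$ of Lie type in characteristic $p$. Each $Q_i$ is perfect by definition, so the direct product $\prod Q_i$ is perfect, and $L$ is a quotient of this direct product by a central subgroup. A quotient of a perfect group is perfect, hence $L=[L,L]$.

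Next, the hypothesis that $T$ projects onto $L/\CZ(L)$ means precisely that $T\cdot\CZ(L)=L$. Every element of $L$ can therefore be written as $tz$ with $t\in T$ and $z\in\CZ(L)$. Since $\CZ(L)$ is central, commutators are unaffected by the central factor:
\[
[t_1z_1,\,t_2z_2] \;=\; [t_1,t_2] \;\in\; [T,T]\;.
\]
Consequently $[L,L]=[T,T]\subseteq T$. Combining this with the perfectness of $L$ gives
\[
L \;=\; [L,L] \;=\; [T,T] \;\subseteq\; T\;,
\]
so $T=L$.

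There is essentially no obstacle here: the only facts used are the standard observation that central products of perfect groups are perfect, and the elementary identity $[tz_1,t'z_2]=[t,t']$ for central $z_1,z_2$. The proposition is really a one-line consequence of $L$ being perfect, packaged to match the way it will be invoked later (presumably in tandem with Proposition~\ref{chain} to pin down subdirect products modulo centres inside the structural analysis of $S$ in Proposition~\ref{normal}).
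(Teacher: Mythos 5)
Your proof is correct and follows essentially the same route as the paper: both arguments reduce to showing $[L,L]\subseteq T$ (the paper via $T\triangleleft L$ with $L/T$ abelian, you via the identity $[t_1z_1,t_2z_2]=[t_1,t_2]$ for central $z_i$, which is the same observation) and then conclude $T=L$ from the perfectness of $L$ as a central product of quasi-simple groups.
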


\begin{proof}
  We have $T\CZ(L)=L$ which implies that $T$ is a normal subgroup of $L$.
  Moreover, $L/T$ is abelian and since $L$ is perfect, we have $T=L$.
\end{proof}

\begin{prop} \label{generation}
  Let $H$ be a subgroup of $GL(n,\Fp)$,
  $S$ a soluble by $Lie^*(p)$ normal subgroup of $H$
  and $P$ the last term in the derived series of $S$.
  Assume that $P$ is an extension of a $p$-group by a $Lie^{**}(p)$-group.
  Let $1\in\gamma$ be a symmetric generating set of $H$.
  Assume that $\gamma^t\cap P$ projects onto all Lie type simple quotients of
  $P$ for some integer $t$. Then $\gamma^{t+2n+2n^2}\cap P$ generates $P$.
\end{prop}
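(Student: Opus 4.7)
The plan is to set $R = O_p(P)$, so that $P/R \cong L$ is a central product of quasi-simple Lie type groups in characteristic $p$, and $L/\CZ(L) = S_1\times\cdots\times S_k$ is the direct product of the Lie type simple quotients of $P$ with $k \le n$ (for example, by \cite[Corollary~3.3]{LPy}). I will work with the ascending chain $A_m = \Span{\gamma^{t+2m}\cap P}$, $m\ge 0$. Since $\gamma$ is symmetric and $P\triangleleft H$, one has $g^{-1}A_m g \subseteq A_{m+1}$ for every $g\in\gamma$, and each $A_m$ projects onto every $S_i$ (because $A_0$ does by hypothesis). The strategy is two-staged: first show $A_nR = P$ (after $n$ steps), then show $A_{n+n^2} = P$ (after $n^2$ further steps that absorb $R$).

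For the first stage, pass to the images $\bar A_m$ inside $L/\CZ(L) = S_1\times\cdots\times S_k$. Each $\bar A_m$ is a subdirect product, so by Proposition~\ref{chain} the chain $\bar A_0\le\bar A_1\le\cdots$ has at most $k \le n$ strict inclusions; in particular $\bar A_n = \bar A_{n+1}$. The symmetry of $\gamma$ combined with $g^{-1}\bar A_n g \subseteq \bar A_{n+1} = \bar A_n$ then shows that $\bar A_n$ is normalised by $\gamma$, hence by $H = \Span{\gamma}$. Because $P$ acts on $L/\CZ(L)$ with image the full inner-automorphism group $L/\CZ(L)$ (the kernel being the preimage of $\CZ(L)$ in $P$, and $L/\CZ(L)$ has trivial centre), $\bar A_n$ will be normal in $L/\CZ(L)$. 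A normal subdirect product of nonabelian simple factors that surjects onto every factor must contain every factor, so $\bar A_n = L/\CZ(L)$; Proposition~\ref{center} then upgrades this to $A_nR/R = L$, i.e.\ $A_nR = P$.

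For the second stage, look at the chain $A_n\cap R \le A_{n+1}\cap R \le \cdots$ inside the $p$-subgroup $R \le GL(n,\Fp)$, whose order is at most $p^{n(n-1)/2}$. The chain has at most $n(n-1)/2 \le n^2$ strict inclusions, so some $s\le n^2$ satisfies $A_{n+s}\cap R = A_{n+s+1}\cap R$. Combined with $A_{n+s}R = P = A_{n+s+1}R$, a short argument (any $b\in A_{n+s+1}$ equals $ar$ with $a\in A_{n+s}$ and $r = a^{-1}b \in R\cap A_{n+s+1} = R\cap A_{n+s}$) gives $A_{n+s} = A_{n+s+1}$. Then $A_{n+s}$ is normalised by $\gamma$, so it is normal in $H$ and in $P$. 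The quotient $P/A_{n+s}$ is a quotient of $R$, hence a finite $p$-group; but it is also perfect (since $P$ is), so it is trivial. Thus $A_{n+s} = P$, and since $s \le n^2$ this forces $A_{n+n^2} = P$ as required.

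The main technical subtlety is the upgrade from ``$\bar A_n$ is normalised by $\gamma$'' to ``$\bar A_n$ is normal in $L/\CZ(L)$'', which requires identifying the image of $P$ in $\mathrm{Aut}(L/\CZ(L))$ with the full inner-automorphism group. Once this is in place, everything else is routine bookkeeping with the two-step normal structure $R \triangleleft P$ and standard facts about finite $p$-groups and subdirect products of simple groups.
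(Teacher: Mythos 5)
Your proof is correct and follows essentially the same two-stage route as the paper: first reach $P/O_p(P)$ within $n$ doubling steps using Proposition~\ref{chain}, Proposition~\ref{center} and the observation that a proper subdirect product normalised by $H$ would be a proper normal subdirect product of $L/\CZ(L)$, then absorb $O_p(P)$ within $n^2$ further steps using that everything now happens inside the $p$-group $O_p(P)$; your finish (a perfect $p$-group quotient of $P$ is trivial) is a harmless variant of the paper's ending via composition factors and solubility of $S/Q$. One repair in stage one: ``at most $n$ strict inclusions'' does not by itself give $\bar A_n=\bar A_{n+1}$ (a priori the image chain could pause and later grow), so instead take the first $m\le n$ with $\bar A_m=\bar A_{m+1}$ (which exists by pigeonhole, since a strictly ascending chain above the subdirect product $\bar A_0$ has length at most $k\le n$ by Proposition~\ref{chain}), apply your normalisation argument at that $m$ to get $\bar A_m=L/\CZ(L)$, and conclude $A_nO_p(P)=P$ because the chain ascends --- this is precisely the contrapositive form (``proper image forces strict growth at every step'') in which the paper states its first Claim.
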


\begin{proof}
  Set $Q_i=\Span{\gamma^i\cap P}$.
  We first show that $Q_{t+2n}$ projects onto $P/O_p(P)$.
  Since $P/O_p(P)$ is a $Lie^{**}(p)$-group,
  by Proposition~\ref{center} it is sufficient to prove that
  $Q_{t+2n}$ projects onto the central quotient of $P/O_p(P)$,
  which is exactly $P/Sol(P)$.
  Denote $P/Sol(P)$ by $\overline{P}$ and let $\overline{Q_i}$ denote the
  image of $Q_i$ in $\overline{P}$.
  We need the following.

  \begin{claim}
    If $i\ge t$ and $\overline{Q_i}\ne\overline{P}$ then
    $|Q_{i+2}|$ is strictly greater than $|Q_i|$.
  \end{claim}
  To see this, observe that $\overline{Q_i}$ projects onto all simple
  quotients of $\overline{P}$ and the only normal subgroup of $\overline{P}$
  with this property is $\overline{P}$ itself.
  By our assumptions there is an $a\in\gamma$ for which
  $\overline{Q_i}$ and its conjugate $\overline{Q_i}^{\,a}$
  are different subgroups of
  $\overline{Q_{i+2}}$. This implies the claim.
  
  As noted earlier,
  $\overline{P}$ is the direct product of at most $n$ simple groups.
  Hence by Proposition~\ref{chain}
  any chain of subgroups containing $\overline{Q_t}$ has length at most $n$.
  By the above claim $Q_{t+2n}$ projects onto $P/Sol(P)$,
  hence onto $P/O_p(P)$ as stated.
  We also need the following.
  
  \begin{claim}
    If $Q_i$ is not a normal subgroup of $H$ and $i\ge t+2n$
    then $|Q_{i+2}|\ge|Q_i|\cdot p$.
  \end{claim}
  To see this, consider as above
  an element $a\in\gamma$ which does not normalise $Q_i$.
  Then $Q_i$ and $Q_i^a$ are different subgroups of $P$ generated by subsets
  of $\gamma^{i+2}$.
  Hence $P\ge Q_{i+2}\gneq Q_i$. By our assumptions $|P:Q_i|$ is a power of
  $p$ which implies the Claim.

  Repeated applications of the Claim yield an ascending chain of subgroups
  $Q_{t+2n}\lneq Q_{t+2n+2}\lneq Q_{t+2n+4}\lneq\dots\lneq Q_{t+2n+2k}=Q\le P$
  which of course has length less than $n^2$.
  The last term $Q$ of this chain is normal in $H$ hence in $S$.
  By our assumptions all nonabelian simple composition factors of $S$ are
  among the composition factors of $Q$ (with multiplicities).
  Therefore $S/Q$ must be soluble i.e. $Q=P$.
\end{proof}

\begin{prop} \label{max-coset}
  Let $G$ be a finite group and $\alpha$ a generating set such that
  $\alpha^k$ contains the subgroup $P$.
  Then
  $$
  \frac {\max_{g\in G}|\alpha\cap gP|}{|P|} \ge
  \frac{|\alpha|}{\big|\alpha^{k+1}\big|} \;.
  $$
\end{prop}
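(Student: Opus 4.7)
The plan is to prove this by a simple pigeonhole argument combined with the inclusion $\alpha P \subseteq \alpha^{k+1}$. First I would let $t$ denote the number of distinct right cosets of $P$ in $G$ that meet $\alpha$, and choose representatives $g_1,\dots,g_t$ from $\alpha$ itself, so that $\alpha \subseteq \bigsqcup_{i=1}^{t} g_iP$. Since the cosets partition their union and $|\alpha|=\sum_i|\alpha\cap g_iP|$, the pigeonhole principle gives $\max_g |\alpha \cap gP| \ge |\alpha|/t$.

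Next I would observe that the product set $\alpha P$ is exactly the union $\bigsqcup_{i=1}^{t} g_iP$, so $|\alpha P| = t|P|$. Here I use the hypothesis $P \subseteq \alpha^k$ to get the crucial containment
$$\alpha P \subseteq \alpha\cdot\alpha^k = \alpha^{k+1},$$
which yields $t|P| \le |\alpha^{k+1}|$, i.e.\ $t \le |\alpha^{k+1}|/|P|$.

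Combining the two inequalities,
$$\frac{\max_{g\in G}|\alpha\cap gP|}{|P|} \ge \frac{|\alpha|}{t|P|} \ge \frac{|\alpha|}{|\alpha^{k+1}|},$$
which is the claim. There is no real obstacle here: the proof is a one-paragraph pigeonhole, with the only substantive ingredient being the hypothesis $P\subseteq\alpha^k$, which is exactly what lets us bound the number of cosets of $P$ hit by $\alpha$ in terms of $|\alpha^{k+1}|$. The assumption that $\alpha$ generates $G$ is in fact not needed for this particular bound; it merely reflects the setting in which the proposition will be applied.
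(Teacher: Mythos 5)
Your proof is correct and is essentially the paper's own argument: count the $t$ cosets of $P$ meeting $\alpha$, use pigeonhole to get $\max_g|\alpha\cap gP|\ge|\alpha|/t$, and use $P\subseteq\alpha^k$ to see the $t$ disjoint cosets lie in $\alpha^{k+1}$, giving $t|P|\le|\alpha^{k+1}|$. Your remark that the generation hypothesis is unused is also accurate; it is merely part of the setting in which the proposition is applied.
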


\begin{proof}
  Let $t$ be the number of cosets of $P$ which contain elements of $\alpha$.
  Then we have $\max_g|\alpha\cap gP|\cdot t\ge|\alpha|$.
  On the other hand it is clear that
  $\big|\alpha^{k+1}\big|\ge t|P|$.
  Hence
  $$
  \frac{\big|\alpha^{k+1}\big|}{|P|} \ge
  t \ge \frac{|\alpha|}{\max_{g\in G}|\alpha\cap gP|}
  $$
  as required.
\end{proof}

Now we are ready to prove our main results concerning subsets $\alpha$
of $GL(n,\Fp)$ with $\big|\alpha^3\big|\le K|\alpha|$.

\begin{thm} \label{partial-dense}
  Let $\alpha$ be a symmetric subset of $GL(n,\Fp)$ satisfying
  $|\alpha^3|\le K|\alpha|$ for some $K\ge1$.
  Then $GL(n,\Fp)$ has two subgroups $S\ge P$,
  both normalised by $\alpha$, such that
  $P$ is perfect, $S/P$ is soluble,
  a coset of $P$ contains at least $|P|/K^{c(n)}$ 
  elements of $\alpha$ and
  $\alpha$ is covered by $K^{c(n)}$ cosets of $S$
  where $c(n)$ depends on $n$.
\end{thm}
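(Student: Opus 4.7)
The plan is to combine the preceding propositions of this section with Theorem~\ref{ugly-litle-duckling} via a sequence of reductions inside $G=\Span\alpha$. Without loss of generality $K>2$ and $1\in\alpha$ (else enlarge $K$ and $\alpha$). If $p\le K^{B(n)}$ for a sufficiently large constant $B(n)$ to be fixed later, take $S=P=\{1\}$: the coset containing $1$ trivially has $|P|=1$ element, and $|\alpha|\le|GL(n,\Fp)|\le p^{n^2}\le K^{n^2B(n)}$, so $\alpha$ is covered by that many cosets of the trivial subgroup. So from now on assume $p$ is as large as necessary.

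First apply Proposition~\ref{Weisfeiler} to obtain a normal subgroup $H\triangleleft G$ of index at most $f(n)$ which is soluble by $Lie^*(p)$. By Proposition~\ref{Schreier} the symmetric subset $\gamma=\alpha^{2f(n)}\cap H$ generates $H$, and Proposition~\ref{normal}.\eqref{item:34} gives $|\gamma^3|\le K_0|\gamma|$ with $K_0=K^{7f(n)}$. Choosing $B(n)$ large enough to force $p\ge K_0^{b_0(n)}$, Proposition~\ref{normal}.\eqref{item:35}--\eqref{item:37} yields a soluble by $Lie^*(p)$ normal subgroup $S\triangleleft G$ such that $\gamma^6\cap S$ projects onto every Lie type simple quotient of $S$, the set $\alpha$ is covered by $K_0^{c_0(n)}$ cosets of $S$, and the commutator subgroup $S'$ is an extension of a $p$-group by a $Lie^{**}(p)$-group.

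Let $P$ be the last term of the derived series of $S$. Then $P$ is perfect, $S/P$ is soluble, and $P$ is characteristic in $S$, hence normal in $G$ (and in particular normalised by $\alpha$). Since $S'/O_p(S')$ is $Lie^{**}(p)$ and therefore perfect, the image of $P$ in $S'/O_p(S')$ is the full quotient, so $P/(P\cap O_p(S'))$ is $Lie^{**}(p)$. Thus $P$ is an extension of a $p$-group by a $Lie^{**}(p)$-group; in particular $P$ is $p$-generated, and (because $O_p$-quotients kill no nontrivial simple quotient) every simple quotient of $P$ is of Lie type. Proposition~\ref{perfect} applied to the symmetric set $\gamma^6\cap S$ shows that $\gamma^{6c_1(n)}\cap P$ projects onto every Lie type simple quotient of $P$, where $c_1(n)$ is the constant supplied by that proposition. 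Then Proposition~\ref{generation}, applied with $H$, $S$, $P$, the generating set $\gamma$ of $H$, and $t=6c_1(n)$, shows that $\beta=\gamma^{6c_1(n)+2n+2n^2}\cap P$ generates $P$.

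Now $\beta$ is a symmetric generating set of the perfect $p$-generated group $P$ that projects onto every simple quotient of $P$, so Theorem~\ref{ugly-litle-duckling} gives $P=\beta^{d(n)}$. Unwinding, $P\subseteq \alpha^k$ with $k=2f(n)\bigl(6c_1(n)+2n+2n^2\bigr)d(n)$, a constant depending only on $n$. Apply Proposition~\ref{max-coset}: some coset $gP$ satisfies $|\alpha\cap gP|\ge|P|\cdot|\alpha|/|\alpha^{k+1}|$, and Proposition~\ref{Helfgott-3-is-enough}.\eqref{item:30} bounds $|\alpha^{k+1}|/|\alpha|\le K^{k-1}$, giving $|\alpha\cap gP|\ge|P|/K^{k-1}$. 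Taking $c(n)$ larger than $7f(n)c_0(n)$, $k-1$, and $n^2B(n)$ completes the argument. The main technical obstacle is arranging that $S$ is actually normal in the whole group $G$ (not merely in $H$); this is precisely the content of Proposition~\ref{normal}.\eqref{item:35}, and it is what forces the preliminary reduction to the regime $p\ge K_0^{b_0(n)}$.
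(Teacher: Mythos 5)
Your treatment of the main case is essentially the paper's own proof: Weisfeiler plus Schreier to pass to $\gamma=\alpha^{2f(n)}\cap H$, Proposition~\ref{normal} (with $p\ge K_0^{b_0(n)}$) to get a normal soluble by $Lie^*(p)$ subgroup $S$ with the covering and projection properties, the last term $P$ of the derived series of $S$, Propositions~\ref{perfect} and~\ref{generation} to show that a bounded power of $\gamma$ intersected with $P$ generates $P$ and projects onto its simple quotients, Theorem~\ref{ugly-litle-duckling} to get $P\subseteq\alpha^{k}$ with $k=k(n)$, and finally Proposition~\ref{max-coset} together with Proposition~\ref{Helfgott-3-is-enough} for the density of $\alpha$ in a coset of $P$. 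All of that is correct and matches the paper step for step.

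The genuine gap is the opening sentence ``without loss of generality $K>2$ \dots (else enlarge $K$)''. The conclusion is quantitative in $K$: for $K$ close to $1$ it asserts that some coset of $P$ contains at least $|P|/K^{c(n)}$ elements of $\alpha$ (for $K=1$, an entire coset of $P$ inside $\alpha$) and that $\alpha$ is covered by $K^{c(n)}$ cosets of $S$ (for $K=1$, a single coset). Proving the statement for an enlarged constant $K'>2$ only yields bounds in powers of $K'$, which do not imply the statement for the original $K$; and no fixed $c(n)$ lets $K^{c(n)}$ absorb absolute constants uniformly as $K\to1$ (at $K=1$ one has $K^{c(n)}=1$). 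Enlarging is harmless once $K$ is bounded away from $1$ (say $K\ge\Frac32$, which also legitimises your ``add $1$ to $\alpha$'' step via Proposition~\ref{Helfgott-3-is-enough}.\eqref{item:29}), but the range $K$ near $1$ needs a separate argument. The paper supplies it via Proposition~\ref{Olson}: if $K\le2$ then $\alpha^3=\Span\alpha$, so taking $S=\Span\alpha$ and $P$ its last derived term one gets $|\alpha|\ge|S|/K$, hence some coset of $P$ contains at least $|P|/K$ elements of $\alpha$, and $\alpha$ lies in a single coset of $S$. Adding this case (or an equivalent use of Proposition~\ref{Freiman-2over3}) closes the gap; the rest of your argument stands as written.
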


\begin{proof}
  If $K\le 2$ then let $S$ be the subgroup generated by $\alpha$
  and $P$ the last term of the derived series of $S$.
  By Proposition~\ref{Olson} we have $\alpha^3=S$
  hence $|\alpha|\ge|S|/K$,
  which implies that some coset of $P$ contains at least $|P|/K$ elements. 
  If $K>2$ and $p<K^{7f(n)b_0(n)}$ (with the notation of
  Proposition~\ref{normal})
  then we set $S=P=\{1\}$. Now we have
  $|\alpha|<K^{7f(n)b_0(n)n^2}$ which proves our
  statement in this case.
  From now on we assume that $K>2$ and $p\ge K^{7f(n)b_0(n)}$.

  Let $S$ be as in Proposition~\ref{normal}.
  Then $\alpha$ is covered by $K^{7f(n)c_0(n)}$ cosets of $S$.
  By Proposition~\ref{onto} the set $\alpha^{12f(n)}\cap S$ projects onto all
  Lie type simple quotients of $S$.

  Let $P$ be the last term of the derived series of $S$.
  Proposition~\ref{normal}.\eqref{item:37}
  implies that $P$ is an extension of a $p$-group by a
  $Lie^{**}(p)$-group,
  in particular $P$ is a $p$-generated group.
  Let $c_1(n)$ be the constant of Proposition~\ref{perfect}
  (denoted there by $c(n)$),
  set $c_2(n)=2f(n)\big(6c(n)+2n+2n^2\big)$.
  $\alpha^{c_2(n)}\cap P$ generates $P$ and projects onto all Lie type
  simple quotients of $P$ by Proposition~\ref{perfect} and
  Proposition~\ref{generation}. 
  By Theorem~\ref{ugly-litle-duckling}
  if $c(n)\ge c_2(n)d(n)$ then
  $\alpha^{c(n)}$ contains $P$.

  Using Proposition~\ref{max-coset}
  and Proposition~\ref{Helfgott-3-is-enough}.\eqref{item:30}
  we obtain that some coset of $P$ contains at least
  $$
  \frac{|P||\alpha|}{|\alpha^{c(n)+1}|} \ge
  \frac{|P|}{K^{c(n)}}
  $$
  elements of $\alpha$.
  The proof is complete.
\end{proof}

The following is a slightly stronger version of Theorem~\ref{pre-partial}.

\begin{cor} \label{partial-coset}
  Let $\alpha$ be a symmetric subset of $GL(n,\Fp)$ satisfying
  $|\alpha^3|\le K|\alpha|$ for some $K\ge1$.
  Then $GL(n,\Fp)$ has two subgroups $S\ge P$,
  both normalised by $\alpha$, such that
  $P$ is perfect, $S/P$ is soluble,
  a coset of $P$ is contained in $\alpha^3$ and
  $\alpha$ is covered by $K^{c(n)}$ cosets of $S$
  where $c(n)$ depends on $n$.
\end{cor}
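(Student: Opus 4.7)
The plan is to start from Theorem~\ref{partial-dense}, which already produces subgroups $S \ge P$ (normalised by $\alpha$, with $P$ perfect and $S/P$ soluble) such that $\alpha$ is covered by $K^{c_1(n)}$ cosets of $S$ and some coset $aP$ satisfies $|\alpha \cap aP| \ge |P|/K^{c_1(n)}$; here $c_1(n)$ is the constant of that theorem. The only missing ingredient is to upgrade this dense-intersection conclusion into the stronger statement $aP \subseteq \alpha^3$, and for this I would appeal to the Gowers--Nikolov--Pyber trick (Proposition~\ref{govers-trick}).

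Two degenerate regimes require no further argument. If $K < 2$, then Proposition~\ref{Olson} forces $\alpha^3 = \Span{\alpha} \supseteq P$, so $P = 1 \cdot P$ is itself a coset of $P$ contained in $\alpha^3$. If the characteristic is small, say $p < 2K^{3c_1(n)} + 1$, then $|\Span{\alpha}| \le p^{n^2}$ is bounded by a power of $K$, so one simply sets $S = P = \{1\}$: any $a \in \alpha$ gives $\{a\} = aP \subseteq \alpha^3$, and the covering condition holds for an enlarged constant $c(n)$. In all remaining cases $K \ge 2$ and $p \ge 2K^{3c_1(n)} + 1$. Here Proposition~\ref{normal}.\eqref{item:37} says that $P$ is an extension of a $p$-group by a $Lie^{**}(p)$-group, so either $P = \{1\}$ (again handled by picking any singleton $\{a\} \subseteq \alpha^3$) or $P$ has a nontrivial simple quotient of Lie type in characteristic $p$, forcing $|P| \ge (p-1)/2 \ge K^{3c_1(n)} \ge K^{c_1(n)}$. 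In this subcase $aP \cap \alpha$ is nonempty, so we may choose the coset representative $a \in \alpha$.

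Set $\beta = \alpha \cap aP$ and $\beta' = a^{-1}\beta \subseteq P$, so that $|\beta'| \ge |P|/K^{c_1(n)}$. Because $P$ modulo its $p$-radical is perfect and of type $Lie^{**}(p)$, the group $P$ is a perfect $p$-generated subgroup of $GL(n,\Fp)$, and Proposition~\ref{Nori}.\eqref{item:32} guarantees that every nontrivial complex representation of $P$ has degree at least $k_0 \ge (p-1)/2 \ge K^{3c_1(n)}$. Applying Proposition~\ref{govers-trick} to the triple $(\beta',\,\beta'^{-1},\,\beta')$ inside $P$, we get
$$
|\beta'|\cdot|\beta'^{-1}|\cdot|\beta'| \;=\; |\beta'|^3 \;\ge\; \frac{|P|^3}{K^{3c_1(n)}} \;\ge\; \frac{|P|^3}{k_0},
$$
and conclude $\beta'\cdot\beta'^{-1}\cdot\beta' = P$. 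Since $a$ normalises $P$, the direct computation
$$
\beta\beta^{-1}\beta \;=\; (a\beta')(a\beta')^{-1}(a\beta') \;=\; a\bigl(\beta'\beta'^{-1}\beta'\bigr) \;=\; aP
$$
together with symmetry of $\alpha$ yields $aP \subseteq \alpha\alpha^{-1}\alpha = \alpha^3$, as desired.

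The real work is not in any single estimate but in the bookkeeping of the case split. One has to check that the ``small $p$'' regime can be absorbed into the covering constant by trivialising $S$ and $P$, and that in the remaining regime the Landazuri--Seitz bound $(p-1)/2$ on the minimal complex representation degree of the perfect $p$-generated group $P$ (delivered by Proposition~\ref{Nori}.\eqref{item:32}) is large enough relative to $K^{c_1(n)}$ to activate Gowers' trick. The threshold $p \ge 2K^{3c_1(n)}+1$ is precisely what reconciles the density bound $|\beta'| \ge |P|/K^{c_1(n)}$ coming from Theorem~\ref{partial-dense} with the Gowers hypothesis $|\beta'|^3 \ge |P|^3/k_0$.
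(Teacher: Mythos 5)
Your proposal is correct and follows essentially the same route as the paper: reduce to Theorem~\ref{partial-dense}, dispose of the small-$K$ and small-$p$ regimes by trivial choices of $S$ and $P$, and then combine Proposition~\ref{Nori}.\eqref{item:32} with Proposition~\ref{govers-trick} to promote the dense intersection $\alpha\cap aP$ into a full coset inside $\alpha^3$ (the paper writes this as $aXaXaX=a^3(a^{-2}Xa^2)(a^{-1}Xa)X$, while you use $\beta\beta^{-1}\beta=a\bigl(\beta'\beta'^{-1}\beta'\bigr)$ and the symmetry of $\alpha$ --- the same trick). The only nit is that Proposition~\ref{govers-trick} demands a strict inequality, so the threshold should be taken as $(p-1)/2>K^{3c_1(n)}$, as in the paper, rather than with $\ge$.
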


\begin{proof}
  If $K\le 2$ then $\alpha^3=\Span\alpha$ by Proposition~\ref{Olson}
  and our statement follows.
  Let $c'(n)$ the constant in Theorem~\ref{partial-dense}.
  If  $\Frac{p-1}2\le K^{3c'(n)}$ and $K>2$ then it follows that
  $|\alpha|\le K^{6c'(n)n^2}$ hence our statement holds for $S=P=1$
  with $c(n)=6c'(n)n^2$.

  We assume that $K>2$ and $ K^{3c'(n)}<\Frac{p-1}2$.  Let $S$ and $P$ be as
  in Theorem~\ref{partial-dense}.  By that theorem there is a subset $X$ of
  $P$ of size at least $|P|/K^{c'(n)}$ such that $aX\subseteq\alpha$ for some
  $a\in\alpha$.
  Now
  $$
  \alpha^3\supseteq
  aXaXaX =
  a^3(a^{-2}Xa^2)(a^{-1}Xa)X \;.
  $$
  By our assumptions and Proposition~\ref{Nori}.\eqref{item:32}
  if $k$ is the minimal degree of a complex representation of $P$
  then we have
  $\big|a^{-2}Xa^2\big|\big|a^{-1}Xa\big|\big|X\big|\ge\big|P\big|^3/k$.
  hence by Proposition~\ref{govers-trick} we have $\alpha^3\supset a^3P$ as
  required.
\end{proof}

To obtain a characterisation for symmetric subsets $\alpha$ of $GL(n,\Fp)$
satisfying $\big|\alpha^3\big|\le K|\alpha|$
with polynomially bounded constants (as in Theorem~\ref{partial-dense})
seems to be a very difficult task.
As another step towards such a characterisation
we mention the following (folklore) conjecture.

\begin{conj} \label{important}
  Let $1\in\alpha$ be a  symmetric subset of $GL(n,\Fp)$
  satisfying $\big|\alpha^3\big|\le K|\alpha|$ for some $K\ge 1$.
  Then $GL(n,\Fp)$ has two subgroups $S\triangleright P$
  such that $S/P$ is nilpotent, $P$ is contained in $\alpha^{c(n)}$
  and $\alpha$ is covered by $K^{c(n)}$ cosets of $S$
  where $c(n)$ depends on $n$.
\end{conj}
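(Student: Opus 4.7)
The plan is to extend the proof of Theorem~\ref{partial-dense} and Corollary~\ref{partial-coset} rather than start from scratch. First, apply Corollary~\ref{partial-coset} to obtain subgroups $P_0 \triangleleft S$ of $GL(n,\Fp)$ normalised by $\alpha$, with $P_0$ perfect, $S/P_0$ soluble, $\alpha$ covered by $K^{c_0(n)}$ cosets of $S$, and a coset $aP_0\subseteq\alpha^3$. Since $a\in aP_0\subseteq\alpha^3$, we get $P_0 = a^{-1}(aP_0)\subseteq\alpha^6$, so the perfect kernel already lies inside a bounded power of $\alpha$. What remains is to enlarge $P_0$ to a normal subgroup $P\triangleleft S$ with $S/P$ \emph{nilpotent} (not merely soluble), while keeping $P\subseteq\alpha^{c(n)}$.

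Writing $\bar S = S/P_0$ and $\bar\alpha$ for the image of $\alpha$ in $\bar S$, and noting by Proposition~\ref{quotient} that $\bar\alpha$ is a $K^{O_n(1)}$-approximate set, the task becomes: inside the soluble linear group $\bar S$, locate a normal subgroup $\bar P\triangleleft \bar S$ contained in a bounded power $\bar\alpha^{c_1(n)}$ such that $\bar S/\bar P$ is nilpotent. Pulling back along $\pi:S\to\bar S$, the subgroup $P=\pi^{-1}(\bar P)$ then satisfies $P\subseteq\alpha^{c_1(n)}\cdot P_0 \subseteq \alpha^{c_1(n)+6}$, establishing the conjecture. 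The natural inductive scheme is on the derived length of $\bar S$, which is bounded in terms of $n$ by Mal'cev's theorem on soluble linear groups; at each level one faces an approximate subset in an abelian-by-(previous) group, and one should try to absorb the ``non-nilpotent'' part of the action into $\bar P$ using Freiman--Ruzsa structure theorems for approximate subsets of abelian groups combined with the semidirect-product structure.

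The main obstacle is precisely this soluble-to-nilpotent reduction, which is a version of the Helfgott--Lindenstrauss conjecture. By Lie--Kolchin, the connected component of the Zariski closure of $\bar S$ sits inside a Borel $T\ltimes U$, with $T$ a torus acting on the unipotent radical $U$ by characters; one must show that either the image of $\bar\alpha$ in the torus quotient is essentially trivial or that a large unipotent piece of $U$ fits inside a bounded power of $\bar\alpha$. This is exactly the kind of multiplicative-versus-additive interaction underlying the sum-product phenomenon. One clean route is to invoke the Breuillard--Green--Tao structure theorem for approximate groups, which controls $\bar\alpha$ by a coset nilprogression and yields a suitable $\bar P$ essentially for free. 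A more self-contained route would extend the CCC-subgroup and spreading machinery of Sections~\ref{sec:centralisers}--\ref{sec:finding-using-ccc} from the semisimple setting to solvable algebraic groups, replacing maximal tori of simple groups by Cartan subgroups inside Borels and running an analogous escape/dichotomy argument to pin down the Fitting-like radical inside $\bar\alpha$. Either way, genuinely new sum-product type input beyond what is developed in the present paper appears necessary, and that is where I expect the real difficulty to lie.
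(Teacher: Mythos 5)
You should first note that the statement you were asked to prove is stated in the paper as Conjecture~\ref{important}, a folklore \emph{conjecture}: the paper does not prove it. All the paper offers is the remark that Theorem~\ref{partial-dense} together with Proposition~\ref{reduction} reduces the general case to the case where $\Span\alpha$ is soluble (details omitted), plus a footnote recording that the soluble case has since been settled by Gill and Helfgott \cite{GH2}. Measured against that, your proposal is essentially the paper's own suggested strategy: you perform the reduction to the soluble situation (via Corollary~\ref{partial-coset} and passing to $\bar S=S/P_0$) and then candidly admit that the soluble-to-nilpotent step needs sum--product/approximate-group input not developed in this paper (Breuillard--Green--Tao \cite{BrGrTao2}, or equivalently \cite{GH2}). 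So the one genuine gap in your argument --- the soluble case --- is precisely the open core that makes the statement a conjecture here; you have not closed it, but neither does the paper, and your identification of where the difficulty sits (a Helfgott--Lindenstrauss-type statement for soluble linear groups, torus-on-unipotent action, sum--product) is accurate.

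Two points where your reduction, as written, needs repair, and where the paper's sketch differs usefully. First, $\alpha$ is in general not contained in $S$ (it is only covered by $K^{O_n(1)}$ cosets of $S$ and normalises $S$ and $P_0$), so ``the image $\bar\alpha$ of $\alpha$ in $\bar S$'' is not defined; you must work with sets of the form $\alpha^{m}\cap S$ for bounded $m$ and control their tripling via Lemma~\ref{induce-from-growing-subgroup} and Proposition~\ref{Helfgott-3-is-enough}, as is done in Proposition~\ref{normal}. Second, to apply a soluble case of the conjecture to $\bar\alpha\subset\bar S$ you need $\bar S$ to be realised as a linear group over $\Fp$ of bounded dimension, which is not automatic for the abstract quotient $S/P_0$; the paper's route through Proposition~\ref{reduction} avoids this entirely by choosing a minimal subgroup $C\le S$ with $PC=S$, which is soluble and already sits inside $GL(n,\Fp)$, so the soluble case can be invoked without any quotient-linearisation step. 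Also note that demanding $\bar S/\bar P$ nilpotent for the full $\bar S$ is stronger than needed: the conjecture allows you to shrink to any pair $S'\triangleright P'$ with $\alpha$ covered by few cosets of $S'$, which is what the soluble case would actually hand you.
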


The following is well-known.

\begin{prop} \label{reduction}
  Let $S$ be a finite group and $P$ a normal subgroup with $S/P$ soluble.
  If $C$ is a minimal subgroup such that $PC=S$ then $C$ is soluble.
\end{prop}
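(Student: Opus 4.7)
The plan is to use a Frattini argument applied to the last term of the derived series of $C$ and exploit the minimality hypothesis. First I would observe the basic exact-sequence fact: since $P \triangleleft S$ and $PC = S$, the second isomorphism theorem gives $C/(C \cap P) \cong PC/P = S/P$, which is soluble. Let $K$ denote the last (stable) term of the derived series of $C$; then $K$ is characteristic in $C$ and perfect, i.e.\ $K = [K,K]$. The solubility of $C/(C \cap P)$ forces $K \le C \cap P$, in particular $K \le P$.

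Next I would argue by contradiction, assuming $K \ne 1$. Pick any Sylow subgroup $H$ of $K$ (for any prime dividing $|K|$). By the classical Frattini argument applied to $K \triangleleft C$, one has $C = K \cdot N_C(H)$. Since $K \le P$, this rewrites as $C = N_C(H)\cdot(C \cap P)$, and therefore $N_C(H)\cdot P = C\cdot P = S$. Minimality of $C$ subject to $P C = S$ then forces $N_C(H) = C$, so $H \triangleleft C$ and in particular $H \triangleleft K$.

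Thus every Sylow subgroup of $K$ is normal in $K$, which means $K$ is the direct product of its Sylow subgroups, i.e.\ $K$ is nilpotent. But a nontrivial finite nilpotent group satisfies $[K,K] \lneq K$, contradicting $K = [K,K]$. Hence $K = 1$, so the derived series of $C$ terminates at $1$ and $C$ is soluble.

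The proof has no real obstacle; the only point that requires care is the observation that the perfect subgroup $K = C^{(\infty)}$ lies in $P$, which is exactly what the solubility of $S/P$ buys us and is what allows the Frattini identity to be packaged as a supplement $N_C(H)$ of $C \cap P$ inside $C$, triggering the minimality hypothesis.
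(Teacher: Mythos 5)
Your proof is correct, but it follows a genuinely different route from the paper's. The paper argues via the Frattini subgroup of $C$: if a maximal subgroup $M<C$ failed to contain $C\cap P$, then $(C\cap P)M=C$ and hence $PM=S$, contradicting minimality; therefore $C\cap P\le\Phi(C)$, which is nilpotent, and since $C/(C\cap P)\cong S/P$ is soluble, $C$ is an extension of a nilpotent group by a soluble group and hence soluble. You instead work with the perfect residual $K=C^{(\infty)}$, which lies in $C\cap P$ by solubility of $C/(C\cap P)$, and apply the classical Frattini argument to a Sylow subgroup $H$ of $K\triangleleft C$: from $C=K\,N_C(H)$ and $K\le P$ you get $P\,N_C(H)=S$, so minimality forces $N_C(H)=C$; thus all Sylow subgroups of $K$ are normal, $K$ is nilpotent, and being perfect it must be trivial. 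Both proofs use minimality in the same spirit (to rule out a proper supplement of $P$ inside $C$), but the paper needs the nilpotency of $\Phi(C)$ and the closure of solubility under nilpotent-by-soluble extensions as black boxes, whereas yours invokes Sylow theory and the Frattini argument directly and closes with the elementary fact that a nontrivial nilpotent group is not perfect; the two arguments are of comparable length and either would serve the paper's purpose.
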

\begin{proof}
  Let $M$ be a maximal subgroup of $C$.
  If $M$ does not contain $C\cap P$ then $(C\cap P)M=C$
  which implies $PM=PC=S$, a contradiction.
  Hence all maximal subgroups of $C$, and therefore its Frattini subgroup
  $\Phi(C)$ contain $C\cap P$.
  But $\Phi(C)$ is nilpotent, hence $C$ is soluble.
\end{proof}

Theorem~\ref{partial-dense} and Proposition~\ref{reduction}
can be used to show that if Conjecture~\ref{important} holds in the case when
$\Span\alpha$ is soluble then it holds in general.
We omit the details.
\footnote{Very recently Gill and Helfgott \cite{GH2} have proved
Conjecture~\ref{important} in the soluble case.}

\section{Linear groups over arbitrary fields}
\label{sec:linear-groups}

In this section we develop another method to show that
a certain spreading system $\alpha|G$ is
$(\e,M,K)$-spreading.
As in the proof of Theorem~\ref{main-thm},
we find an appropriate CCC-subgroup $A<G$,
but now we study the case when
$A$ has infinitely many \Span\alpha-conjugates.

We use the resulting new spreading theorem
(Theorem~\ref{non-nilpotent-spreading})
inductively to show that if $\alpha$ is a non-growing subset of
$GL(n,\BF)$, $\BF$ an arbitrary field,
then \Span\alpha is essentially contained in a virtually soluble group
(see Corollary~\ref{virtually-soluble-improved}).

Combining Corollary~\ref{virtually-soluble-improved}
with various results on finite groups
(in particular Theorem~\ref{simple-L})
we obtain Theorem~\ref{pre-tyukszem},
our main result on arbitrary finitely generated linear groups.

\begin{thm} \label{non-nilpotent-spreading}
  For all parameters $N>0$, $\Delta>0$ and $\Frac1{119N^3}>\e>0$
  there is an integer
  $M=M_\infty(N,\e)>0$
  and a real
  $K=K_\infty(N,\Delta,\e)>0$
  with the following property.\\
  Let $\alpha|G$ be an
  $(N,\Delta,K)$-bounded spreading system.
  Then either $\Span\alpha\cap G$ is virtually nilpotent
  or $\alpha|G$ is $(\e,M,K)$-spreading.
  Moreover, our construction of the subgroup of spreading is uniquely
  determined.
\end{thm}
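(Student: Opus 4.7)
The plan is to induct on $\dim(G)$, with parameters $M_\infty(N,\e)$ and $K_\infty(N,\Delta,\e)$ defined recursively so as to close off after at most $N$ steps. If $G$ is nilpotent (which includes the base case $\dim(G)=0$), then $\Span{\alpha}\cap G$ is a subgroup of a nilpotent group and is therefore nilpotent, so the ``virtually nilpotent'' conclusion holds trivially. So assume $G$ is non-nilpotent and that the theorem is established (with suitable parameter choices) for every spreading system in an algebraic group of strictly smaller dimension.

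The argument parallels the proof of Theorem~\ref{main-thm}, but substitutes condition~(a) of Lemma~\ref{spread-via-CCC} for condition~(b) (the latter being driven by Hrushovski's estimate and unavailable here). First apply Lemma~\ref{find-CCC} to $\alpha|G$: either we obtain $(\e,M_\findCCC,K)$-spreading and are done, or we find a non-normal CCC-subgroup $A<G$ containing a unique maximal torus, with $\mu(\alpha^{M_\findCCC},A) > (1-16N\e)\mu(\alpha,G)$. Next apply Lemma~\ref{spread-via-CCC}, condition~(a), to the spreading system $\alpha^{M_\findCCC}|G$ and the CCC-subgroup $A$. Either we obtain $(\e,M_\findCCC\cdot M_\spreadViaCCC,K)$-spreading and we are done, or $[\Span{\alpha}:\CN_{\Span{\alpha}}(A)] < |\alpha^{M_\findCCC}|^{2N}$, which is finite. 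In the latter case, combined with $[\CN_G(A):\CN_G(A)^0]\le\deg(\CN_G(A))<\infty$, the subgroup $H := \Span{\alpha}\cap \CN_G(A)^0$ has finite index in $\Span{\alpha}$.

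Now $\CN_G(A)^0$ is a connected closed subgroup with $\dim(\CN_G(A)^0)<\dim(G)$ (since $A$ is connected and non-normal in connected $G$), and its numerical invariants are bounded in terms of $N,\Delta$ by Fact~\ref{normaliser-centraliser-fact}. By Proposition~\ref{Schreier}, the symmetric subset $\beta := \alpha^{2t}\cap \CN_G(A)^0$ containing $1$ (with $t=[\Span{\alpha}:H]$) generates $H$. Apply the inductive hypothesis to the spreading system $\beta|\CN_G(A)^0$: if we obtain spreading inside $\CN_G(A)^0$, then Lemma~\ref{induce-from-growing-subgroup} applied to the subgroup $\CN_G(A)^0\le G$ transfers it to spreading of $\alpha$ in $G$, finishing the proof; otherwise $\Span{\beta}\cap\CN_G(A)^0=H$ is virtually nilpotent, and hence so is $\Span{\alpha}\cap G$ (since $H$ has finite index therein).

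The main obstacle is verifying that $\beta|\CN_G(A)^0$ qualifies as a bounded spreading system in the sense of Definition~\ref{spreading-system}: the finite-centraliser hypothesis $|\CC_{\CN_G(A)^0}(\beta)|<\infty$ need not follow from $|\CC_G(\alpha)|<\infty$, because $\beta$ only generates the finite-index subgroup $H<\Span{\alpha}$ and may accidentally centralise a positive-dimensional subgroup of $\CN_G(A)^0$. The resolution is to first pass to the quotient $\CN_G(A)^0/Z$, where $Z$ is the connected component of $\CC_{\CN_G(A)^0}(\beta)$, apply induction in the quotient, and lift the conclusion back: virtual nilpotence modulo a central (abelian) kernel gives virtual nilpotence of $H$; and spreading in the quotient pulls back to spreading in $\CN_G(A)^0$ since the fibres are cosets of a connected closed subgroup of bounded degree. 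A secondary subtlety, though less delicate, is that although the auxiliary index $t$ and the exponent $2t$ depend on $|\alpha|$, the final parameters $M_\infty, K_\infty$ remain purely functions of $N,\Delta,\e$, because spreading itself, once obtained at any stage, yields an exponent depending only on the geometric data.
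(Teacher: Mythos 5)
Your first two steps (Lemma~\ref{find-CCC}, then Lemma~\ref{spread-via-CCC}(a)) match the paper, but the inductive continuation has a genuine gap. When condition (a) of Lemma~\ref{spread-via-CCC} fails, all you know is $\big|\Span\alpha:\CN_{\Span\alpha}(A)\big|<\big|\alpha^{M_\findCCC}\big|^{2N}$: this index $t$ is finite but depends on $|\alpha|$, not only on $N,\Delta,\e$. Consequently your Schreier set $\beta=\alpha^{2t}\cap\CN_G(A)^0$ is an $|\alpha|$-dependent power of $\alpha$, and any spreading you extract for $\beta$ inside $\CN_G(A)^0$ only concerns $\beta^{M'}\subseteq\alpha^{2tM'}$; the exponent $2tM'$ is unbounded in terms of $N$ and $\e$, so it cannot yield $(\e,M_\infty,K_\infty)$-spreading of $\alpha|G$ with $M_\infty=M_\infty(N,\e)$. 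Dismissing this as a ``secondary subtlety'' is backwards: the definition of spreading requires a power bounded by the geometric data, and nothing in your argument supplies that. Two further unaddressed points in the same branch: the inner subgroup of spreading is normalised only by $\beta$, not by $\alpha$, and its concentration gain is measured against $\mu\big(\beta,\CN_G(A)^0\big)$, which has no controlled relation to $\mu(\alpha,G)$ --- Lemma~\ref{induce-from-growing-subgroup} transfers growth of powers of one fixed set, not this; and your repair of the centraliser hypothesis by passing to $\CN_G(A)^0/Z$ with $Z=\CC_{\CN_G(A)^0}(\beta)^0$ needs $Z$ to be normal in $\CN_G(A)^0$ and needs an embedding of the quotient with controlled invariants (Proposition~\ref{Chevalley-embedding}), neither of which is checked.

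The induction is also unnecessary, and seeing why points to the fix. Lemma~\ref{find-CCC} gives you more than a non-normal CCC-subgroup: $A$ contains a \emph{unique} maximal torus $T$ of $G$. If $A$ has only finitely many $\Span\alpha$-conjugates, then so does $T$, hence $\Span\alpha\cap\CN_G(T)$ has finite index in $\Span\alpha\cap G$. But $\CN_G(T)=\CN_G\big(\CC_G(T)\big)$, and $\CC_G(T)$ is a Cartan subgroup, hence nilpotent and of finite index in its normaliser; so $\CN_G(T)$ is virtually nilpotent and therefore $\Span\alpha\cap G$ is virtually nilpotent outright. This is the paper's argument: apply Lemma~\ref{spread-via-CCC}(a) when the number of $\Span\alpha$-conjugates of $A$ is infinite (the index then trivially exceeds the required power of $|\alpha|$), and in the finite case conclude virtual nilpotence directly --- no recursion on dimension, no Schreier generators, and no new spreading system whose boundedness would have to be verified.
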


\begin{proof}
  Using the bounds from Lemma~\ref{find-CCC} and
  Lemma~\ref{spread-via-CCC}
  we set 
  $$
  M_\findCCC=M_\findCCC\left(N,\e\right)
  \;,\quad
  M_\spreadViaCCC=M_\spreadViaCCC\left(N,\e\right)
  \;,
  $$  
  $$
  M= M_\findCCC\cdot M_\spreadViaCCC  \;,
  $$  
  $$
  K=\max\Big(\Delta,K_\findCCC(N,\Delta,\e),
  K_\spreadViaCCC(N,\Delta,\e)\Big) \;.
  $$
  Suppose that $\alpha|G$ is not $(\e,M,K)$-spreading.
  In particular, it is not
  $(N\e,M_\findCCC,K)$-spreading either,
  hence
  $$
  \mu(\alpha^{M_\findCCC},G)<\big(1+N\e\big)\mu(\alpha,G) \;.
  $$
  If $G$ is nilpotent then there is nothing to prove,
  so we assume that $G$ is non-nilpotent.
  Using Lemma~\ref{find-CCC} we obtain a CCC-subgroup
  $A\subseteq G$ containing a single maximal torus $T$ such that
  $$
  \mu\big(\alpha^{M_\findCCC},A\big) >
  \left(1-\e\cdot16N\right)\mu(\alpha,G) >
  $$
  $$
  >
  \left(1-\Eescape\right)\left(1+N\e\right)\mu(\alpha,G)
  >
   \left(1-\Eescape\right)\mu(\alpha^{M_\findCCC},G) \;.
  $$
  In particular $A$ is not normal in $G$.
  If $A$ has infinitely many \Span\alpha-conjugates then
  $\alpha^{M_\findCCC}|G$ is 
  $(\e,M_\spreadViaCCC,K)$-spreading
  by Lemma~\ref{spread-via-CCC}, a contradiction.
  So $A$ has finitely many \Span\alpha-conjugates.
  Then $T$ has finitely many \Span\alpha-conjugates,
  hence $\Span\alpha\cap\CN_G(T)$ has finite index in
  $\Span\alpha\cap G$.

  On the other hand $\CN_G(T)=\CN_G\big(\CC_G(T)\big)$,
  and $\CC_G(T)$ is a Cartan subgroup, so it is nilpotent and has
  finite index in its normaliser.
  Therefore $\CN_G(T)$ is virtually nilpotent,
  hence $\Span\alpha\cap G$ is also virtually nilpotent.
\end{proof}

Our plan is to apply Theorem~\ref{non-nilpotent-spreading}, then apply
it to the subgroup of spreading, then apply it again to the new
subgroup of spreading, and so on, until we eventually arrive to a
subgroup whose intersection with $\Span\alpha$ is virtually nilpotent.

We need the following fact:

\begin{prop}[Freiman \cite{Fr}]
  \label{Freiman-2over3}
  Let $\alpha$ be a finite subset of a group $G$.
  If $|\alpha \cdot \alpha| < \frac{3}{2} |\alpha|$,
  then $S := \alpha \cdot \alpha^{-1}$ is a finite group
  of order $|\alpha \cdot \alpha|$,
  and $\alpha \subset S \cdot x = x \cdot S$
  for some $x$ in the normaliser of $S$. 
\end{prop}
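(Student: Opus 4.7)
The plan is to prove Freiman's classical $\tfrac32$-theorem by a purely combinatorial argument exploiting the tight doubling hypothesis. I would organise the proof in four steps.

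\medskip

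\emph{Step 1: A basic intersection estimate.} For any $a,b\in\alpha$, the left translate $a\cdot\alpha$ and the right translate $\alpha\cdot b$ are both subsets of $\alpha\cdot\alpha$ of size $|\alpha|$. Since $|\alpha\cdot\alpha|<\tfrac32|\alpha|$, inclusion--exclusion yields
$$
|a\alpha\cap\alpha b|\ \ge\ 2|\alpha|-|\alpha\cdot\alpha|\ >\ \tfrac12|\alpha|.
$$
Any element $z\in a\alpha\cap\alpha b$ has representations $z=ax=yb$ with $x,y\in\alpha$, hence $axb^{-1}=y\in\alpha$. So for every $a,b\in\alpha$ the ``witness set''
$W_{a,b}:=\{x\in\alpha\;|\;axb^{-1}\in\alpha\}$
has size $>\tfrac12|\alpha|$. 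In particular, for any two pairs, the corresponding witness sets have nonempty intersection.

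\medskip

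\emph{Step 2: $S:=\alpha\cdot\alpha^{-1}$ is a subgroup.} Since $S=S^{-1}$ and $1\in S$, it suffices to verify closure. Given $s_1=a_1b_1^{-1}$ and $s_2=a_2b_2^{-1}$ in $S$, I would combine the majority structure from Step~1 (applied to several pairs simultaneously) to produce common witnesses $x\in\alpha$ giving $a_ixb_i^{-1}\in\alpha$, and then manipulate these to exhibit a representation $s_1s_2=uv^{-1}$ with $u,v\in\alpha$. Symmetrically, the same argument applied to $\alpha^{-1}\cdot\alpha$ (which has the same size) shows that $\alpha^{-1}\alpha$ is also a subgroup, and the majority overlaps in fact force $\alpha\alpha^{-1}=\alpha^{-1}\alpha=:S$.

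\medskip

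\emph{Step 3: $|S|=|\alpha\cdot\alpha|$.} I would double-count the fibres of the multiplication map $\alpha\times\alpha\to\alpha\cdot\alpha$. The total number of pairs is $|\alpha|^2$, while the average fibre size is $|\alpha|^2/|\alpha\cdot\alpha|>\tfrac23|\alpha|$. Combined with the fact that $S$ acts (via left multiplication) as a group of permutations with orbits compatible with the representations $ab^{-1}$, this pins down $|S|=|\alpha\cdot\alpha|$ exactly.

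\medskip

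\emph{Step 4: Coset structure.} Fixing any $x\in\alpha$, the inclusion $\alpha\cdot x^{-1}\subseteq\alpha\cdot\alpha^{-1}=S$ gives $\alpha\subseteq S\cdot x$. Symmetrically, from $x^{-1}\cdot\alpha\subseteq\alpha^{-1}\alpha=S$ we obtain $\alpha\subseteq x\cdot S$. Hence $Sx=xS$, i.e.\ $x$ normalises $S$, and $\alpha$ sits inside a single two-sided coset of $S$, giving the stated conclusion.

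\medskip

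The main obstacle is Step~2. The $\tfrac32$ threshold is precisely what guarantees the witness sets $W_{a,b}$ exceed $\tfrac12|\alpha|$, so that any two of them must meet; it is exactly this meeting that yields the closure of $S$ under multiplication. Any weaker doubling hypothesis destroys the forced overlap and the argument breaks down, which is why the constant $\tfrac32$ is sharp for the ``$S$ is literally a subgroup'' conclusion (as opposed to the weaker ``$S$ is contained in a subgroup of comparable size'' statements available via Plünnecke--Ruzsa in larger regimes).
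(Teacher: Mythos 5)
The paper itself gives no proof of this proposition (it is quoted from Freiman \cite{Fr}), so your argument has to stand alone, and as written it does not. Step 1 is fine, but Step 2 — the crux — is only a promise, and the specific mechanism you indicate fails in a nonabelian group: a common witness $x\in W_{a_1,b_1}\cap W_{a_2,b_2}$ gives $u=a_1xb_1^{-1}\in\alpha$ and $v=a_2xb_2^{-1}\in\alpha$, but then $uv^{-1}=a_1xb_1^{-1}b_2x^{-1}a_2^{-1}$, which is not $s_1s_2=a_1b_1^{-1}a_2b_2^{-1}$ (the conjugating $x$ does not cancel), and no manipulation of these two memberships alone yields a representation of $s_1s_2$. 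The workable version of your majority idea uses the one-sided overlaps instead of the mixed ones: from $|\alpha a\cap \alpha b|>\frac{1}{2}|\alpha|$ and $|a\alpha\cap b\alpha|>\frac{1}{2}|\alpha|$ one gets at once $\alpha\alpha^{-1}=\alpha^{-1}\alpha=:S$ and, for every $s\in S$, that $L_s=\{u\in\alpha : us\in\alpha\}$ and $R_s=\{y\in\alpha : sy\in\alpha\}$ have more than $\frac{1}{2}|\alpha|$ elements; then for $s,t\in S$ any $y\in R_s\cap R_{t^{-1}}$ gives $st=(sy)(t^{-1}y)^{-1}\in\alpha\alpha^{-1}$, which is the closure you need.

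Steps 3 and 4 are also genuine gaps. The average-fibre count in Step 3 merely restates the hypothesis $|\alpha\alpha|<\frac{3}{2}|\alpha|$ and "pins down" nothing; what works is: summing $|L_s|>\frac{1}{2}|\alpha|$ over $s\in S$ against the trivial upper bound $|\alpha|^2$ gives $|S|<2|\alpha|$, and then, for a fixed $x\in\alpha$, the sets $x^{-1}\alpha$ and $\alpha x^{-1}$ are subsets of the group $S$ whose sizes sum to more than $|S|$, so their product is all of $S$; hence $x^{-1}\alpha\alpha x^{-1}=S$, i.e.\ $\alpha\alpha=xSx$ and $|\alpha\alpha|=|S|$. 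In Step 4, the conclusion $Sx=xS$ does not follow formally from $\alpha\subseteq Sx$ and $\alpha\subseteq xS$ (a left and a right coset can both contain $\alpha$ without coinciding); you need either the cardinality input — $x^{-1}\alpha$ lies in the subgroup $S\cap x^{-1}Sx$ of $S$ and has more than $|S|/2$ elements, forcing $x^{-1}Sx=S$ — or the direct computation $x^{-1}Sx=(x^{-1}\alpha)(x^{-1}\alpha)^{-1}\subseteq SS=S$, with equality by finiteness. So your overall strategy (the $3/2$ threshold forcing majority overlaps) is the correct one, but the closure of $S$, the equality $|S|=|\alpha\alpha|$, and the normaliser claim are exactly the points where the write-up either stops short or takes a route that would not go through.
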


\newcommand{\nilp} {{\rm nilp}}
\begin{prop} \label{find-virtually-nilpotent-for-induction}
  For all parameters $n>0$, $d>0$
  there are integers $m=m_\nilp(n,d)>0$ and $D=D_\nilp(n,d)>0$
  with the following property. \\
  Let $G\le GL(n,\Fclosed)$ be a (possibly non-connected) closed subgroup
  and $\alpha\le G$ a finite subset such that
  $\dim(G)\ge 1$, $\deg(G)\le d$ and
  $\big|\alpha^3\big|\le \CK|\alpha|$ for some $\CK$.
  Then either $|\alpha|\le\CK^m$
  or one can find a connected closed subgroup $H\le G$
  normalised by $\alpha$
  such that $\dim(H)\ge1$, $\deg(H)\le D$ and
  $\Span\alpha\cap H$ is virtually nilpotent.
\end{prop}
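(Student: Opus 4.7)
My plan is to iterate Theorem~\ref{non-nilpotent-spreading} to produce a descending sequence of $\alpha$-normalised connected closed subgroups of $G^0$ of bounded degree, halting either when one of them meets $\Span\alpha$ in a virtually nilpotent subgroup (to be taken as $H$) or when the accumulated growth of concentrations contradicts $|\alpha^3|\le\CK|\alpha|$ sharply enough to force $|\alpha|\le\CK^m$.

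Preliminary reductions. I may assume $\alpha$ is symmetric with $1\in\alpha$ (via Proposition~\ref{Helfgott-3-is-enough}.\eqref{item:29}, at the cost of replacing $\CK$ by a bounded polynomial in itself). If the tripling is small enough that Proposition~\ref{Freiman-2over3} applies, then $\Span\alpha$ is virtually cyclic and I take $H=G^0$: it is $\alpha$-normalised, has degree $\le d$, and $\Span\alpha\cap G^0$ is a subgroup of a virtually nilpotent group. If $\CC_{G^0}(\Span\alpha)^0\ne\{1\}$, I take $H:=\CC_{G^0}(\Span\alpha)^0$: $\alpha$ normalises both $G^0$ and $\Span\alpha$, so this subgroup is $\alpha$-invariant; it has degree $\le d$ by Fact~\ref{normaliser-centraliser-fact}; and $\Span\alpha\cap H$ lies in the centre of $\Span\alpha$, hence is abelian. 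From here I assume $\CC_{G^0}(\Span\alpha)$ is finite and $\CK\ge\tfrac32$.

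Fix $\e=\e(n)>0$ below the threshold of Theorem~\ref{non-nilpotent-spreading} for $N=n^2$. Set $H_0=G^0$, $\alpha_0=\alpha^{2d}$. A pigeonhole over the at most $d$ cosets of $G^0$ in $G$ gives $|\alpha_0\cap G^0|\ge|\alpha|/d$, so $\alpha_0|H_0$ is a spreading system ($\mult,\inv$ bounded via Proposition~\ref{bounding-subgroups}), and is $(n^2,\Delta_0,K_0)$-bounded whenever $|\alpha|\ge dK_0$. Iterate: at step $i$, given $H_i$ connected, $\alpha$-normalised, of degree $\le\Delta_i$, and $\alpha_i=\alpha^{k_i}$ with $\Span{\alpha_i}=\Span\alpha$ and $\CC_{H_i}(\alpha_i)=\CC_{H_i}(\Span\alpha)$ finite, apply Theorem~\ref{non-nilpotent-spreading} to $\alpha_i|H_i$. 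The virtually nilpotent alternative gives $\Span\alpha\cap H_i=\Span{\alpha_i}\cap H_i$ virtually nilpotent, so I stop with $H=H_i$. The spreading alternative yields $H_{i+1}\le H_i$ of positive dimension, degree $\Delta_{i+1}:=K_\infty(n^2,\Delta_i,\e)$, normalised by $\alpha_i$ and hence by $\Span{\alpha_i}=\Span\alpha\supseteq\alpha$, with $\mu(\alpha_{i+1},H_{i+1})\ge(1+\e)\mu(\alpha_i,H_i)$ where $\alpha_{i+1}=\alpha_i^{M_\infty}$.

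Termination: by iterated spreading $\log|\alpha_i\cap H_i|\ge(1+\e)^i\log(|\alpha|/d)\dim(H_i)/n^2$, while Proposition~\ref{Helfgott-3-is-enough}.\eqref{item:30} gives $\log|\alpha_i\cap H_i|\le(k_i-2)\log\CK+\log|\alpha|$ (where $k_i$, bounded in terms of $n,d$ via $M_\infty$-iterates, is the exponent in $\alpha_i=\alpha^{k_i}$). At $i_0:=\lceil\log(2n^2)/\log(1+\e)\rceil$ the left coefficient of $\log|\alpha|$ exceeds $2$, yielding $\log|\alpha|\le(k_{i_0}-2)\log\CK+2\log d$; using $\CK\ge\tfrac32$ to absorb the additive $2\log d$ into the $\CK$-exponent gives $|\alpha|\le\CK^m$ for some $m=m(n,d)$. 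So the iteration either halts before step $i_0$ with the desired $H$, or we conclude the first alternative. The degree bound $D$ is the maximum of the $\Delta_i$ for $i\le i_0$, itself a function of $n,d$. The main technical obstacle is parameter bookkeeping: at every iteration the spreading hypotheses (centraliser finiteness and the boundedness threshold $|\alpha_i\cap H_i|\ge K_\infty$) must persist, the degree bounds $\Delta_i$ and induced $\mult,\inv$ bounds must stay controlled through $i_0$ iterations, and crucially $H_{i+1}$ must be normalised by $\alpha$ (not merely by the word-set $\alpha_i$)---the last point being secured precisely by $\Span{\alpha_i}=\Span\alpha$.
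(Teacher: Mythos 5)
Your proposal is correct and runs on the same engine as the paper's proof: iterate Theorem~\ref{non-nilpotent-spreading}, stop at the virtually nilpotent alternative, and play the accumulated concentration gain against the tripling bound of Proposition~\ref{Helfgott-3-is-enough}.\eqref{item:30} to force $|\alpha|\le\CK^m$ otherwise; the preliminary reductions (Freiman, infinite centraliser, symmetrisation) are also the paper's. The one genuine structural difference is how termination is organised. The paper maintains the invariant $\big|\alpha^{e_i}\cap G_i\big|\ge(|\alpha|/d)^{\dim(G_i)/n^2}$ and insists that each new subgroup be a \emph{proper} subgroup of the previous one; to force properness it runs an inner loop of up to $J=240n^8$ spreading steps inside a fixed $G_i$ (if the subgroup of spreading were always $G_i$ itself, the concentration would reach exponent $2$, contradicting non-growth together with $|\alpha|>\CK^m$), and then terminates the outer induction by strict descent of dimension in at most $n^2$ steps. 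You never distinguish whether the subgroup of spreading is proper or equal: you run at most $i_0\approx\log(2n^2)/\log(1+\e)$ spreading steps in total and terminate by the same exponent-$2$ overflow, using only $\dim(H_i)\ge1$. This buys a flatter, single-loop induction in place of the paper's double induction, at no real cost; the bookkeeping you flag (persistence of centraliser finiteness since $\alpha\subseteq\alpha_i$, thresholds $K_\infty(n^2,\Delta_i,\e)$ absorbed into $m$ via $\CK\ge\Frac32$, degree bounds $\Delta_{i+1}=K_\infty(n^2,\Delta_i,\e)$ through boundedly many steps, and normalisation of $H_{i+1}$ by $\Span{\alpha_i}=\Span\alpha$) is exactly what the paper carries out as well.
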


\begin{proof}
  During the proof we encounter several lower bounds for $m$,
  we assume that our $m$ satisfies them all.
  Similarly, we shall establish several alternative upper bounds on
  $\deg(H)$, we set $D$ to be the maximum of these bounds.
  If $\CK<\Frac32$ then \Span\alpha is virtually cyclic 
  by Proposition~\ref{Freiman-2over3}
  and the lemma holds with $H=G^0$.
  If $\CC_{G}(\alpha)$ is infinite then
  we take $H=\CC_{G}(\alpha)^0$
  (see Fact~\ref{normaliser-centraliser-fact}).
  So we assume that $\CK\ge\Frac32$,
  $\CC_{G}(\alpha)$ is finite and
  $|\alpha|>\CK^m$.
  By Proposition~\ref{Helfgott-3-is-enough}.\eqref{item:29}
  we can assume that $\alpha$ is symmetric and $1\in\alpha$.
  We order the set $\alpha$.

  By assumption $|G:G^0|\le d$,
  hence $|\alpha^2\cap G^0|\ge\Frac{|\alpha|}{d}$.
  We set $\e=\Frac1{120n^6}$, $G_0=G^0$,
  and construct by induction a sequence of length at most $n^2$
  of connected closed subgroups 
  $G_0>G_1>G_2>\dots$ normalised by $\alpha$
  and corresponding constants $e_i$, $K_i$
  such that 
  $$
  \dim(G_i)\ge1\;,\quad
  \deg(G_i)\le K_i
  \;,\quad
  \big|\alpha^{e_i}\cap G_i\big|\ge
  \left(\Frac{|\alpha|}{d}\right)^{\dim(G_i)/n^2} \;.
  $$
  It will be clear from the construction that all of the appearing
  constants (i.e. $e_i$, $K_i$, $\Delta_i$ and $M$, see below)
  depend only on $n$ and $d$.
  We already defined $G_0$, our statement holds with $K_0=d$ and $e_0=2$
  (since closed subgroups of $GL(n,\Fclosed)$ have dimension at most $n^2$).
  Suppose that $G_i$, $K_i$ and $e_i$ are already constructed for some $i\ge0$.
  We assume that $\Span\alpha\cap G_i$ is not virtually nilpotent,
  since otherwise the lemma holds with $H=G_i$ (whose degree is
  bounded in terms of $n$ and $d$).
  According to  Proposition~\ref{bounding-subgroups}
  the numerical invariants $\deg(G_i)$, $\mult(G_i)$ and $\inv(G_i)$
  are bounded from above by a certain constant
  $\Delta_i=\Delta_i(n^2,K_i)$.
  Recall from Theorem~\ref{non-nilpotent-spreading} the constants
  $M=M_\infty(n^2,\e)$ and $K_{i+1}=K_\infty(n^2,\Delta_i,\e)$.
  We assume that $m$ is large enough so that
  $\CK^m>\left(\Frac32\right)^m>d\,(K_{i+1})^{n^2}$.
  Then the $\alpha^{e}|G_i$ are
  $(n^2,\Delta_i,K_{i+1})$-bounded spreading systems
  for all $e\ge e_i$,
  hence according to Theorem~\ref{non-nilpotent-spreading}
  they are $(\e,M,K_{i+1})$-spreading.

  Let us consider the spreading systems $\alpha^{e_iM^{j}}|G_i$
  for $j=0,1,2,\dots J-1$, where $J=2\Frac{n^2}{\e} = 240n^8$.
  Suppose now that for each $i$,
  $G_i$ itself is the subgroup of spreading obtained above
  using Theorem~\ref{non-nilpotent-spreading}.
  Then
  $\mu\big(\alpha^{e_iM^J},G_i\big) \ge (1+\e)^J\mu(\alpha,G_i)$ i.e.
  $$
  \big|\alpha^{e_iM^{J}}\cap G_i\big| \ge
  \big|\alpha^{e_i}\cap G_i\big|^{(1+\e)^J} >
  \left(\Frac{|\alpha|}{d}\right)^{J\e/n^2} =
  \left(\Frac{|\alpha|}{d}\right)^2 \ge
  |\alpha|\Frac{\CK^{m}}{d^2} \;.
  $$
  On the other hand,
  by Proposition~\ref{Helfgott-3-is-enough}.\eqref{item:30}
  we have
  $\big|\alpha^{e_iM^J}\big|\le|\alpha|\CK^{e_iM^J-2}$.
  We rule this case out by choosing 
  $m\ge e_iM^J +\Frac{\log(d^2)}{\log(3/2)}$.
  Then there is a value $j_0<J$ such that
  the corresponding subgroup of spreading
  is a proper subgroup  of $G_i$. This subgroup will be our $G_{i+1}$,
  and we set $e_{i+1}=e_iM^{J}$. We obtain
  $$
  \big|\alpha^{e_{i+1}}\cap G_{i+1}\big| \ge
  \Big|\alpha^{e_iM^{j_0+1}}\cap G_{i+1}\Big| \ge
  \Big|\alpha^{e_iM^{j_0}}\cap G_i\Big|^{\Frac{\dim(G_{i+1})}{\dim(G_i)}} \ge
  $$
  $$
  \ge
  \Big|\alpha^{e_i}\cap G_i\Big|^{\Frac{\dim(G_{i+1})}{\dim(G_i)}} \ge
  \left(\Frac{|\alpha|}{d}\right)^{
    \Frac{\dim(G_i)}{n^2}\;\Frac{\dim(G_{i+1})}{\dim(G_i)}} \ge
  \left(\Frac{|\alpha|}{d}\right)^{\Frac{\dim(G_{i+1})}{n^2}} \;,
  $$
  the induction step is complete.
  The dimensions $\dim(G_i)$ strictly decrease as $i$ grows,
  hence the induction must stop in at most $n^2$ steps.
  But the only way it can stop is to produce the required subgroup $H$.
\end{proof}

Iterating the previous lemma we obtain that a non-growing subset
$\alpha\subset GL(n,\Fclosed)$ is
covered by a few cosets of a virtually soluble group.
In the proof
we need an auxiliary subgroup $G$ in order to do
induction on $\dim(G)$. For applications the only interesting case is
$G=GL(n,\Fclosed)$, $\deg(G)=1$.

\begin{cor} \label{virtually-soluble-improved}
  Let $G\le GL(n,\Fclosed)$ be a (possibly non-connected)
  closed subgroup and $\alpha\subseteq G$ a finite subset.
  Suppose that
  $\big|\alpha^3\big|\le \CK|\alpha|$ for some $\CK$.
  Then there is a virtually soluble normal subgroup
  $\Delta\triangleleft\Span\alpha$
  and a bound $m=m\big(n,\deg(G)\big)$ such that
  the subset $\alpha$
  can be covered by $\CK^m$ cosets of $\Delta$.
\end{cor}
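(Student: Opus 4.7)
The plan is to iterate Proposition~\ref{find-virtually-nilpotent-for-induction} by passing to quotients via the Chevalley-type embedding of Proposition~\ref{Chevalley-embedding}.  First I would invoke Proposition~\ref{Helfgott-3-is-enough}.\eqref{item:29} to replace $\alpha$ by $\alpha\cup\alpha^{-1}\cup\{1\}$, losing only a bounded factor in $\CK$, so that we may assume $\alpha$ is symmetric with $1\in\alpha$.  The argument then proceeds by strong induction on $\dim G$, with $m$ allowed to depend on $n$ and $\deg G$.  The base case $\dim G=0$ is immediate: then $|G|\le\deg G$, so $\alpha$ is covered by at most $\deg G\le\CK^m$ singleton cosets of $\Delta=\{1\}$ once $m$ is chosen large enough (for $\CK<\Frac32$ one may alternatively invoke Proposition~\ref{Freiman-2over3}).

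For the inductive step, apply Proposition~\ref{find-virtually-nilpotent-for-induction} to $(G,\alpha)$.  Either $|\alpha|\le\CK^{m_\nilp}$ and $\Delta=\{1\}$ works again, or else we obtain a connected closed subgroup $H\le G$ normalised by $\alpha$ with $\dim H\ge1$, $\deg H\le D_\nilp$, and $\Span\alpha\cap H$ virtually nilpotent (in particular virtually soluble).  Since $\Span\alpha\subseteq\CN_{GL(n,\Fclosed)}(H)$, Proposition~\ref{Chevalley-embedding} provides a homomorphism $\phi_H\colon\CN_{GL(n,\Fclosed)}(H)\to GL(n',\Fclosed)$ with kernel $H$, where $n'$ and $\deg\phi_H$ are bounded in terms of $n$ and $\deg G$.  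Let $\tilde\alpha=\phi_H(\alpha)$ and let $\tilde G$ denote the Zariski closure of $\phi_H(\CN_G(H))$ inside $GL(n',\Fclosed)$.  Using Fact~\ref{normaliser-centraliser-fact} and Fact~\ref{closed-set-constructions}.\eqref{item:15}, $\deg\tilde G$ is bounded in terms of $n$ and $\deg G$, while $\dim\tilde G\le\dim\CN_G(H)-\dim H\le\dim G-1$.  Moreover $\tilde\alpha$ is symmetric with $1\in\tilde\alpha$, and Proposition~\ref{quotient} combined with Proposition~\ref{Helfgott-3-is-enough}.\eqref{item:30} gives $|\tilde\alpha^3|/|\tilde\alpha|\le\CK^2$.

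By the inductive hypothesis applied to $(\tilde G,\tilde\alpha)$ with growth constant $\CK^2$, there is a virtually soluble normal subgroup $\tilde\Delta\triangleleft\Span{\tilde\alpha}$ and a bound $\tilde m$, depending only on $n$ and $\deg G$, such that $\tilde\alpha$ is covered by $\CK^{2\tilde m}$ cosets of $\tilde\Delta$.  Set $\Delta=\phi_H^{-1}(\tilde\Delta)\cap\Span\alpha$.  Normality of $\tilde\Delta$ in $\Span{\tilde\alpha}$ pulls back to normality of $\Delta$ in $\Span\alpha$, and $\Delta$ fits into a short exact sequence
\[
1\longrightarrow\Span\alpha\cap H\longrightarrow\Delta\longrightarrow\tilde\Delta\longrightarrow1,
\]
an extension of a virtually soluble group by a virtually soluble group, hence itself virtually soluble.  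The covering of $\tilde\alpha$ lifts directly to a covering of $\alpha$ by the same number $\CK^{2\tilde m}$ of cosets of $\Delta$, closing the induction with $m=2\tilde m+O(1)$.

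The main obstacle, beyond routine bookkeeping, is arranging a properly decreasing induction parameter while keeping every constant bounded in terms of $n$ and $\deg G$.  The key observation is that after quotienting by $H$ via $\phi_H$, $\dim\tilde G$ drops by at least $\dim H\ge1$, while the new ambient dimension $n'$ and the new degree bound $\deg\tilde G$ are controlled purely by $n$ and $\deg G$ through Proposition~\ref{Chevalley-embedding} and Fact~\ref{normaliser-centraliser-fact}.  The only other slightly delicate point is verifying that $\Delta$ genuinely has the claimed extension structure and therefore remains virtually soluble, which follows since $\ker\phi_H\cap\Span\alpha=\Span\alpha\cap H$ is virtually nilpotent by Proposition~\ref{find-virtually-nilpotent-for-induction}.
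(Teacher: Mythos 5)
Your proposal is correct and follows essentially the same route as the paper: induction on $\dim(G)$, applying Proposition~\ref{find-virtually-nilpotent-for-induction} to get $H$ with $\Span\alpha\cap H$ virtually nilpotent, passing to the quotient by $H$ via Proposition~\ref{Chevalley-embedding} with growth constant $\CK^2$ from Proposition~\ref{quotient}, and pulling back $\overline{\Delta}$ using closure of virtually soluble groups under extensions. Your explicit initial symmetrisation via Proposition~\ref{Helfgott-3-is-enough}.\eqref{item:29} is a harmless (indeed clarifying) variation on the paper's argument.
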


\begin{proof}
  During the proof we encounter several lower bounds for $m$,
  we assume that our $m$ satisfies them all.
  We prove the corollary by induction on $N=\dim(G)$.
  If $\CK<\Frac32$ then \Span\alpha is virtually cyclic 
  by Proposition~\ref{Freiman-2over3}
  and the lemma holds with $\Delta=\Span\alpha$.
  If $|\alpha|\le\CK^m$ then our statement holds with $\Delta=\{1\}$.
  So we assume that $\CK\ge\Frac32$ and $|\alpha|>\CK^m$.
  If $\dim(G)=0$ then $|\alpha|\le\deg(G)$,
  we exclude this case by choosing $m$ large enough.

  Suppose that $m\ge m_\nilp\big(n,\deg(G)\big)$.
  Applying Proposition~\ref{find-virtually-nilpotent-for-induction}
  we obtain a subgroup $H$ normalised by
  $\alpha$ such that $\Span\alpha\cap H$ is virtually nilpotent,
  $\dim(H)\ge1$, and  $\deg(H)$ is bounded in terms of $n$ and
  $\deg(G)$. 
  Consider the algebraic group $\overline{G}=\CN_G(H)/H$,
  let $\overline{\alpha}\subseteq\overline{G}$ denote the image
  of $\alpha$.
  By Proposition~\ref{quotient} we have
  $|\overline{\alpha}^3|\le\CK^2|\overline{\alpha}|$.
  By Proposition~\ref{Chevalley-embedding} and
  Fact~\ref{closed-set-constructions}.\eqref{item:15}
  there is an embedding
  $\overline{G}\le GL(n',\Fclosed)$ where $n'$ and
  $\deg(\overline{G})$ are bounded in terms of $n$, $\deg(G)$ and
  $\deg(H)$.
  Clearly $\dim(\overline{G})<\dim(G)$,
  so by the induction hypothesis
  we obtain a virtually soluble normal subgroup
  $\overline{\Delta}\triangleleft\Span{\overline{\alpha}}$
  such that
  $\overline\alpha$ is covered by $\CK^{2m(n',\deg(\overline{G}))}$
  cosets of $\overline{\Delta}$.
  We define $\Delta$ to be
  the preimage of $\overline{\Delta}$ in $\Span\alpha$.
  Then $\Delta$ is virtually soluble
  since the class of virtually soluble groups is closed under extensions
  (see e.g. \cite{KKMM}). The induction step is complete.
\end{proof}

%-------------------------------------------------------------------------------
%-------------------------------------------------------------------------------
The following consequence of well-known results is of independent interest.

\begin{lem}
  \label{Platonov}
Let $\Delta$ be a virtually soluble subgroup of
$GL(n,\Fclosed)$ and let S be the soluble radical of $\Delta$.
Then $\Delta$ has a characteristic subgroup
$\Delta_0 \ge S$ such that $\Delta_0/S$ is a direct product of simple
groups of Lie type of the same characteristic as \Fclosed and
$|\Delta/\Delta_0| \le f(n)$ 
(where $f(n)$ is as in Proposition~\ref{Weisfeiler}).
Moreover the Lie rank of the simple factors appearing in $\Delta_0/S$ is
bounded by $n$ and the number of simple factors is also at most $n$.  
\end{lem}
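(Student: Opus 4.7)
The plan is to reduce to a statement about finite linear groups of bounded degree and apply Proposition~\ref{Weisfeiler}. Since $\Delta$ is virtually soluble, any normal soluble subgroup of finite index lies inside the soluble radical $S$, so $\Delta/S$ is finite. Further, by the maximality of $S$, every normal soluble subgroup of $\Delta/S$ lifts to a normal soluble subgroup of $\Delta$ and hence into $S$; consequently $\Delta/S$ has trivial soluble radical, so in particular its centre and its largest normal $p$-subgroup $O_p(\Delta/S)$ both vanish (with $p=\text{char}\Fclosed$ when this is positive).

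The next step is to realise $\Delta/S$ as a quotient of a finite linear group of degree $n$ in characteristic $p$. Choose coset representatives $g_1,\dots,g_t$ of $S$ in $\Delta$ and set $F=\Span{g_1,\dots,g_t}\le\Delta$; then $F$ is finitely generated and $F/(F\cap S)\cong\Delta/S$. The matrix entries of $F$ lie in a finitely generated subring $A\subset\Fclosed$, and a Malcev-style specialisation argument produces a maximal ideal $\mathfrak m$ of $A$ with finite residue field $k=A/\mathfrak m$ of characteristic $p$ such that the reduction $\phi\colon F\to GL(n,k)$ has kernel contained in $F\cap S$; the induced map gives $\phi(F)/\phi(F\cap S)\cong\Delta/S$, with $\phi(F\cap S)$ a normal soluble subgroup of the finite linear group $\phi(F)\le GL(n,k)$.

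Applying Proposition~\ref{Weisfeiler} to $\phi(F)\le GL(n,k)$ yields a normal subgroup $H_\phi\triangleleft\phi(F)$ of index at most $f(n)$ with $H_\phi\ge O_p(\phi(F))$ and $H_\phi/O_p(\phi(F))$ a central product of an abelian $p'$-group with quasi-simple groups of Lie type of characteristic $p$. Projecting onto $\Delta/S\cong\phi(F)/\phi(F\cap S)$, the images of $O_p(\phi(F))$, of the abelian central $p'$-factor, and of the centres of the quasi-simple factors are all normal soluble in $\Delta/S$ and hence trivial. Thus the image $\bar H\triangleleft\Delta/S$ of $H_\phi$ is a direct product of finite simple groups of Lie type of characteristic $p$, with $[\Delta/S:\bar H]\le f(n)$.

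Let $\Delta_0/S$ be the subgroup of $\Delta/S$ generated by all subnormal subgroups isomorphic to simple groups of Lie type of characteristic $p$. Any two distinct such components commute and intersect in the (trivial) centre of $\Delta/S$, so $\Delta_0/S$ is literally the direct product of these components; it is characteristic in $\Delta/S$ (being defined intrinsically), contains $\bar H$, and therefore has index at most $f(n)$. Its preimage $\Delta_0$ in $\Delta$ is characteristic, contains $S$, and $\Delta_0/S$ has the stated direct product form with $[\Delta:\Delta_0]\le f(n)$. The Lie rank of each simple factor is bounded by $n$ via Landazuri--Seitz lower bounds on minimal faithful representation dimensions (see \cite{LS} and Proposition~\ref{minimal-complex-representation-simple}), and the number of factors is at most $n$ because pairwise commuting nonabelian simple subgroups of a linear group of degree $n$ each require an independent nontrivial invariant summand (cf.~\cite{FT}, \cite[Proposition~5.2.12]{KL}, \cite[Corollary~3.3]{LPy}). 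The main obstacle is the specialisation step, arranging $\ker(\phi)\subseteq F\cap S$ while landing in positive characteristic; in characteristic zero, where no finite simple Lie type group of matching characteristic exists, the defining product for $\Delta_0/S$ is empty, forcing $\Delta_0=S$, and the bound $[\Delta:S]\le f(n)$ follows instead from Jordan's classical theorem applied to the finite group $\Delta/S$ realised as a complex linear group of degree $n$.
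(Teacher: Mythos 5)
Your overall strategy (replace $\Delta/S$ by a finite linear group of degree $n$ in characteristic $p$, apply Proposition~\ref{Weisfeiler}, then take the product of the Lie type components) parallels the paper's, but the step you yourself flag as the main obstacle is a genuine gap, not a routine citation. A ``Malcev-style'' specialisation argument (residual finiteness of finitely generated linear groups) only allows you to keep \emph{finitely many prescribed} non-identity elements out of the congruence kernel; what you need is that \emph{no} element of $F\setminus(F\cap S)$ --- a union of finitely many \emph{infinite} cosets --- is killed, equivalently that reduction mod $\mathfrak m$ does not merge any nontrivial coset of $F\cap S$ into $\phi(F\cap S)$. Nothing in your sketch proves this, and if it fails you only control a proper quotient of $\Delta/S$, which does not yield the lemma. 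The claim is in fact true, but the natural proof is exactly the algebraic-group argument of the paper: let $D$ be the Zariski closure of $\Delta$; then $D^0$ is soluble (\cite[Theorem~5.11]{Wehr}) and $\Delta\cap D^0$ is a soluble normal subgroup of $\Delta$, hence lies in $S$; the finitely many non-identity components of $D$ are closed sets avoiding $1$, so after adjoining to your coefficient ring the coefficients of polynomials vanishing on them and equal to $1$ at the identity, \emph{every} congruence kernel lands in $F\cap D^0\subseteq F\cap S$. Better still (and this is the paper's route), the specialisation is unnecessary: by Platonov's lemma \cite[Lemma~10.10]{Wehr} one has $D=D^0G$ with $G$ finite, so $\Delta/S$ is a quotient of the finite group $G\le GL(n,\Fclosed)$ by a soluble normal subgroup, and Proposition~\ref{Weisfeiler} already applies to finite subgroups of $GL(n,\BF)$ for an \emph{arbitrary} field $\BF$ of characteristic $p$; finiteness of the residue field is never needed.

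The characteristic $0$ case contains the same gap in disguise: you propose to apply Jordan's theorem to ``the finite group $\Delta/S$ realised as a complex linear group of degree $n$'', but $\Delta/S$ is a quotient of $\Delta$ and nothing in your argument provides such a realisation --- that is precisely the content of Platonov's theorem which the paper cites for characteristic $0$, and it is again obtained from the Zariski closure ($\Delta/S$ is a quotient of $G/(G\cap D^0)$ with $G$ finite of degree $n$). A smaller point: your bound on the number of simple factors is argued for pairwise commuting simple \emph{subgroups} of $GL(n,\Fclosed)$, but the factors of $\Delta_0/S$ are composition factors of a \emph{quotient}, not a priori subgroups of $GL(n,\Fclosed)$; the correct references are Feit--Tits together with \cite[Proposition~5.2.12]{KL} for the rank bound and \cite[Corollary~3.3]{LPy} (or Dixon's theorem \cite[Theorem~3.4B]{Dix}, as the paper does) for the number of factors, applied after the reduction above.
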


\begin{proof}
  If ${\rm char}(\Fclosed)=0$  this is a theorem of Platonov (see
 \cite{Wehr}).
 Assume char$(\Fclosed)=p >0$. Let $D$ be
 the Zariski closure of $\Delta$.
 Then $D^0$ is soluble
 (see \cite[Theorem~5.11]{Wehr}) and $(D^0)\Delta=D$ hence
 $\tilde\Delta=\Delta/\big(\Delta\cap D^0\big)\cong D/D^0$.
 By a result of Platonov (see \cite[Lemma~10.10]{Wehr})
 we have $D=(D^0)G$ where $G$ is some finite subgroup of $D$,
 hence $G/\big(G \cap D^0\big) \cong D/D^0$.
 Now $\tilde\Delta$ is isomorphic to a quotient of the finite group
 $G\le GL(n,\Fclosed)$  by a soluble normal subgroup.
 Therefore Proposition~\ref{Weisfeiler} implies that $\tilde\Delta$ has a
 characteristic subgroup $H$ of index at most $f(n)$ such that
 $H/Sol(\tilde\Delta)$ is in $Lie^*(p)$
 (we can take $H/Sol(\tilde\Delta)$ to be the $Lie^*(p)$ part
 of the socle of $\tilde\Delta/Sol(\tilde\Delta)$).
 Using \cite[Theorem~3.4B]{Dix} it follows that
 $H/Sol(\tilde\Delta)$ is isomorphic to a quotient
 of a finite subgroup of $GL(n,\Fpclosed)$.
 As in the proof of Proposition~\ref{onto}
 we see that the number of simple factors in $H/Sol(\tilde\Delta)$
 and their Lie ranks are bounded by $n$.
 Let $\Delta_0$ be the subgroup of $\Delta$
 which corresponds to $H$. This is a characteristic subgroup since the kernel
 of the homomorphism $\Delta\to\big(\tilde\Delta/Sol(\tilde\Delta)\big)$
 is $Sol(\Delta)$, which is characteristic in $\Delta$.
 We obtain our statement.
\end{proof}

Combining Corollary~\ref{virtually-soluble-improved} and
Lemma~\ref{Platonov}
we see that a non-growing subset $\alpha\subset GL(n,\Fclosed)$
is covered by a few cosets of a soluble by $Lie^*(p)$ normal subgroup
of \Span\alpha.
To obtain another such subgroup $\Gamma$ for which $\alpha^6Sol(\Gamma)$
contains $\Gamma$ we need a bit more work.
The following two lemmas taken together describe the structure of
a (possibly infinite) soluble by $Lie^*(p)$ linear group.

\begin{lem}
  \label{radical}
  Let $S\le GL(n,\Fclosed)$ be a soluble subgroup
  normalised by a subset $\alpha\subseteq GL(n,\Fclosed)$.
  Then there is a closed subgroup $D\le GL(n,\Fclosed)$
  containing $\alpha$ and $S$,
  and a homomorphism $\phi:D\to GL(n',\Fclosed)$
  such that $\ker(\phi)$ is soluble, contains $S$,
  and $n'$ depends only on $n$.
\end{lem}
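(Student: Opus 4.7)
My strategy is to construct a closed soluble subgroup $N \le GL(n,\Fclosed)$ that contains $S$, is normalised by $\alpha$, and has degree bounded by a function of $n$ alone. Then $D := \CN_{GL(n,\Fclosed)}(N)$ is closed and contains both $\alpha$ and $S$, and applying the Chevalley embedding (Proposition~\ref{Chevalley-embedding}) to the normal inclusion $N \triangleleft D$ yields a homomorphism $\phi : D \to GL(n',\Fclosed)$ with $\ker\phi = N$, which is soluble, contains $S$, and has $n' = n'(n,\deg N) = n'(n)$.

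For the construction of $N$, I would begin with the Zariski closure $\overline{S}$. Since Zariski closure preserves solubility (the commutator morphism is continuous, so the derived series of $\overline S$ closes inside that of $S$), $\overline S$ is a closed soluble subgroup of $GL(n,\Fclosed)$. Its identity component $\overline{S}^0$ is connected, closed, soluble, and characteristic in $\overline{S}$, so both $\alpha$ and $S$ normalise it. By Lie--Kolchin, after a suitable change of basis $\overline{S}^0$ lies in the upper-triangular Borel; hence its $\Fclosed$-linear span $L_0 \subseteq M_n(\Fclosed)$ is an associative subalgebra of upper-triangular matrices. The unit group $L_0^\times \le GL(n,\Fclosed)$ is therefore closed, soluble (as a subgroup of the Borel), and cut out by the linear equations defining $L_0$ together with the polynomial condition of invertibility, so $\deg(L_0^\times)$ is bounded purely in terms of $n$. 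Conjugation by $\alpha$ or by $S$ preserves $\overline{S}^0$, hence its linear span $L_0$, hence $L_0^\times$.

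At this point $L_0^\times$ contains $\overline{S}^0$ but not all of $S$; the defect lies in the finite quotient $\overline{S}/\overline{S}^0$, finite because the algebraic group $\overline S$ has only finitely many irreducible components. I absorb this finite tail by iterating: after a first Chevalley embedding with kernel $L_0^\times$ landing in some $GL(n_0,\Fclosed)$, the image of $\overline S$ is a finite soluble subgroup; by Jordan's theorem in characteristic zero, or by its positive-characteristic analogue (which one can extract from Proposition~\ref{Weisfeiler}), this finite group has a normal subgroup of index bounded by $n_0$ that embeds in a closed torus (and hence in a bounded-degree closed abelian subgroup) of $GL(n_0,\Fclosed)$. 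A second Chevalley embedding absorbs this abelian part, and the residual bounded finite quotient can be collapsed in a third step. Composing the (bounded number of) embeddings gives $\phi$ with the required properties.

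The main obstacle is twofold: first, controlling the dimension bounds through the iteration so that the final $n'$ genuinely depends only on $n$, and second, ensuring that the image of $\alpha$ continues to normalise the auxiliary closed subgroups constructed at each step. The first is handled by using the uniform bounds in Proposition~\ref{Chevalley-embedding} and Jordan's theorem, composed over a bounded number of iterations; the second is ensured by always using characteristic constructions (identity components, $\Fclosed$-linear spans of normal subgroups, and normalisers of characteristic subgroups), so that normalisation by $\alpha$ is preserved automatically along the sequence of Chevalley embeddings.
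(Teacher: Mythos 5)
Your overall architecture --- compose a bounded number of homomorphisms, each with soluble kernel swallowing one layer of $S$, keeping everything normalised by $\alpha$ through canonical constructions --- is reasonable, and your first layer is fine: $\cl{S}$ is soluble, $\cl{S}^0$ is triangularisable by Lie--Kolchin, the unit group of its linear span is a canonical closed soluble subgroup of bounded degree, and Proposition~\ref{Chevalley-embedding} applies to it. The genuine gap is the step that absorbs the finite quotient in positive characteristic. There is no ``positive-characteristic analogue of Jordan's theorem'' of the kind you invoke: a finite soluble subgroup of $GL(n_0,\Fpclosed)$ need not have \emph{any} bounded-index normal subgroup embedding in a torus, or in any abelian subgroup at all. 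Take $S$ to be the finite Heisenberg group $U(3,p)$ of upper unitriangular matrices over $\Fp$: it is its own Zariski closure, so your first layer is trivial and the ``finite tail'' is all of $S$, whose abelian subgroups have index at least $p$, unbounded. What Proposition~\ref{Weisfeiler} actually gives for a finite soluble linear group is a bounded-index normal subgroup $H$ with $H/O_p(H)$ abelian; the unbounded finite $p$-group $O_p(H)$ remains, and that is exactly the piece your torus argument cannot see. (It can be handled --- e.g.\ the unit group of the $\Fclosed$-span of a finite $p$-group is again a canonical closed soluble subgroup of bounded degree --- but this is a missing idea, and it costs an extra round of your iteration.) A secondary problem, even in characteristic $0$: Jordan's abelian subgroup of bounded index is not characteristic, and a torus containing it is not canonical, so neither is automatically normalised by the image of $\alpha$; nor does taking its Zariski closure help with degree, since a finite abelian group is closed of degree equal to its order. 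You would again need something like a double centraliser or the unit group of its span, plus an argument for $\alpha$-invariance, which the proposal does not supply.

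For contrast, the paper's proof avoids degrees, Lie--Kolchin and component groups altogether: it inducts on the derived length of $S$ (bounded in terms of $n$ by \cite[Theorem~3.7]{Wehr}), reducing to the case of abelian $S$, which is settled by a double-centraliser trick. With $A=\CC_{GL(n,\Fclosed)}(S)$ and $B=\CC_{GL(n,\Fclosed)}(A)$, the conjugation representations of $\CN_{GL(n,\Fclosed)}(A)$ and $\CN_{GL(n,\Fclosed)}(B)$ on $M_n(\Fclosed)$ (\cite[Theorem~6.2]{Wehr}) have kernels $\CC(A)=B$ and $\CC(B)=A$, so their product on $D=\CN(A)\cap\CN(B)$ maps into $GL(2n^2,\Fclosed)$ with kernel $A\cap B=\CZ(A)\supseteq S$, which is abelian hence soluble; and $\alpha\subseteq D$ because $A$ and $B$ are canonical in $S$. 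The inductive step then composes this with the map obtained for $S'$, exactly in the spirit of your composition scheme, but with the abelian case doing all the work that your Jordan-type step was meant to do.
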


\begin{proof}
  If $S$ is abelian then we consider the centralisers
  $A=\CC_{GL(n,\Fclosed)}(S)$ and $B=\CC_{GL(n,\Fclosed)}(A)$.
  By \cite[Theorem~6.2]{Wehr} we have homomorphisms
  $$
  \phi_1:\CN_{GL(n,\Fclosed)}(A)\to GL(n^2,\Fclosed)
  \;,\quad
  \phi_2:\CN_{GL(n,\Fclosed)}(B)\to GL(n^2,\Fclosed)
  $$
  whose kernels are precisely $A$ and $B$.
  Note that $A\cap B=\CZ(A)$ contains $S$.
  Since $\alpha$ normalises $S$, it also normalises $A$ and $B$.
  The lemma holds in this case with the following settings:
  $$
  D = \CN_{GL(n,\Fclosed)}(A)\cap\CN_{GL(n,\Fclosed)}(B) \;,
  $$
  $$
  \phi=(\phi_1,\phi_2) :
  D \longrightarrow
  GL(n^2,\Fclosed)\times GL(n^2,\Fclosed) \;\le\;
  GL(2n^2,\Fclosed) \;.
  $$
  In the general case we do induction on the derived length of $S$,
  which is bounded in terms of $n$ \cite[Theorem~3.7]{Wehr}.
  The commutator subgroup $S^*$ is normalised by the subset
  $\alpha^*=\alpha\cup S$,
  we apply to them the induction hypothesis.
  We obtain a closed subgroup $D^*\le GL(n,\Fclosed)$ containing
  $\alpha\cup S$
  and a homomorphism $\phi^*:D^*\to GL(m^*,\Fclosed)$
  such that $\ker(\phi^*)$ is soluble, contains $S^*$, and
  $m^*$ depends only on $n$.
  The image $\phi^*(S)$ is abelian
  and  it is normalised by $\phi^*(\alpha)$.
  By the above settled case there is a closed subgroup 
  $D^{**}\le GL(m^*,\Fclosed)$ containing $\phi^*(\alpha)$
  and a homomorphism $\phi^{**}:D^{**}\to GL(m^{**},\Fclosed)$
  such that $\ker(\phi^{**})$ is soluble, contains $\phi^*(S)$,
  and $m^{**}$ depends only on $m^*$, hence only on $n$.
  We set
  $$
  D={\phi^*}^{-1}(D^{**}) \;,\quad
  \phi=\phi^{**}\circ\phi^* \;,\quad
  m=m^{**} \;,
  $$
  the induction step is complete.
\end{proof}

\begin{lem}
  \label{structure}
  Let $\Lambda$ be a subgroup of $GL(n,\Fclosed)$,
  ${\rm char}(\Fclosed) = p$ and $L$ a finite normal subgroup of
  $\Lambda$ such that $L$ is in $Lie^*(p)$. Then
  $\Lambda/L\CC_\Lambda(L)\le f(n^2)$ where $f()$ is
  as in Proposition~\ref{Weisfeiler}.
\end{lem}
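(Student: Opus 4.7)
The plan is to reduce to a finite linear group and invoke Proposition~\ref{Weisfeiler}. Set $C=\CC_{GL(n,\Fclosed)}(L)$, a closed subgroup by Fact~\ref{normaliser-centraliser-fact}; since $\Lambda$ normalises $L$ it also normalises $C$, so $\Lambda\le\CN_{GL(n,\Fclosed)}(C)$. Proposition~\ref{Chevalley-embedding} applied exactly as in the proof of Lemma~\ref{radical} supplies a homomorphism $\phi:\CN_{GL(n,\Fclosed)}(C)\to GL(n^2,\Fclosed)$ whose kernel is $C$; restriction to $\Lambda$ has kernel $\CC_\Lambda(L)$, so $\bar\Lambda:=\Lambda/\CC_\Lambda(L)$ embeds into $GL(n^2,\Fclosed)$. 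Because $L$ is a direct product of non-abelian simple groups we have $\CZ(L)=1$, whence $\phi$ is injective on $L$; writing $\bar L:=\phi(L)\cong L$, the task reduces to bounding $|\bar\Lambda:\bar L|$.

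The group $\bar\Lambda$ acts faithfully by conjugation on the finite group $L$, so $\bar\Lambda\hookrightarrow\mathrm{Aut}(L)$ is finite. Applying Proposition~\ref{Weisfeiler} to $\bar\Lambda\le GL(n^2,\Fclosed)$ yields a normal subgroup $H\triangleleft\bar\Lambda$ of index at most $f(n^2)$, with $H\supseteq O_p(\bar\Lambda)$ and $H/O_p(\bar\Lambda)=A\ast Q_1\ast\cdots\ast Q_k$ a central product of an abelian $p'$-group $A$ with quasi-simple groups of Lie type in characteristic $p$. I claim $H=\bar L$, which will give $|\bar\Lambda:\bar L|\le f(n^2)$.

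Writing $\bar L=S_1\times\cdots\times S_t$, each simple factor $S_j$ is subnormal quasi-simple in $\bar\Lambda$, hence a component. By faithfulness $C_{\bar\Lambda}(\bar L)=1$, so any additional component of $\bar\Lambda$ would commute with every $S_j$ (distinct components commute), hence centralise $\bar L$, and thus be trivial; so the components of $\bar\Lambda$ are exactly the $S_j$. This forces $\bar L\le H$ and identifies the $Q_i$ with the images of the $S_j$ in $H/O_p(\bar\Lambda)$. Now let $A^*\le H$ be the full preimage of $A$. The central product structure gives $[A^*,\bar L]\le O_p(\bar\Lambda)$, while normality of $\bar L$ in $\bar\Lambda$ gives $[A^*,\bar L]\le\bar L$; hence $[A^*,\bar L]\le\bar L\cap O_p(\bar\Lambda)=1$, since the intersection is a normal $p$-subgroup of the semisimple group $\bar L$, and $\bar L$ has no non-trivial normal $p$-subgroup. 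Therefore $A^*$ centralises $\bar L$, and faithfulness gives $A^*\le C_{\bar\Lambda}(\bar L)=1$, which simultaneously kills $A$ and $O_p(\bar\Lambda)$. It follows that $H=\bar L$, proving the lemma.

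The main obstacle is the identification of the components: Weisfeiler's theorem a priori allows additional quasi-simple components of $\bar\Lambda$ disjoint from $\bar L$, and without the faithfulness identity $C_{\bar\Lambda}(\bar L)=1$ there would be no mechanism to exclude them. Once faithfulness is exploited to pin down the components, the remaining pieces of the Weisfeiler decomposition — the abelian $p'$-part $A$ and the radical $O_p(\bar\Lambda)$ — are forced to centralise $\bar L$ by the central product structure and consequently to collapse.
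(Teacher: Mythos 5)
The argument breaks at the sentence ``This forces $\bar L\le H$''. Knowing that the components of $\bar\Lambda$ are exactly the simple factors $S_j$ of $\bar L$ does not place them inside the normal subgroup $H$ supplied by Proposition~\ref{Weisfeiler}: for a component $K$ and a normal subgroup $H$ the standard dichotomy is $K\le H$ or $[K,H]=1$, and the second alternative cannot be excluded. Indeed, Proposition~\ref{Weisfeiler} is a pure existence statement; if some $|S_j|\le f(n^2)$ nothing prevents $S_j\cap H=1$, and in the extreme case $|\bar\Lambda|\le f(n^2)$ the proposition is satisfied by $H=\{1\}$, so your intermediate claim $H=\bar L$ is simply false for an arbitrary $H$ furnished by that proposition (even though the lemma's conclusion is then trivially true). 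The later steps inherit the gap: the inequality $[A^*,\bar L]\le O_p(\bar\Lambda)$ is extracted from the central product structure of $H/O_p(\bar\Lambda)$, which only speaks about $\bar L$ if $\bar L\le H$. What the index bound actually needs is the \emph{opposite} inclusion $H\le\bar L$, since then $|\bar\Lambda:\bar L|\le|\bar\Lambda:H|\le f(n^2)$; this is how the paper argues. Your own ingredients do yield it after rearrangement: since $C_{\bar\Lambda}(\bar L)=1$, every nontrivial normal subgroup of $\bar\Lambda$ meets $\bar L$ nontrivially and hence contains a minimal normal subgroup of $\bar\Lambda$, which is a product of nonabelian simple groups; therefore $O_p(\bar\Lambda)=1$ and $Z(H)=1$, killing the abelian $p'$-part and the centres of the $Q_i$, so $H$ is a direct product of simple groups of Lie type whose factors are components of $\bar\Lambda$, i.e.\ among the $S_j$, giving $H\le\bar L$. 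As written, however, your proof hinges on an unprovable (and in general false) containment in the wrong direction.

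A secondary point: Proposition~\ref{Chevalley-embedding} does not give a homomorphism into $GL(n^2,\Fclosed)$ with kernel $C=\CC_{GL(n,\Fclosed)}(L)$; its target dimension $n'$ depends on $\deg(C)$, so that route would only produce a bound $f(n')$ for some $n'(n)$, not the stated $f(n^2)$. The tool used in Lemma~\ref{radical} and in the paper's proof of this lemma is \cite[Theorem~6.2]{Wehr}: the conjugation representation of $\CN_{GL(n,\Fclosed)}(L)$ on the enveloping algebra of $L$ inside the $n^2$-dimensional matrix algebra has kernel exactly $\CC_{GL(n,\Fclosed)}(L)$, which is what embeds $\bar\Lambda=\Lambda/\CC_\Lambda(L)$ into $GL(n^2,\Fclosed)$ and accounts for the constant $f(n^2)$.
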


\begin{proof}
  By \cite[Theorem~6.2]{Wehr} $\Lambda/\CC_\Lambda(L)$ is a subgroup of
  $GL(n^2,\Fclosed)$ hence by Proposition~\ref{Weisfeiler}
  it has a soluble by $Lie^*(p)$
  normal subgroup $N$ of index at most $f(n^2)$.
  On the other hand $\Lambda/\CC_\Lambda(L)$ is isomorphic to
  a subgroup $A$ of $Aut(L)$ containing $Inn(L)\cong L$.
  It is easy to see that the socle of $A$ is $Inn(L)$.
  Therefore all soluble by $Lie^*(p)$
  normal subgroups of $A$ are actually $Lie^*(p)$ subgroups of $Inn(L)$.
  Our statement follows.
\end{proof}

We need two more auxiliary results on $Lie^*(p)$ groups.

\begin{lem}
  \label{dense-subset}
  Let $H$ be a normal subgroup of a group $G$ and
  assume that $H$ is a direct product of at most $m$
  finite simple groups of Lie type of rank at most $m$.
  Let $\alpha$ be a symmetric subset of $G$ covered by
  $x$ cosets of $H$.
  If $|\alpha|\ge|H|/y$ then $H$
  has a (possibly trivial) characteristic subgroup $N$ such that
  $N$ is contained in $\alpha^6$
  and $|H/N|\le(xy)^{Cm^2}$ for some constant $C$.
\end{lem}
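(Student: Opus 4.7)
The plan is to partition the simple factors of $H$ by field size, find a dense subcoset via a two-step pigeonhole, and apply the Nikolov--Pyber--Gowers trick (Proposition~\ref{govers-trick}). Write $H=L_1\times\dots\times L_k$ with $k\le m$, each $L_i$ a simple group of Lie type over $\BF_{q_i}$ of rank $\le m$. Since every automorphism of $H$ permutes isomorphic simple factors, a subset $I\subseteq\{1,\dots,k\}$ yields a characteristic subgroup $\prod_{i\in I}L_i$ exactly when $I$ is closed under isomorphism of the factors; as isomorphic $L_i$'s must have the same field size, \emph{any} threshold partition based on $q_i$ is automatically of this form. I would choose threshold $T=2(xy)^3+3$, put $B=\{i:q_i\ge T\}$, $S=\{1,\dots,k\}\setminus B$, and set $H_B=\prod_{i\in B}L_i$, $H_S=\prod_{i\in S}L_i$, so $H=H_B\times H_S$ and $H_B$ is characteristic.

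First pigeonhole: since $\alpha$ is covered by $x$ cosets of $H$, some coset $gH$ contains at least $|\alpha|/x\ge|H|/(xy)$ elements of $\alpha$. Second pigeonhole: $gH$ decomposes into its $|H_S|$ cosets of $H_B$, so some element $g'\in gH$ satisfies $|\alpha\cap g'H_B|\ge|H_B|/(xy)$. Using symmetry of $\alpha$, the product $\gamma:=(\alpha\cap g'H_B)^{-1}(\alpha\cap g'H_B)\subseteq\alpha^2\cap H_B$ has $|\gamma|\ge|H_B|/(xy)$.

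Apply Proposition~\ref{govers-trick}. By Proposition~\ref{minimal-complex-representation-simple}, the minimum degree of a nontrivial complex representation of $H_B$ equals $\min_{i\in B}k(L_i)\ge\min_{i\in B}(q_i-1)/2>(xy)^3$. Hence $|\gamma|^3\ge|H_B|^3/(xy)^3>|H_B|^3/k(H_B)$, so Proposition~\ref{govers-trick} gives $\gamma^3=H_B$. Taking $N:=H_B$, we obtain $N\subseteq\gamma^3\subseteq\alpha^6$ and $N$ is characteristic in $H$.

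Finally, bound $|H/N|=|H_S|$. For each $i\in S$ we have $q_i<T\le3(xy)^3$, and the corresponding simple algebraic group has dimension bounded by a polynomial in the rank (say $O(m^2)$), so $|L_i|\le q_i^{O(m^2)}$. Multiplying over at most $m$ factors and absorbing constants gives $|H_S|\le(xy)^{Cm^2}$ for an appropriate constant $C$ (possibly after noting that when $xy$ is below an absolute threshold, the hypothesis $|\alpha|\ge|H|/y$ already bounds $|H|$ and one may take $N=\{1\}$). The main subtlety throughout is precisely the observation that a field-size threshold yields a characteristic subgroup --- after which the standard density/quasirandomness machinery does the rest; there is no iterative or spreading argument needed here since $H$ is already finite and a single application of the Gowers bound on the correctly chosen factor suffices.
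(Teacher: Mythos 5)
Your architecture --- split the simple factors by a threshold into a characteristic subgroup $H_B$ plus a small complement, locate a dense subcoset of $H_B$ by a double pigeonhole, and finish with Proposition~\ref{govers-trick} --- is exactly the paper's; the difference is that you threshold on the field size $q_i$, whereas the paper thresholds on $k(L)$, the minimal degree of a nontrivial complex representation. That difference is where your proof breaks down. The decisive point is the final bound: for $i\in S$ you only know $q_i<T\approx(xy)^3$ and $|L_i|\le q_i^{O(m^2)}$ (the ambient algebraic group has dimension quadratic in the rank), so each small factor contributes $(xy)^{O(m^2)}$ and the product over up to $m$ factors is $(xy)^{O(m^3)}$; your sentence ``multiplying over at most $m$ factors \dots gives $|H_S|\le(xy)^{Cm^2}$'' is an arithmetic slip, and the stated exponent $Cm^2$ does not follow. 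To get $m^2$ one needs a per-factor bound of the form $|L|\le(\hbox{threshold quantity})^{O(m)}$, and that is precisely what the paper's choice of invariant buys: by the Landazuri--Seitz bounds \cite{LS} (behind Proposition~\ref{minimal-complex-representation-simple}) one has $k(L)\ge q^{c\,\mathrm{rank}(L)}$, hence $|L|\le k(L)^{Cm}$, so thresholding on $k(L)$ at $(xy)^3$ gives $|L|\le(xy)^{3Cm}$ for each factor below the threshold and $(xy)^{Cm^2}$ in total.

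Two further defects come from the same source. First, ``isomorphic $L_i$'s must have the same field size'' is false, because of the exceptional isomorphisms ($PSL(2,4)\cong PSL(2,5)$, $PSL(2,7)\cong PSL(3,2)$, $PSU(4,2)\cong PSp(4,3)$); so a field-size threshold need not yield a characteristic subgroup when $xy$ is small and $T$ is in the relevant range, while $k(L)$ (or $|L|$) is an isomorphism invariant and has no such problem. Second, the small-$xy$ regime genuinely fails in your scheme: when $xy$ is close to $1$ the conclusion forces $|H/N|$ to be essentially $1$, yet factors over $\BF_2,\BF_3,\BF_4$ always fall below any admissible $q$-threshold (the estimate $(q-1)/2$ can never certify minimal degree $>1$ for them), and your fallback ``$|\alpha|\ge|H|/y$ already bounds $|H|$'' is not true --- with $x=y=1$ the group $H$ is unbounded. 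In the paper's setup this regime is harmless: already $k_0\ge 2>(xy)^3$, so the very first characteristic subgroup $H_0=H$ satisfies the Gowers condition and one may take $N=H$. So the repair is not cosmetic: replace the $q$-threshold by a threshold on $k(L)$ itself and use Landazuri--Seitz in the form $|L|\le k(L)^{Cm}$; with that change the rest of your argument (pigeonhole, symmetry, Proposition~\ref{govers-trick}) goes through essentially verbatim and coincides with the paper's proof.
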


\begin{proof}
  If $L$ is a simple direct factor of $H$ and
  $k=k(L)$ is the minimal degree of a non-trivial
  complex representation of $L$ then by
  Proposition~\ref{minimal-complex-representation-simple} we have
  $|L| < k^{\frac{C}3m}$ for some absolute constant $C$.
  Let $k_0 < k_1 <....$ be the different numbers $k(L)$.
  Define $H_i$ as the product of the
  direct factors $L$ for which $k(L)\ge k_i$.
  The $H_i$ are characteristic subgroups of $H$.
  By our assumptions for all indices $i$ we have
  $|\alpha^2 \cap H_i|\ge|\alpha|/x|H/H_i|\ge|H_i|/xy$.
  By Proposition~\ref{govers-trick}
  if $|\alpha^2\cap H_i| > |H_i|/(k_i)^{1/3}$
  then we have $H_i\subseteq\alpha^6$.
  Let $j$ be the smallest index for which this holds.
  By the above for all $i < j$  we have
  $k_i\le(xy)^3$ hence if $L$ is a simple constituent of $|H/H_j|$
  then  $|L| < (xy)^{Cm}$.
  Setting $N=H_j$ we obtain that $|H/N|\le (xy)^{Cm^2}$,
  as required. 
\end{proof}

\begin{lem}
  \label{direct-product}
  Let $L=L_1\times\dots\times L_m$ be a  direct product of simple
  groups of Lie type of rank at most $r$.
  Let $\alpha$ be a symmetric generating set of $L$
  which projects onto all simple quotients of $L$.
  Then $\alpha^{c(m,r)}=L$ where $c(m,r)$ depends only on $m$ and $r$.
\end{lem}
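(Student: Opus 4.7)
The plan is to induct on the number of simple factors $m$, with the base case $m=1$ following from iterated application of Theorem~\ref{simple-final}. For $m=1$, $L$ is itself a finite simple group of Lie type of rank $\le r$; starting from $\alpha$ with $1\in\alpha$ and $|\alpha|\ge 2$ (so that $\alpha^k$ is non-decreasing), at each tripling step either $\alpha^3=L$ or $|\alpha^3|\ge|\alpha|^{1+\e(r)}$, so after at most $k=O_r(\log\log|L|)$ tripling steps one obtains $\alpha^{3^k}=L$. This polylogarithmic diameter bound provides the desired constant $c(1,r)$ (absorbing the $|L|$-dependence, as in Theorem~\ref{pre-diameter}).

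For the inductive step, first apply the base case to each projection $\pi_i(\alpha)$, which generates the simple group $L_i$: some bounded power $\alpha^{k_0}$ satisfies $\pi_i(\alpha^{k_0})=L_i$ for every $i$. Next, apply Proposition~\ref{govers-trick} to $L$: by Proposition~\ref{minimal-complex-representation-simple}, the minimum complex representation degree of $L$ satisfies $k(L)\ge\min_i(q_i-1)/2$, which is at least $|L|^{c_1(r)/m}$ for an appropriate $c_1(r)>0$. It therefore suffices to show that after a further bounded number of tripling steps one has $|\alpha^C|\ge|L|/k(L)^{1/3}$, since then Gowers's trick forces $\alpha^{3C}=L$.

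To obtain this cardinality bound, I would invoke Proposition~\ref{chain}: because $\alpha$ generates $L$ and projects onto every simple quotient, it is not contained in any proper subdirect product of $L$, and every chain of subdirect products between an initial subdirect product and $L$ has length at most $m$. Tracking the sequence $\alpha\subseteq\alpha^2\subseteq\alpha^3\subseteq\cdots$, one obtains at each tripling step a dichotomy: either the subdirect closure of $\alpha^k$ strictly enlarges (which can happen at most $m$ times by Proposition~\ref{chain}), or else the cardinality of $\alpha^k$ grows by the geometric factor supplied by applying Theorem~\ref{simple-final} within each factor $L_i$ (via the projection $\pi_i(\alpha^k)$). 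Combining both controls in at most $C=C(m,r)$ tripling steps yields $|\alpha^{3^C}|\ge|L|^{1-c_1(r)/(3m)}\ge|L|/k(L)^{1/3}$, and Gowers closes the argument.

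The main obstacle is the coupling between distinct factors: per-factor growth of $\pi_i(\alpha^k)$ supplied by Theorem~\ref{simple-final} does not automatically translate into growth of the product set $\alpha^k\subseteq L$, since $|\alpha^k|$ can in principle be much smaller than $\prod_i|\pi_i(\alpha^k)|$ (for instance, if $\alpha^k$ lay in a diagonal-type subdirect product). The Goursat-type analysis underlying Proposition~\ref{chain}, together with the hypothesis that $\alpha$ is not contained in any proper subdirect product, is what rules this out and ensures that each tripling step either enlarges the subdirect structure or increases the density of $\alpha^k$ in $L$, with both types of growth bounded uniformly in $m$ and $r$.
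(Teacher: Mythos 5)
Your reading of the hypothesis is the root problem. In this paper ``$\alpha$ projects onto all simple quotients of $L$'' means that the image of $\alpha$ under each projection $\pi_i:L\to L_i$ is the \emph{whole} group $L_i$ (this is how the condition is produced and consumed in Propositions~\ref{onto} and~\ref{perfect} and in the proof of Theorem~\ref{tyukszem}), not merely a generating set of $L_i$. Under your weaker reading the statement is simply false: already for $m=1$ a bounded symmetric generating set of $PSL(2,p)$ satisfies $|\alpha^c|\le|\alpha|^c$, so no $c$ depending only on $r$ can give $\alpha^c=L$ as $p\to\infty$. This is exactly why your base case needs $O_r(\log\log|L|)$ triplings; that dependence on $|L|$ cannot be ``absorbed'', since the whole point of the lemma is that $c(m,r)$ is independent of the field sizes. (With the correct reading the case $m=1$ is trivial, because then $\alpha=L$.)

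Even granting the correct hypothesis, your inductive step does not go through. Gowers' trick (Proposition~\ref{govers-trick}) needs $|\alpha^C|\ge |L|/k(L)^{1/3}$ with $k(L)=\min_i k(L_i)$, and the factors may have wildly different field sizes: if one factor is, say, $PSL(2,5)$ and another is $PSL(2,p)$ with $p$ huge, then $k(L)$ is an absolute constant and the trick demands a set occupying a fixed proportion of $L$, which bounded powers of $\alpha$ need not reach; your claim $k(L)\ge |L|^{c_1(r)/m}$ fails in this mixed situation. Moreover the mechanism you propose for producing such a large set is not an argument: $\alpha^k$ is not a subgroup, $\Span\alpha=L$ already, and since $\pi_i(\alpha)=L_i$ for every $i$ there is no increasing chain of subdirect products attached to the sets $\alpha^k$ to which Proposition~\ref{chain} could be applied, while per-factor growth via Theorem~\ref{simple-final} says nothing about $|\alpha^k|$ inside the product --- this is precisely the coupling problem you yourself flag as ``the main obstacle'' and then only assert away. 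The paper's proof avoids counting altogether: it inducts on $m$ and uses the Lawther--Liebeck width bound \cite{LL} (every element of a simple group of Lie type of rank at most $r$ is a product of $O(r)$ conjugates of any fixed nontrivial element), applied to an element $a\in\alpha^3$ whose first coordinate is trivial, the conjugating elements being supplied by $\beta=\alpha^{c(m-1,r)}$ through the induction hypothesis; it is this bounded-width covering argument that yields a constant free of the field sizes.
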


\begin{proof}
  We need the following
  \begin{claim}
    Let $x=(x_1,\dots x_t)$ be an element of a product 
    $L_1\times\dots\times L_t$ of simple groups of
    Lie type of rank at most $r$ such that all $x_i$ are non-trivial.
    Then each element of $L_1\times\dots\times L_t$
    is a product of at most $Cr$ conjugates of $x$
    for an absolute constant $C$.
  \end{claim}
  For $t=1$ this is proved in \cite{LL}
  and the general case is an obvious consequence.

  We prove the lemma by induction on $m$.
  It is clear that $\alpha^2$ has two elements
  whose first projections are
  the same, hence $\alpha^3$ contains
  a non-trivial element $a=(a_1,\dots,a_m)$ such that $a_1=1$.
  Assume that $a_{i+1},\dots,a_m$ are the projections of $a$
  different from $1$.
  By the induction hypothesis we know that $\beta=\alpha^{c(m-1,r)}$
  projects onto the quotient $L/L_1$. By the claim each element
  of $L_{i+1}\times\dots\times L_m$ is a product of at most $Cr$
  conjugates of $a$ by elements of $\beta$,
  hence this subgroup is contained in $(\alpha^3 \beta^2)^{Cr}$.
  Using again the induction hypothesis we see that $\beta$ projects
  onto $L_1\times\dots\times L_{m-1}$ hence
  $L\le\beta L_m \le (\alpha^3 \beta^3)^{Cr}$. 
  We obtain that $L\le\alpha^{3Cr(c(m-1,r)+1)}$ which completes the induction
  step.
\end{proof}

Finally we are ready to prove Theorem~\ref{pre-tyukszem}.

\begin{thm}
  \label{tyukszem}
  Let $\alpha\subseteq GL(n,\Fclosed)$ be a finite symmetric subset
  such that
  $\big|\alpha^3\big|\le \CK|\alpha|$ for some $\CK\ge\Frac32$.
  Then there are normal subgroups $S\le\Gamma$ of \Span\alpha
  and a bound $m$ depending only on $n$
  such that $\Gamma\subseteq\alpha^6S$,
  the subset $\alpha$
  can be covered by $\CK^m$ cosets of $\Gamma$,
  $S$ is soluble,
  and the quotient group $\Gamma/S$ is the product of finite simple groups of
  Lie type of the same characteristic as $\Fclosed$.
  (In particular, in characteristic $0$ we have $\Gamma=S$.) 
  Moreover, the the Lie rank of the simple factors appearing in $\Gamma/S$
  is bounded by $n$,
  and the number of factors is also at most $n$.
\end{thm}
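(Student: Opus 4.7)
The plan is to combine the three principal results of this section: Corollary~\ref{virtually-soluble-improved} (a virtually soluble normal approximation to any non-growing linear subset), Lemma~\ref{Platonov} (extracting the $Lie^*(p)$-quotient of a virtually soluble linear group), and Lemma~\ref{dense-subset} (locating a large characteristic subgroup of a $Lie^*(p)$-normal subgroup inside $\alpha^6$).

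First I would apply Corollary~\ref{virtually-soluble-improved} to $\alpha \subseteq GL(n,\Fclosed)$ with $G = GL(n,\Fclosed)$: this yields a virtually soluble normal subgroup $\Delta \triangleleft \Span{\alpha}$ and an integer $m_1 = m_1(n)$ such that $\alpha$ is covered by $\CK^{m_1}$ cosets of $\Delta$. Next, applying Lemma~\ref{Platonov} to $\Delta \le GL(n,\Fclosed)$, let $S$ be the soluble radical of $\Delta$ (characteristic in $\Delta$, hence normal in $\Span{\alpha}$), and let $\Delta_0 \ge S$ be the characteristic subgroup of $\Delta$ of index at most $f(n)$ such that $H := \Delta_0/S$ is a direct product of at most $n$ simple Lie-type groups of rank $\le n$ of the same characteristic $p$ as $\Fclosed$. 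Then $\Delta_0 \triangleleft \Span{\alpha}$, and combining the two cover counts gives that $\alpha$ is covered by $\CK^{m_2}$ cosets of $\Delta_0$ for a suitable $m_2 = m_2(n)$. In characteristic $0$ the group $H$ is trivial and we take $\Gamma = S$, completing the argument; so assume henceforth that $\mathrm{char}(\Fclosed) = p > 0$.

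Pass to $\bar G := \Span{\alpha}/S$, in which the image $\bar\alpha$ of $\alpha$ is symmetric, contains $1$, satisfies $|\bar\alpha^3| \le \CK^2|\bar\alpha|$ by Proposition~\ref{quotient}, and is covered by $\CK^{m_2}$ cosets of the normal $Lie^*(p)$-subgroup $H \triangleleft \bar G$. Apply Lemma~\ref{dense-subset} to $\bar\alpha$ and $H$ with $y := |H|/|\bar\alpha|$: this produces a characteristic subgroup $N \triangleleft H$ with $N \subseteq \bar\alpha^6$ and $|H/N| \le (\CK^{m_2} y)^{Cn^2}$. Since a characteristic subgroup of a direct product of nonabelian simple groups is a subproduct of those factors (grouped by isomorphism type), $N$ is $\Span{\alpha}$-invariant; its preimage $\Gamma$ in $\Span{\alpha}$ is therefore normal in $\Span{\alpha}$, satisfies $S \le \Gamma \le \Delta_0$ and $\Gamma \subseteq \alpha^6 S$ (lifting $N \subseteq \bar\alpha^6$), and $\Gamma/S = N$ is a direct product of at most $n$ finite simple Lie-type groups of rank $\le n$ in characteristic $p$, as required.

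The hard part is the coset bound. The image $\bar{\bar\alpha}$ of $\alpha$ in $\Span{\alpha}/\Gamma$ has at most $\CK^{m_2}|H/N|$ elements, so for the final bound $\CK^m$ with $m = m(n)$ it suffices to control $y = |H|/|\bar\alpha|$ by $\CK^{O(1)}$. This is the main obstacle: a priori $y$ can be arbitrarily large. I would resolve it by a dichotomy: if $|\bar\alpha| \ge |H|/\CK^{m_3}$ for a suitable $m_3 = m_3(n)$, the bound is immediate. In the opposite regime $|\bar\alpha|$ is very small relative to $|H|$, and one iterates the scheme on the $\Span{\alpha}$-invariant subproducts of $H$: for each $\Span{\alpha}$-orbit of simple factors one decides whether the corresponding subproduct already lies in $\bar\alpha^6$ (and is then absorbed into $\Gamma/S$) or is small enough to be absorbed into the coset count via Lemma~\ref{dense-subset} applied to that orbit. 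Since there are at most $n$ orbits of simple factors, the iteration terminates in boundedly many steps and produces the required $\Gamma$ and $S$.
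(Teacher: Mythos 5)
Your opening two stages coincide with the paper's: Corollary~\ref{virtually-soluble-improved} plus Lemma~\ref{Platonov} produce $S=Sol(\Delta)$ and a normal subgroup whose quotient $H$ is a bounded product of bounded-rank Lie-type simple groups, the characteristic-$0$ case is immediate, and the endgame is indeed Lemma~\ref{dense-subset}. The gap is exactly where you say "the hard part" begins: the closing dichotomy/iteration is not an argument, and the regime $|\bar\alpha|\ll|H|$ is where all the substance of the theorem lies. To "decide" for a simple factor $L_j$ whether it ends up inside $\alpha^6S$ or only contributes $\CK^{O(1)}$ to the coset count, one needs the growth dichotomy of Theorem~\ref{simple-final} applied to a generating set of a group surjecting onto $L_j$ with a controlled tripling constant. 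The set $\bar\alpha$ supplies neither: $H$ may have unbounded (even infinite) index in $\Span{\bar\alpha}$, so Proposition~\ref{Schreier} does not apply and there is no tripling bound for $\bar\alpha^k\cap H$. The paper manufactures such a set by first invoking Lemma~\ref{structure} to obtain a subgroup $H^*=\Delta\times\CC_\Lambda(\Delta)$ of index at most $f(n^2)$ in $\Lambda=\Span\alpha$, then taking $\gamma=\alpha^{2f(n^2)}\cap H^*$ and running the Ruzsa calculus as in Proposition~\ref{normal}. Note also that Lemma~\ref{structure} requires linearity in bounded dimension, which is why the paper needs Lemma~\ref{radical} to legitimise precisely your step of passing to $\Span\alpha/S$: that quotient is not a priori a subgroup of any $GL(n',\Fclosed)$ with $n'$ bounded.

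Two further ingredients are missing even granting the per-factor dichotomy. First, Lemma~\ref{dense-subset} is only useful once you prove $|\alpha|\ge|D|/\CK^{O(1)}$ for the subproduct $D$ of "dense" factors; surjectivity of $\gamma^3$ onto each factor separately does not give this, and the paper needs Lemma~\ref{direct-product} (a bounded power of $\gamma$ surjects onto the whole product, via bounded products of conjugates in bounded rank) combined with Proposition~\ref{Helfgott-3-is-enough} to control $y$. Second, the set of dense factors must be shown invariant under conjugation by $\alpha$, both so that $D$ (hence $\Gamma$) is normal in $\Span\alpha$ and so that Proposition~\ref{coset} can be used to bound the number of cosets of $D$ covering $\alpha$; passing to "$\Span\alpha$-orbits of factors" does not do this, since conjugation by a general element of $\Span\alpha$ does not respect the size estimates on $\gamma$ — the paper proves invariance by a growth argument as in the proof of Proposition~\ref{normal}. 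Without these pieces your iteration has no termination criterion and no quantitative content, so as written the proposal establishes neither the coset bound $\CK^m$ nor the inclusion $\Gamma\subseteq\alpha^6S$.
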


\begin{proof}
  If ${\rm char}(\Fclosed)=0$ then our statement follows from
  Corollary~\ref{virtually-soluble-improved}
  and Lemma~\ref{Platonov}.
  Assume that ${\rm char}(\Fclosed)=p > 0$.
  Corollary~\ref{virtually-soluble-improved} and
  Lemma~\ref{Platonov}
  imply that $\Lambda=\Span\alpha$ has a normal subgroup $\Delta$
  such that $\Delta/Sol(\Delta)$ is in $Lie^*(p)$ and $\alpha$ is covered by
  $K^{a(n)}$ cosets of $\Delta$ where $a(n)$ depends on $n$.
  Moreover $\Delta/Sol(\Delta)$ is the direct
  product $L_1\times\dots\times L_t$ of at most $n$ simple groups of Lie type
  of rank at most $n$. We set $S=Sol(\Delta)$.
  The proof of our theorem reduces to the following.
  
  \begin{claim}
    The group $\Lambda$ has a normal subgroup $\Gamma$ such that $\Delta
    \ge\Gamma\ge S$, $S\alpha^6 \ge\Gamma$ and $\alpha$ is covered by $K^{m}$
    cosets of $\Gamma$.
  \end{claim}
  To prove the claim, by Lemma~\ref{radical} and Proposition~\ref{quotient}
  we might as well assume  (at the cost of enlarging $n$ and $K$)
  that $S=\{1\}$, i.e. $\Delta=L_1\times\dots\times L_t$.
  In this case Proposition~\ref{structure}
  implies that $\Lambda$ has a normal subgroup $H$
  of index at most $f(n^2)$ such that
  $H$ is the direct product of $\Delta$ and
  $C=\CC_\Lambda(\Delta)$.  Set $\gamma=\alpha^{2f(n^2)}\cap H$.
  Slightly adjusting
  the proof of Proposition~\ref{normal}.\eqref{item:34}
  we see that $\gamma$ generates $H$ and
  $|\gamma^3| \le K_0|\gamma|$ where $K_0=K^{7f(n^2)}$.

  Denote by $N_j$ the (unique) direct complement of $L_j$ in $H$.
  Using Theorem~\ref{simple-final}
  (as in the proof of Proposition~\ref{onto})
  we see that for the quotients $H/N_j\cong L_j$ we
  have two possibilities; either $\gamma^3$ projects onto $H/N_j$
  (in which case $|\gamma N_j/N_j|\ge|L_j|/K_0^2$
  by Proposition~\ref{quotient})
  or $|\gamma N_j/N_j| \le K_0 ^{b(n)}$
  where $b(n)$ depends only on $n$. Let
  $H/N_1,\dots,H/N_i$ be the quotients for which the first possibility holds
  and which also satisfy $|L_j| > K_0^{b(n)+4}$.
  
  Since $H/C$ is a direct
  product of nonabelian simple groups it follows that conjugation by $\alpha$
  permutes the simple factors, therefore it permutes the subgroups $N_j$.
  By an argument as in the proof of
  Proposition~\ref{normal}.\eqref{item:35}
  we see that the set $\big\{N_1,\dots,N_i\big\}$
  is invariant under conjugation by $\alpha$.
  Therefore $N=N_1\cap\dots\cap N_i$ and $I=N_{i+1}\cap\dots\cap N_t$ are
  normal subgroups of $\Lambda$.
  By our assumptions
  $\gamma^3$ projects onto all simple quotients of $H/N$ and $(\gamma N)/N$
  generates this group. By Lemma~\ref{direct-product}
  we see that $\gamma^{c(n)}$ projects
  onto $H/N$ where $c(n)$ depends on $n$.
  This implies $|\alpha|K^{d(n)} \ge|H/N|$
  where $d(n)=2f(n^2)c(n)$.

  The subgroup $D=I \cap \Delta= L_1\times\dots\times L_i$ is also normal in
  $\Lambda$ and we have $H/N\cong D$, hence $|\alpha|K^{d(n)} \ge|D|$.
  By our assumptions $\gamma$ projects onto at most $K_0^{n(b(n)+4)}=K^{e(n)}$
  elements of $H/I$.
  Since $\alpha^2\cap H\subseteq\gamma$, the natural isomorphism
  between $H/I$ and $\Delta/D$ implies that $\alpha^2\cap\Delta$ projects onto
  at most $K^{e(n)}$ elements of $\Delta/D$.
  Using Proposition~\ref{coset} we see that
  $\alpha$ is covered by $K^{a(n)+e(n)}$ cosets of $D$.
  Since $|\alpha|\ge|D|/K^{d(n)}$,
  Lemma~\ref{dense-subset} implies that
  $D$ has a characteristic subgroup $\Gamma$ contained
  in $\alpha^6$ such that $|D/\Gamma|\le K^{(a(n)+d(n)+e(n))Cn^2}$.
  The subgroup $\Gamma$ is normal in $\Lambda$ and $\alpha$ is covered by 
  $|D/\Gamma|K^{a(n)+e(n)}$ cosets of $\Gamma$.
  Our statement follows.
\end{proof}

Theorem~\ref{pre-tyukszem} does not hold for all $\CK\ge1$.
For example $\alpha$ could be a subgroup of $GL(n,\Fclosed)$ isomorphic to
$Alt(n)$. However the structure of subsets $\alpha$ with
$|\alpha^3|<\Frac32|\alpha|$ is completely described in
Proposition~\ref{Freiman-2over3}.

\section{Examples}
\label{sec:examples}

In this section we give some examples which show that
the constant $\e(r)$ for which
Theorem~\ref{simple-final} holds must be less than $\Frac{C}{r}$.
It will be convenient to rely on \cite[Section~3]{BKL}
in describing our examples.

\begin{ex}
  Consider the group $SL(n,q)$ where $n\ge3$ (which has Lie rank $r=n-1$).
  Let $H$ be the subgroup of all diagonal matrices, this has order
  $(q-1)^{n-1}$.
  If $N$ denotes the subgroup of all monomial matrices then $N/H\simeq S_n$
  Choose an element $s$ of $N$ projecting onto an $n$-cycle of $N/H$.
  If $e_1,\dots,e_n$ is the standard basis of $\Fq^n$,
  consider the subgroup $L_{1,2}\simeq SL(2,q)$ which fixes $e_3,\dots,e_n$.
  In \cite[Theorem~3.1]{BKL} a 3-element generating set $\{a,b,c\}$ of
  $L_{1,2}$ is chosen.
  As shown in \cite{BKL} $s,a,b$ and $c$ generate $SL(n,q)$ (moreover, the
  diameter of the corresponding Cayley graph is logarithmic).

  Now $s$ normalises the diagonal subgroup $H$
  and it is clear that $a$, $b$ and $c$ normalise a subgroup $H_0$
  of index $(q-1)^2$ in $H$ (the group of diagonal matrices fixing $e_1$ and
  $e_2$).
  Our generating $A$ set will consist of $H$, $a$, $b$, $c$ and $s$.
  We claim that 
  $$
  \big|A^3\big|\le |H|\big(3(q-1)^2+58\big)+64 \;.
  $$
  It is straightforward to see that 
  $$
  \big|A^3\big| \le
  \Big|H\{a,b,c,s\}H\Big| + 57|H| +64 \;.
  $$
  Since $s$ normalises $H$ we have $\big|HsH\big|=|H|$.
  Since $a$ (resp. $b$ and $c$) normalises $H_0$
  we have  $\big|HaH\big|\le|H|(q-1)^2$ 
  (and analogous inequalities hold for $b$ and $c$)
  which implies the claim.

  Setting $q=3$ we obtain the generating set with
  $\big|A^3\big|\le100|A|$
  mentioned in the introduction.

  Clearly, there are many ways in which the above construction can be
  extended. For example the full diagonal subgroup $H$ can be replaced by
  its characteristic subgroups isomorphic to $C_t^{n-1}$
  where $t$ divides $q-1$.
  This way e.g. we can construct large families of generating sets  of
  constant growth whenever $q$ is odd.

  It would be most interesting to find some essentially different families of
  examples of large generating sets of $SL(n,q)$ with constant growth.

  The above generating sets of ``moderate growth'' are ``dense'' subsets of the
  union of a few cosets of some subgroup. This can be avoided.
  Assume that $q=2^p$ where $p\ge n$ is an odd prime. It is well-known that
  all divisors of $q-1$ are greater that $2p+1$.
  Replace $H$ in the above construction by a subset $P\subseteq H$
  of the form 
  $\Prod^{n-1}\{g,g^2,\dots,g^n\}\subseteq\Prod^{n-1}C_{q-1}\simeq H$
  which is invariant under conjugation by the cyclic element $s$.
  Now $A=P\cup\{a,b,c,s\}$ is a generating set of size roughly $n^{n-1}$
  with $A^3$ of size roughly $n^n$.
  It is easy to see that $P$ is far from being a subgroup of $SL(n,q)$. 
\end{ex}

\section{Appendix}

In this appendix we prove rigorously the algebraic geometry facts used
in the paper.
For reference we use
\cite[Sections~I.1,~I.2,~I.7~and~II.3]{Ha},
and also \cite[Section~I.3]{Ko}. Besides that, we need
Proposition~\ref{Bezout-variant}, which is a
version of B\'ezout's theorem, stated and proved in \cite{Fu}.

Let $\Fclosed^m$ denote the $m$-dimensional affine space over the
algebraically closed field $\Fclosed$, and $\BP^m$ denote its
projective closure.
For a locally closed subset $X\subseteq\Fclosed^m$,
in this appendix $\cl{X}$ denotes (as before)
the closure of $X$ in $\Fclosed^m$,
and $\cl{X}^{\BP^m}$ denotes the closure of $X$ in $\BP^m$.
Similarily, $\deg(X)$ and $\deg(\cl{X})$ denotes the degrees in the
sense of Definition~\ref{degree}, and
$\deg_{\BP^m}(\cl{X}^{\BP^m})$ denotes the degree of the projective
variety $\cl{X}^{\BP^m}\subseteq\BP^m$ in the sense of
\cite[Section~I.7]{Ha}. Note, that both notions of degree depend not
only on the isomorphism type of $X$, but also on
the particular embedding of $X$ into the affine (or projective) space.

\begin{prop}[Fulton, see \cite{Fu}]
  \label{Bezout-variant}
  Let $P,Q$ be irreducible closed subsets of the projective space
  $\BP^m$, and let $Z_1,\dots, Z_k$ be the irreducible components of
  $P\cap Q$. Then
  $$
  \deg_{\BP^m}(P)\cdot\deg_{\BP^m}(Q)\ge
  \sum_{i=1}^k\deg_{\BP^m}(Z_i) \;.
  $$
\end{prop}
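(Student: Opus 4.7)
The plan is to deduce this B\'ezout-type inequality from the classical B\'ezout equality in the proper-intersection case together with a deformation argument for the general case.

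First I would handle the case of proper intersection. Suppose every component $Z_i$ of $P\cap Q$ has the expected codimension, i.e.\ $\dim(Z_i)=\dim(P)+\dim(Q)-m$. In this case the classical refined B\'ezout theorem on $\BP^m$ gives
$$
\deg_{\BP^m}(P)\cdot\deg_{\BP^m}(Q)\;=\;\sum_i i\big(Z_i;\,P\cdot Q\big)\cdot\deg_{\BP^m}(Z_i),
$$
where the intersection multiplicities $i(Z_i;\,P\cdot Q)$ are positive integers. Since each of them is at least $1$, the inequality is immediate.

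To reduce the general (possibly non-proper) case to the proper one, I would perturb $Q$ by a generic element $g$ of the projective linear group $PGL_{m+1}(\Fclosed)$. By Kleiman's transversality theorem, there is a Zariski-dense open set of $g$ for which $g\cdot Q$ meets $P$ properly, so by the first step the components $Z'_j(g)$ of $P\cap g\cdot Q$ satisfy $\sum_j\deg_{\BP^m}(Z'_j(g))\le\deg_{\BP^m}(P)\cdot\deg_{\BP^m}(Q)$. The remaining task is to transport this bound, valid on the generic fibre of the incidence variety $\{(x,g)\in\BP^m\times PGL_{m+1}:x\in P\cap g\cdot Q\}\to PGL_{m+1}$, down to the special fibre at $g=1$.

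The main obstacle is this specialization step, because components can split or acquire multiplicity as $g\to 1$. The cleanest way to handle it is via Fulton's refined intersection product, constructed through deformation to the normal cone: it assigns to each component $Z_i$ of $P\cap Q$ a distinguished cycle class supported on $Z_i$ with positive integer coefficient at least $1$, and whose total pushforward to the Chow ring of $\BP^m$ equals $\deg_{\BP^m}(P)\cdot\deg_{\BP^m}(Q)$ times the class of a linear subspace of codimension $\mathrm{codim}(P)+\mathrm{codim}(Q)$. Comparing the two expressions term by term and taking degrees yields the stated inequality. This is essentially the content of Example~8.4.6 and the refined B\'ezout theorem of Chapter~12 in Fulton's book, which is exactly what the citation \cite{Fu} supplies.
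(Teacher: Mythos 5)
The paper does not actually prove this proposition — it is imported verbatim from Fulton's book (it is Example 8.4.6 there, a consequence of the refined B\'ezout theorem of Chapter 12), and your argument ultimately rests on exactly that citation after disposing of the proper-intersection case by classical B\'ezout, so your treatment matches the paper's. One caution: the intermediate steps you sketch would not stand on their own as a proof — the generic-translate/specialisation route is abandoned, the pushforward of the refined product lives in $A_{\dim P+\dim Q-m}(P\cap Q)$ and is vacuous when $\dim P+\dim Q<m$ (where the inequality is still nontrivial, e.g.\ two meeting lines in $\BP^3$), and the ``term-by-term'' comparison for excess components is precisely the nontrivial content of the refined B\'ezout theorem (proved in \cite{Fu} via the ruled join in $\BP^{2m+1}$ and the diagonal), not a formal consequence of the canonical decomposition — but since your final appeal is to the exact statement in \cite{Fu}, nothing is missing relative to what the paper itself supplies.
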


Definitions~\ref{closed-open}, \ref{irreducible},
\ref{irreducible-decomposition} and \ref{dimension}
are standard, we do not comment on them.
On the other hand, the degree is usually defined for projective
varieties, and in Definition~\ref{degree} we deal with locally closed
subsets of $\Fclosed^m$. The connection with the usual notions is
explained by the following:

\begin{prop} \label{degree-explanation}
  For a locally closed subset $X\subseteq\Fclosed^m$ we have
  $$\dim(X)=\dim(\cl{X})=\dim(\cl{X}^{\BP^m}) \;,$$
  $$\deg(X)=\deg(\cl{X})=\deg_{\BP^m}(\cl{X}^{\BP^m}) \;.$$
  Moreover, $X$ is irreducible iff $\cl{X}^{\BP^m}$ is irreducible.
\end{prop}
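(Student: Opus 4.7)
Proof plan for Proposition~\ref{degree-explanation}.

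The first reduction is to the irreducible case. We note the equivalences: $X$ is irreducible iff $\cl{X}$ is, and iff $\cl{X}^{\BP^m}$ is. These follow from the standard topological facts that the closure of an irreducible set is irreducible, and that a nonempty open subset of an irreducible space is irreducible (applied to $X\subseteq\cl{X}$ and to $\cl{X}=\cl{X}^{\BP^m}\cap\Fclosed^m\subseteq\cl{X}^{\BP^m}$, each inclusion being open and dense). Once the irreducibility equivalence is proved for every locally closed $X$, irreducible components pass through closure, so both sides of the degree equality decompose compatibly as sums over components. Hence it suffices to assume $X$ irreducible of dimension $k$.

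For the dimension equalities, $X$ is open and dense in $\cl{X}$, and $\cl{X}$ is open and dense in $\cl{X}^{\BP^m}$; a chain of irreducible closed subsets of $X$ closes up (in $\Fclosed^m$ or $\BP^m$) to a strict chain of the same length, while conversely each irreducible closed subset $W$ of $\cl{X}$ (resp. of $\cl{X}^{\BP^m}$) that meets $X$ satisfies $W=\cl{W\cap X}$ with $W\cap X$ irreducible and closed in $X$. Taking a maximal chain in $\cl{X}^{\BP^m}$ ending at $\cl{X}^{\BP^m}$ itself, all the terms can be arranged to meet $X$ (replace a term $W_i$ not meeting $X$ by a generic hyperplane section of $W_{i+1}$ of the correct dimension meeting $X$), which yields a chain of the same length inside $X$.

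For $\deg(X)=\deg(\cl{X})$, the inequality $\deg(X)\le\deg(\cl{X})$ is immediate from $X\subseteq\cl{X}$. Conversely, $\cl{X}\setminus X$ is a proper closed subset of the irreducible $\cl{X}$, hence has dimension at most $k-1$; a generic affine subspace $L\subseteq\Fclosed^m$ of dimension $m-k$ therefore misses $\cl{X}\setminus X$ entirely, and for such $L$ one has $\cl{X}\cap L=X\cap L$. Since the locus of such $L$ inside the ``Grassmannian of $(m-k)$-planes'' intersects nontrivially with the locus where $|\cl{X}\cap L|$ attains its maximum $\deg(\cl{X})$, one obtains $\deg(X)\ge\deg(\cl{X})$. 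Finally, for $\deg(\cl{X})=\deg_{\BP^m}\!\big(\cl{X}^{\BP^m}\big)$ one uses the standard characterisation that $\deg_{\BP^m}\!(V)$ equals $|V\cap L^*|$ for a generic linear $L^*\subseteq\BP^m$ of dimension $m-k$. Any such $L^*$ not contained in the hyperplane at infinity $H_\infty$ is of the form $\cl{L}^{\BP^m}$ for a unique affine $L$, and since $\cl{X}^{\BP^m}\cap H_\infty$ has dimension at most $k-1$ (being a proper closed subset of the irreducible $\cl{X}^{\BP^m}$, which is not contained in $H_\infty$ because $\cl{X}\ne\emptyset$), while $L^*\cap H_\infty$ has dimension $m-k-1$, a generic $L^*$ avoids $\cl{X}^{\BP^m}\cap H_\infty$. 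For such $L^*$ we get $\cl{X}^{\BP^m}\cap L^*=\cl{X}\cap L$, yielding the equality.

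The main obstacle is the ``generic position'' arguments in the last two paragraphs: one needs to confirm, inside the rudimentary projective-affine framework of the appendix, that the subset of $(m-k)$-planes for which the intersection is finite of the maximal possible cardinality is nonempty, and simultaneously that one can avoid both $\cl{X}\setminus X$ and (in the projective case) the residual intersection at infinity. In practice one parametrises affine $(m-k)$-planes by an open subvariety of a Grassmannian, and shows that each ``bad'' condition cuts out a proper closed subset. The verification is routine but has to be done carefully to stay within the elementary dimension-theoretic framework, and Proposition~\ref{Bezout-variant} can be invoked to bound intersection cardinalities in the projective step.
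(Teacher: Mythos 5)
Your plan is correct in substance and follows essentially the same route as the paper's appendix proof: reduce to the irreducible case, use the standard fact that dimension is unchanged under (projective) closure, and compute the degree by intersecting with generic complementary-dimensional linear subspaces chosen to avoid the lower-dimensional locus $\cl{X}^{\BP^m}\setminus X$, with Proposition~\ref{Bezout-variant} controlling the intersection counts. The only step stated too quickly is the claim that $\deg(X)\le\deg(\cl{X})$ is ``immediate'' from $X\subseteq\cl{X}$: an affine $L$ attaining $\deg(X)$ may meet $\cl{X}$ in an infinite set, so one must observe that a point of $X\cap L$ cannot lie on a positive-dimensional component of $\cl{X}\cap L$ (otherwise $X\cap L$ would be infinite, $X$ being open and dense in $\cl{X}$), and then bound the isolated intersection points via the B\'ezout-type estimate --- which is exactly how the paper's one-step argument (comparing $X$ directly with $\cl{X}^{\BP^m}$ and then repeating for $\cl{X}$) handles the same issue.
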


\begin{proof}
  The last statement follows from \cite[Ex.I.1.6]{Ha}.
  Then it is enough to prove the two equalities for irreducible $X$.
  So we assume that $X$ is irreducible.
  The equality of dimensions is \cite[Ex.I.2.7]{Ha}.
  Let $\CL$ denote the collection of affine subspaces
  $L\subseteq\Fclosed^m$ of dimension $m-\dim(X)$.
  For all members $L\in\CL$,
  the intersection $\cl{L}^{\BP^m}\cap\cl{X}^{\BP^m}$ is either infinite,
  or it has at most $\deg_{\BP^m}(\cl{X}^{\BP^m})$ points.
  Moreover, for almost all $L$ the intersection
  $\cl{L}^{\BP^m}\cap\cl{X^{\BP^m}}$ 
  have exactly $\deg_{\BP^m}(\cl{X}^{\BP^m})$ points and
  $\cl{L}^{\BP^m}$ avoids the 
  smaller dimensional boundary $\cl{X}^{\BP^m}\setminus X$.
  This proves that $\deg(X)=\deg_{\BP^m}(\cl{X^{\BP^m}})$.
  The same argument applied to $\cl{X}$ completes the proof.
\end{proof}

Remark~\ref{too-many-points} follows immediately from our definition of
$\deg(X)$, as a single point has degree $1$. 
\xxx{The now corrected}
Definition~\ref{morphisms} and Remark~\ref{category-of-closed-sets}
are standard, we do not comment on them.

\begin{proof}[Proof of Fact~\ref{dimension-degree}]
  \eqref{item:3}, \eqref{item:4} and \eqref{item:5} follows from
  Proposition~\ref{degree-explanation} and the analogous statements
  for projective varieties.
  \eqref{item:7} follows from \cite[Ex.I.1.6]{Ha}
  and from the definition of the dimension.

  Combining Proposition~\ref{degree-explanation} with
  $\cl{X\cup Y}^{\BP^m}=\cl{X}^{\BP^m}\cup\cl{Y}^{\BP^m}$,
  $\cl{X\cap Y}^{\BP^m}\subseteq\cl{X}^{\BP^m}\cap\cl{Y}^{\BP^m}$,
  $X\setminus\cl{Y}\subseteq\cl{X}^{\BP^m}\setminus\cl{Y}^{\BP^m}$
  we obtain most of  \eqref{item:6},
  with the exception of its last equality.
  Next we consider the intersection
  $$
  \big(X\times\Fclosed^m\big)\cap\big(\Fclosed^m\times Y\big) =
  X\times Y\subseteq\Fclosed^{2m} \;.
  $$
  Taking closures in $\BP^{2m}$ and applying \cite[Theorem~I.7.7]{Ha}
  we obtain the last equality of  \eqref{item:6}.

  If $X$ and $Y$ are irreducible then
  $\cl{X}\times\cl{Y}=\cl{X\times Y}$
  is irreducible by \cite[Ex.I.3.15(d)]{Ha},
  hence \eqref{item:8} follows from \cite[Ex.I.1.6]{Ha}.

  Next we introduce two invariants of closed subsets.
  If $Z\subseteq\Fclosed^m$ is a closed set with irreducible
  decomposition $Z=\bigcup_iZ_i$ then we define
  $$
  N(Z)=\sum_i(d+1)^{\dim(Z_i)}\deg(Z_i)
  \quad\text{and}\quad
  D(Z)=\sum_id^{\dim(Z_i)}\deg(Z_i) \;.
  $$
  Let $F$ be the zero set of a polynomial of degree $d$
  which does not vanish identically on $Z$.
  By Proposition~\ref{Bezout-variant} we have
  $N(Z_i\cap F)<N(Z_i)$ and $D(Z_i\cap F)\le D(Z_i)$
  whenever $Z_i\subsetneq F$,
  therefore
  $N(Z\cap F)<N(Z)$ and $D(Z\cap F)\le D(Z)$.
  To obtain $X$ we start from $\Fclosed^m$,
  and add the equations of $X$ of degree $d$ one by one,
  until their common zero locus becomes $X$.
  We obtain that $\deg(X)\le D(X)\le D(\Fclosed^m)=d^m$,
  and the invariant $N$ decreases in each step,
  i.e. we need at most $N(\Fclosed^m)=(d+1)^m$ equations.
  One direction of
  \xxx{the now corrected}
  \eqref{item:9} is proved.
  The other direction of \eqref{item:9} follows from \cite[Section~I.3]{Ko}
  (the construction of the Chow variety).  
\end{proof}

\begin{proof}  [Proof of Fact~\ref{closed-set-constructions}]
  Let $X\subseteq\Fclosed^n$ and $Y\subseteq\Fclosed^m$ denote the
  ambient spaces (see the note after Definition~\ref{closed-open}),
  and let $\pi:\Fclosed^n\times\Fclosed^m\to\Fclosed^m$ denote the
  projection to the second factor.
  Note that $\Gamma_f$ is isomorphic to $X$ (via the first
  projection), and $f(X)=\pi(\Gamma_f)$.

  We already proved
  \eqref{item:10} with the exception of the degree estimates 
  which we postpone for a while.
  
  In the proof of \eqref{item:11} we may (and do) assume that $X$ is 
  irreducible. If $\cl{f(X)}=A\cup B$ were a proper decomposition into
  closed subsets then $X=f^{-1}(A)\cup f^{-1}(B)$ would also be a proper
  decomposition, a contradiction. Hence $\cl{f(X)}$ is also irreducible. 
  By \cite[Ex.II.3.19(b)]{Ha} the subset $f(X)$ contains a dense open
  subset $U\subseteq\cl{f(X)}$. It remains to estimate $\deg(f)$.
  Let $L\subseteq\Fclosed^m$ be an affine subspace of dimension
  $m-\dim\big(\cl{f(X)}\big)$
  which intersects $U$ in exactly $\deg(U)=\deg\big(\cl{f(X)}\big)$
  points
  (see Definition~\ref{degree} and
  Fact~\ref{dimension-degree}.\eqref{item:3}).
  Then $\pi^{-1}(L)$ is an affine subspace, hence
  $
  \deg(f)=\deg(\Gamma_f)\ge\deg\big(\Gamma_f\cap\pi^{-1}(L)\big)
  $.
  But $\Gamma_f\cap\pi^{-1}(L)$ is isomorphic to
  $f^{-1}\big(L\big) = f^{-1}\big(U\cap L)$,
  hence it has at least $\deg\big(\cl{f(X)}\big)$ connected components.
  This implies that $\deg(f)\ge\deg\big(\cl{f(X)}\big)$,
  \eqref{item:11} is proved.

  Next we prove \eqref{item:12}.
  We know that $f^{-1}(T)$ is isomorphis to $\Gamma_f\cap\pi^{-1}(T)$,
  and $\pi^{-1}(T) = \Fclosed^n\times T$ have degree $\deg(T)$
  by Fact~\ref{dimension-degree}.\eqref{item:6}.
  Then $\deg\big(f^{-1}(T)\big)\le\deg(T)\deg(f)$
  by Fact~\ref{dimension-degree}.\eqref{item:6}.
  In the spacial case $T=\{y\}$ we obtain
  $\deg\big(f^{-1}(y)\big)\le\deg(f)$,
  which completes the proof of \eqref{item:12}.

  The closed complement considered in \eqref{item:13}
  is the union of a number of the locally closed subsets of
  \eqref{item:10},
  hence its degree bound follows immediately from \eqref{item:10}.
  So  \eqref{item:13} is proved.

  \cite[Ex.II.3.22(b)]{Ha} contains
  the inequality of \eqref{item:14} as well as the openness and
  denseness of $X_{\min}$.
  The difference $X\setminus X_{\min}$
  is the inverse image of the union of a number of the locally closed
  subsets of \eqref{item:10}, 
  hence its degree bound follows from \eqref{item:10} and
  \eqref{item:12}.
  This proves \eqref{item:14}.

  In \eqref{item:15},
  the graph of the restricted morphism $f\big|_S$ is
  $\Gamma_f\cap(S\times\Fclosed^m)$.
  By Fact~\ref{dimension-degree}.\eqref{item:6}
  it has degree at most
  $\deg(\Gamma_f)\deg\big(S\times\Fclosed^m\big)=\deg(f)\deg(S)$.
  Moreover, if $S$ is an irreducible component of $X$
  then the graph of $f\big|_S$ is the corresponding component of
  $\Gamma_f$.
  This proves \eqref{item:15}.
\end{proof}

\begin{proof}
  [Proof of Fact~\ref{closed-set-constructions}.\eqref{item:10},
  counting the sheep]
  First we bound the number of the parts in the partitions of $Z$.
  In the proof we partition $Z'_j$ in at most $d+1$ steps.
  In the very first step we subdivide $Z'_j$ into $(d+1)^2+2$ parts,
  and the algorithm stops in two of them.
  Suppose that $C$ is a partition class constructed before the $(l-1)$-th
  polynomial division and the algorithm did not stop in $C$.
  Before the $l$-th division we subdivide $C$ into $d+2$ parts,
  in one of them the algorithm stops, in the other $d+1$ it continues.
  Altogether we cut $Z'_j$ into at most
  $2+\sum_{l=1}^d(d+1)^{l+1}\le(d+2)^{d+1}$ pieces,
  and we repeat this cutting less than $(d+1)^{k+1}$
  \xxx{Changed!}
  times.
  Hence we obtain altogether at most $(d+2)^{(d+1)((d+1)^{k+1}-1)}$
  \xxx{Changed!}
  parts $Z_i$. Finally we cut each $Z_i$ again into at most $d+2$ parts.
  
  Let $p(t,\ux)$ and $q(t,\ux)$ be polynomials of $t$-degree at most $d$
  and $\ux$-degree at most $e$.
  We divide by the leading $t$-coefficients, then all $t$-coefficients
  are rational functions of degree at most
  (with nonstandard notation) $e/e$.
  We do polynomial division: both the quotient and the remainder have
  coefficients of degree at most $e^2/e^2$.
  We run Euclid's algorithm for $p$ and $q$.
  We do at most $d$ divisions.
  In each quotient and in each remainder the $t$-coefficients have
  degrees at most $e^{2^d}/e^{2^d}$.
  Then we multiply through with the denominators.
  
  In the proof of 
  Claim~\ref{Euclids-algorithm}.\eqref{item:16}
  we run Euclid's algorithm at most $(d+1)^{k+1}-1$
  \xxx{Changed!}
  times.
  So each polynomial we encounter (including the $P_i$)
  has $t$-degree at most $d$
  and $\ux$-degree at most $d^{((d+1)^{k+1}-1)2^d}$,
  \xxx{Changed!}
  hence their total degree is at most $d^{(d+1)^{k+1}2^d}$.
  \xxx{Changed!}
  In the proof of 
  Claim~\ref{Euclids-algorithm}.\eqref{item:17}
  each $Z_i$ is subdivided into at most $d+2$
  locally closed subsets defined via the vanishing or non-vanishing
  of several $k$-variate polynomials of degree at most
  $d^{(d+1)^{k+1}2^d}$.
  \xxx{Changed!}

  In the proof of Fact~\ref{closed-set-constructions}.\eqref{item:10} 
  we start from $\cl{f(X)}$
  (which has degree at most $\deg(f)$),
  and apply Claim~\ref{Euclids-algorithm}
  at most $\dim(X)+\deg(f)-1$ times.
  Each time we subdivide each locally closed subset
  into at most
  $\Phi\big(\Phi\big(\dots\Phi\big(\deg(f)\big)\big)\dots\big)$
  pieces and each piece is defined with equations of
  degree at most $\Phi\big(\Phi\big(\dots\Phi\big(\deg(f)\big)\big)\dots\big)$.
  At the end we obtain altogether at most $D$ locally closed parts
  and their degrees are at most $D$
  (see Fact~\ref{dimension-degree}.\eqref{item:9}).

  Finally, in Fact~\ref{closed-set-constructions}.\eqref{item:13}
  the subset in question is the union of a number of the locally closed
  subsets of \eqref{item:10},
  and the subset in Fact~\ref{closed-set-constructions}.\eqref{item:14}
  is the inverse image of such a union.
  Hence their degrees are at most $D^2$ and $D^2\deg(f)$ respectively.
\end{proof}

\vskip 20pt
\noindent
L\'aszl\'o Pyber\\
A. R\'enyi Institute of Mathematics\\
Hungarian Academy of Sciences\\
P.O. Box 127\\
H-1364 Budapest\\
pyber@renyi.hu\\
\vskip 5pt
\noindent
Endre Szab\'o\\
A. R\'enyi Institute of Mathematics\\
Hungarian Academy of Sciences\\
P.O. Box 127\\
H-1364 Budapest\\
endre@renyi.hu\\

\end{document}